\def\mapsto{\DOTSB\mathchar"39AD }
\definecolor{green}{HTML}{2ECC71}
\definecolor{blue}{HTML}{3498DB}
\definecolor{red}{HTML}{E74C3C}
\definecolor{orange}{HTML}{FD6A02}
\def\@endtheorem{\endtrivlist}
\Crefname{equation}{}{}
\Crefname{enumi}{}{}
\Crefname{conditioni}{Condition}{Conditions}
\Crefname{conditionalti}{Condition}{Conditions}
\newtheorem{theorem}{Theorem}[section]
\newtheorem*{theorem*}{Theorem}
\Crefname{theorem}{Theorem}{Theorems}
\newtheorem{lemma}[theorem]{Lemma}
\Crefname{lemma}{Lemma}{Lemmas}
\newtheorem{proposition}[theorem]{Proposition}
\Crefname{proposition}{Proposition}{Propositions}
\newtheorem{corollary}[theorem]{Corollary}
\Crefname{corollary}{Corollary}{Corollaries}
\Crefname{conjecture}{Conjecture}{Conjectures}
\Crefname{assumption}{Assumption}{Assumptions}
\theoremstyle{definition}
\newtheorem{definition}[theorem]{Definition}
\Crefname{definition}{Definition}{Definitions}
\Crefname{question}{Question}{Questions}
\theoremstyle{remark}
\newtheorem{remark}[theorem]{Remark}
\Crefname{remark}{Remark}{Remarks}
\Crefname{example}{Example}{Examples}
\newtheorem*{example*}{Example}
\numberwithin{equation}{section}
\newcommand{\nat}{\mathbb{N}}
\newcommand{\real}{\mathbb{R}}
\newcommand{\rational}{\mathbb{Q}}
\DeclareMathOperator{\law}{law}
\DeclareMathOperator{\e}{e}
\newcommand{\dd}{\mathrm{d}}
\DeclarePairedDelimiter{\paren}{\lparen}{\rparen}
\DeclarePairedDelimiter{\bracket}{\lbrack}{\rbrack}
\DeclarePairedDelimiter{\set}{\lbrace}{\rbrace}
\DeclarePairedDelimiter{\abs}{\lvert}{\rvert}
\DeclarePairedDelimiter{\norm}{\lVert}{\rVert}
\DeclarePairedDelimiterX{\hsp}[2]{\langle}{\rangle}{#1, #2}
\DeclarePairedDelimiterX{\pairing}[2]{\langle}{\rangle}{#1 \mid #2}
\DeclarePairedDelimiterX{\hpairing}[2]{\lcurvyangle}{\rcurvyangle}{#1 \mid #2}
\DeclarePairedDelimiterXPP{\Exp}[1]{\exp}{\lparen}{\rparen}{}{#1}
\DeclarePairedDelimiterXPP{\Log}[1]{\log}{\lparen}{\rparen}{}{#1}
\DeclarePairedDelimiterXPP{\Inf}[1]{\inf}{\lbrace}{\rbrace}{}{#1}
\DeclarePairedDelimiterXPP{\Sup}[1]{\sup}{\lbrace}{\rbrace}{}{#1}
\newcommand{\given}[1][]{%
  \nonscript\:#1\vert
  \allowbreak
  \nonscript\:
\mathopen{}}
\DeclareMathOperator{\prob}{\mathbb{P}}
\DeclareMathOperator{\esp}{\mathbb{E}}
\DeclarePairedDelimiterXPP{\Prob}[1]{\prob}[]{}{\renewcommand\given{\nonscript\:\delimsize\vert\nonscript\:\mathopen{}} #1}
\DeclarePairedDelimiterXPP{\Esp}[1]{\esp}[]{}{\renewcommand\given{\nonscript\:\delimsize\vert\nonscript\:\mathopen{}} #1}
\DeclarePairedDelimiterXPP{\Var}[1]{\esp}[]{}{\renewcommand\given{\nonscript\:\delimsize\vert\nonscript\:\mathopen{}} #1}
\DeclareMathOperator{\ent}{\mathcal{H}}
\DeclareMathOperator{\fish}{\mathcal{I}}
\DeclarePairedDelimiterXPP{\Ent}[1]{\ent}(){}{\renewcommand\given{\nonscript\:\delimsize\vert\nonscript\:\mathopen{}} #1}
\DeclarePairedDelimiterXPP{\Fish}[1]{\fish}(){}{\renewcommand\given{\nonscript\:\delimsize\vert\nonscript\:\mathopen{}} #1}
\newcommand{\ou}{\mathsf{P}} %
\newcommand{\dou}{\ou^{\star}} %
\newcommand{\gen}{\mathsf{L}} %
\newcommand{\diff}{\mathsf{D}} %
\newcommand{\skoro}{\mathsf{D}^{\star}} %
\DeclareMathOperator{\dom}{\mathscr{D}om}
\newcommand{\smooth}{\mathscr{S}}
\newcommand{\config}{\varUpsilon}
\newcommand{\sobolev}{\mathscr{W}}
\DeclarePairedDelimiterXPP{\EspPi}[1]{\esp_{\pi}}[]{}{\renewcommand\given{\nonscript\:\delimsize\vert\nonscript\:\mathopen{}} #1}
\DeclarePairedDelimiterXPP{\EspPim}[1]{\esp_{\pi \otimes m}}[]{}{\renewcommand\given{\nonscript\:\delimsize\vert\nonscript\:\mathopen{}} #1}
\DeclarePairedDelimiterXPP{\EspPimt}[1]{\esp_{\pi \otimes m \otimes \dd t}}[]{}{\renewcommand\given{\nonscript\:\delimsize\vert\nonscript\:\mathopen{}} #1}
\newcommand{\emparg}{\,\cdot\,} %
\author{Lorenzo DELLO SCHIAVO}
\affil{%
  Institute of Science and Technology Austria
  \par
  Am Campus 1, 3400 Klosterneuburg, Austria%
  \par
  \normalfont\href{mailto:lorenzo.delloschiavo@ist.ac.at}{\texttt{lorenzo.delloschiavo@ist.ac.at}}%
}
\author{Ronan HERRY}
\affil{%
  IRMAR, Université de Rennes 1
  \par
  263 avenue du Général Leclerc, 35042 Rennes Cedex
  \par
  \normalfont\href{mailto:ronan.herry@univ-rennes1.fr}{\texttt{ronan.herry@univ-rennes1.fr}}%
}
\author{Kohei SUZUKI}
\affil{%
Department of Mathematical Science, Durham University,
\par
Science Laboratories, South Road, DH1 3LE, United Kingdom
\par
\normalfont\href{mailto:kohei.suzuki@durham.ac.uk}{\texttt{kohei.suzuki@durham.ac.uk}}%
}
\begin{document}
\title{Wasserstein geometry and Ricci curvature bounds\\for Poisson spaces}

\maketitle

\begin{abstract} 
  Let $\config$ be the configuration space over a complete and separable metric base space, endowed with the Poisson measure $\pi$.
  We study the geometry of $\config$ from the point of view of optimal transport and Ricci-lower bounds.
  To do so, we define a formal Riemannian structure on $\mathscr{P}_{1}(\config)$, the space of probability measures over $\config$ with finite first moment, and we construct an extended distance $\mathcal{W}$ on $\mathscr{P}_{1}(\config)$.
  The distance $\mathcal{W}$ corresponds, in our setting, to the Benamou--Brenier variational formulation of the Wasserstein distance.
  Our main technical tool is a non-local continuity equation defined via the difference operator on the Poisson space.
  We show that the closure of the domain of the relative entropy is a complete geodesic space, when endowed with $\mathcal{W}$.
  We establish non-local infinite-dimensional analogues of results regarding the geometry of the Wasserstein space over a metric measure space with synthetic Ricci curvature bounded below.
  In particular, we obtain that:
  \begin{itemize}[wide]
    \item the Ornstein--Uhlenbeck semi-group is the gradient flow of the relative entropy;
    \item the Poisson space has a Ricci curvature, in the entropic sense, bounded below by $1$;
    \item the distance $\mathcal{W}$ satisfies an HWI inequality.
  \end{itemize}
\end{abstract}
\tableofcontents
\newpage

\section{Introduction}
The theory of optimal transportation, and in particular the \emph{Wasserstein geometry}, plays a prominent role in the study of the geometry of metric measure spaces and of functional inequalities on them.
For instance, the seminal contributions \cite{Sturm,LottVillani,AGSDuke} establish a synthetic theory of Ricci curvature lower bounds for metric measure spaces, subsuming and extending the classical theory on smooth Riemannian manifolds; see, for instance, \cite[Part III]{VillaniOldAndNew} for a broad introduction to this topic.

Later developments extend this approach to various settings, including finite spaces equipped with a discrete distance.
In this case, \cite{Maas,Mielke} provide a fundamental intuition regarding the generalization of the Benamou--Brenier dynamical formulation of the $W_{2}$ transport distance to discrete spaces, where there is no geodesic associated with $W_{2}$.

Following the above line of research, in this paper we develop a Wasserstein geometry on configuration spaces, which are prototypical infinite-dimensional non-local spaces.
In particular, our work establishes that the configuration sapce equipped with the Poisson measure has Ricci curvature bounded from below by $1$, in a synthetic sense.

\subsection{Main results}
The configuration space $\config$ over a metric space $X$ is the set of non-negative Borel measures on $X$ that are integer-valued on balls.
Provided $X$ is equipped with a $\sigma$-finite measure $m$, the Poisson measure $\pi$ with intensity $m$, e.g.~\cite[Ch.\ 3]{LastPenrose}, is a canonical reference probability measure on $\config$ .
In this paper, we construct a distance $\mathcal{W}$ on $\mathscr{P}_{1}(\config)$, the space of probability measures over $\config$ with finite first moment (see \cref{s:locally-integrable} for definitions).
The geometric properties of $(\mathscr{P}_{1}(\config), \mathcal{W})$ account for synthetic Ricci-curvature lower bounds associated with $(\config, \pi)$.
To state our result, we consider the \emph{Ornstein--Uhlenbeck} semi-group $\mathsf{P} = \set{ \mathsf{P}_{t} : t \geq 0 }$ which plays the role of the heat semi-group in our setting, as well as its dual semi-group $\mathsf{P}^{\star} = \set{ \mathsf{P}^{\star}_{t} : t \geq 0 }$ acting on measures (see \cref{s:ou} for definitions and details).
Let us also write $\Ent{\,\cdot \given \pi}$ for the relative entropy with respect to $\pi$, and $\dom \ent \coloneq \set{ \mu \in \mathscr{P}(\config) : \Ent{ \mu \given \pi } < \infty }$.
\begin{theorem*}
The distance $\mathcal{W}$ satisfies the following properties:
\begin{itemize}[wide]
\item (\cref{t:W})
  the space $(\mathscr{P}_{1}(\config),\mathcal{W})$ is a complete geodesic extended-metric space.
\item (\cref{t:talagrand}) $\mathcal{W}$ satisfies the \emph{Talagrand inequality}
\begin{equation*}
  \mathcal{W}^{2}(\mu, \pi) \leq \Ent{ \mu \given \pi}, \qquad \mu \in \mathscr{P}_{1}(\config).
\end{equation*}
\end{itemize}

Furthermore, the non-extended metric space $(\dom \ent, \mathcal{W})$ captures the Ricci-curvature lower bounds of $(\config, \pi)$ in the following sense:
\begin{itemize}[wide]
\item (\cref{t:dou-contractivity}) 
  The dual semi-group $\dou$ exponentially contracts $\mathcal{W}$ with rate $1$:
\begin{equation*}
\mathcal{W}(\dou_{t} \mu_{0}, \dou_{t} \mu_{1}) \leq \e^{-t} \mathcal{W}(\mu_{0}, \mu_{1}), \qquad  t \geq 0, \qquad \mu_0,\, \mu_1 \dom \ent.
\end{equation*}

\item (\cref{t:evi}) The Ornstein--Uhlenbeck semi-group satisfies an \emph{Evolution Variation Inequality}
  \begin{equation}\label{e:evi-intro}\tag{EVI}
    \Ent{\dou_{s} \mu \given \pi} + \frac{1}{2} \frac{\dd}{\dd s} \mathcal{W}^{2}(\dou_{s} \mu, \xi) + \frac{1}{2} \mathcal{W}^{2}(\dou_{s} \mu, \xi) \leq \Ent{\xi \given \pi}, \qquad s \geq 0, \qquad \mu,\,\xi \in \dom \ent.
  \end{equation}

\item (\cref{t:entropy-geodesically-convex})
  The relative entropy is $1$-geodesically convex on $\dom \ent$ with respect to $\mathcal{W}$.

\item  (\cref{t:hwi})
  The relative entropy $\mathcal{H}$, the distance $\mathcal{W}$, and the Fisher information $\mathcal{I}$ satisfy the HWI inequality 
\begin{equation*}
  \Ent{ \mu \given \pi } \leq \mathcal{W}(\mu, \pi) \sqrt{\Fish{ \mu \given \pi }} - \frac{1}{2} \mathcal{W}^{2}(\mu, \pi), \qquad \mu \in \dom \ent.
  \end{equation*}
\end{itemize}
\end{theorem*}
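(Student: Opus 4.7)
The strategy is Otto--Villani's derivation of HWI, adapted to the non-local Poisson setting: combine the $1$-geodesic convexity of the relative entropy established in \cref{t:entropy-geodesically-convex} with a Cauchy--Schwarz-type upper bound on the descending slope of $\ent$ by $\sqrt{\Fish{\mu \given \pi}}$.

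Fix $\mu \in \dom \ent$; if $\Fish{\mu \given \pi} = \infty$ the statement is trivial by Talagrand (\cref{t:talagrand}), so assume $\Fish{\mu \given \pi} < \infty$. Since $(\dom \ent, \mathcal{W})$ is geodesic (\cref{t:W}), join $\mu$ to $\pi$ by a constant-speed geodesic $(\mu_s)_{s \in [0,1]}$. The $1$-geodesic convexity of $\ent$ applied to $(\mu_s)$, combined with $\Ent{\pi \given \pi} = 0$, yields, for every $s \in (0, 1]$,
\begin{equation*}
  \Ent{\mu_s \given \pi} \leq (1 - s)\, \Ent{\mu \given \pi} - \frac{s(1-s)}{2}\, \mathcal{W}^2(\mu, \pi).
\end{equation*}
Rearranging and taking $\liminf_{s \to 0^+}$ produces the lower bound on the one-sided derivative
\begin{equation*}
  \liminf_{s \to 0^+} \frac{\Ent{\mu \given \pi} - \Ent{\mu_s \given \pi}}{s} \geq \Ent{\mu \given \pi} + \frac{1}{2}\, \mathcal{W}^2(\mu, \pi).
\end{equation*}

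Second, I would establish the descending-slope bound
\begin{equation*}
  \limsup_{\nu \to \mu} \frac{\paren*{\Ent{\mu \given \pi} - \Ent{\nu \given \pi}}^+}{\mathcal{W}(\mu, \nu)} \leq \sqrt{\Fish{\mu \given \pi}}
\end{equation*}
as a consequence of the EVI characterization \eqref{e:evi-intro} (\cref{t:evi}). In the abstract metric framework, this is the classical deduction that the EVI$(1)$ gradient-flow property forces the descending slope of the driving functional to be bounded by the square root of the dissipation functional (here $\fish$, identified via $\tfrac{\dd}{\dd t}\Ent{\dou_t \mu \given \pi}\bigr\rvert_{t = 0^+} = -\Fish{\mu \given \pi}$). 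Evaluating this bound along $(\mu_s)$, for which $\mathcal{W}(\mu, \mu_s) = s\, \mathcal{W}(\mu, \pi)$, gives
\begin{equation*}
  \limsup_{s \to 0^+} \frac{\Ent{\mu \given \pi} - \Ent{\mu_s \given \pi}}{s} \leq \mathcal{W}(\mu, \pi)\, \sqrt{\Fish{\mu \given \pi}}.
\end{equation*}
Chaining the two Dini-derivative estimates yields HWI.

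The main obstacle is the slope bound, since everything else is a direct chase through inequalities. In the smooth setting, it reduces to the Cauchy--Schwarz estimate $\abs*{\tfrac{\dd}{\dd s}\Ent{\mu_s \given \pi}} \leq \sqrt{\Fish{\mu_s \given \pi}}\, \abs{\dot{\mu}_s}$ along any absolutely continuous curve, where $\abs{\dot{\mu}_s}$ denotes the $\mathcal{W}$-metric speed. In our non-local Poisson framework, the analogous bound should follow from the Benamou--Brenier action underlying $\mathcal{W}$, combined with the discrete chain rule for the entropy dissipation in which a logarithmic-mean weight replaces the density of $\mu$ when contracting the non-local gradient with the tangent vector of the curve; this step should be deduced from the EVI$(1)$ and the gradient-flow characterization of $\dou$ developed in earlier sections.
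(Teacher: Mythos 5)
Your proposal only addresses the HWI inequality (the last bullet), not the full theorem; the theorem is a summary statement whose other items (\cref{t:W}, \cref{t:talagrand}, \cref{t:dou-contractivity}, \cref{t:evi}, \cref{t:entropy-geodesically-convex}) have their own proofs elsewhere in the paper, so I will assess your HWI argument alone.

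Your overall structure is the Otto--Villani scheme that the paper also uses (via \cite[Thm.~7.3]{ErbarMaas}): combine the geodesic $1$-convexity of $\ent$ along a constant-speed $\mathcal W$-geodesic from $\mu$ to $\pi$ with a descending-slope bound $\abs{D^-\ent}(\mu)\le\sqrt{\Fish{\mu\given\pi}}$, then chain the two one-sided Dini estimates. Where you genuinely depart from the paper is in how you obtain the slope bound. You propose to deduce it abstractly from the \cref{e:evi-intro} property: $\text{EVI}(1)\Rightarrow$ energy-dissipation identity $\Rightarrow\abs{D^-\ent}(\dou_t\mu)^2=-\tfrac{\dd}{\dd t}\Ent{\dou_t\mu\given\pi}=\Fish{\dou_t\mu\given\pi}$ for a.e.\ $t>0$ (using the de~Bruijn identity of \cref{t:properties-ou}\cref{i:production-entropy-ou}), followed by lower semi-continuity of the slope together with monotonicity of $\fish$ along $\dou$ to pass $t\to 0^+$. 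The paper instead proves the slope bound directly and more elementarily via \cref{t:wcurl-derivative-fisher}: take the OU curve from \cref{t:ou-solution}, observe that its Lagrangian equals $\Fish{\dou_t\mu\given\pi}$, and invoke the metric-speed bound from the Benamou--Brenier characterization \cref{t:wcurl-absolutely-continuous}. Both routes are correct; the paper's is more constructive, avoids the strong-upper-gradient/EDE machinery of the abstract EVI theory, and importantly is logically independent of \cref{t:evi} (the paper's HWI does not rely on the EVI), while yours inherits all the hypotheses needed for EVI. One detail you leave implicit and should spell out is the passage from the a.e.-$t$ identity $\abs{D^-\ent}(\dou_t\mu)=\sqrt{\Fish{\dou_t\mu\given\pi}}$ to the bound at $\mu$ itself: you need both the $\mathscr P_1$-lower semi-continuity of the slope (guaranteed by geodesic convexity) and that $\fish$ is non-increasing along $\dou$, combined with \cref{e:continuity-dou-p1} and \cref{t:lsc-I} to sandwich the $\liminf$.
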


\begin{remark}
  On manifolds, the contraction of the heat semi-group with respect to the Wasserstein distance, the convexity of the relative entropy with respect to Wasserstein geodesic, and the EVI-gradient flow are all equivalent to have a Ricci curvature bounded from below.
  They do not coincide in our infinite-dimensional non-local setting.
\end{remark}

\subsection{Summary of our construction}
We construct the distance $\mathcal{W}$ on $\mathscr{P}_{1}(\config)$, the space of all probability measures on $\config$ with locally finite intensity (see \cref{d:P1} below).
The \emph{discrete difference operator} on functions $F\colon\config\to\mathbb{R}$ is
\begin{align*}
\diff F\colon \config\times X\ni (\eta,x) \mapsto \diff_x F(\eta) \coloneq F(\eta+\delta_x)-F(\eta),
\end{align*}
and we denote by $\skoro$ its formal adjoint, called \emph{Skorokhod divergence}.

On $\mathscr{P}_{1}(\config)$, we consider a formal Riemannian structure induced by $\skoro$ and by the Poisson measure~$\pi$, together with the corresponding intrinsic distance à la Benamou--Brenier.
Precisely, for a curve $\bar\mu= \set{ \mu_{t} : t \in [0,1]}$ of absolutely continuous measures with $\mu_t=\rho_t\pi\in\mathscr{P}_{1}(\config)$, $t \in [0,1]$, and a curve of \emph{tangent vectors} $\bar{w}= \set{ w_{t} : t \in [0,1] }$ with $w_t\in L^1(\pi\otimes m)$, we informally say that the pair $(\bar{\mu},\bar{w})$ is a solution to the \emph{continuity equation} if
\begin{equation}\label{e:intro-ce}
  \partial_{t} \rho_{t} + \skoro(w_{t} \hat{\rho}_{t}) = 0, \qquad t\in [0,1].
\end{equation}
Here $\hat{\rho}_t$ is a tangent vector built from $\rho_t$, accounting for the non-locality of $\diff$ (see below for precise definitions).
We endow $\mathscr{P}_{1}(\config)$ with the \emph{dynamical transport distance} $\mathcal{W}$ defined by
\begin{equation*}
  \mathcal{W}^{2}(\mu_{0}, \mu_{1}) = \inf \int_0^1 \norm{w_t}_{\mu_{t}}^2 \dd t, \qquad \mu_0,\mu_1\in\mathscr{P}_1(\config),
\end{equation*}
where the infimum runs over all solutions $(\bar{\mu},\bar{w})$ to~\cref{e:intro-ce} with $\bar{\mu}$ joining $\mu_{0}$ to $\mu_{1}$, and where we let
\begin{equation*}
\norm{w}_{\mu}^{2} \coloneq \int \abs{w(\eta, x)}^{2} \hat{\rho}(\eta, x) \pi(\dd \eta) m(\dd x) .
\end{equation*}

This distance $\mathcal{W}$ is \emph{extended}, meaning that it may take the value $+\infty$.
However, in view of the Talagrand inequality, it is finite on $\dom \ent$.
Restricting our attention to the $\mathcal W$-closure $\mathscr{P}_{1}^{*}(\config)$ of the domain of the relative entropy, we see that $(\mathscr{P}_{1}^{*}(\config), \mathcal{W})$ is a complete non-extended geodesic space.
We actually esyablish our functional inequalities on $\mathscr{P}_{1}^{*}(\config)$.

\subsection{Motivation}
Developing a theory of optimal transport in the setting of the Poisson space $(\config, \pi)$, and understanding the \emph{curvature} of this space  from the point of view of the theory of synthetic Ricci curvature bounds serve as our main guidelines.
Classically, the theory of synthetic Ricci curvature bounds comes in two flavours:
\begin{enumerate}[(i), wide]
  \item The Bakry--Émery theory \cite{BakryEmery,BGL}, also referred to as the \emph{Eulerian formalism}, is concerned with a Markov semi-group $\mathsf{P} = (\mathsf{P}_{t})_{t \geq 0 }$.
    This theory characterizes Ricci-curvature lower bounds by a convexity-type inequality of the relative entropy along the semi-group.
    For diffusion semigroups, this convexity property is a consequence of the celebrated sub-commutation inequality between the semi-group and the associated carré du champ operator.
    In the case of the Poisson space, the canonical Markov semi-group is the Ornstein--Uhlenbeck semi-group and it is known that it satisfies a Bakry--Émery \cite[Lem.\ 6]{LastAnaSto}.
    Namely, we have that $\diff \mathsf{P}_{t} = \e^{-t} \mathsf{P}_{t} \diff$.
    However, due to the non-diffusive nature of the Ornstein--Uhlenbeck semi-group on the Poisson space, it is rather difficult to draw consequences of this property in this case.
    Nevertheless, \cite{Chafai} uses the Bakry--Émery commutation in order to derive a modified logarithmic Sobolev inequality for the Poisson measure (first obtained by \cite{Wu} with different methods).

  \item The Lott--Sturm--Villani theory \cite{Sturm,LottVillani,AGSDuke}, also referred to as the \emph{Lagrangian formalism}, is concerned with a metric measure space.
    It characterizes Ricci-curvature lower bounds by a convexity-type inequality of the relative entropy along the geodesics of optimal transport.
    Since there is no canonical distance on the configuration space, this far-reaching theory simply does not apply.
    The absence of a canonical distance is a typical feature of infinite-dimensional spaces.
\end{enumerate}

Despite several works (see below) extending the Lagrangian side of the theory for non-diffusive semi-groups or discrete spaces, a generalization of those techniques to non-local infinite-dimensional spaces, such as the configuration space, have so far remained out of reach. 
Our work tackles this issue and provides foundational tools for the development of a Wasserstein geometry and theory of Ricci curvature bounds for point processes on general state spaces with no assigned geometry.

\subsection{Related works}
\subsubsection{Entropic Ricci curvature for Markov chains and jump processes}
\cite{Maas,Mielke} and the subsequent works \cite{ErbarMaas,FathiMaas} initiated the study of optimal transport and Ricci-curvature bounds for non-local operators.
More precisely, they construct a transport distance, based on a non-local continuity equation, and study related functional inequalities for finite Markov chains.
This approach is partially generalized to jump processes on $\mathbb{R}^{n}$ in~\cite{Erbar}.

In particular, the idea of using an analogue of the Benamou--Brenier formulation involving a discrete continuity equation goes back to \cite{Maas}, while our definition of the Lagrangian, and the formulation of the continuity equation through a couple $(\bar{\mu}, \bar{\nu})$ is an adaptation to the Poisson setting of the ones in \cite{DNS09} generalizing the Benamou--Brenier formula in a continuous setting, and in \cite{Erbar} for jump processes on $\mathbb{R}^{d}$.
In the case of finite Markov chains on some space $E$, \cite{Maas} shows that the interior of $\mathscr{P}(E)$ endowed with $\mathcal{W}$ is a Riemannian manifold.
In this spirit, \cref{t:p1-complete-geodesic} identifies a non-trivial component of $\mathscr{P}_{1}(\config)$ on which $\mathcal{W}$ is a complete geodesic space.
No such identification appears in \cite{Erbar}.
In particular, the work \cite{Erbar} does not exclude that the topology generated by~$\mathcal{W}$ for jump processes is trivial.
Let us further note that Poisson random measures naturally appear in the study of L\'evy processes through their jump measures. It would therefore be interesting to know whether the results of \cite{Erbar} can be recast in our setting via this identification.

The recent work \cite{PRST} generalizes this non-local Benamou--Brenier approach to rather general jump processes.
However, the jump kernel of the Poisson process does not satisfy \cite[Assumption (3.4)]{PRST}.

\subsubsection{Other transportation costs for the configuration space}
\cite{GHP} studies optimal transport, more specifically, transport-entropy inequalities on the Poisson space.
There, N.\ Gozlan, G.\ Peccati and the second author circumvent the lack of canonical cost by considering a non-linear generalization of the classical optimal transport problem.
This generalized optimal transport is fully theorized in \cite{GRST}, and is particularly well suited to study discrete spaces \cite{GRSTCurvature}.
One of their main result \cite[Thm.\ 1.2]{GHP} is very close in spirit to our Talagrand inequality for $\mathcal{W}$ (\cref{t:talagrand}): they also obtain an upper bound of their transport cost $\mathbb{M}^{2}$ by the relative entropy.
However, at the time of writing, no dynamical Benamou--Brenier formulation for the generalized optimal transport of \cite{GRST} exists, and a comparison of those results seems out of reach.
Whether the transport cost of \cite{GHP} satisfies a displacement convexity inequality is an interesting question outside of the scope of the current paper.

\subsubsection{Other geometries on the configuration space}
  The configuration space over a Riemannian manifold $X$ may be endowed with a differential geometry lifted from that of the base Riemannian manifold.
  This geometry, defined and studied in \cite{AKR}, arises from the \emph{continuous difference operator}
\begin{align*}
  \mathbf{\nabla} F\colon \config \times X \ni (\eta,x)\longmapsto \nabla^z\big\vert_{z=x} \diff_{z}F(\eta),
\end{align*}
and the associated \emph{Dirichlet form}
\begin{equation*}
  \mathcal{E}(F) \coloneq \int_{\config} \int_{X} \norm{ \mathbf{\nabla}_{\eta}F(x) }_{T_{x}X}^{2} \eta(\dd x) \pi(\dd \eta).
\end{equation*}
The corresponding dynamic is that of the second quantization of the heat semi-group to the Poisson space \cite{Surgailis}; while the Ornstein--Uhlenbeck semi-group studied in this paper corresponds to the second quantization of the semi-group $\mathsf{P}_{t} f = \e^{-t} f$ for all $f \in \mathscr{F}(X)$ and $t \ge 0$.
\cite{RoecknerSchied} proves that this geometry corresponds to that of the extended metric measure space $(\config, W_{2})$, where $W_{2}$ is the Wasserstein $2$ transport distance with respect to the Riemannian distance.
Following \cite{ErbarHuesmann}, this geometry on $\config$ inherits both Ricci-curvature and Alexandrov-curvature lower bounds from the base space.
Two of the authors \cite{LzDSSuz21,LzDSSuz22} have recently generalized these results to a large class of metric measure spaces; while the third author also has proved analogous curvature bounds~\cite{Suz22} in the setting of Dyson Brownian motion.

This geometry differs from the one we consider throughout the rest of the paper.
For instance, the process associated to this differential geometry is a diffusion process; while the Ornstein--Uhlenbeck semi-group defines a jump process.
Our analysis on the Poisson space also holds without any geometric assumptions on the base space; while \cite{AKR,ErbarHuesmann} require that the space is a manifold with some geometric assumptions.

\subsubsection{Curvature of the Wiener space}
Together with Gaussian measures, Poisson random measures are ubiquitous in probability theory.
Among other common properties, they share the existence of an orthogonal systems of \enquote{chaoses}.
Consequently, they admit a \enquote{differential calculus}, known as the \emph{Malliavin calculus}, completely characterised by their probabilistic properties.
In particular, we expect the geometric and functional analytic results one can deduce from this differential calculus to be independent of properties of the underlying space.
In this regard, \cite{FangShaoSturm} derives synthetic Ricci-curvature lower bounds for infinite-dimensional Wiener spaces, equipped with a Gaussian measure, that are as good as the finite-dimensional ones.
Our result parallels theirs on the configuration space, equipped with a Poisson measure.
Let us however highlight two fundamental differences:
\begin{itemize}
  \item The generator of the Ornstein--Uhlenbeck process on the Gaussian space is diffusive; while our operator is purely non-local.
  \item The Wiener space comes naturally equipped with an extended distance, the so-called \emph{Cameron--Martin distance}, while their is no canonical distance on the configuration space.
\end{itemize}

\subsection{Outline of the paper}
Throughout the paper, we let $X$ be a complete and separable metric space.
\cref{s:reminders} recalls the necessary definitions regarding the \emph{configuration space} $\config$ over $X$, and establishes some topological results regarding the topology of point processes.
Of particular importance, we define the space $\mathscr{P}_{1}(\config)$ of point processes with finite first moment and we endow it with a Polish topology (\cref{t:p1-polish}).
We show that mapping a point process in $\mathscr{P}_{1}(\config)$ to its \emph{reduced Campbell measure} is an homeomorphism (\cref{t:C-homeomorphism}).
In \cref{s:ou}, we recall definitions regarding the \emph{Ornstein--Uhlenbeck semi-group} $\mathsf{P}$ as well as the \emph{difference operator} $\diff$ and their interactions with the \emph{relative entropy $\ent$} and the \emph{Fisher information $\fish$}.

In \cref{s:continuity-equation}, we give a precise formulation to the \emph{continuity equation} \cref{e:intro-ce}.
We show (\cref{t:ou-solution}) that the Ornstein--Uhlenbeck evolution is a solution to the continuity equation, and that every solution has a continuous representative (\cref{t:ce-continuous-representative}).
We also obtain a closed formula for the entropy production along solutions to the continuity equations (\cref{t:ce-entropy-production}).

In \cref{s:variational-distance}, we define and study the \emph{Lagrangian} $\mathcal{L}$ and the \emph{action} $\mathcal{A}$ that are necessary to obtain our \emph{transport distance} $\mathcal{W}$.
We also study a \emph{entropic regularization} $\mathcal{J}_{\varepsilon}$ of $\mathcal{W}$, that is of independent interest.
We first state several properties of the Lagrangian (\cref{t:lsc-lagrangian,t:convex-lagrangian,t:bound-nu-lagrangian}) necessary to apply the direct method of the calculus of variations in order to prove existence of minimizing curves.
We also establish in \cref{t:contraction-lagrangian} that the action of the Ornstein--Uhlenbeck semi-group contracts the Lagrangian.
We then define the action $\mathcal{A}$ and verify the existence of minimizers in the infimum.
In that regard, we establish the compactness of sub-level sets in \cref{t:compact-action}.
After defining the extended distance $\mathcal{W}$, we summarize its main properties in \cref{t:W}.

In \cref{s:gradient-flow}, we show that $\mathcal{W}$ is finite on the domain of $\mathcal{H}$.
The main tool is the Talagrand inequality (\cref{t:talagrand}) comparing $\mathcal{W}$ and $\mathcal{H}$.
We then establish in \cref{t:evi} one of the main result of this work: on the domain of $\mathcal{H}$ the Ornstein--Uhlenbeck semi-group is an EVI-gradient flow for the entropy.
From this follows several important consequences such as the geodesic convexity of the relative entropy in \cref{t:entropy-geodesically-convex} and the HWI inequality \cref{t:hwi}.

\subsection{Acknowledgments}
The authors are grateful to Masha Gordina, Takashi Kumagai, Laurent Saloff-Coste, Karl-Theodor Sturm, and the Mathematisches Forschungsinstitut Oberwolfach (MFO) for organizing the workshop \emph{Heat Kernels, Stochastic Processes and Functional Inequalities} (2019), where the authors started discussing this work. 

L.D.S.~gratefully acknowledges funding by the Austrian Science Fund (FWF) grant F65, and by the European Research Council (ERC, grant No.~716117, awarded to Prof.\ Dr.~Jan Maas).
He acknowledges funding of his current position by the Austrian Science Fund (FWF) through grant ESPRIT~208.

R.H.~gratefully acknowledges funding from Centre Henri Lebesgue.
Most of this research was carried out while R.H.\ was postdoc for the DFG through the project \emph{Random Riemannian Geometry} (initiated by Prof.\ Dr.\ Karl-Theodor Sturm and Dr.\ Eva Kopfer) within the SPP 2265 \emph{Random Geometric Systems}.

K.S.~gratefully acknowledges funding by: the JSPS Overseas Research Fellowships, Grant Nr. 290142; World Premier International Research Center Initiative (WPI), MEXT, Japan; JSPS Grant-in-Aid for Scientific Research on Innovative Areas \emph{Discrete Geometric Analysis for Materials Design}, Grant Number 17H06465; and the Alexander von Humboldt Stiftung, Humboldt-Forschungsstipendium.

\section{Topological results for point processes}\label{s:reminders}
\subsection{Topological preliminaries for spaces of functions and measures}

Given a measure $\mu$ on some measurable space, we write $\abs{\mu}$ for its \emph{variation}; and for a non-negative measurable or $\mu$-integrable functions $f$, we write $\mu(f) = \int f \dd \mu$ for the integral of $f$ with respect to~$\mu$.
We say that a locally convex topological vector space is complete if it is complete with respect to each of the seminorms defining its locally convex topology.

\subsubsection{The weak topology}
Given a topological space $(E,\tau)$, we write $\mathfrak{B}(E)$ for the Borel sets of $E$, and $\mathfrak{K}(E)$ for the Borel compact sets.
We write $\mathscr{F}_{b}(E)$ for the set of $\real$-valued bounded Borel functions, and $\mathscr{C}_{b}(E)$ for those that are bounded and continuous.
We write $\mathscr{P}(E)$ for the set of all Borel probability, and $\mathscr{M}_{b}(E)$ for the set of Borel finite signed measures on $E$.
For $B \in \mathfrak{B}(E)$ we define the \emph{evaluation map} $\iota_{B}\colon \lambda \mapsto \lambda(B)$ for every Borel measure $\lambda$.
For $F \in \mathscr{F}(E)$, we also write $\iota_{F}$ whenever this is well-defined.
For an event $B \in \mathfrak{B}(E)$, we also write $\mathfrak{B}_{B}(E)$ for the $\sigma$-algebra of events depending only on $B$.
More precisely, $\mathfrak{B}_{B}(X)$ is the $\sigma$-algebra of all $B' \in \mathfrak{B}(X)$ such that either $B' \subset B$ or $X \setminus B \subset B'$.
The spaces $\mathscr{F}_{b}(E)$ and $\mathscr{C}_{b}(E)$ are endowed with the \emph{uniform norm} under which they are Banach spaces.
Likewise, $\mathscr{P}(E)$ and $\mathscr{M}_{b}(E)$ are always endowed with the \emph{weak topology}, that is the initial topology associated with $\iota_{F}$, $F \in \mathscr{C}_{b}(E)$.

We also use the superscript $+$ to indicate a subset of non-negative functions or measures.
For instance, we write $\mathscr{M}_{b}^{+}(E)$ for the cone of non-negative finite Borel measures, $\mathscr{F}^{+}(E)$ for the non-negative Borel functions.

\subsubsection{The vague topology}

When $(E,d)$ is a metric space, we write $\mathfrak{B}_{0}(E)$ for the bounded measurable sets.
We write $\mathscr{F}_{0}(E)$ for the space of bounded measurable that vanish outside of a bounded set, and $\mathscr{C}_{0}(E)$ for those that are also continuous.

  Given a closed and bounded $B \subset E$, we write $\mathscr{C}_{b,B}(E)$ for the subspace of functions $f \in \mathscr{C}_{0}(E)$ vanishing outside of $B$; this set $\mathscr{C}_{b,B}(E)$ is equipped with the uniform norm, under which it is a Banach space.
  The set $\mathscr{C}_{0}(E)$ can be endowed with the inductive limit topology associated to the inclusions $\mathscr{C}_{b,E_{n}}(E) \to \mathscr{C}_{0}(E)$, where $(E_{n})$ is any strictly increasing sequence of closed balls of $E$ whose union covers $E$.
Since, for all $n \in \mathbb{N}$, the topology induced on $\mathscr{C}_{b,E_{n}}(E)$ by $\mathscr{C}_{b,E_{n+1}}(E)$ coincides with that of $\mathscr{C}_{b,E_{n}}(E)$, the inductive limit is strict, and by \cite[Prop.\ 9 (iii), p.\ II.35]{BourbakiEVT}, $\mathscr{C}_{0}(E)$ is complete.
This topology is in general not metrizable.
A sequence $(f_{n}) \subset \mathscr{C}_{0}(E)$ converges to $f$ for the inductive topology we just defined, provided there exists a closed ball $B$ such that the supports of all the $f_{n}$'s are contained in $B$, and $(f_{n})$ converges to $f$ in $\mathscr{C}_{b,B}(E)$.
We endow the set $\mathscr{F}_{0}(E)$ with a similar inductive limit topology.
We also consider $\mathscr{M}_{0}(E)$ the space of signed Borel measures that are finite on bounded sets.
The set $\mathscr{M}_{0}(E)$ is endowed with the \emph{vague topology}, that is the initial topology associated with $\iota_{F}$, $F \in \mathscr{C}_{0}(E)$.
The importance of the inductive-limit topology on $\mathscr{C}_{0}(E)$ is highlighted by the fact that if $F_{n} \to F$ in $\mathscr{C}_{0}(E)$, then $\nu(F_{n}) \to \nu(F)$ for all $\nu \in \mathscr{M}_{0}(E)$.
\begin{remark}
  All the objects associated with a metric space $(E,d)$ as above depend on the metric structure of $d$ and not only on the topology generated by $d$.
  For instance, $d$ and $d \wedge 1$ generate the same topology.
  However, every set is bounded with respect to $d \wedge 1$.
\end{remark}

\subsubsection{Topological properties of the weak and vague topology}
Let us recall some fundamental results regarding the topology of the spaces of measures we consider.
\begin{theorem}\label{t:prohorov}
  Assume either that $(E, \tau)$ is a Polish space (for statements regarding the weak topology); or that $(E, d)$ is a complete and separable metric space (for statements regarding the vague topology).
  Then:
  \begin{enumerate}[$(i)$, wide]
  \item\label{i:prohorov-polish}
      The weak topology on $\mathscr{M}_{b}^{+}(E)$, resp.\ the vague topology on $\mathscr{M}_{0}^{+}(E)$, is induced by that of the simple convergence on a countable set of $\mathscr{C}_{b}(E)$, resp.\ $\mathscr{C}_{0}(E)$.
      Namely, there exists $(h_{k}) \subset \mathscr{C}_{b}(E)$, resp.\ $\mathscr{C}_{0}(E)$, such that the weak topology on $\mathscr{M}_{b}^{+}(E)$, resp.\ the  vague topology on $\mathscr{M}_{0}^{+}(E)$, is the locally convex topology generated by the seminorms
      \begin{equation*}
        \mu \mapsto \abs{\mu(h_{k})}, \qquad k \in \mathbb{N}.
      \end{equation*}
    Furthermore, the spaces $\mathscr{P}(E)$, $\mathscr{M}_{b}^{+}(E)$, and $\mathscr{M}^{+}_{0}(E)$ are Polish.
    \item\label{i:prohorov-compact}
      A set $\Delta \subset \mathscr{M}_{0}(E)$ is vaguely relatively sequentially compact if and only if both of the following conditions hold:
      \begin{subequations}\label{e:prohorov-compact}
          \begin{align}
    & \label{e:prohorov-compact-uniform-bound} \forall B \in \mathfrak{B}_{0}(E)\qquad \sup_{\mu \in \Delta} \abs{\mu}(B) < \infty; \\
    & \label{e:prohorov-compact-tight} \forall B \in \mathfrak{B}_{0}(E) \quad \forall \varepsilon > 0 \quad \exists K_{\varepsilon} \in \mathfrak{K}(E) : \sup_{\mu \in \Delta} \abs{\mu}(B \setminus K_{\varepsilon}) \leq \varepsilon.
          \end{align}
        \end{subequations}
      A set $\Delta \subset \mathscr{M}_{b}(E)$ is weakly relatively sequentially compact if and only if \cref{e:prohorov-compact-uniform-bound,e:prohorov-compact-tight} hold with $B = E$.
  \end{enumerate}
\end{theorem}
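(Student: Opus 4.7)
The plan is to reduce (i) to classical Polish-space results and to handle the vague case by exhausting $E$ with closed balls. For the weak topology, I would first invoke the standard theory of weak convergence on Polish spaces (via, e.g., the L\'evy--Prokhorov or Dudley metric) to produce a countable family $(g_k) \subset \mathscr{C}_b(E)$ for which convergence of $\mu(g_k)$ against every $g_k$ characterizes weak convergence. The seminorms $\mu \mapsto |\mu(g_k)|$ then generate the weak topology on $\mathscr{M}_b^+(E)$, and the metrizability together with completeness of the L\'evy--Prokhorov metric yield Polishness of $\mathscr{P}(E)$; extending to $\mathscr{M}_b^+(E)$ is then standard by incorporating the total-mass coordinate.

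For the vague topology, I would fix a strictly increasing exhaustion $(E_n)$ of $E$ by closed balls as in the definition of $\mathscr{C}_0(E)$. Each $\mathscr{C}_{b,E_n}(E)$ is a separable Banach space under the uniform norm, hence admits a countable dense subfamily $(g_{n,k})_k$; concatenating over $n$ yields the desired countable family $(h_k) \subset \mathscr{C}_0(E)$. Since each $f \in \mathscr{C}_0(E)$ lies in some $\mathscr{C}_{b,E_n}(E)$ and is uniformly approximable by the $(g_{n,k})_k$, the seminorms $\mu \mapsto |\mu(h_k)|$ generate the vague topology on $\mathscr{M}_0^+(E)$, once combined with the evaluations against continuous cut-offs subordinate to $(E_n)$ in order to control the local mass. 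Polishness of $\mathscr{M}_0^+(E)$ then follows by identifying it, via the restrictions $\mu \mapsto \mu\vert_{E_n}$, with a closed subset of the Polish product $\prod_n \mathscr{M}_b^+(E_n)$, closedness being the consistency condition for these restrictions.

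For (ii), necessity of \cref{e:prohorov-compact-uniform-bound} follows by contradiction: if $|\mu_n|(B) \to \infty$ along a sequence $(\mu_n) \subset \Delta$ for some $B \in \mathfrak{B}_0(E)$, a continuous cut-off $f \in \mathscr{C}_0(E)$ with $f \geq \mathbf{1}_B$ gives $\mu_n(f) \to \infty$, contradicting relative vague compactness. Necessity of \cref{e:prohorov-compact-tight} comes from the fact that restrictions of a vaguely convergent subsequence to a bounded set are weakly convergent in $\mathscr{M}_b(E)$, hence classically tight. For sufficiency, I would carry out a diagonal extraction: on each $E_n$, the restricted family $\{\mu\vert_{E_n} : \mu \in \Delta\}$ is uniformly bounded and tight, so the classical Prokhorov theorem yields a weakly convergent subsequence in $\mathscr{M}_b^+(E_n)$; diagonalizing over $n$ and testing against the countable determining family $(h_k)$ from (i) identifies the weak limits as consistent restrictions of a single vague limit in $\mathscr{M}_0^+(E)$.

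The main obstacle is that $E$ is not assumed locally compact, so closed bounded sets need not be relatively compact and the classical identification of $\mathscr{M}_0(E)$ with the dual of continuous compactly supported functions is unavailable. This forces a careful decoupling between the local boundedness condition \cref{e:prohorov-compact-uniform-bound} and the local tightness condition \cref{e:prohorov-compact-tight}, and it requires verifying that vague convergence of the diagonal subsequence holds against \emph{all} of $\mathscr{C}_0(E)$ rather than only against the countable determining family; this last point is precisely where the density of $(g_{n,k})_k$ in $\mathscr{C}_{b,E_n}(E)$ established in the first paragraphs pays off.
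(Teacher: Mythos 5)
Your construction of the countable determining family $(h_k)$ for the vague topology relies on the claim that each $\mathscr{C}_{b,E_n}(E)$ is a \emph{separable} Banach space in the uniform norm. This fails whenever $E$ is not locally compact, which is exactly the generality the theorem needs (recall it will be applied to $E = \config$ and $E = \config \times X$). Take $E = \ell^2$ and $E_n = \overline{B}(0,n)$: the set $D = \set{(n-1)e_k : k \in \mathbb{N}}$, with $(e_k)$ an orthonormal basis, is closed, uniformly discrete, and contained in the interior of $E_n$. By Tietze extension and multiplication by a fixed cut-off $\chi$ supported in $E_n$ with $\chi \equiv 1$ on $D$, every bounded function on $D$ lifts to an element of $\mathscr{C}_{b,E_n}(E)$ agreeing with it on $D$; since $\set{0,1}^{D}$ is uncountable and its elements are at pairwise uniform distance $1$, $\mathscr{C}_{b,E_n}(E)$ is non-separable. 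Consequently the dense countable subfamily $(g_{n,k})_k$ does not exist, and the diagonal argument in your final paragraph, which explicitly invokes this density to upgrade convergence from the countable family to all of $\mathscr{C}_0(E)$, has nothing to stand on.

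The paper takes a genuinely different route that sidesteps this: it imports a countable family $(g_i) \subset \mathscr{C}_b(E)$ inducing the weak topology on $\mathscr{M}_b^{+}(E)$ from the classical metrizability theory of $\mathscr{P}(E)$ over a Polish space --- such a family exists because $\mathscr{P}(E)$ is separable and metrizable, \emph{not} because $\mathscr{C}_b(E)$ is separable (it is not, for non-compact $E$) --- then multiplies by a fixed increasing sequence of cut-offs $f_j \in \mathscr{C}_0(E)$ and takes $(h_k)$ an enumeration of $\set{f_j g_i}$; the resulting seminorms metrize the vague topology on $\mathscr{M}_0^{+}(E)$ and yield a complete metric. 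Part (ii) is simply cited from \cite[Thm.\ 8.6.2]{Bogachev}. A secondary gap: your necessity argument for \cref{e:prohorov-compact-uniform-bound} tests against a single cut-off $f \geq 1_B$, but $\Delta \subset \mathscr{M}_0(E)$ consists of signed measures, and $\abs{\mu_n}(B) \to \infty$ does not force $\mu_n(f) \to \infty$ (for instance $\mu_n = n\delta_x - n\delta_y$ with $f(x) = f(y)$); one needs a uniform boundedness argument on the barrelled space $\mathscr{C}_0(E)$, or an appeal to the cited reference.
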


\begin{proof}
  \cref{i:prohorov-polish} \cite[Thms.\ 6.2, 6.5, \& 6.6]{Parthasarathy} for the case of $\mathscr{P}(E)$ with $(E,\tau)$ Polish.
  The case of $\mathscr{M}_{b}^{+}(E)$ is treated similarly.
  Now, assume that $(E,d)$ is complete and separable.
  Then it is also Polish, thus, by the previous case, we can find a countable family $(g_{k}) \subset \mathscr{C}_{b}(E)$ that induces the weak topology on $\mathscr{M}^{+}_{b}(E)$.
  We fix a point $o \in X$, and we consider a sequence $(f_{k}) \subset \mathscr{C}_{0}(E)$ such that $1_{B(o,k)} \leq f_{k} \leq 1_{B(o,k+1)}$.
  We take $(h_{k})$ an enumeration of $\set{ f_{j} g_{i} : j,\, i \in \mathbb{N} }$.
  Then
  \begin{equation*}
    \rho(\mu, \mu') \coloneq \sum_{k \in \mathbb{N}} 2^{-k} \abs*{ \int h_{k} \dd (\mu - \mu') },
  \end{equation*}
  is a distance metrizing the vague topology on $\mathscr{M}_{0}^{+}(E)$, and it is complete.

  \cref{i:prohorov-compact} \cite[Thm.\ 8.6.2]{Bogachev}.
\end{proof}

\subsection{Point processes, intensity measures, Campbell measures, Laplace transforms}
Let $(X, d)$ be a complete and separable metric space equipped with $m \in \mathscr{M}_{0}^{+}(X)$.
We write $\config = \config(X)$ for the space of \emph{configurations} over $X$, that is the $\mathbb{N}_{0} \cup \{\infty\}$-valued Borel measures on $X$ that are finite on every bounded set.
\begin{lemma}[{\cite[Lem.\ 2.1]{GHP}}]\label{t:config-locally-compact}
  The set $\config$ is closed in $\mathscr{M}_{0}^{+}(X)$. In particular, it is a Polish space.
\end{lemma}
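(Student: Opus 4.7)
The space $\mathscr{M}_{0}^{+}(X)$ is Polish by \cref{t:prohorov}\cref{i:prohorov-polish}, hence metrizable, so it suffices to show that $\config$ is sequentially closed; the Polish conclusion then follows from the general fact that a closed subset of a Polish space is Polish. I would therefore fix a sequence $(\eta_{n}) \subset \config$ with $\eta_{n} \to \eta$ vaguely in $\mathscr{M}_{0}^{+}(X)$, and aim to prove $\eta \in \config$, i.e., that $\eta(B) \in \mathbb{N}_{0}$ for every bounded Borel set $B$.

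The first step is a Portmanteau-type statement for vague convergence: for every bounded Borel set $B$ with $\eta(\partial B) = 0$, one has $\eta_{n}(B) \to \eta(B)$. This is obtained by squeezing $1_{B}$ between two functions in $\mathscr{C}_{0}^{+}(X)$ whose difference is supported in a neighborhood of $\partial B$ (whose $\eta$-mass is zero), and using the definition of vague convergence together with the local finiteness of~$\eta$. Since each $\eta_{n}(B) \in \mathbb{N}_{0}$, the limit $\eta(B)$ is finite and lies in the closed subset $\mathbb{N}_{0} \subset [0,\infty)$, so $\eta(B) \in \mathbb{N}_{0}$ for every such $\eta$-continuity set.

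Next I would upgrade from continuity sets to all bounded Borel sets by a Dynkin class argument. Fix a bounded open ball $U \subset X$. The collection
\begin{equation*}
  \mathcal{C}_{U} \coloneq \set{ B \in \mathfrak{B}(U) : \overline{B} \subset U,\ \eta(\partial B) = 0 }
\end{equation*}
is a $\pi$-system: the boundary of an intersection is contained in the union of boundaries, hence is $\eta$-null. Using that for each $x$ in a countable dense subset of $X$ only countably many radii satisfy $\eta(\partial B(x,r)) > 0$, one checks that $\mathcal{C}_{U}$ generates $\mathfrak{B}(U)$. By the previous step, $\mathcal{C}_{U}$ is contained in $\mathcal{D}_{U} \coloneq \set{ B \in \mathfrak{B}(U) : \eta(B) \in \mathbb{N}_{0} }$. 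Since $\eta(U) < \infty$, the class $\mathcal{D}_{U}$ is a Dynkin system (stable under proper differences, since $\mathbb{N}_{0}$ is, and under monotone countable unions, since a non-decreasing sequence of integers with finite limit is eventually integer-valued and stationary). By Dynkin's lemma, $\mathcal{D}_{U} = \mathfrak{B}(U)$. Exhausting $X$ by such balls yields $\eta(B) \in \mathbb{N}_{0}$ for every bounded Borel $B$, i.e., $\eta \in \config$.

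The main subtlety I anticipate is the verification that the $\pi$-system $\mathcal{C}_{U}$ indeed generates $\mathfrak{B}(U)$ and that the stability properties of $\mathcal{D}_{U}$ hold; both rely crucially on the local finiteness of $\eta$, which prevents the integer constraint from being broken by countable operations. Everything else is a routine application of the direct method combined with vague Portmanteau.
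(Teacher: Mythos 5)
The paper itself does not prove this lemma; it imports it from \cite[Lem.~2.1]{GHP}. So I evaluate your argument on its own terms.

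Your overall structure — metrizability reduces closedness to sequential closedness, vague Portmanteau gives $\eta_{n}(B)\to\eta(B)$ on $\eta$-continuity sets (hence $\eta(B)\in\mathbb{N}_{0}$ there), and a $\pi$--$\lambda$ argument upgrades to all bounded Borel sets — is the standard route and is essentially sound. There is, however, one genuine gap in the Dynkin step. A Dynkin ($\lambda$-) system on $U$ must contain $U$ itself, but your $\pi$-system $\mathcal{C}_{U}$ carries the constraint $\overline{B}\subset U$, so $U\notin\mathcal{C}_{U}$, and you never establish $\eta(U)\in\mathbb{N}_{0}$ before invoking the $\pi$--$\lambda$ theorem. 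The finiteness $\eta(U)<\infty$ alone gives stability under proper differences and increasing unions, but not membership of the ambient set: without $U\in\mathcal{D}_{U}$ the conclusion $\sigma(\mathcal{C}_{U})\subset\mathcal{D}_{U}$ simply does not follow (for instance on $\Omega=\{1,2,3\}$ take $\mathcal{C}=\{\{1\}\}$ and $\mathcal{D}=\{\emptyset,\{1\}\}$: $\mathcal{D}$ is stable under proper differences and increasing unions and contains the $\pi$-system $\mathcal{C}$, yet $\sigma(\mathcal{C})\not\subset\mathcal{D}$).

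The fix is immediate and uses only tools you already deploy. Either choose $U$ at the outset to be an $\eta$-continuity ball (for any fixed center only countably many radii are bad), so that $\eta(U)\in\mathbb{N}_{0}$ follows directly from your Portmanteau step; or observe that $U$ is an increasing union of balls $B_{k}\in\mathcal{C}_{U}$, whence $\eta(U)=\lim_{k}\eta(B_{k})$ is the finite limit of a non-decreasing sequence in $\mathbb{N}_{0}$, hence lies in $\mathbb{N}_{0}$. With this patch, and noting at the end that integrality on all bounded sets gives $\mathbb{N}_{0}\cup\{\infty\}$-valuedness on arbitrary Borel sets by exhaustion, the proof is complete.
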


A \emph{point process} $\mu$ is any element of $\mathscr{P}(\config)$.
Fix a point process $\mu$.
We write $I_{\mu}$ for the \emph{intensity measure} of $\mu$, that is
\begin{equation*}
  I_{\mu}(B) \coloneq \mu(\iota_{B}) = \int \eta(B) \mu(\dd \eta), \qquad B \in \mathfrak{B}(X).
\end{equation*}
The \emph{reduced Campbell measure} is
\begin{equation*}
  C_{\mu}(A \times B) \coloneq \iint 1_{B}(x) 1_{A}(\eta - \delta_{x}) \eta(\dd x) \mu(\dd \eta), \qquad A \in \mathfrak{B}(\config),\, B \in \mathfrak{B}(X).
\end{equation*}
It is a well-known fact \cite[Thm.\ 4.1]{LastPenrose} in the theory of point processes that $\mu$ is a Poisson point process (with intensity $I_{\mu}$) if and only if $C_{\mu} = \mu \otimes I_{\mu}$.
We refer to this relation as to the \emph{Mecke identity}.
When $\mu$ is a Poisson point process, for all probability densities $f \in L^{1}(\mu)$, we have that
\begin{equation}\label{e:campbell-density}
  \frac{\dd C_{f\mu}}{\dd (f\mu \otimes I_{f\mu})}(\eta, x) = \frac{f(\eta + \delta_{x})}{\int_{\config} f(\gamma + \delta_{x}) \mu(\dd \gamma)}, \qquad \eta \in \config,\, x \in X.
\end{equation}
However, for a generic point process $\mu$, the Campbell measure $C_{\mu}$ is \emph{not} absolutely continuous with respect to $\mu \otimes I_{\mu}$, see for instance, \cite{OsadaShirai} for an explicit counter-example.

Finally, the \emph{Laplace transform} of $\mu$ is the map
\begin{equation*}
  \Lambda_{\mu}(h) \coloneq \int \exp(- \eta(h)) \mu(\dd \eta), \qquad h \in \mathscr{C}^{+}_{0}(X).
\end{equation*}

  \subsection{The weak convergence \texorpdfstring{on $\mathscr{P}(\config)$}{of point processes}}
  We define $\mathscr{G}$ as the (algebraic) linear span of functions of the form $\e^{-\iota_{h}}$ for some $h \in \mathscr{F}^{+}_{0}(X)$.
  Set $\mathscr{S} \coloneq \mathscr{G} \cap \mathscr{C}_{b}(\config)$.
  Let us recall the following characterization of the weak convergence on $\mathscr{P}(\config)$.
  \begin{theorem}[{\cite[Thm.\ 4.11]{Kallenberg}}]\label{t:weak-convergence-P-upsilon}
    The space $\mathscr{P}(\config)$ is Polish.
   Moreover, for all $(\mu_{n}) \subset \mathscr{P}(\config)$ and $\mu \in \mathscr{P}(\config)$.
   Then,
\begin{equation}
\bracket[\Big]{ \mu_{n} \xrightarrow[n \to \infty]{\mathscr{P}(\config)} \mu } \Leftrightarrow \bracket[\Big]{ \mu_{n}(F) \to \mu(F), F \in \mathscr{S} } \Leftrightarrow \bracket[\Big]{ \Lambda_{\mu_{n}}(h) \to \Lambda_{\mu}(h), h \in \mathscr{C}_{0}^{+}(X) }.
\end{equation}
\end{theorem}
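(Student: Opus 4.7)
The first claim---that $\mathscr{P}(\config)$ is Polish---is immediate from \cref{t:config-locally-compact} (which says $\config$ is Polish) together with the standard fact, recorded in \cref{t:prohorov}\ref{i:prohorov-polish} applied to $E = \config$, that the space of probability measures on a Polish space endowed with the weak topology is itself Polish.

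Two of the three implications in the chain are straightforward, and I would dispatch them first. The implication [weak convergence $\Rightarrow$ convergence on $\mathscr{S}$] is trivial since $\mathscr{S} \subset \mathscr{C}_b(\config)$ by definition. For [convergence on $\mathscr{S} \Rightarrow$ convergence of Laplace transforms], note that for $h \in \mathscr{C}_0^+(X)$, the evaluation $\iota_h \colon \eta \mapsto \eta(h)$ is vaguely continuous on $\config$, so $\exp(-\iota_h)$ lies in $\mathscr{C}_b(\config) \cap \mathscr{G} = \mathscr{S}$, and $\Lambda_{\mu_n}(h) = \mu_n(\exp(-\iota_h))$.

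The non-trivial direction is [Laplace $\Rightarrow$ weak]. My plan is the classical tightness-plus-uniqueness argument. \emph{Tightness.} For $h \in \mathscr{C}_0^+(X)$ and $t,M > 0$, the Markov-type inequality
\begin{equation*}
\mu_n\set{\eta \in \config : \eta(h) \geq M} \leq \frac{1 - \Lambda_{\mu_n}(th)}{1 - \e^{-tM}}
\end{equation*}
combined with $\lim_{t \to 0^+}(1 - \Lambda_\mu(th)) = 0$ (dominated convergence) and the pointwise Laplace convergence $\Lambda_{\mu_n}(th) \to \Lambda_\mu(th)$ lets me choose parameters $t = t(\varepsilon,k)$ and $M = M(\varepsilon,k)$ controlling $\sup_n \mu_n\set{\eta(h) \geq M}$ by $\varepsilon 2^{-k}$. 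Applying this to a sequence $(h_k) \subset \mathscr{C}_0^+(X)$ with $h_k \geq 1_{B(o,k)}$ for an exhausting family of balls, and invoking the Prohorov characterisation \cref{t:prohorov}\ref{i:prohorov-compact} of compact subsets of $\mathscr{M}_0^+(X) \supset \config$, yields, for each $\varepsilon > 0$, a compact $K_\varepsilon \subset \config$ with $\sup_n \mu_n(\config \setminus K_\varepsilon) \leq \varepsilon$. \emph{Uniqueness.} Laplace transforms on $\mathscr{C}_0^+(X)$ determine a point process: by approximating piecewise constant $h = \sum_i c_i 1_{B_i}$ with disjoint $B_i \in \mathfrak{B}_0(X)$ through elements of $\mathscr{C}_0^+(X)$, one recovers the joint Laplace transforms of $(\eta(B_1), \dots, \eta(B_k))$, hence their finite-dimensional distributions, which determine the law on $\config$ by a monotone class argument. \emph{Conclusion.} Any weakly convergent subsequence $\mu_{n_k} \to \mu'$ satisfies $\Lambda_{\mu'} = \Lambda_\mu$ by the easy direction, hence $\mu' = \mu$ by uniqueness, and the subsequence principle gives weak convergence of the full sequence.

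The main obstacle is the tightness step: translating pointwise Laplace convergence into uniform control of the tail $\sup_n \mu_n\set{\eta(h_k) \geq M_k}$ requires a careful joint choice of the parameters $t_k$ and $M_k$ and the exhausting functions $h_k$, and then summing the countably many tail bounds. Once tightness is in hand, the remaining steps are essentially formal consequences of Prohorov's theorem and the measure-determining property of Laplace functionals.
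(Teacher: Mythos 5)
The paper does not prove this statement: it is imported verbatim as a citation to Kallenberg's \emph{Random Measures, Theory and Applications}, Thm.\ 4.11, so there is no ``paper proof'' to compare against. Your proposal is therefore an original reconstruction, and it is worth checking carefully.

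Your handling of Polishness and of the two easy implications is fine. The gap is in the tightness half of the direction [Laplace $\Rightarrow$ weak]. You build a putative compact $K_{\varepsilon} = \bigcap_{k} \set{\eta \in \config : \eta(h_{k}) \le M_{k}}$ out of Markov-type bounds on $\eta(h_{k})$, with $h_{k} \ge 1_{B(o,k)}$. But $\config$ carries the vague topology of $\mathscr{M}_{0}^{+}(X)$, and by \cref{t:prohorov}\cref{i:prohorov-compact} relative compactness there requires \emph{two} conditions: the uniform local mass bound \cref{e:prohorov-compact-uniform-bound}, which your set does satisfy, and the tightness-in-$X$ condition \cref{e:prohorov-compact-tight}, namely that for every bounded $B \subset X$ and $\delta > 0$ there is a compact $K \subset X$ with $\sup_{\eta \in K_{\varepsilon}} \eta(B\setminus K) \le \delta$ (for $\delta < 1$ and integer-valued $\eta$ this forces $\eta(B\setminus K) = 0$). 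Nothing in your bounds controls the second condition: when $X$ is merely complete separable metric, a bounded ball $B(o,k)$ need not be relatively compact, so uniform bounds on $\eta(B(o,k))$ do not prevent the support of the configurations from escaping along non-compact directions inside the ball. (Take $X$ an infinite-dimensional Hilbert space and $\eta_{j} = \delta_{e_{j}}$: all $\eta_{j}(h_{k})$ are uniformly bounded, yet $\set{\eta_{j}}$ has no vaguely convergent subsequence.) So the $K_{\varepsilon}$ you construct need not be compact in $\config$, and the tightness step, which you yourself flag as ``the main obstacle,'' is not actually closed by the sketch.

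To repair the argument you would need to establish, in addition, the analogue of Kallenberg's Thm.\ 4.10 condition $\lim_{K \in \mathfrak{K}(X)}\limsup_{n} \int \paren[\big]{\eta(B\setminus K)\wedge 1}\,\mu_{n}(\dd\eta) = 0$ for each bounded $B$. This \emph{can} be squeezed out of Laplace convergence by testing against functions $h \in \mathscr{C}_{0}^{+}(X)$ supported in the open set $B\setminus K$ and using that the limit $\mu$, being a probability measure on the Polish space $\config$, is itself tight — but this is precisely the delicate content that Kallenberg's proof provides and that your sketch currently skips. Everything else (the representation $\exp(-\iota_{h}) \in \mathscr{S}$, the measure-determining property of the Laplace functional via finite-dimensional distributions and a monotone class argument, the subsequence principle at the end) is sound.
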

In general, there exists a no countable set $\mathscr{D} \subset \mathscr{S}$ convergence-determining for the weak topology on $\mathscr{P}(\config)$.
We now provide a partial ansatz to this result.
For $\lambda \in \mathscr{M}_{0}^{+}(X)$, the class $\mathfrak{B}_{0}^{\lambda}(X)$ of \emph{continuity sets} for $\lambda$ consists of the sets $B \in \mathfrak{B}_{0}(X)$ such that $\lambda(\partial B) = 0$.
We then define
\begin{equation}\label{e:control-continuity-sets}
  \mathscr{P}^{\lambda}(\config) \coloneq \set*{ \mu \in \mathscr{P}(\config) : \text{ continuity sets for $\lambda$ are also continuity sets for $I_{\mu}$} }.
\end{equation}
In particular, $\mu \in \mathscr{P}^{\lambda}(\config)$ whenever $I_{\mu} \ll \lambda$.
\begin{lemma}\label{t:weak-convergence-countable}
  Take $\lambda \in \mathscr{M}^{+}_{0}(X)$.
  There exists a countable set $\mathscr{G}^{\lambda} \subset \mathscr{G}$ such that the trace topology of $\mathscr{P}(\config)$ on $\mathscr{P}^{\lambda}(\config)$ is induced by the topology of simple convergence on $\mathscr{G}^{\lambda}$, namely it is induced by the seminorms
  \begin{equation}\label{e:weak-convergence-countable-seminorms}
    \mu \mapsto \abs{\mu(F)}, \qquad F \in \mathscr{G}^{\lambda}.
  \end{equation}
\end{lemma}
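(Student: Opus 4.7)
The plan is to construct $\mathscr{G}^\lambda$ as the countable $\mathbb{Q}$-linear span of exponentials $\e^{-\iota_h}$ for $h$ ranging over a countable family of simple functions supported on a Boolean algebra of $\lambda$-continuity sets, and then verify separately that (i) each corresponding seminorm is continuous on $\mathscr{P}^\lambda(\config)$ and (ii) these seminorms jointly determine weak convergence.

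\textbf{Construction.} Fix a dense sequence $(x_i) \subset X$. For each $i$, the non-decreasing function $r \mapsto \lambda(B(x_i, r))$ has at most countably many jumps, so $R_i \coloneq \set{r \in \mathbb{Q}^+ : \lambda(\partial B(x_i, r)) = 0}$ is dense in $(0, \infty)$. Let $\mathcal{A}$ be the Boolean algebra generated by $\set{B(x_i, r) : i \in \nat,\, r \in R_i}$. Since the boundary of a finite union, intersection, or difference is contained in the union of the individual boundaries, every $A \in \mathcal{A}$ is a bounded $\lambda$-continuity set, and $\mathcal{A}$ is countable. Let $\mathscr{H}^\lambda$ be the countable family of $h = \sum_{i=1}^n c_i 1_{A_i}$ with $c_i \in \mathbb{Q}^+$, $A_i \in \mathcal{A}$, $n \in \nat$, and let $\mathscr{G}^\lambda$ be the countable $\mathbb{Q}$-linear span of $\set{\e^{-\iota_h} : h \in \mathscr{H}^\lambda} \subset \mathscr{G}$.

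\textbf{Continuity of the seminorms.} For $F = \e^{-\iota_h} \in \mathscr{G}^\lambda$ with $h = \sum c_i 1_{A_i}$, the set of discontinuities of $F$ on $\config$ is contained in $\set{\eta : \eta(\bigcup_i \partial A_i) > 0}$. For every $\mu \in \mathscr{P}^\lambda(\config)$, Markov's inequality and the definition of $\mathscr{P}^\lambda(\config)$ give $\mu(\set{\eta(\partial A_i) > 0}) \leq I_\mu(\partial A_i) = 0$, so $F$ is $\mu$-a.e.\ continuous. The Portmanteau theorem then ensures that each map $\mu \mapsto \mu(F)$ is continuous on $\mathscr{P}^\lambda(\config)$ equipped with the trace of the weak topology of $\mathscr{P}(\config)$; hence the seminorms in \cref{e:weak-convergence-countable-seminorms} are continuous.

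\textbf{Sufficiency.} Conversely, assume $(\mu_n),\, \mu \in \mathscr{P}^\lambda(\config)$ with $\mu_n(F) \to \mu(F)$ for every $F \in \mathscr{G}^\lambda$. By \cref{t:weak-convergence-P-upsilon}, it is enough to show $\Lambda_{\mu_n}(h) \to \Lambda_\mu(h)$ for every $h \in \mathscr{C}_0^+(X)$. Fix such an $h$ with compact support $K$ and choose $B \in \mathcal{A}$ with $K \subset B$. Since $\e^{-\iota_{\alpha 1_B}} \in \mathscr{G}^\lambda$ for $\alpha \in \mathbb{Q}^+$, the hypothesis and the elementary inequality
\begin{equation*}
  \mu_n(\eta(B) \geq N) \leq \frac{1 - \Lambda_{\mu_n}(\alpha 1_B)}{1 - \e^{-\alpha N}}
\end{equation*}
combined with $\Lambda_\mu(\alpha 1_B) \uparrow 1$ as $\alpha \downarrow 0$ (monotone convergence) yield a uniform tail estimate: for every $\varepsilon > 0$ one can pick $N$ with $\sup_n \mu_n(\eta(B) > N) < \varepsilon$. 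Using uniform continuity of $h$ on $B$, partition $B$ into finitely many elements of $\mathcal{A}$ of small diameter to produce $\tilde{h} \in \mathscr{H}^\lambda$ supported in $B$ with $\norm{h - \tilde{h}}_\infty < \varepsilon/N$. Splitting the integral $|\Lambda_{\mu_n}(h) - \Lambda_{\mu_n}(\tilde{h})|$ over $\set{\eta(B) \leq N}$ and its complement and using $|\e^{-s} - \e^{-t}| \leq |s - t|$ yields a bound of order $\varepsilon$ uniform in $n$; since $\Lambda_{\mu_n}(\tilde{h}) \to \Lambda_\mu(\tilde{h})$ by assumption, the claim follows.

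\textbf{Main obstacle.} The delicate point is that $I_{\mu_n}(B)$ may be infinite, so one cannot naively bound $|\Lambda_{\mu_n}(h) - \Lambda_{\mu_n}(\tilde{h})|$ by $I_{\mu_n}(B)\norm{h - \tilde{h}}_\infty$. Extracting a uniform tail estimate for $\eta(B)$ purely from Laplace convergence on the countable family $\mathscr{G}^\lambda$, and combining it with a truncation argument, is what makes the approximation step work.
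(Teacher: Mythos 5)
Your proof has a genuine gap in the sufficiency step. You write that $h \in \mathscr{C}_0^+(X)$ has \emph{compact} support $K$; but by the paper's definitions, $\mathscr{C}_0(X)$ consists of bounded continuous functions vanishing outside a \emph{bounded} set, and in a general complete and separable metric space a closed bounded set need not be compact. This defeats the approximation step in two ways: first, a continuous function on a closed bounded but non-compact set need not be uniformly continuous, so ``uniform continuity of $h$ on $B$'' cannot be assumed; second, partitioning $B$ into finitely many elements of $\mathcal{A}$ of small diameter requires $B$ to be totally bounded, which again fails without compactness. Consequently the construction of $\tilde h$ with $\|h - \tilde h\|_\infty < \varepsilon/N$ is not available, and the step from Laplace convergence on the countable family $\mathscr{G}^\lambda$ to Laplace convergence on all of $\mathscr{C}_0^+(X)$ breaks down precisely in the setting the paper works in (e.g.\ $X$ an infinite-dimensional Banach space). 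The earlier parts of your argument are sound: the construction of the Boolean algebra $\mathcal{A}$, the Portmanteau argument for continuity of the seminorms, and the uniform tail estimate on $\mu_n(\eta(B) \geq N)$ via $\Lambda_{\mu_n}(\alpha 1_B)$ are all correct, and the last is a nice observation that would indeed be essential if an approximation argument were available.

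The paper sidesteps this entirely by invoking the Kallenberg machinery for point processes: Lem.~1.9 of \cite{Kallenberg} produces a countable \emph{dissecting ring} $\mathfrak{I}^\lambda$ of $\lambda$-continuity sets, and Thm.~4.11~(iii) of \cite{Kallenberg} asserts that Laplace-functional convergence over simple functions on a dissecting ring of $I_\mu$-continuity sets already characterises weak convergence of point processes. That theorem exploits the special structure of point measures (local finiteness and integer values), which is exactly what is missing from a pure uniform-approximation argument. So the paper's route is not merely a citation convenience: the underlying result is a theorem about $\config$ that one cannot reprove by approximating $h$ in sup-norm by simple functions unless $X$ is proper.
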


\begin{remark}
  We could also use \cref{t:prohorov} \cref{i:prohorov-polish} to find a countable subset of $\mathscr{C}_{b}(\config)$ to construct the seminorms.
  However, we cannot use $\mathscr{C}_{b}(\config)$ in the definition of the continuity equation \cref{e:ce} below.
\end{remark}

\begin{proof}
By \cite[Lem.\ 1.9 (v)]{Kallenberg}, $\mathfrak{B}_{0}^{\lambda}(X)$ is a \emph{dissecting ring} in the sense of \cite[p.\ 24]{Kallenberg}.
By \cite[Lem.\ 1.9 (i)]{Kallenberg}, there exists a countable dissecting ring $\mathfrak{I}^{\lambda} \subset \mathfrak{B}_{0}^{\lambda}(X)$.
Let $\mathcal{I}^{\lambda}$ be the set of simple, $\mathfrak{I}^{\lambda}$-measurable, $\mathbb{Q} \cap [0,1]$-valued functions on $X$.
In a more prosaic way, $\mathscr{I}^{\lambda}$ is the set of functions $h$ of the form
\begin{equation*}
  h = \sum_{i=1}^{l} q_{i} 1_{B_{i}}, \qquad l \in \mathbb{N},\, (q_{i}) \subset \mathbb{Q} \cap [0,1],\, (B_{i}) \subset \mathfrak{I}^{\lambda}.
\end{equation*}
Then $\mathscr{I}^{\lambda}$ is countable and we define:
\begin{equation*}
  \mathscr{G}^{\lambda} \coloneq \set{ \e^{-\iota_{h}} : h \in  \mathscr{I}^{\lambda} }.
\end{equation*}

Let us verify that $\mathscr{G}^{\lambda}$ is an appropriate choice for the claim.
Let $(\mu_{n}) \subset \mathscr{P}^{\lambda}(\config)$ and $\mu \in \mathscr{P}^{\lambda}(\config)$.
As a subset of the Polish space $\mathscr{P}(\config)$, the space $\mathscr{P}^{\lambda}(\config)$ is metrizable, and in particular, second-countable.
It is thus sufficient to verify that convergence of $(\mu_{n})$ with respect to the family of seminorms \cref{e:weak-convergence-countable-seminorms} is equivalent to weak convergence.
By construction, $\mathfrak{I}^{\lambda}$ is a dissecting ring consisting of continuity sets of $I_{\mu}$.
If $\mu_{n} \to \mu$ in $\mathscr{P}(\config)$, we get $\mu_{n}(F) \to \mu(F)$ for all $F \in \mathscr{G}^{\lambda}$, by \cite[Thm.~4.11 (iii)]{Kallenberg}.
Conversely, assume that $\mu_{n}(F) \to \mu(F)$ for all $F \in \mathscr{G}^{\lambda}$.
Then the same holds for all $F$ in the closure $\overline{\mathscr{G}^{\lambda}}$ of $\mathscr{G}^{\lambda}$ with respect to the uniform topology.
For $B \in \mathfrak{I}^{\lambda}$, $q \in \mathbb{Q} \cap [0,1]$, and $r \in [0,1]$, we have that
\begin{equation*}
\sup_{\eta \in \config} \abs[\big]{\e^{-q \eta(B)} - \e^{-r \eta(B)}} = \sup_{n \in \mathbb{N}} \abs[\big]{\e^{-qn} - \e^{-rn}} \xrightarrow[q \to r]{} 0.
\end{equation*}
Together with the triangle inequality, this shows that $\overline{\mathscr{G}^{\lambda}}$ contains functions of the form $\e^{-\iota_{h}}$ for $h$ a simple, $\mathfrak{I}^{\lambda}$-measurable, $[0,1]$-valued function on $\config$.
By \cref{t:weak-convergence-P-upsilon}, $\mu_{n} \to \mu$ in $\mathscr{P}(\config)$.
\end{proof}

\subsection{Locally integrable point processes}\label{s:locally-integrable}
Without further assumptions, $I_{\mu}$ is merely a non-negative measure on $X$, not necessarily finite on bounded sets.
This motivates the following definition.
We consider the set $\mathscr{C}_{b,0}(\config \times X)$ of continuous and bounded functions on $\config \times X$ that vanish outside of a set of the form $\config \times B$ for some $B \in \mathfrak{B}_{0}(X)$.
As for $\mathscr{C}_{0}(X)$ or $\mathscr{F}_{0}(X)$, the space $\mathscr{C}_{b,0}(\config \times X)$ can be endowed with an inductive limit topology.
More precisely, it is the strict inductive limit of the Banach spaces $\mathscr{C}_{b,\config \times B}(\config \times X)$ of continuous and bounded functions on $\config \times X$ vanishing outside of $\config \times B$ for some closed bounded set $B \subset X$.
Similarly to the vague topology, we consider the set $\mathscr{M}_{b,0}(\config \times X)$ of signed Borel measures $\nu$ on $\config \times X$ such that $\nu(\config \times B) < \infty$ for all $B \in \mathfrak{B}_{0}(X)$.
We equip it with the locally convex topology induced by the seminorms
\begin{equation*}
  \nu \mapsto \abs{\nu(F)}, \qquad F \in \mathscr{C}_{b,0}(\config \times X).
\end{equation*}
\cref{t:prohorov} also works for $\mathscr{M}_{b,0}(\config \times X)$, when we take for \enquote{bounded sets} the sets of the form $\config \times B$ for some $B \in \mathfrak{B}_{0}(X)$.
To see this we can consider the complete and separable metric space $E = \config \times X$ endowed with a distance of the form $d' \oplus d$ where $d'$ is any \emph{bounded} distance on $\config$ that is complete and induces the topology of $\config$.
Then, a set is bounded if $E$ if and only if it is contained in $\config \times B$ for some $B$ bounded in $X$, and the topology of $\mathscr{M}_{b,0}(\config \times X)$ we defined is the vague topology of $\mathscr{M}_{0}(E)$.

\begin{definition}
  We say that a point process $\mu$ is \emph{locally integrable} if $I_{\mu} \in \mathscr{M}_{0}(X)$.
  We write $\mathscr{P}_{1}(\config)$ for the set of all locally integrable point processes.
\end{definition}
We now equip $\mathscr{P}_{1}(\config)$ with a suitable topology.
We say that $F \in \mathscr{F}(\config)$ has \emph{sublinear growth}, provided there exists $c > 0$ and $h \in \mathscr{C}_{0}(X)$ such that:
\begin{equation*}
  \abs{F(\eta)} \leq c (1 + \eta(h)), \qquad \eta \in \config.
\end{equation*}
We write $\mathscr{C}_{1}(\config)$ for the set of continuous functions with sublinear growth.

\begin{remark}
  We always have $\mathscr{C}_{b}(\config) \subset \mathscr{C}_{1}(\config)$ with a strict inclusion, since for all $h \in \mathscr{C}_{0}(X) \setminus \{0\}$, $\iota_{h} \in \mathscr{C}_{1}(\config) \setminus \mathscr{C}_{b}(\config)$.
  This is true even when $X = \{*\}$ is the one-point space.
\end{remark}

\begin{definition}\label{d:P1}
  We equip $\mathscr{P}_{1}(\config)$ with the initial topology associated with the mappings $\iota_{F}$, $F \in \mathscr{C}_{1}(\config)$.
  In other words, it is the locally convex topology defined by the family of semi-norms
  \begin{equation*}
    \mu \mapsto \abs{\mu(F)}, \qquad F \in \mathscr{C}_{1}(\config).
  \end{equation*}
\end{definition}

We now establish that the space $\mathscr{P}_{1}(\config)$ with the above topology is Polish.
A central tool in proving so is the following property of the Campbell map.
\begin{theorem}\label{t:C-homeomorphism}
  The map $C \colon \mathscr{P}_{1}(\config) \to \mathscr{M}^{+}_{b,0}(\config \times X), \mu \mapsto C_{\mu}$ is a homeomorphism onto its image.
\end{theorem}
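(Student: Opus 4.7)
The plan is to establish the three properties of a homeomorphism onto the image: continuity of $C$, injectivity, and continuity of $C^{-1}$ on the image. Well-definedness is immediate from $C_\mu(\config\times B) = I_\mu(B) < \infty$ for $\mu \in \mathscr{P}_1(\config)$ and $B \in \mathfrak{B}_0(X)$, and metrizability of all ambient spaces reduces the task to sequential arguments.

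For continuity of $C$, I would show that for every $F\in\mathscr{C}_{b,0}(\config\times X)$ the function $G_F\colon\eta\mapsto\int F(\eta-\delta_x,x)\,\eta(\dd x)$ belongs to $\mathscr{C}_1(\config)$, whence $C_\mu(F)=\mu(G_F)$ depends continuously on $\mu$. Sublinear growth is obtained by choosing, via Urysohn's lemma, some $h\in\mathscr{C}_0^+(X)$ with $h\geq 1_B$ for a closed bounded $B$ containing the $X$-support of $F$, yielding $|G_F(\eta)|\leq\|F\|_\infty\,\eta(h)$. Continuity of $G_F$ at a fixed $\eta\in\config$ follows after enlarging $B$ to a closed bounded $B'\supset B$ with $\eta(\partial B')=0$: the vague convergence $\eta_n\to\eta$ then forces the atoms of $\eta_n$ inside $B'$ to converge one-to-one to those of $\eta$, and the continuity of $F$ applied to a finite sum gives $G_F(\eta_n)\to G_F(\eta)$.

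For injectivity and the continuity of $C^{-1}$, I would derive a Laplace inversion formula. For $h\in\mathscr{C}_0^+(X)$ and $t\in[0,1]$, set $F_t(\eta,x)\coloneq\e^{-(\eta+\delta_x)(th)}h(x)\in\mathscr{C}_{b,0}(\config\times X)$. Substitution in the definition of $C_\mu$ yields $C_\mu(F_t) = \int\e^{-\eta(th)}\,\eta(h)\,\mu(\dd\eta) = -\tfrac{\dd}{\dd t}\Lambda_\mu(th)$, and integrating over $t\in[0,1]$ gives
\begin{equation*}
\Lambda_\mu(h) = 1 - \int_0^1 C_\mu(F_t)\,\dd t.
\end{equation*}
Thus $C_\mu$ determines $\Lambda_\mu$, hence $\mu$, by \cref{t:weak-convergence-P-upsilon}. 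Given $C_{\mu_n}\to C_\mu$ in $\mathscr{M}^+_{b,0}(\config\times X)$, one has $C_{\mu_n}(F_t)\to C_\mu(F_t)$ for each fixed $t\in[0,1]$, while $|C_{\mu_n}(F_t)|\leq I_{\mu_n}(h)=C_{\mu_n}(1\otimes h)\to I_\mu(h)$ furnishes a uniform bound in $(n,t)$. Dominated convergence in $t$ then delivers $\Lambda_{\mu_n}(h)\to\Lambda_\mu(h)$ for every $h\in\mathscr{C}_0^+(X)$, whence $\mu_n\to\mu$ weakly in $\mathscr{P}(\config)$ by \cref{t:weak-convergence-P-upsilon}.

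To upgrade this to convergence in $\mathscr{P}_1(\config)$, I would verify $\mu_n(F)\to\mu(F)$ for every $F\in\mathscr{C}_1(\config)$. Fix such $F$ with $|F|\leq c(1+\iota_h)$, $h\in\mathscr{C}_0^+(X)$, and truncate $F_N\coloneq(-cN)\vee F\wedge(cN)\in\mathscr{C}_b(\config)$. The $\mathscr{P}(\config)$-convergence gives $\mu_n(F_N)\to\mu(F_N)$, and $\mu(F_N)\to\mu(F)$ by dominated convergence since $\iota_h\in L^1(\mu)$. The remainder obeys $|F-F_N|\leq c(1+\iota_h)1_{\iota_h > N-1}\leq 4c(\iota_h-(N-1)/2)^+$ for $N\geq 2$. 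Writing $(\iota_h-(N-1)/2)^+=\iota_h-\iota_h\wedge(N-1)/2$, weak convergence applied to $\iota_h\wedge(N-1)/2\in\mathscr{C}_b(\config)$ together with $\mu_n(\iota_h) = C_{\mu_n}(1\otimes h)\to\mu(\iota_h)$ yields $\mu_n((\iota_h-(N-1)/2)^+)\to\mu((\iota_h-(N-1)/2)^+)$, which vanishes as $N\to\infty$. The main obstacle lies in the first part: continuity of $G_F$ at every $\eta\in\config$ requires the pointwise enlargement $B\to B'$, since the boundary of the $X$-support of $F$ may otherwise carry $\eta$-mass, so there is no uniform choice of $B'$ valid for every $\eta$ simultaneously.
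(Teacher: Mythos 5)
Your proposal is correct and follows the same overall outline as the paper (continuity of $C$ via the pushforward $C_\mu(F)=\mu(G_F)$, continuity of $C^{-1}$ via the Laplace-transform differentiation identity, then a uniform-integrability upgrade from weak to $\mathscr{P}_1(\config)$-convergence), but several steps are executed differently. You derive injectivity as a corollary of the Laplace inversion formula $\Lambda_\mu(h)=1-\int_0^1 C_\mu(F_t)\,\dd t$, whereas the paper gives a separate ad hoc argument using the test function $u(\eta,x)=\tfrac{1}{\eta(B)+1}1_B(x)1_A(\eta+\delta_x)$; your route is cleaner since it unifies injectivity with the inversion used later. For the continuity of $G_F$ (the paper's \cref{t:f-from-u-continuous}), you argue pointwise by choosing a bounded closed $B'\supset B$ with $\eta(\partial B')=0$, using the standard fact that under vague convergence the atoms of $\eta_n$ inside $B'$ stabilise in number and converge to those of $\eta$; the paper instead invokes a dominated-convergence bound whose first term $c\,\abs{\int h\,\dd(\eta_n-\eta)}$ does not obviously dominate $\int u(\eta_n-\delta_x,x)\,\dd(\eta_n-\eta)$, so your atom-convergence argument is arguably more careful here (though it must be handled with attention to multiplicities, which the standard theorem on vague convergence of integer-valued measures does provide). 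Finally, to upgrade weak convergence to $\mathscr{P}_1(\config)$-convergence, the paper appeals to the uniform-integrability criterion of \cite[Lem.\ 5.11]{KallenbergFMP}, while you give a self-contained truncation argument exploiting $\mu_n(\iota_h)=C_{\mu_n}(1\otimes h)\to C_\mu(1\otimes h)$ and the identity $(\iota_h-a)^+=\iota_h-\iota_h\wedge a$; both work, and yours avoids the external reference at the cost of an explicit $\varepsilon$-management.
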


\begin{proof}
 We write $\mathscr{I}$ for the image of $C$.
  For all $\mu \in \mathscr{P}_{1}(\config)$ and $B \in \mathfrak{B}_{0}(X)$, $C_{\mu}(\config \times B) = I_{\mu}(B) < \infty$.
  Moreover, $C_{\mu}$ is always non-negative.
  Thus, $\mathscr{I} \subset \mathscr{M}^{+}_{b,0}(\config \times X)$, and the assertion is well-posed.
  In the rest of the proof, we write $\config^{*} \coloneq \config \setminus \set{\varnothing}$, where $\varnothing$ is the empty configuration.
  By \cite[IX, p.57, Prop. 1]{BourbakiTopo2}, the open set $\config^{*}$ is also Polish.
  \paragraph{$C$ is into.}
  Let $\mu$ and $\mu' \in \mathscr{P}_{1}(\config)$ such that $C_{\mu} = C_{\mu'}$.
  Let $A \in \mathfrak{B}(\config)$, $B \in \mathfrak{B}_{0}(X)$, and
  \begin{equation*}
    u(\eta, x) = \frac{1}{\eta(B) + 1}\, 1_{B}(x)\, 1_{A}(\eta + \delta_{x}) \qquad \eta \in \config,\, x \in X.
  \end{equation*}
  Then, we have that
  \begin{equation*}
    C_{\mu}(u) = \iint_{B} u(\eta - \delta_{x}, x) \eta(\dd x) \mu(\dd \eta) = \iint 1_{A}(\eta) \frac{1_{B}(x)}{\eta(B)} \eta(\dd x) \mu(\dd \eta) = \mu(A \cap \{\eta(B) > 0 \}).
  \end{equation*}
  Letting $B \nearrow X$ we get that $\mu(A) = \mu'(A)$ for all $A \in \mathfrak{B}(\config^{*})$ by monotone convergence.
  Thus~$\mu$ and~$\mu'$ coincide as measures on $\config^{*}$ but since they are probability measures on $\config$, we have that
  \begin{equation*}
    \mu(\varnothing) = 1 -\mu(\config^{*}) = \mu'(\varnothing).
  \end{equation*}
  Thus $\mu = \mu'$ on $\mathscr{P}_{1}(\config)$.
  \paragraph{$C$ is continuous.}
  Take $\mu_{o} \in \mathscr{P}_{1}(\config)$ and $l \in \mathbb{N}$.
  For $i=1,\dots,l$, let $u_{i} \in \mathscr{C}_{b,0}(\config \times X)$, and $\varepsilon_{i} > 0$.
  We set
  \begin{equation*}
  V \coloneq \bigcap_{i=1}^{l} \ \set[\Big]{ C_{\mu} : \mu \in \mathscr{P}_{1}(\config),\,  \abs{(C_{\mu} - C_{\mu_{o}})(u_{i})} \leq \varepsilon_{i}}.
  \end{equation*}
  The set $V$ is a neighbourhood in $\mathscr{I}$ of $C_{\mu_{o}}$ and the class of all sets $V$ of this form is a fundamental system of neighbourhoods of $C_{\mu_{o}}$ (for instance, \cite[II, pp. 2-4]{BourbakiIntegration}).
  Thus it suffices to show that $C^{-1}(V)$ is a neighbourhood of $\mu_{o}$.
  Now, since $C$ is into, we have that
  \begin{equation*}
  C^{-1}(V) = \bigcap_{i=1}^{l} \ \set[\Big]{ \mu \in \mathscr{P}_{1}(\config) : \abs{(C_{\mu} - C_{\mu_{o}})(u_{i})} \leq \varepsilon_{i} }.
  \end{equation*}
  Let $i = 1,\dots, l$.
  We set, for $\eta \in \config$, $F_{i}(\eta) \coloneq \int u_{i}(\eta - \delta_{x}, x) \eta(\dd x)$.
  By \cref{t:f-from-u-continuous} below, $F_{i} \in \mathscr{C}_{1}(\config)$.
  Moreover,
  \begin{equation*}
    (C_{\mu} - C_{\mu_{o}})(u_{i}) = \iint u_{i}(\eta-\delta_{x}, x) \eta(\dd x)(\mu - \mu_{o})(\dd \eta) = (\mu - \mu_{o})(F_{i}).
  \end{equation*}
  Thus, we get
  \begin{equation*}
  C^{-1}(V) = \bigcap_{i=1}^{l} \ \set[\Big]{ \mu \in \mathscr{P}_{1}(\config) : \abs{(\mu - \mu_{o})(F_{i})} \leq \varepsilon_{i}},
  \end{equation*}
  which, by definition of the topology on $\mathscr{P}_{1}(\config)$, is a neighbourhood of $\mu_{o}$ in $\mathscr{P}_{1}(\config)$.
  \paragraph{$C^{-1}$ is continuous.}
  Since $\mathscr{M}^{+}_{b,0}(\config \times X)$ is Polish, it is sufficient to show that $C^{-1}$ is sequentially continuous.
  Thus, let us consider $(\mu_{n}) \subset \mathscr{P}_{1}(\config)$ and $\mu \in \mathscr{P}_{1}(\config)$ such that $C_{\mu_{n}} \to C_{\mu}$ in $\mathscr{M}_{b,0}(\config \times X)$.
  Take $h \in \mathscr{C}_{0}(X)$ with $h \geq 0$.
  We have the following bound:
  \begin{equation*}
    \abs*{ \e^{-s\eta(h)} - \e^{-t\eta(h)} } \leq \abs{s-t} \abs{\eta(h)}, \qquad s,\,t \in (0,1),\, \eta \in \config.
  \end{equation*}
  Since $\mu \in \mathscr{P}_{1}(\config)$ and $h \in \mathscr{C}_{0}(X)$, we have that $\iota_{h} \in L^{1}(\mu)$.
  Since, $\mu \in \mathscr{P}_{1}(\config)$, by Lebesgue dominated convergence theorem, we find that the map $(0,1) \ni t \mapsto \Lambda_{\mu}(th)$ is differentiable with derivative given by:
  \begin{equation*}
    \frac{\dd}{\dd t} \Lambda_{\mu}(th) = C_{\mu}(u_{t}),
  \end{equation*}
  where
  \begin{equation*}
    u_{t}(\eta, x) \coloneq - h(x) \e^{-th(x)} \exp\left(-t \int h \dd \eta\right), \qquad \eta \in \config,\, x \in X,\, t \in (0,1).
  \end{equation*}
  Now, observe that,
  \begin{equation*}
    \abs{ C_{\mu_{n}}(u_{t}) } = \int \eta(h) \e^{-t\eta(h)} \mu_{n}(\dd \eta) \leq I_{\mu_{n}}(h) = C_{\mu_{n}}(1 \otimes h).
  \end{equation*}
  The right-hand side is uniformly bounded with respect to $n \in \mathbb{N}$ since, by assumption $(C_{\mu_{n}})$ converges in $\mathscr{M}_{b,0}(\config \times X)$ and $1 \otimes h \in \mathscr{C}_{b,0}(\config \times X)$.
  Thus, by dominated convergence:
  \begin{equation*}
    \Lambda_{\mu_{n}}(h) = 1 + \int_{0}^{1} C_{\mu_{n}}(u_{t}) \dd t \xrightarrow[n \to \infty]{} 1 + \int_{0}^{1} C_{\mu}(u_{t}) \dd t = \Lambda_{\mu}(h).
  \end{equation*}
  Thus $(\mu_{n})$ converges weakly to $\mu$ (\cref{t:weak-convergence-P-upsilon}).
  Take $F \in \mathscr{C}_{1}(\config)$.
  By definition, we can find $h \in \mathscr{C}_{0}(X)$ such that $\abs{F} \leq c (\iota_{h} + 1)$ for some $c > 0$.
  Now the convergence of the Campbell measures implies the convergence of the intensity measures in $\mathscr{M}_{0}(X)$.
  Thus, by \cite[Lem.\ 5.11]{KallenbergFMP}, we get that $\iota_{h}$ is uniformly integrable with respect to $(\mu_{n})$.
  So that $F$ is also uniformly integrable with respect to $(\mu_{n})$.
  By the continuous mapping theorem, $F_{\sharp} \mu_{n} \to F_{\sharp} \mu$ in distribution, together with uniform integrability, this gives by \cite[Lem.\ 5.11]{KallenbergFMP}, that $\mu_{n}(F) \to \mu(F)$.
  This shows that $\mu_{n} \to \mu$ in $\mathscr{P}_{1}(\config)$.
\end{proof}

Now we turn to the proof of Polishness.
\begin{theorem}\label{t:p1-polish}
  The space $\mathscr{P}_{1}(\config)$ is Polish.
\end{theorem}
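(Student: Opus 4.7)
The plan is to exploit the Campbell homeomorphism of \cref{t:C-homeomorphism}. Since $C\colon\mathscr{P}_1(\config)\to\mathscr{M}^{+}_{b,0}(\config\times X)$ is a homeomorphism onto its image $\mathscr{J}$, and the ambient space $\mathscr{M}^{+}_{b,0}(\config\times X)$ is Polish (by \cref{t:prohorov} applied to the product space $\config\times X$ endowed with a bounded complete metric on $\config$, as discussed in the preceding paragraphs), it is enough to prove that $\mathscr{J}$ is closed. Then $\mathscr{P}_1(\config)$ will be Polish as a closed subspace of a Polish space.

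Take $(\mu_n)\subset\mathscr{P}_1(\config)$ with $C_{\mu_n}\to\nu$ in $\mathscr{M}^{+}_{b,0}(\config\times X)$; I aim to produce $\mu\in\mathscr{P}_1(\config)$ with $C_\mu=\nu$. The Chebyshev bound $\mu_n(\{\eta:\eta(B)>M\})\leq I_{\mu_n}(B)/M$, combined with uniform upper bounds on $I_{\mu_n}(B) = C_{\mu_n}(\config\times B)$ obtained by dominating $1_{\config\times B}$ by a function in $\mathscr{C}_{b,0}(\config\times X)$, gives tightness of $(\mu_n)$ in $\mathscr{P}(\config)$; so $\mu_n\to\mu$ weakly for some $\mu\in\mathscr{P}(\config)$ along a subsequence. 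I would then pass to the limit in the identity $\Lambda_{\mu_n}(h) = 1 + \int_0^1 C_{\mu_n}(u_t)\,\dd t$ derived in the proof of \cref{t:C-homeomorphism}, with $u_t(\eta,x) = -h(x)\e^{-th(x)}\e^{-t\eta(h)}\in\mathscr{C}_{b,0}(\config\times X)$, to obtain $\Lambda_\mu(h) = 1 + \int_0^1\nu(u_t)\,\dd t$ for every $h\in\mathscr{C}_0^{+}(X)$. Rescaling $h$ to $th$ and differentiating at $t=0^{+}$ (via monotone convergence on $(1-\e^{-t\iota_h})/t\nearrow\iota_h$) then recovers $I_\mu(h) = \nu(1\otimes h)<\infty$, whence $\mu\in\mathscr{P}_1(\config)$.

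It remains to upgrade $\mu_n\to\mu$ from weak convergence to convergence in $\mathscr{P}_1(\config)$. From $\mu_n\to\mu$ weakly together with $\mu_n(\iota_h) = C_{\mu_n}(1\otimes h)\to\nu(1\otimes h) = \mu(\iota_h)$, a Vitali-type argument (the bounded-continuous truncations $\iota_h\wedge K$ provide convergence of truncated means, which combined with convergence of the full means forces $\sup_n\mu_n((\iota_h-K)^{+})\to 0$ as $K\to\infty$) delivers uniform integrability of $\iota_h$ with respect to $(\mu_n)$. Since every $F\in\mathscr{C}_1(\config)$ is dominated by some $c(1+\iota_h)$, this propagates to uniform integrability of $F$, whence $\mu_n(F)\to\mu(F)$. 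Therefore $\mu_n\to\mu$ in $\mathscr{P}_1(\config)$, and continuity of $C$ then forces $\nu = C_\mu\in\mathscr{J}$. Injectivity of $C$ makes the limit $\mu$ unique, so the original full sequence converges.

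The main subtlety is this last upgrade: tightness in $\mathscr{P}(\config)$ alone provides only weak convergence, whereas the $\mathscr{P}_1$-topology involves the unbounded test functions $\iota_h\in\mathscr{C}_1(\config)$. The convergence of the Campbell measures against the specific test functions $1\otimes h$ is precisely what allows the Vitali-type argument to close the gap between weak and $\mathscr{P}_1$-convergence.
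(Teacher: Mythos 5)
Your overall strategy coincides with the paper's: reduce the claim to closedness of the image of $C$ in $\mathscr{M}^{+}_{b,0}(\config\times X)$, extract a weakly convergent subsequence $\mu_n\to\mu$, show $\mu\in\mathscr{P}_1(\config)$, and then upgrade to $\mathscr{P}_1$-convergence via uniform integrability of $\iota_h$. Your Laplace-transform differentiation (to obtain $I_\mu(h)=\nu(1\otimes h)<\infty$) and the Vitali-type truncation argument are valid, somewhat more elementary alternatives to the paper's appeals to \cite[Lem.~5.11]{KallenbergFMP} and the Portmanteau theorem.

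There is, however, a genuine gap in the tightness step. You assert that the Chebyshev bound $\mu_n(\eta(B)>M)\le I_{\mu_n}(B)/M$ together with $\sup_n I_{\mu_n}(B)<\infty$ suffices for tightness of $(\mu_n)$ in $\mathscr{P}(\config)$. This is false for a general complete and separable $X$: a uniform bound on counts over a bounded set $B$ controls the number of points but not where they sit inside $B$. Relative compactness in $\mathscr{P}(\config)$ also requires a \emph{spatial} tightness condition, namely that for each bounded $B$ and $\varepsilon>0$ there is a compact $K\subset X$ with $\sup_n\mu_n\paren{\eta(B\setminus K)\ge 1}\le\varepsilon$; this is the second condition in \cite[Thm.~4.10]{Kallenberg}. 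When $X$ is $\sigma$-compact this is automatic because bounded sets are relatively compact, but for, say, an infinite-dimensional base space it is not. The missing ingredient does follow from what you already have, but you must use more of the hypothesis $C_{\mu_n}\to\nu$ than the uniform mass bound: pushing $C_{\mu_n}$ forward along the projection $\config\times X\to X$ shows that $(I_{\mu_n})$ converges in $\mathscr{M}_0^{+}(X)$, hence is relatively compact there, so \cref{t:prohorov}~\cref{i:prohorov-compact} gives \cref{e:prohorov-compact-tight}; combined with $\mu_n\paren{\eta(B\setminus K)\ge 1}\le I_{\mu_n}(B\setminus K)$ this supplies exactly the spatial tightness you omitted. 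With this repair the rest of your proof goes through.
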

\begin{proof}
  It is sufficient to show that $\mathscr{P}_{1}(\config)$ is homeomorphic to a Polish space (for instance, \cite[IX, p.\ 58, Cor.\ 2]{BourbakiTopo2}).
  Thus, in view of the previous theorem and \cite[IX, p.\ 57, Prop.\ 1]{BourbakiTopo2}, it suffices to show that the image $\mathscr{I}$ of $C$ in $\mathscr{M}^{+}_{b,0}(\config \times X)$ is closed.
  Take $(\mu_{n}) \subset \mathscr{P}_{1}(\config)$ such that $C_{\mu_{n}} \to \sigma \in \mathscr{M}^{+}_{b,0}(\config \times X)$.
  By continuity of the projection $\config \times X \to X$, the sequence $(I_{\mu_{n}})$ also converges to some measure in $\mathscr{M}^{+}_{0}(X)$.
  This yields that $\Delta \coloneq (I_{\mu_{n}})$ is relatively compact, so that by \cref{t:prohorov} it satisfies \cref{e:prohorov-compact}.
  Precisely, take $B \in \mathfrak{B}_{0}(X)$, by \cref{e:prohorov-compact-tight}, we have that
  \begin{equation*}
  \inf_{K \in \mathfrak{K}(X)} \sup_{n \in \mathbb{N}} \int \paren[\big]{\eta(B \setminus K) \wedge 1} \mu_{n}(\dd \eta) \leq \inf_{K \in \mathfrak{K}(X)} \sup_{n \in \mathbb{N}} I_{\mu_{n}}(B \setminus K) = 0;
  \end{equation*}
  while, by \cref{e:prohorov-compact-uniform-bound} and Markov's inequality, we get that
  \begin{equation*}
    \sup_{n \in \mathbb{N}} \mu_{n}(\eta(B) > r) \leq \frac{1}{r} \sup_{n \in \mathbb{N}} I_{\mu_{n}}(B) \leq \frac{c}{r}.
  \end{equation*}
  The two previous equations show that the conditions of \cite[Thm.\ 4.10]{Kallenberg} are satisfied and thus up to extraction we can find $\mu \in \mathscr{P}(\config)$ such that $\mu_{n} \to \mu$ in $\mathscr{P}(\config)$.

  Now, let $h \in \mathscr{C}_{0}(X)$.
  By \cite[Lem.\ 5.11]{KallenbergFMP}, we get $\mu \in \mathscr{P}_{1}(\config)$ with $I_{\mu}(h) \leq \liminf_{n} I_{\mu_{n}}(h) < \infty$.
  By definition, the map $\eta \mapsto \eta(h)$ is continuous.
  Thus, the set $\set{ \eta(h) + 1 \geq r }$ is a closed set, for every $r > 0$,
  and the map
  \begin{equation*}
    u_{r}(\eta, x) \coloneq h(x) 1_{\set{\eta(h) + 1 \geq r}}, \qquad \eta \in \config,\, x \in X,
  \end{equation*}
  is upper semi-continuous.
  By the Portmanteau Theorem,
  \begin{equation*}
    \limsup_{n} \int \eta(h)\, 1_{\set{\eta(h) \geq r}} \dd \mu_{n} \leq \limsup_{n} C_{\mu_{n}}(u_{r}) \leq \sigma(u_{r}).
  \end{equation*}
  By dominated convergence, the right-hand side converges to $0$ as $r \to \infty$.
  In particular, $\iota_{h}$ is uniformly integrable with respect to $(\mu_{n})$.
  By an argument similar to the one used in the previous proof, we conclude that $\mu_{n} \to \mu$ in $\mathscr{P}_{1}(\config)$.
  Since $C$ is continuous and $\mathscr{M}_{b,0}(\config \times X)$ is Hausdorff, this shows that $\sigma = C_{\mu}$.
\end{proof}

Actually in the previous proofs, we have established the two following results that we extract here for convenience.
\begin{proposition}\label{t:p1-convergence}
  Let $(\mu_{n})_{n \in \nat} \subset \mathscr{P}_{1}(\config)$ and $\mu \in \mathscr{P}(\config)$.
  Then, the following are equivalent:
  \begin{enumerate}[$(i)$]
    \item\label{i:convergence-p1} $\mu \in \mathscr{P}_{1}(\config)$ and $\mu_{n} \xrightarrow[n \to \infty]{\mathscr{P}_{1}(\config)} \mu$.
    \item\label{i:convergence-C} $C_{\mu_{n}} \xrightarrow[n \to \infty]{\mathscr{M}_{b,0}(\config \times X)} C_{\mu}$.
    \item\label{i:convergence-I} $\mu_{n} \xrightarrow[n \to \infty]{weakly} \mu$ and $I_{\mu_{n}} \xrightarrow[n \to \infty]{\mathscr{M}_{0}(X)} I_{\mu}$.
  \end{enumerate}
\end{proposition}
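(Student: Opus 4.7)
The plan is to observe that all three equivalences essentially follow by reassembling arguments from the proofs of \cref{t:C-homeomorphism,t:p1-polish}, as the author's phrasing suggests. I will verify the three implications (i) $\Leftrightarrow$ (ii) and (i) $\Leftrightarrow$ (iii) separately, using the Campbell map as the central object.

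First, the equivalence (i) $\Leftrightarrow$ (ii) is the sequential reformulation of the homeomorphism assertion in \cref{t:C-homeomorphism}. Both $\mathscr{P}_{1}(\config)$ and $\mathscr{M}_{b,0}(\config \times X)$ are metrizable (the former by \cref{t:p1-polish}, the latter by the remark, right after \cref{t:prohorov}, identifying its topology with a vague topology on a complete separable metric space), so sequential convergence determines the topology. Since $C$ is a homeomorphism onto its image, $\mu_n \to \mu$ in $\mathscr{P}_{1}(\config)$ if and only if $C_{\mu_n} \to C_\mu$ in $\mathscr{M}_{b,0}(\config \times X)$.

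For the implication (i) $\Rightarrow$ (iii), I would note that weak convergence is immediate from the inclusion $\mathscr{C}_b(\config) \subset \mathscr{C}_1(\config)$. For the vague convergence of the intensity measures, for every $h \in \mathscr{C}_0(X)$ the evaluation $\iota_h$ belongs to $\mathscr{C}_1(\config)$ with the bound $|\iota_h(\eta)| \leq \eta(|h|) \leq 1 + \eta(|h|)$, so $I_{\mu_n}(h) = \mu_n(\iota_h) \to \mu(\iota_h) = I_\mu(h)$, which is the characterization of vague convergence in $\mathscr{M}_0(X)$.

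The remaining implication (iii) $\Rightarrow$ (i) is obtained by reproducing verbatim the last portion of the proof of \cref{t:p1-polish}. From the convergence $I_{\mu_n} \to I_\mu$ in $\mathscr{M}_0(X)$ and lower semicontinuity we get $I_\mu(h) \leq \liminf_n I_{\mu_n}(h) < \infty$ for every $h \in \mathscr{C}_0^{+}(X)$, which yields $\mu \in \mathscr{P}_1(\config)$. Now let $F \in \mathscr{C}_1(\config)$ with $|F| \leq c(1+\iota_h)$ for some $c>0$ and $h \in \mathscr{C}_0(X)$; the vague convergence of the intensities combined with \cite[Lem.\ 5.11]{KallenbergFMP} gives that $\iota_h$, and therefore $F$, is uniformly integrable with respect to $(\mu_n)$. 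Together with the weak convergence $\mu_n \to \mu$ and the continuous mapping theorem, this yields $\mu_n(F) \to \mu(F)$, i.e.\ $\mu_n \to \mu$ in $\mathscr{P}_1(\config)$. There is no genuine obstacle here beyond ensuring one invokes the correct uniform integrability criterion; the proposition is a clean repackaging of the compactness/convergence arguments already performed in the two preceding theorems.
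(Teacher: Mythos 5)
Your proposal is correct and follows essentially the same route as the paper: the equivalence (i)~$\Leftrightarrow$~(ii) is read off from \cref{t:C-homeomorphism} and metrizability, and the hard direction (iii)~$\Rightarrow$~(i) reproduces the uniform-integrability argument via \cite[Lem.~5.11]{KallenbergFMP} already deployed in the proofs of \cref{t:C-homeomorphism,t:p1-polish}. The only (harmless) variation is that you establish (i)~$\Rightarrow$~(iii) directly from $\mathscr{C}_b(\config)\subset\mathscr{C}_1(\config)$ and $\iota_h\in\mathscr{C}_1(\config)$, whereas the paper routes this through (ii)~$\Rightarrow$~(iii) inside the continuity proof of $C^{-1}$; both are elementary and logically equivalent given (i)~$\Leftrightarrow$~(ii).
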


\begin{proof}
  The equivalence between \cref{i:convergence-p1,i:convergence-C} follows from \cref{t:C-homeomorphism} since $C$ is a homemorphism.
  We proved that \cref{i:convergence-C} implies \cref{i:convergence-I} implies \cref{i:convergence-p1} in the proof of the continuity of $C^{-1}$ in \cref{t:C-homeomorphism}.
\end{proof}

  \begin{remark}
    Take $d_{1}$ (resp.\ $d_{2}$) a distance that completely metrizes the topology of $\mathscr{P}(\config)$ (resp.\ that of $\mathscr{M}^{+}_{0}(X)$).
    In view of the above result the distance
    \begin{equation*}
      d(\mu, \mu) \coloneq d_{1}(\mu, \mu') + d_{2}(I_{\mu}, I_{\mu'}), \qquad \mu,\, \mu' \in \mathscr{P}_{1}(\config),
    \end{equation*}
    metrizes the topology of $\mathscr{P}_{1}(\config)$.
    However, this distance may in general be \emph{not} complete.
    
    Indeed, take $X = \set{ \ast }$, the one-point space.
    Then, $\config$ is identified with $\nat_0$, and $\mathscr{M}_{0}(X)$ is identified with $\mathbb{R}$.
    Let $\mu_{n}$ be the law of a random variable that takes the value $n$ with probability $1/n$, and $0$ with probability $1 -1/n$.
    Then $\mu_{n} \to \delta_{0}$ in $\mathscr{P}(\nat_0)$ so that $(\mu_{n})_n$ is Cauchy with respect to $d_{1}$.
    Moreover, we have that $I_{\mu_{n}} = 1$ for all $n \in \mathbb{N}$, so that $(I_{\mu_{n}})_n$ is Cauchy with respect to $d_{2}$.
 Thus, $(\mu_{n})_n$ is Cauchy with respect to $d$.
    However, it does not converge in $\mathscr{P}_{1}(\nat_0)$, since $I_{\delta_0}=0$.
  \end{remark}

\begin{proposition}\label{t:p1-compactness}
  Let $\mathscr{A} \subset \mathscr{P}_{1}(\config)$.
  Then, the following are equivalent:
  \begin{enumerate}[$(i)$]
    \item\label{i:compact-p1} $\mathscr{A}$ is relatively compact in $\mathscr{P}_{1}(\config)$;
    \item\label{i:ui} $\mathscr{A}$ is relatively compact in $\mathscr{P}(\config)$ and, for all $h \in \mathscr{C}_{0}(X)$, the map $\iota_{h}$ is uniformly integrable with respect to $\mathscr{A}$.
  \end{enumerate}
\end{proposition}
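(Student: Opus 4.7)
The plan is to reduce everything to \cref{t:p1-convergence} and to use \cite[Lem.~5.11]{KallenbergFMP}, which converts weak convergence plus convergence of expectations into uniform integrability (and conversely). Since $\mathscr{P}_{1}(\config)$ is Polish by \cref{t:p1-polish}, relative compactness is equivalent to relative sequential compactness, and we may therefore argue subsequentially throughout.

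For the implication $\cref{i:compact-p1} \Rightarrow \cref{i:ui}$, observe first that the inclusion $\mathscr{P}_{1}(\config) \hookrightarrow \mathscr{P}(\config)$ is continuous (the topology on $\mathscr{P}_{1}(\config)$ is defined by \emph{more} seminorms than the weak topology, via the seminorms indexed by $\mathscr{C}_{1}(\config) \supset \mathscr{C}_{b}(\config)$). So relative compactness passes to $\mathscr{P}(\config)$. To establish uniform integrability of $\iota_h$ for $h \in \mathscr{C}_{0}(X)$, suppose by contradiction that there exist $\varepsilon > 0$, $K_n \to \infty$, and $\mu_n \in \mathscr{A}$ with $\int_{\abs{\iota_h} \geq K_n} \abs{\iota_h} \dd \mu_n \geq \varepsilon$. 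By relative compactness, extract a subsequence with $\mu_n \to \mu$ in $\mathscr{P}_{1}(\config)$. Then \cref{t:p1-convergence} gives both $\mu_n \to \mu$ weakly and $I_{\mu_n} \to I_\mu$ in $\mathscr{M}_{0}(X)$, so $I_{\mu_n}(\abs{h}) \to I_\mu(\abs{h}) < \infty$. Applying \cite[Lem.~5.11]{KallenbergFMP} to the continuous function $\iota_h$ and the weakly convergent sequence $(\mu_n)$ then yields that $\iota_h$ is uniformly integrable along $(\mu_n)$, contradicting the choice of $(\mu_n, K_n)$.

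For $\cref{i:ui} \Rightarrow \cref{i:compact-p1}$, take a sequence $(\mu_n) \subset \mathscr{A}$. By relative compactness of $\mathscr{A}$ in $\mathscr{P}(\config)$, extract a subsequence (still denoted $(\mu_n)$) converging weakly to some $\mu \in \mathscr{P}(\config)$. For each $h \in \mathscr{C}_{0}(X)$, $\iota_h$ is continuous on $\config$ (this is exactly the definition of the vague topology restricted to $\config$), so weak convergence combined with the assumed uniform integrability of $\iota_h$ gives, again by \cite[Lem.~5.11]{KallenbergFMP}, that $\mu_n(\iota_h) \to \mu(\iota_h)$ with a finite limit. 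Hence $I_{\mu_n}(h) \to I_\mu(h)$ for all $h \in \mathscr{C}_{0}(X)$, which means $I_\mu$ defines an element of $\mathscr{M}_{0}^{+}(X)$ (finiteness on bounded sets follows by dominating $\mathbf{1}_B$ by some $h \in \mathscr{C}_{0}(X)$) and $I_{\mu_n} \to I_\mu$ vaguely. Therefore $\mu \in \mathscr{P}_{1}(\config)$, and \cref{t:p1-convergence} \cref{i:convergence-I} $\Rightarrow$ \cref{i:convergence-p1} yields $\mu_n \to \mu$ in $\mathscr{P}_{1}(\config)$.

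The main subtlety is the delicate interplay between weak convergence of $(\mu_n)$ and convergence of $I_{\mu_n}$: neither one alone is enough to get convergence in $\mathscr{P}_{1}(\config)$, and uniform integrability of each $\iota_h$ is precisely the bridge between them, supplied by \cite[Lem.~5.11]{KallenbergFMP}. The argument for $\cref{i:compact-p1} \Rightarrow \cref{i:ui}$ must be phrased subsequentially because uniform integrability is a statement uniform over the whole family, and a direct non-sequential argument would require a quantitative version of Kallenberg's lemma.
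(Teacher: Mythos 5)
Your proof is correct and follows essentially the same route as the paper's: both directions hinge on \cref{t:p1-convergence} together with Kallenberg's uniform-integrability lemma \cite[Lem.~5.11]{KallenbergFMP}, passing between $\mathscr{P}_{1}(\config)$-convergence and the pair (weak convergence, vague convergence of intensities). The paper's version is terser, whereas you make explicit the contradiction argument for the uniform integrability in $(i)\Rightarrow(ii)$ and the check that $I_\mu\in\mathscr{M}_0^+(X)$ in $(ii)\Rightarrow(i)$; the only cosmetic point is that one should apply the Kallenberg lemma to the nonnegative function $\iota_{\abs{h}}$ (and then dominate $\abs{\iota_h}\le\iota_{\abs{h}}$) rather than to $\iota_h$ directly.
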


\begin{proof}
  Assume that $\mathscr{A}$ is relatively compact in $\mathscr{P}_{1}(\config)$.
  Take a sequence in $\mathscr{A}$.
  Up to extraction it converges in $\mathscr{P}_{1}(\config)$.
  Thus, by \cref{t:p1-convergence} \cref{i:convergence-I}, it converges weakly.
  This shows that $\mathscr{A}$ is weakly sequentially relatively compact, and thus relatively compact in $\mathscr{P}(\config)$.
  The uniform integrability follows from \cite[Lem.\ 5.11]{KallenbergFMP} together with \cref{t:p1-convergence} \cref{i:convergence-I}.

  Conversely, assume that $\mathscr{A}$ is weakly relatively compact and that we have the uniform integrability condition.
  Then up to extraction every sequence in $\mathscr{A}$ weakly converges, and by the uniform integrability and \cite[Lem.\ 5.11]{KallenbergFMP}, we find that the intensiy measures also converge.
  By \cref{t:p1-convergence}, we deduce that $\mathscr{A}$ is then sequentially relatively compact in $\mathscr{P}_{1}(\config)$, and thus it is relatively compact, since $\mathscr{P}_{1}(\config)$ is Polish by \cref{t:p1-polish}.
\end{proof}

Let us finish with the proof of the lemma used above.
\begin{lemma}\label{t:f-from-u-continuous}
  If $u \in \mathscr{C}_{b,0}(\config \times X)$, then
  \begin{equation*}
    F \colon \eta \longmapsto \int u(\eta - \delta_{x}, x) \eta(\dd x), \qquad \eta \in \config,
  \end{equation*}
  satisfies $F \in \mathscr{C}_{1}(\config)$.
\end{lemma}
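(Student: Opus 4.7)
I aim to verify the two defining properties of the class $\mathscr{C}_{1}(\config)$: that $F$ has sublinear growth, and that $F$ is continuous on $\config$.

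For sublinear growth, the definition of $\mathscr{C}_{b,0}(\config \times X)$ as a strict inductive limit provides a closed bounded set $B \subset X$ outside of which $u$ vanishes, along with a global bound $M \coloneq \norm{u}_{\infty} < \infty$. Hence $\abs{F(\eta)} \leq M\, \eta(B)$ for every $\eta \in \config$, and picking $h \in \mathscr{C}_{0}^{+}(X)$ with $1_{B} \leq h$ (Urysohn's lemma, using that $X$ is a metric space and $B$ is closed and bounded) gives $\abs{F(\eta)} \leq M(1+\eta(h))$, as required.

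For continuity I fix $\eta \in \config$ and a sequence $\eta_{n} \to \eta$ in $\config$, and reduce $F$ to a finite sum over atoms inside a suitable neighbourhood of $B$. Enlarging $B$, one finds a closed bounded set $K$ with $B$ in the interior of $K$ and $\eta(\partial K)=0$: fixing a basepoint $o \in X$, the closed balls $\overline{B(o,r)}$ have $\eta$-null boundary for all but countably many $r$. Since $\eta(\partial K) = 0$, vague convergence yields $\eta_{n}(K) \to \eta(K)$, and integrality of both sides forces $\eta_{n}(K) = \eta(K) = k$ for $n$ large. Writing $\eta|_{K} = \sum_{i=1}^{k} \delta_{x_{i}}$ and $\eta_{n}|_{K} = \sum_{i=1}^{k} \delta_{x_{i}^{n}}$, the classical atom-relabelling result for vague convergence of point measures on Polish spaces (see, e.g., \cite[Lem.~4.1]{Kallenberg}) produces, up to permutation, labellings with $x_{i}^{n} \to x_{i}$ in $X$ for each $i$.

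With this indexing in place, the conclusion follows by passage to the limit. Since $u$ vanishes off $\config \times B \subset \config \times K$, only atoms in $K$ contribute, so for $n$ large
\begin{equation*}
F(\eta_{n}) = \sum_{i=1}^{k} u\paren*{\eta_{n} - \delta_{x_{i}^{n}},\, x_{i}^{n}}, \qquad F(\eta) = \sum_{i=1}^{k} u\paren*{\eta - \delta_{x_{i}},\, x_{i}}.
\end{equation*}
The vague convergences $\eta_{n} \to \eta$ and $\delta_{x_{i}^{n}} \to \delta_{x_{i}}$ give $\eta_{n} - \delta_{x_{i}^{n}} \to \eta - \delta_{x_{i}}$ in $\mathscr{M}_{0}(X)$, and since each of these differences is a configuration, closedness of $\config$ in $\mathscr{M}_{0}^{+}(X)$ (\cref{t:config-locally-compact}) ensures the convergence takes place in $\config$. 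Joint continuity of $u$ then produces termwise convergence, and summing the finitely many terms yields $F(\eta_{n}) \to F(\eta)$. The only delicate step is the atom relabelling; everything else is routine.
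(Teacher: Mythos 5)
You take a genuinely different route from the paper. The sublinear-growth estimate coincides, but for continuity the paper fixes a dominating function $h \in \mathscr{C}_0^+(X)$ with $\abs{u} \leq ch$, splits $F(\eta_n)-F(\eta)$ into a term controlled by the vague convergence $\eta_n(h)\to\eta(h)$ and a remainder handled by dominated convergence in $L^1(\eta)$. Your proof instead exploits the discreteness of configurations: since $u$ vanishes off $\config\times B$, $F(\eta)$ is a finite sum over the atoms of $\eta$ in $B$; you trap $B$ in a closed bounded $\eta$-continuity set $K$ via the usual radius-selection argument, use integer-valuedness to pin down $\eta_n(K)=\eta(K)$ eventually, invoke the atom-relabelling lemma for vaguely convergent finite point measures, and pass to the limit termwise using joint continuity of $u$ and closedness of $\config$ in $\mathscr{M}_0^+(X)$ (\cref{t:config-locally-compact}). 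Both arguments are correct; yours is more explicit and in fact sidesteps a delicacy in the paper's estimate, whose second term integrates $u(\eta_n-\delta_x,x)$ over atoms $x$ of $\eta$ that need not be atoms of $\eta_n$, so that $\eta_n-\delta_x$ need not be a configuration and the evaluation of $u$ there is not a priori meaningful; in your argument $u$ is only ever evaluated at $(\eta_n-\delta_{x_i^n},x_i^n)$ with $x_i^n$ a genuine atom of $\eta_n$. The trade-off is the reliance on the atom-relabelling lemma and the continuity-set construction, which the paper's shorter dominated-convergence argument avoids.
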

\begin{proof}
  Without loss of generality, we assume that $u \geq 0$.
  Indeed, for $u \in \mathscr{C}_{b,0}(\config \times X)$, we also have that $u_{+}$ and $u_{-} \in \mathscr{C}_{b,0}(\config \times X)$.
  Since $u \in \mathscr{C}_{b,0}(\config \times X)$, there exists $c > 0$ and $h \in \mathscr{C}_{0}(X)$ such that $h \geq 0$ and
  \begin{equation*}
    u(\eta, x) \leq c h(x), \qquad \eta \in \config,\, x \in X.
  \end{equation*}
  We have that
  \begin{equation*}
    \abs{F(\eta_{n}) - F(\eta)} \leq c \abs*{ \int h(x) (\eta_{n} - \eta)(\dd x) } + \int \abs*{ u(\eta_{n} - \delta_{x}, x) - u(\eta - \delta_{x}, x) } \eta(\dd x).
  \end{equation*}
   The first term vanishes as $n \to \infty$, by definition of vague convergence.
  Since $u$ is continuous the integrand in the second term also vanishes and is dominated by $2c h \in L^{1}(\eta)$.
  By dominated convergence, the corresponding integral also vanishes.
  This shows that $F$ is continuous.
  We also have that $F(\eta) \leq c \eta(h)$ which shows that $F \in \mathscr{C}_{1}(\config)$.
\end{proof}

\section{Discrete operators and the Ornstein--Uhlenbeck dynamics}\label{s:ou}
\subsection{Mehler's formula, difference operator, divergence of a function}\label{s:mehler}
We refer the reader to \cite{LastAnaSto} for more details and proofs regarding objects introduced in this section.
We fix a Poisson point process $\pi$ with intensity $m \in \mathscr{M}_{0}^{+}(X)$.

  For all $\pi$-integrable $F \in \mathscr{F}(\config)$, we define the \emph{Ornstein--Uhlenbeck semi-group} by
\begin{equation*}
  \ou_{t} F(\eta) \coloneq \Esp*{ F(\eta^{(\e^{-t})} + \xi_{t}) }, \qquad \eta \in \config,\, t \ge 0,
\end{equation*}
where $\eta^{(s)}$ is the $s$-thinning of $\eta$ and $\xi_{t}$ is distributed as a Poisson point process with intensity $(1-\e^{-t})m$ and is independent of the thinning.

The family $\ou = (\ou_{t})_{t \ge 0}$ is a Markov semigroup on $L^{1}(\pi)$.
Moreover, it maps continuous functions to continuous functions.
For all point processes $\mu$, we define by duality
\begin{equation*}
  \dou_{t} \mu(A) \coloneq \mu(\ou_{t}1_{A}), \qquad A \in \mathfrak{B}(\config),\quad t \ge 0.
\end{equation*}
It is readily verified that this indeed defines a measure.
If $\mu = \rho \pi$, then $\dou_{t}\mu = (\ou_{t} \rho) \pi$ for all $t \ge 0$.
We also have that $\dou_{t}$ maps $\mathscr{P}_{1}(\config)$ to $\mathscr{P}_{1}(\config)$ for all $t > 0$, and that
\begin{equation}\label{e:continuity-dou-p1}
  \dou_{t}\mu \xrightarrow[t \to 0]{\mathscr{P}_{1}(\config)} \mu, \qquad \mu \in \mathscr{P}_{1}(\config).
\end{equation}
\begin{proof}[Proof of \cref{e:continuity-dou-p1}]
Let $\eta \sim \mu$ and $h \in \mathscr{C}_{0}(X)$, and set $\eta_{t} \coloneq \eta^{(\e^{-t})}$.
By \cite[Lem.\ 3.1]{Kallenberg}, we have that
  \begin{equation*}
    \Esp*{ \exp(-\eta_{t}(h)) }  = \Esp*{ \Exp*{ \int \Log*{1 - \e^{-t}(1 - \e^{-h})} \dd \eta } }.
  \end{equation*}
  Since $\Log*{ 1 - \e^{-t}(1 - \e^{-h}) } \leq (1-\e^{-h})$, by dominated convergence and \cite[Thm.\ 4.11]{Kallenberg}, we get that $\law(\eta_{t}) \to \mu$ in $\mathscr{P}(\config)$.
  Similar computations show that $\law(\xi_{t}) \to \delta_{\varnothing}$ in $\mathscr{P}(\config)$.
  Thus, by continuity of the sum and the continuous mapping theorem, we conclude that $\dou_{t}\mu = \law(\eta_{t} + \xi_{t}) \to \mu$ in $\mathscr{P}(\config)$.
  The Mehler formula also implies that
  \begin{equation}\label{e:intensity-Ornstein-Uhlenbeck}
    I_{\dou_{t}\mu}(h) = \e^{-t} I_{\mu}(h) + (1-\e^{-t}) m(h) \xrightarrow[t \to 0]{} I_{\mu}(h),
  \end{equation}
which concludes the proof in view of \cref{t:p1-convergence}.
\end{proof}

For $F \in \mathscr{F}(\config)$ we write
\begin{equation*}
  \diff_{x} F(\eta) \coloneq F(\eta + \delta_{x}) - F(\eta), \qquad \eta \in \config,\quad x \in X,
\end{equation*}
and regard $\diff F$ as the map
\begin{equation*}
  \diff F \colon \config \times X \ni (\eta, x) \mapsto \diff_{x} F(\eta).
\end{equation*}
The difference operator and the semi-group satisfy a commutation relation à la Bakry--Émery \cite[Lem.\ 6]{LastAnaSto}:
\begin{equation}\label{e:be}\tag{$\mathbf{BE}$}
  \diff \ou_{t} F = \e^{-t} \ou_{t} \diff F, \qquad F \in L^{2}(\pi).
\end{equation}
For all $u \in L^{1}(\pi \otimes m)$, we define a formal adjoint to $\diff$, namely the \emph{Skorokhod divergence}
\begin{equation*}
  \skoro u(\eta) \coloneq \int u(\eta - \delta_{x}, x) \eta(\dd x) - \int u(\eta, x) m(\dd x), \qquad \eta \in \config.
\end{equation*}
By the Mecke formula, $\skoro u \in L^{1}(\pi)$.

\subsection{Sobolev spaces}

Due to its discrete nature, $\diff$ does not give rise to a good notion of smooth functions.
As a partial substitute, let us define the Sobolev spaces associated with $\diff$.
For all $F \in \mathscr{F}(\config)$ and $k \in \nat$, we can define iteratively $\diff^{k} F \in \mathscr{F}(\config \times X^{k})$.
We can thus define the \emph{Sobolev spaces of order $k \in \nat$ and $p \in [1,\infty]$} as the set $\sobolev^{k,p} = \sobolev^{k,p}(\config)$ containing all $F \in L^{p}(\pi)$ such that $\diff^{k'} F \in L^{p}(\pi \otimes m^{\otimes k'})$ for all $k' \leq k$.
It is endowed with the norm
\begin{equation*}
  \norm{F}_{k,p} \coloneq \norm{F}_{L^{p}(\pi)} + \sum_{k'=1}^{k} \norm{D^{k'}F}_{L^{p}(\pi \otimes m^{\otimes k'})}.
\end{equation*}

\subsection{Generator of the Ornstein--Uhlenbeck semi-group}

The Markov generator of $\ou$ on $L^{2}(\pi)$ is the unbounded operator $\gen$ with $\dom \gen$ consisting of all the functions $F \in L^{2}(\pi)$ such that the following limit exists 
\begin{equation*}
  \gen F \coloneq L^{2}(\pi)\text{-}\lim_{t \to 0} \frac{\mathsf{P}_{t} - \mathsf{id}}{t} F.
\end{equation*}
By \cite[Props.\ 3 \& 4]{LastAnaSto}, we have that $\dom \mathsf{L} = \mathscr{W}^{2,2}$; for $F \in \dom{\gen} \cap \mathscr{W}^{1,1}$, we have the following representation
\begin{equation*}
  \gen F(\eta) = - \skoro \diff F(\eta) = \int \paren[\big]{F(\eta + \delta_{x}) - F(\eta)} m(\dd x) - \int \paren[\big]{F(\eta) - F(\eta - \delta_{x})} \eta(\dd x), \qquad \eta \in \config,
\end{equation*}
and the following integration by parts holds
\begin{align}
    & \label{e:ou-ipp-generator} \int F\, \mathsf{L} G\, \dd \pi = - \int \diff F\, \diff G\, \dd(\pi \otimes m), \qquad F \in \mathscr{W}^{1,2},\, G \in \dom \mathsf{L}.
\end{align}
In view of the general theory of Dirichlet forms \cite[Thm.\ 2.20]{MaRockner}, we have the following regularization property
\begin{equation}\label{e:ou-regularization-L2}
  \mathsf{P}_{t} L^{2}(\pi) \subset \dom \mathsf{L}, \qquad t > 0.
\end{equation}
The inverse of $\mathsf{L}$ is defined for all $F \in L^{2}(\pi)$ such that $\int F \dd \pi = 0$ via \cite[Thm.\ 7]{LastAnaSto}:
\begin{equation*}
  \mathsf{L}^{-1} F(\eta) \coloneq - \int_{0}^{\infty} \mathsf{P}_{s} F(\eta) \dd s.
\end{equation*}

\subsection{Relative entropy}
Let $\pi$ be the Poisson point process with intensity $m$, and $\mu$ be a point process.
The \emph{relative entropy of $\mu$ with respect to $\pi$} is
\begin{equation*}
\Ent{\mu \given \pi} \coloneq \int \rho \log \rho \dd \pi \qquad \text{if} \quad \mu\ll \pi , \quad \rho= \frac{\dd \mu}{\dd \pi},
\end{equation*}
and $\Ent{\mu \given \pi} = \infty$ otherwise.
We write $\dom \ent$ for the set of $\mu \in \mathscr{P}(\config)$ with $\Ent{\mu \given \pi} < \infty$.
The following result recasts well-known properties of the relative entropy with respect to the weak topology in the setting of $\mathscr{P}_{1}(\config)$.
\begin{lemma}\label{t:lsc-H}
  We have that $\dom \ent \subset \mathscr{P}_{1}(\config)$.
  Moreover, $\Ent{\emparg \given \pi}$ is lower semi-continuous with respect to the $\mathscr{P}_{1}(\config)$-topology and its sub-level sets are relatively compact in $\mathscr{P}_{1}(\config)$.
\end{lemma}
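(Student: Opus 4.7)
The plan is to prove the three claims in sequence, using throughout the Donsker--Varadhan variational characterization $\mu(F) \leq \log \pi(e^{F}) + \Ent{\mu \given \pi}$ valid for every $F \in \mathscr{F}_{b}(\config)$, combined with the finite Laplace functional $\pi(e^{\lambda \iota_{h}}) = \exp(m(e^{\lambda h} - 1)) < \infty$ for all $h \in \mathscr{C}_{0}(X)$ and $\lambda \in \real$.

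For $\dom \ent \subset \mathscr{P}_{1}(\config)$, fix $h \in \mathscr{C}_{0}^{+}(X)$ and apply the variational inequality to $F = \iota_{h} \wedge r$; letting $r \to \infty$, monotone convergence yields $I_{\mu}(h) \leq \log \pi(e^{\iota_{h}}) + \Ent{\mu \given \pi} < \infty$, so $I_{\mu} \in \mathscr{M}_{0}^{+}(X)$. For the lower semi-continuity, observe that since $\mathscr{C}_{b}(\config) \subset \mathscr{C}_{1}(\config)$, the $\mathscr{P}_{1}(\config)$-topology is finer than the trace of the weak topology on $\mathscr{P}_{1}(\config)$; the claim therefore reduces to the standard weak lower semi-continuity of $\Ent{\emparg \given \pi}$ on the Polish space $\config$, itself a direct consequence of rewriting the variational formula as a supremum over $F \in \mathscr{C}_{b}(\config)$.

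For the relative compactness of the sub-level set $\mathscr{A}_{c} \coloneq \set{\mu : \Ent{\mu \given \pi} \leq c}$, I would invoke \cref{t:p1-compactness}, which reduces the problem to checking weak relative compactness of $\mathscr{A}_{c}$ in $\mathscr{P}(\config)$ together with uniform integrability of $\iota_{h}$ with respect to $\mathscr{A}_{c}$ for every $h \in \mathscr{C}_{0}(X)$. Weak compactness follows from the inequality $\mu(A) \leq (\log 2 + c)/\log(1/\pi(A))$, valid for $\pi(A) \leq 1/2$ by optimization of the variational bound over $F = \lambda\, 1_{A}$, combined with the tightness of $\pi$ on the Polish space $\config$. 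For uniform integrability, reducing to the case $h \geq 0$, apply the variational inequality with $F = \theta \iota_{h} \cdot 1_{\set{\iota_{h} > r}}$ to obtain
\begin{equation*}
  \theta\, \mu(\iota_{h} \cdot 1_{\set{\iota_{h} > r}}) \leq \log \pi(e^{\theta \iota_{h} \cdot 1_{\set{\iota_{h} > r}}}) + c.
\end{equation*}
The bound $e^{\theta \iota_{h} \cdot 1_{\set{\iota_{h} > r}}} \leq 1 + e^{\theta \iota_{h}} 1_{\set{\iota_{h} > r}}$ together with $\pi(e^{\theta \iota_{h}}) < \infty$ and dominated convergence show that the right-hand side converges to $0$ as $r \to \infty$.

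The main subtlety is the order of limits in this last step: for any prescribed $\varepsilon > 0$, one first picks $\theta$ large enough so that $c/\theta < \varepsilon/2$, and only afterwards chooses $r$ large so that the logarithmic term divided by $\theta$ is at most $\varepsilon/2$. This yields $\sup_{\mu \in \mathscr{A}_{c}} \mu(\iota_{h} \cdot 1_{\set{\iota_{h} > r}}) \leq \varepsilon$, which is exactly the uniform integrability needed for \cref{t:p1-compactness}. The crucial input making this order of limits work is the super-exponential integrability of $\iota_{h}$ under $\pi$ for $h$ with bounded support, which is the hallmark of the Poisson Laplace functional.
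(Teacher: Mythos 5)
Your proof is correct, and it takes a genuinely different technical route from the paper. You work directly from the Gibbs/Donsker--Varadhan variational inequality $\mu(F) \leq \log\pi(\e^{F}) + \Ent{\mu\given\pi}$, using the explicit Poisson Laplace functional $\pi(\e^{\lambda\iota_{h}}) = \exp(m(\e^{\lambda h}-1))$ three times: once with $F = \iota_{h}\wedge r$ to get $\dom\ent \subset \mathscr{P}_{1}(\config)$, once with $F = \lambda 1_{A}$ to get weak tightness of the sub-level set, and once with $F = \theta\iota_{h}1_{\{\iota_h>r\}}$ (with the crucial $\theta$-then-$r$ order of quantifiers) for the uniform integrability required by \cref{t:p1-compactness}. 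The paper instead packages the same information through Orlicz-space machinery: it introduces the Young pair $\theta(s) = s\log s - s + 1$ and $\theta^{*}(t)=\e^{t}-1$, proves the Hölder-type bound $\abs{\int F_{r}\rho\,\dd\pi} \leq \norm{F_{r}}_{L^{\theta^{*}}}(1\wedge\Ent{\mu\given\pi})$, deduces finiteness of $I_\mu$ from Fenchel's inequality and $\norm{F_r}_{L^{\theta^{*}}}\to 0$ from the equivalence of Orlicz and Luxembourg norms, and then cites Dembo--Zeitouni for weak compactness. The lower semi-continuity step (weak LSC plus the $\mathscr{P}_{1}(\config)$-topology being finer) is the same in both. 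What your route buys is self-containedness: no Orlicz norms, no Luxembourg-norm equivalence, and the weak compactness is derived rather than quoted. What the paper's route buys is compactness of notation (a single Orlicz-Hölder inequality controls both the membership and the uniform integrability) and a formulation that adapts more easily if $\theta$ is changed. One small point worth making explicit in your writeup: you state the variational inequality for $F\in\mathscr{F}_{b}(\config)$ but apply it to the unbounded $F = \theta\iota_{h}1_{\{\iota_h>r\}}$; this is harmless since $\pi(\e^{F})<\infty$ and the extension by truncation and monotone convergence is exactly the argument you already spell out in the first step, but it would be cleaner to say so once.
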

\begin{proof}
Set $\theta(s) \coloneq s \log s - s + 1$ for $s \geq 0$ and $\theta(s) \coloneq \infty$ otherwise. We denote its Legendre transform by
\begin{equation*}
\theta^{*}(t) \coloneq \sup_{s \in \real} \paren[\big]{s t - \theta(s) } = \e^{t} - 1, \qquad t \in \real.
\end{equation*}
The functions $\theta$ and $\theta^{*}$ are convex conjugate to each other, and a pair of Young functions.
  We define, the \emph{Orlicz norm}
  \begin{equation*}
    \norm{F}_{L^\theta} \coloneq \Sup*{ \int FG \dd \pi : \int \theta^{*}(|G|) \leq 1 }, \qquad F \in \mathscr{F}(\config).
  \end{equation*}
  Let $\mu=\rho\pi \in \dom \ent$ and note that $\int \theta(\rho) \dd \pi = \Ent{\mu \given \pi}$.
  For $r \in \real$ and $B \in \mathfrak{B}_{0}(X)$ set $F_{r}(\eta) \coloneq \eta(B) 1_{\{ \eta(B) > r\}}$.
  In view of \cite[Eq. (9.13), p. 73]{Orlicz}, we find that
  \begin{equation}\label{e:holder-ui}
    \abs*{ \int F_{r} \rho \dd \pi } \leq \norm{F_{r}}_{L^{\theta^{*}}} \paren[\big]{1 \wedge \Ent{\mu \given \pi}}.
\end{equation}
On the one hand, by Fenchel's inequality, we have that
\begin{equation}\label{eq:FenchelIotaB}
\norm{F_{r}}_{L^{\theta^{*}}}\leq \norm{F_{0}}_{L^{\theta^{*}}} \leq \int (\e^{\eta(B)} - 1) \dd \pi + 1.
\end{equation}
This quantity is finite in view of the exponential integrability of Poisson random variables.
The second inequality above shows that $\dom \ent \subset \mathscr{P}_{1}(\config)$.
On the other hand, by dominated convergence,
\begin{equation*}
  \int \theta^{*}(a F_{r}) \dd \pi = \int 1_{\{ a \eta(B) > r \}} (\e^{a \eta(B)} - 1)\, \pi(\dd \eta) \xrightarrow[r\to\infty]{} 0, \qquad  a > 0.
\end{equation*}
In view of the equivalence of the Orlicz and the Luxembourg norms \cite[Eq. (9.24), p. 80]{Orlicz}, the latter convergence implies that
\begin{equation*}
  \lim_{r \to \infty} \norm{F_{r}}_{L^{\theta^{*}}} = 0.
\end{equation*}
Together with \cref{e:holder-ui}, this shows that the uniform integrability condition in \cref{t:p1-compactness} is satisfied on sub-level sets of $\Ent{\emparg \given \pi}$.
By \cite[Lem.\ 6.2.12]{DemboZeitouni}, these sub-level sets are also weakly relatively compact, thus we conclude they are relatively compact in $\mathscr{P}_{1}(\config)$ by \cref{t:p1-compactness}.
Since $\Ent{ \emparg \given \pi}$ is weakly lower semi-continuous (e.g., \cite[Lem.\ 6.2.13]{DemboZeitouni}) and since the $\mathscr{P}_{1}(\config)$-topology is finer than the weak topology, we get the lower semi-continuity.
\end{proof}

\subsection{Fisher information}
The \emph{(modified) Fisher information} of $\mu \in \mathscr{P}(\config)$ is
\begin{equation*}
  \Fish{\mu \given \pi} \coloneq \int \diff \rho \ \diff \log \rho \dd (\pi \otimes m) \qquad \text{if} \quad \mu\ll \pi , \quad \rho= \frac{\dd \mu}{\dd \pi},
\end{equation*}
if $\mu = \rho \pi$, and $\Fish{\mu \given \pi} \coloneq \infty$ otherwise.
We write $\dom \fish$ for the set of $\mu$'s with $\Fish{\mu \given \pi} < \infty$.
By convexity of $\theta$, we have that $(a - b) (\log a - \log b) = (a-b) (\theta'(a) - \theta'(b)) \geq 0$ for all $a$ and $b \in \real$.
This shows that $\Fish{\mu \given \pi}$ is well-defined, although potentially $\infty$, for all $\mu \in \mathscr{P}(\config)$.
The relative entropy and the Fisher information are related through the \emph{modified logarithmic Sobolev inequality} \cite[Cor.\ 2.2]{Wu}:
\begin{equation}\label{e:lsi}
  \Ent{\mu \given \pi} \leq \Fish{\mu \given \pi}, \qquad \mu \in \mathscr{P}(\config).
\end{equation}

\begin{theorem}\label{t:lsc-I}
  The functional $\Fish{\emparg \given \pi}$ is lower semi-continuous on $\mathscr{P}_{1}(\config)$.
\end{theorem}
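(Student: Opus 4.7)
The plan is to express $\Fish{\,\cdot\, \given \pi}$ as a supremum of continuous functionals on $\mathscr{P}_{1}(\config)$ by exploiting the reduced Campbell measure. For $\mu = \rho \pi$, the Mecke identity gives the densities with respect to $\pi \otimes m$ as $\tfrac{\dd C_{\mu}}{\dd(\pi \otimes m)}(\eta, x) = \rho(\eta + \delta_{x})$ and $\tfrac{\dd (\mu \otimes m)}{\dd (\pi \otimes m)}(\eta, x) = \rho(\eta)$. Writing $a = \tfrac{\dd C_{\mu}}{\dd (\pi \otimes m)}$ and $b = \tfrac{\dd (\mu \otimes m)}{\dd (\pi \otimes m)}$ and using the algebraic identity $(a - b)(\log a - \log b) = a \theta(b/a) + b \theta(a/b)$, where $\theta(s) = s \log s - s + 1$ is the function appearing in the proof of \cref{t:lsc-H}, I obtain the rewriting
\begin{equation*}
  \Fish{\mu \given \pi} = \tilde{H}(C_{\mu} \given \mu \otimes m) + \tilde{H}(\mu \otimes m \given C_{\mu}),
\end{equation*}
where $\tilde{H}(\nu \given \lambda) \coloneq \int \theta(\dd \nu / \dd \lambda) \dd \lambda$ for locally finite $\nu, \lambda \in \mathscr{M}_{b,0}(\config \times X)$, with the convention $\tilde{H}(\nu \given \lambda) = +\infty$ whenever $\nu \not\ll \lambda$.

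Next, I would derive a Donsker--Varadhan-type variational representation for $\tilde{H}$ via the conjugate pair $(\theta, \theta^{*})$, with $\theta^{*}(s) = \e^{s} - 1$:
\begin{equation*}
  \tilde{H}(\nu \given \lambda) = \sup_{f \in \mathscr{C}_{b,0}(\config \times X)} \set*{\nu(f) - \lambda(\e^{f} - 1)}.
\end{equation*}
The inequality $\geq$ follows by integrating the pointwise Young inequality $f r \leq \theta(r) + \theta^{*}(f)$. The converse reduces to approximating the optimizer $f^{\star} = \log(\dd \nu / \dd \lambda)$ by continuous bounded functions that vanish outside bounded sets of $X$: truncate to $f^{\star}_{M,B} \coloneq \paren[\big]{(-M) \vee f^{\star} \wedge M} \cdot 1_{\config \times B}$, let $M \to \infty$ and $B \nearrow X$, and apply a Lusin-type approximation on the Polish space $\config \times X$ combined with dominated convergence (allowed by local finiteness of $\nu$ and $\lambda$).

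For each fixed $f \in \mathscr{C}_{b,0}(\config \times X)$, one also has $\e^{f} - 1 \in \mathscr{C}_{b,0}(\config \times X)$, and both $\mu \mapsto C_{\mu}(f)$ and $\mu \mapsto (\mu \otimes m)(\e^{f} - 1)$ are continuous on $\mathscr{P}_{1}(\config)$. The continuity of the former is \cref{t:p1-convergence} \cref{i:convergence-C}; for the latter, given $\mu_{n} \to \mu$ in $\mathscr{P}_{1}(\config)$, Fubini gives $(\mu_{n} \otimes m)(\e^{f} - 1) = \int_{B} \mu_{n}\paren{(\e^{f}-1)(\,\cdot\,, x)} m(\dd x)$, with $B \subset X$ a bounded set containing the $X$-support of $f$; the integrand converges pointwise via weak convergence $\mu_{n} \to \mu$ in $\mathscr{P}(\config)$ and is uniformly bounded by $\e^{\norm{f}_{\infty}} - 1$, so dominated convergence on $B$ (with $m(B) < \infty$) concludes. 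The symmetric argument handles $\tilde{H}(\mu \otimes m \given C_{\mu})$.

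Combining these steps, $\Fish{\,\cdot\, \given \pi}$ is a supremum of continuous functionals on $\mathscr{P}_{1}(\config)$, hence lower semi-continuous. The main technical obstacle is the second step: justifying that the restricted test-function class $\mathscr{C}_{b,0}(\config \times X)$ saturates the variational formula for $\tilde{H}$. This is essentially a density argument combining truncation on $X$ with continuous approximation on the resulting bounded Polish slab, carried out in a way compatible with the monotonicity of $\tilde{H}$ in the support of the test function.
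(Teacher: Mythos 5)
Your approach takes a genuinely different route from the paper's. The paper rewrites the Fisher integrand via a normal integrand $\psi(s,t)$, invokes Ioffe's theorem to obtain lower semi-continuity with respect to the \emph{weak} topology of $L^{1}(\pi)\times L^{1}_{\mathrm{loc}}(\pi\otimes m)$, and then transfers this to the $\mathscr{P}_{1}(\config)$-topology on a fixed sub-level set via the modified logarithmic Sobolev inequality \cref{e:lsi}, la~Vallée-Poussin, and Dunford--Pettis. You instead decompose the integrand by $(a-b)(\log a-\log b) = a\,\theta(b/a) + b\,\theta(a/b)$, recast $\Fish{\mu\given\pi}$ as a symmetrized relative entropy between $C_{\mu}$ and $\mu\otimes m$, and seek a Donsker--Varadhan representation against $\mathscr{C}_{b,0}(\config\times X)$, each term of which is $\mathscr{P}_{1}(\config)$-continuous by \cref{t:p1-convergence}. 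The algebraic identity, the identification of densities via the Mecke formula, and the continuity of $\mu\mapsto C_{\mu}(f)-(\mu\otimes m)(\e^{f}-1)$ and its reflected partner are all correct; the truncation/Lusin step you flag is standard. This would be a pleasantly direct proof, if the rewriting held on all of $\mathscr{P}_{1}(\config)$.

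There is, however, a genuine gap. Your identity $\Fish{\mu\given\pi} = \tilde{H}(C_{\mu}\given\mu\otimes m) + \tilde{H}(\mu\otimes m\given C_{\mu})$ is verified only for $\mu\ll\pi$, through the Mecke densities $\rho(\eta)$ and $\rho(\eta+\delta_{x})$. When $\mu\not\ll\pi$ the paper's definition sets $\Fish{\mu\given\pi}\coloneq\infty$, whereas the functional $\mathcal{G}(\mu)\coloneq\tilde{H}(C_{\mu}\given\mu\otimes m) + \tilde{H}(\mu\otimes m\given C_{\mu})$ is finite exactly when $C_{\mu}\sim\mu\otimes m$ with a suitably integrable density — a Papangelou-type integrability condition that is not obviously equivalent to $\mu\ll\pi$. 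What your argument establishes is lower semi-continuity of $\mathcal{G}$, together with $\mathcal{G}\leq\Fish{\emparg\given\pi}$ and equality on $\dom\fish$; but lower semi-continuity of a minorant agreeing with the target on its domain does not give lower semi-continuity of the target. Concretely, if $\mu_{n}\to\mu$ in $\mathscr{P}_{1}(\config)$ with $\Fish{\mu_{n}\given\pi}\leq b$ and the limit $\mu$ fails to be absolutely continuous with respect to $\pi$, you obtain $\mathcal{G}(\mu)\leq b$ but not the contradiction with $\Fish{\mu\given\pi}=\infty$ that closedness of the sub-level set requires. The paper addresses precisely this point: \cref{e:lsi} converts the Fisher bound into an entropy bound, la~Vallée-Poussin gives uniform integrability of the densities $(\rho_{n})$ in $L^{1}(\pi)$, and Dunford--Pettis forces $\mu=\rho\pi$. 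To close your proof you would need to either show that $\mathcal{G}(\mu)<\infty$ forces $\mu\ll\pi$ — which I do not see how to do cheaply, if it is true at all — or import the entropy/Dunford--Pettis step, at which point the variational formula ceases to shorten the argument.
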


\begin{proof}
The lower semi-continuity of the Fisher information will follow from that of similar functionals defined at the level of functions, by a uniform integrability argument, as we now show.

\paragraph{Weak $L^1_{\mathrm{loc}}$ lower semi-continuity.}

Let $L^{1}_{\mathrm{loc}}(\pi \otimes m)$ be the space of (equivalence classes of) Borel functions $u \colon \config \times X \to \mathbb{R}$ such that
\begin{equation*}
  \norm{u}_{B} \coloneq \int_{\config \times B} \abs{u} \dd(\pi \otimes m) < \infty, \qquad B \in \mathfrak{B}_{0}(X).
\end{equation*}
Equipped with the locally convex topology induced by the family of semi-norms $\norm{\emparg}_{B}$ with $B \in \mathfrak{B}_{0}(X)$, the space $L^{1}_{\mathrm{loc}}(\pi \otimes m)$ is a Fréchet space, and every continuous linear functional on $L^{1}_{\mathrm{loc}}(\pi \otimes m)$ is represented by some $u \in L^{\infty}(\pi \otimes m)$ with $u$ vanishing outside of some $B \in \mathfrak{B}_{0}(X)$, see \cref{t:DualLLoc}.

Let us define
\begin{equation*}
  \psi(s,t) \coloneq
  \begin{cases}
  \paren[\big]{\log(s+t) - \log s} t, & \text{if } s > 0,\, t > - s;
  \\ +\infty, & \text{otherwise;}
\end{cases}
\end{equation*}
and
\begin{align*}
  & \mathcal{I}_{\pi}(\rho, u) \coloneq \int_{\config \times X} \psi(\rho,u) \dd (\pi \otimes m), \qquad \rho \in L^{1}(\pi),\, u \in L^{1}_{\mathrm{loc}}(\pi \otimes m); \\
  & \mathcal{I}_{\pi,B}(\rho, u) \coloneq \int_{\config \times B} \psi(\rho, u) \dd (\pi \otimes m), \qquad \rho \in L^{1}(\pi),\, u \in L_{\mathrm{loc}}^{1}(\pi \otimes m),\, B\in\mathfrak{B}_0(X).
\end{align*}
We fix $B \in \mathfrak{B}_{0}(X)$ and we write $m_{B}$ for the restriction of $m$ to $B$.
Since $\pi \otimes m_{B}$ is a finite non-atomic measure, and since $\psi \geq 0$, by \cite[Thm.\ 1]{Ioffe} we find that $\mathcal{I}_{\pi,B}$ is lower semi-continuous with respect to the weak topology of $L^{1}(\pi) \times L^{1}(\pi \otimes m_{B})$.
By \cite[II, p.\ 53, Prop.\ 8]{BourbakiEVT}, this weak topology is actually the product topology of the weak topologies on $L^1(\pi)$ and $L^1(\pi\otimes m_{B})$.

Let $(\rho_{\alpha}) \subset L^{1}(\pi)$ be a net weakly converging to $\rho \in L^{1}(\pi)$ and $(u_{\alpha}) \subset L^1_{\mathrm{loc}}(\pi\otimes m)$ be a net weakly converging to $u\in L^1_{\mathrm{loc}}(\pi\otimes m)$.
On the one hand, in view of \cref{t:DualLLoc}, we find that $(u_{\alpha} 1_{B})$ is a net converging weakly in $L^{1}(\pi \otimes m_{B})$ to $u 1_{B}$.
On the other hand,
\begin{equation*}
  \liminf_{\alpha} \mathcal{I}_{\pi,B}(\rho_{\alpha}, u_{\alpha}) = \liminf_{\alpha} \mathcal{I}_{\pi,B}(\rho_{\alpha}, u_{\alpha} 1_{B}) \geq \mathscr{I}_{\pi,B}(\rho, u 1_{B}) = \mathscr{I}_{\pi,B}(\rho, u).
\end{equation*}
Thus, by the lower semi-continuity established above, we find that $\mathcal{I}_{\pi,B}$ is actually lower semi-continuous with respect to the weak topology on $L^{1}(\pi) \times L^{1}_{\mathrm{loc}}(\pi \otimes m)$.

Since $\psi\geq 0$, the functional $B\mapsto \mathcal{I}_{\pi,B}$ is monotone increasing.
By monotone convergence
\begin{align}\label{e:restricted-fisher}
  \sup_{B\in\mathfrak{B}_0(X)} \mathcal{I}_{\pi,B} = \mathcal{I}_{\pi}.
\end{align}
Thus, as a supremum of lower semi-continuous functions, $\mathcal{I}_{\pi}$ is also lower semi-continuous with respect to the weak topology on $L^{1}(\pi) \times L^{1}_{\mathrm{loc}}(\pi \otimes m)$.

\paragraph{$\mathscr{P}_1(\config)$-lower semicontinuity.} 
Fix $b \geq 0$.
We show that
\begin{equation*}
A_b \coloneq \set*{ \mu =\rho\pi \in \mathscr{P}_{1}(\config) : \Fish{ \mu \given \pi } \leq b }
\end{equation*}
is closed in $\mathscr{P}_{1}(\config)$.
In view of \cref{t:p1-polish}, it suffices to show that it is sequentially closed.
Consider $(\mu_{n}) \subset A_b$, with $\mu_n= \rho_n\pi$, converging to some $\mu \in \mathscr{P}_{1}(\config)$.
By \cref{e:lsi} and a theorem of la Vallée-Poussin \cite[Thm.\ 22, p.\ 38]{DellacherieMeyer}, the set $A_b$ is uniformly integrable when regarded as a subset of $L^1(\pi)$.
Hence, by the Dunford--Pettis Theorem \cite[Thm.\ 25, p.\ 43]{DellacherieMeyer}, the family $(\rho_{n})$ is weakly relatively compact in $L^{1}(\pi)$.
Since $(\mu_{n})$ converges to $\mu$ in $\mathscr{P}_{1}(\config)$, we thus find that there exists $\rho \in L^{1}(\pi)$ with $\mu = \rho \pi$ and $(\rho_{n})$ converges to $\rho$ weakly in $L^{1}(\pi)$.

Let $v \in L^{\infty}(\pi \otimes m)$ such that there exists $B \in \mathfrak{B}_{0}(X)$ with $v = 0$ $\pi \otimes m$-almost everywhere outside of $\config \times B$.
Take $h \in \mathscr{C}_{0}(X)$ such that $0 \leq h \leq 1$ and $h = 1$ on $B$.
In view of \cref{t:p1-compactness}, we find that $(\iota_{h} \rho_{n})$ is uniformly integrable in $L^{1}(\pi)$.
By Dunford--Pettis theorem, the sequence $(\iota_{h} \rho_{n})$ converges weakly in $L^{1}(\pi)$ to $(\iota_{h}\rho)$.
By the Mecke formula, we get that
\begin{equation*}
  \begin{split}
    \int_{\config \times X} v\, \diff \rho_{n} \dd (\pi \otimes m) =&\ \int_{\config} \bracket*{ \int_{B} v(\eta - \delta_{x}, x) \eta(\dd x) - \int_{B} v(\eta, x) m(\dd x) } \rho_{n}(\eta) \pi(\dd \eta)
    \\
    =& \int_{\config} F \iota_{h} \rho_{n} \dd \pi - \int_{\config} G \rho_{n} \dd \pi,
  \end{split}
\end{equation*}
where
\begin{align*}
  & F(\eta) = \int_{B} \frac{v(\eta - \delta_{x}, x)}{\eta(h)}  \eta(\dd x) \leq \norm{v}_{L^{\infty}(\pi \otimes m)}, \\
  & G(\eta) = \int_{B} v(\eta, x) m(\dd x) \leq m(B) \norm{v}_{L^{\infty}(\pi \otimes m)}.
\end{align*}
Thus, both $F$ and $G \in L^{\infty}(\pi)$.
By the weak convergence of $(\rho_{n})_n$ and $(\iota_{h} \rho_{n})_n$, we thus find that $(\diff \rho_{n})_n$ converges weakly in $L^{1}_{\mathrm{loc}}(\pi \otimes m)$.

By weak lower semi-continuity of $\mathcal{I}_{\pi}$ on $L^{1}(\pi) \times L_{\mathrm{loc}}^{1}(\pi \otimes m)$ established in the first part of the proof,
\begin{equation*}
\Fish{\mu \given \pi} = \mathcal{I}_{\pi}(\rho, \diff \rho) \leq \liminf_n \mathcal{I}_{\pi}(\rho_{n}, \diff \rho_{n}) = \liminf_n \Fish{ \mu_{n} \given \pi } \leq b.
\end{equation*}
This shows that $\mu \in A_b$ and concludes the proof.
\end{proof}

By Jensen's inequality both $\Ent{\emparg \given \pi}$ and $\Fish{\emparg \given \pi}$ are decreasing along the dual Ornstein--Uhlenbeck semi-group.
In particular, both $\dom \ent$ and $\dom \fish$ are stable under the action of $\dou$.
For local Dirichlet forms, in a quite general setting, the semigroup maps $L^{2}$ densities to the domain of the Fisher information.
In our non-local setting, similar results are not available.
Thus, we carry out ad hoc computations owing to the explicit formula of the Dirichlet form in the Poisson setting.
\begin{theorem}\label{t:properties-ou}
  Let $\mu \in \dom \ent$ and $t > 0$.
  \begin{enumerate}[$(i)$, wide]
    \item\label{i:regularizing-ou} The Ornstein--Uhlenbeck semi-group is regularizing:
      \begin{equation}\label{e:regularizing-ou}
        \dou_{t}\mu \in \dom \fish.
      \end{equation}
    \item\label{i:production-entropy-ou} The Fisher information controls the entropy production along the Ornstein--Uhlenbeck semi-group:
      \begin{equation}\label{e:production-entropy-ou}
        \Ent{\dou_{t}\mu \given \pi} = \Ent{\mu \given \pi} - \int_{0}^{t} \Fish{\dou_{s} \mu \given \pi} \dd s.
      \end{equation}
    \item\label{i:exponential-convergence-entropy} The Ornstein--Uhlenbeck semi-group converges exponentially fast to equilibrium:
      \begin{equation}\label{e:exponential-convergence-entropy}
        \Ent{\dou_{t} \mu \given \pi} \leq \e^{-t} \Ent{\mu \given \pi}.
      \end{equation}
  \end{enumerate}
\end{theorem}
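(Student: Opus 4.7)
The three claims are tightly coupled: (iii) follows from (ii) via the modified log-Sobolev inequality \cref{e:lsi} and Grönwall, while (i) follows from (ii) combined with a contraction of the Fisher information along $\dou$. Hence the core task is (ii), whose heuristic derivation is
\[ \frac{\dd}{\dd t}\Ent{\dou_t(\rho\pi)\given\pi} = \int (1+\log\ou_t\rho)\,\mathsf{L}\ou_t\rho\,\dd\pi = -\int \diff(\log\ou_t\rho)\,\diff\ou_t\rho\,\dd(\pi\otimes m) = -\Fish{\dou_t(\rho\pi)\given\pi}, \]
using the integration by parts \cref{e:ou-ipp-generator}.

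\textbf{Entropy production (ii).} To make this rigorous, I would first establish (ii) for a dense class of regular densities: for $\rho \in L^\infty(\pi)$ with $\varepsilon \leq \rho \leq M$, we have $\rho \in L^2(\pi)$, so $\ou_t\rho \in \dom\mathsf{L} = \sobolev^{2,2}$ by \cref{e:ou-regularization-L2}, and the Markov property preserves the bounds $\varepsilon \leq \ou_t\rho \leq M$; hence $\abs{\diff\log\ou_t\rho} \leq \varepsilon^{-1}\abs{\diff\ou_t\rho}$, so $\log\ou_t\rho\in\sobolev^{1,2}$ and the integration by parts is rigorous. For general $\mu = \rho\pi\in\dom\ent$, approximate by $\rho_n := Z_n^{-1}\bracket[\big]{(\rho\wedge n)\vee n^{-1}}$ with $Z_n\to 1$. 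These satisfy the regularity above, $\rho_n\to\rho$ in $L^1(\pi)$, and $\Ent{\rho_n\pi\given\pi}\to\Ent{\mu\given\pi}$ by dominated convergence. Since $\dou_t(\rho_n\pi)\to\dou_t\mu$ in $\mathscr{P}_1(\config)$, passing to the limit in the regular identity via Fatou and the lower semi-continuity of $\fish$ (\cref{t:lsc-I}) and $\ent$ (\cref{t:lsc-H}) yields the \emph{inequality}
\[ \int_0^t \Fish{\dou_s\mu\given\pi}\,\dd s \leq \Ent{\mu\given\pi}-\Ent{\dou_t\mu\given\pi}. \]

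\textbf{Fisher contraction, (i), and equality in (ii).} Both the reverse inequality in (ii) and (i) rest on the contraction
\[ \Fish{\dou_t\nu\given\pi}\leq \e^{-2t}\Fish{\nu\given\pi}, \qquad \nu\in\mathscr{P}_1(\config), \]
which I would prove by combining the Mehler representation $\ou_t\rho(\eta+\delta_x) = (1-\e^{-t})\ou_t\rho(\eta) + \e^{-t}\ou_t\bracket[\big]{\rho(\cdot+\delta_x)}(\eta)$, the concavity of $\log$ (producing the extra factor $\e^{-2t}$), and the joint convexity of $\phi(a,b):=(a-b)(\log a-\log b)$ (easily verified from the Hessian) combined with Jensen for the expectations appearing in Mehler's formula, using the invariance $\int \ou_t F\,\dd\pi = \int F\,\dd\pi$. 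From the previous paragraph, $\Fish{\dou_{s_0}\mu\given\pi}<\infty$ for some $s_0\in(0,t)$, so contraction gives $\Fish{\dou_t\mu\given\pi}\leq\e^{-2(t-s_0)}\Fish{\dou_{s_0}\mu\given\pi}<\infty$, which is (i). For the full equality in (ii), apply the regular version to $\dou_\epsilon\mu\in\dom\fish$ and pass $\epsilon\to 0$ using \cref{e:continuity-dou-p1}, the continuity of $\Ent{\dou_\epsilon\mu\given\pi}\to\Ent{\mu\given\pi}$ (from lower semi-continuity plus monotonicity of $\ent$ along $\dou$), and monotone convergence of $\int_\epsilon^{t+\epsilon}\Fish{\dou_s\mu}\,\dd s$ (using the $s$-monotonicity of $\fish$ granted by contraction). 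Finally, (iii) is immediate: \cref{e:lsi} and (ii) give $\frac{\dd}{\dd t}\Ent{\dou_t\mu\given\pi}\leq-\Ent{\dou_t\mu\given\pi}$, and Grönwall concludes.

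\textbf{Main obstacle.} The delicate step is the Fisher contraction: its ingredients are classical, but their non-local assembly in the Poisson setting requires careful case analysis on the sign of $\ou_t\bracket[\big]{\rho(\cdot+\delta_x)} - \ou_t\rho$ when combining $\log$-concavity with Jensen, precisely to produce the $\e^{-2t}$ factor (a weaker convexity-only argument would give no decay rate). A secondary subtlety is extending the regular-density version of (ii) to the class of finite-Fisher densities needed to apply it to $\dou_\epsilon\mu$; this is handled by a further approximation exploiting that $\rho \mapsto (\rho\wedge n)\vee n^{-1}$ reduces the integrand $\phi$ pointwise, so that dominated convergence applies simultaneously to $\fish$ and $\ent$ under this truncation.
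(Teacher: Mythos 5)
Your proposal takes a genuinely different route from the paper: instead of deriving the equality in (ii) directly, you obtain the one-sided inequality $\int_0^t\Fish{\dou_s\mu\given\pi}\dd s\leq\Ent{\mu\given\pi}-\Ent{\dou_t\mu\given\pi}$ via Fatou and lower semi-continuity, and then try to close the gap using a Fisher-information contraction. The paper instead truncates \emph{both} the nonlinearity $\theta$ (via $\theta_k$ in \cref{e:theta-k}) and the density $\rho$, and exploits monotone convergence on each level; the monotonicities needed are verified from the convexity of $\theta_{k+1}-\theta_k$ and of the analogous statement for $s\mapsto s\wedge k$. This produces an equality directly, without any one-sided argument and without a Fisher contraction. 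Your route, if it worked, would be more structural; but there are two genuine gaps.

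First, the claimed contraction $\Fish{\dou_t\nu\given\pi}\leq\e^{-2t}\Fish{\nu\given\pi}$ is \emph{false}. The concavity of $\log$ yields $\log\bigl((1-\lambda)a+\lambda b\bigr)-\log a\geq\lambda(\log b-\log a)$, which is the \emph{wrong} direction whenever $b>a$, and switches direction when $b<a$; no sign-case analysis can assemble these into a single useful estimate because the two cases interleave over $\config\times X$. What one \emph{can} prove is the rate $\e^{-t}$: apply Jensen's inequality for the jointly convex map $\phi(a,b)=(a-b)(\log a-\log b)$ to the representation $\ou_t\rho(\eta+\delta_x)=\esp[\rho(Y+Z_x)]$, $\ou_t\rho(\eta)=\esp[\rho(Y)]$ with $Y=\eta^{(\e^{-t})}+\xi_t$ and $Z_x\in\{0,\delta_x\}$ independent of $Y$ with $\prob(Z_x=\delta_x)=\e^{-t}$; then $\phi(\rho(Y),\rho(Y))=0$ kills the $(1-\e^{-t})$-mass and only the factor $\e^{-t}$ survives, and the $\pi$-invariance of $Y$ finishes the computation. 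That the rate $\e^{-2t}$ genuinely fails can be seen already on $X=\{*\}$ with $\rho$ the density of $\mathrm{Poisson}((1+\varepsilon)\lambda)$ against $\mathrm{Poisson}(\lambda)$: a direct computation gives $\Fish{\dou_t\mu\given\pi}/\Fish{\mu\given\pi}=\e^{-t}\log(1+\varepsilon\e^{-t})/\log(1+\varepsilon)$, which tends to $\e^{-t}$ as $\varepsilon\to\infty$. The rate $\e^{-t}$ (or even just monotone decrease, which the paper also asserts) is enough for your step (i), so this gap is repairable.

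The second gap, however, is structural. To close the reverse inequality in (ii), you want to apply a ``regular version'' of (ii) to $\dou_\epsilon\mu$, and to justify this you propose the two-sided truncation $\tilde\rho_n:=Z_n^{-1}\bigl[(\tilde\rho\wedge n)\vee n^{-1}\bigr]$, arguing that $T_n:=(\emparg\wedge n)\vee n^{-1}$ reduces the integrand $\phi$ pointwise so dominated convergence applies. But you then need $\int_0^t\Fish{\dou_s(\tilde\rho_n\pi)\given\pi}\dd s\to\int_0^t\Fish{\dou_s\nu\given\pi}\dd s$ and $\Ent{\dou_t(\tilde\rho_n\pi)\given\pi}\to\Ent{\dou_t\nu\given\pi}$, and the domination you have is for $\phi(T_n\tilde\rho(\cdot+\delta_x),T_n\tilde\rho)$, \emph{not} for $\phi(\mathsf{P}_s T_n\tilde\rho(\cdot+\delta_x),\mathsf{P}_s T_n\tilde\rho)$: truncation does not commute with $\mathsf{P}_s$, so the claimed pointwise dominator is irrelevant to the time-evolved quantities. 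With only lower semi-continuity of $\ent$ and $\fish$ available, the limit passage gives back the same one-sided inequality you already had, not the equality. This is precisely the obstruction the paper circumvents by choosing the increasing upper truncation $\rho\wedge k$ and the truncations $\theta_k$, which are tuned to make both sides of the identity \emph{monotone} in $k$ (see the inequality after \cref{e:theta-monotone-convex}), yielding the equality by monotone convergence rather than by two one-sided estimates. You should replace your two-sided-truncation-plus-Fatou scheme with the paper's double monotone-convergence argument, or find an alternative mechanism that actually delivers convergence of $\Fish{\dou_s\nu_n\given\pi}$ and $\Ent{\dou_t\nu_n\given\pi}$ and not merely $\liminf$ bounds.
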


\begin{remark}
  \cref{i:regularizing-ou,i:production-entropy-ou} are the usual de Bruijn's identity.
  They are classical for diffusions.
  See, for instance, \cite[Prop.\ 5.2.2]{BGL} or \cite[Thm.\ 4.16]{AGSHeat}.
  We provide a proof for Poisson processes, for completeness.
\end{remark}

\begin{proof}
  \cref{i:regularizing-ou,i:production-entropy-ou}
  Assume first that $\rho \in L^{2}(\pi)$.
  As before, write $\theta(s) \coloneq s \log s -s + 1$, for $s \geq 0$ and $\theta(s) \coloneq \infty$ otherwise.
  For all $k \in \mathbb{N}$, set
  \begin{equation}\label{e:theta-k}
    \theta_{k}(s) \coloneq \int_{1}^{s} k \wedge \log r \vee (-k) \dd r, \qquad s \geq 0,
  \end{equation}
  and $\theta_{k}(s) \coloneq \infty$ otherwise.
  Then, $(\theta_{k})$ is an increasing sequence of Lipschitz functions converging to $\theta$.
  Let $t > 0$, by \cref{e:ou-regularization-L2}, $\mathsf{P}_{t}\rho \in \dom \mathsf{L}$.
  Since $\theta_{k}'$ is Lipschitz, we also find that $\theta_{k}'(\mathsf{P}_{t}\rho) \in \dom \mathcal{E}$ \cite[Prop.\ 3.3.1, p.\ 14]{BouleauHirsch}.
  Thus,
  \begin{equation*}
    \int \theta_{k}(\rho) \dd \pi - \int \theta_{k}(\mathsf{P}_{t} \rho) \dd \pi = - \int_{0}^{t} \int \theta_{k}'(\mathsf{P}_{t} \rho) \mathsf{L} \mathsf{P}_{t} \rho \dd \pi \dd t = \int_{0}^{t} \int \diff \theta_{k}'(\mathsf{P}_{t} \rho) \diff \mathsf{P}_{t} \rho \dd \pi \dd m.
  \end{equation*}
  As $k \to \infty$, by monotone convergence, the left-hand side converges to $\Ent{ \mathsf{P}_{t}^{\star} \mu \given \pi } - \Ent{ \mu \given \pi }$.
  Now, we also claim that the right-hand side is also monotone.
  First, by convexity of $\theta_{k}$, we find that the integrand on the right-hand side is non-negative.
  Differentiating twice yields that $\theta_{k+1} - \theta_{k}$ is convex.
  It thus follows, that
  \begin{equation}\label{e:theta-monotone-convex}
    (\theta_{k+1}'(s) - \theta_{k+1}'(r))(s-r) \geq (\theta_{k}'(s) - \theta_{k}'(r))(s-r), \qquad s,\, r \geq 0.
  \end{equation}
  The above formula is the monotonicity of the integrand.
  We obtain \cref{e:production-entropy-ou} by monotone convergence.
  This also gives \cref{e:regularizing-ou} for almost every $t$.
  We conclude it holds for every $t$ by continuity.

  Now we only assume that $\mu = \rho \pi \in \dom \ent$
  For $k \in \mathbb{N}$, let $\mu_{k} \coloneq (\rho \wedge k) \pi / Z_{k}$.
  We explicitly compute
  \begin{equation*}
    \Fish{ \mathsf{P}_{t}^{\star} \mu_{k} \given \pi } = \frac{1}{Z_{k}} \int \diff (\mathsf{P}_{t} \rho \wedge k) \diff \theta'_{k}(\mathsf{P}_{t} \rho) \dd \pi \dd m.
  \end{equation*}
  Similarly to \cref{e:theta-monotone-convex}, we have for $s$ and $r \geq 0$:
  \begin{equation*}
    (\log(s \wedge (k+1)) - \log(r \wedge (k+1)))(s \wedge (k+1) - r \wedge (k+1)) \geq (\log(s \wedge k) - \log(r \wedge k))(s \wedge k - r \wedge k).
  \end{equation*}
  By the previous argument for $L^{2}(\pi)$-densities, we get that
  \begin{equation*}
    \Ent{ \mathsf{P}^{\star}_{t} \mu_{k} \given \pi } - \Ent{ \mu_{k} \given \pi } = \int_{0}^{t} \Fish{ \mathsf{P}_{s}^{\star} \mu_{k} \given \pi } \dd s.
  \end{equation*}
  Since $Z_{k} \to 1$, we conclude by monotone convergence taking $k \to \infty$.

  \cref{i:exponential-convergence-entropy}
  Grönwall lemma together with \cref{i:production-entropy-ou,e:lsi}.
\end{proof}

\begin{remark}
  The statement above and its proof can be immediately extended to functions rather than probability measures.
  For $\rho \in L^{1}(\pi)$, write
  \begin{align*}
    & \ent_{\pi}(\rho) \coloneq \int \rho \log \rho \dd \pi - \int \rho \dd \pi \log \int \rho \dd \pi;
 \\ & \fish_{\pi}(\rho) \coloneq \int \diff \rho \diff \log \rho \dd \pi \dd m.
  \end{align*}
  If $\ent_{\pi}(\rho) < \infty$, then
  \begin{equation*}
    \frac{\dd}{\dd t} \ent_{\pi}(\mathsf{P}_{t}\rho) = - \fish_{\pi}(\mathsf{P}_{t} \rho), \qquad t > 0.
  \end{equation*}
\end{remark}

\section{Continuity equation}\label{s:continuity-equation}

In order to construct a \emph{Riemannian distance}, we first present a notion of \emph{infinitesimal variation} of a curve $\bar{\mu} = (\mu_{t}) \subset \mathscr{P}_{1}(\config)$.
Informally, the variation is obtained through a weak formulation of the \emph{discrete continuity equation} \cref{e:intro-ce}.
In order to give a more rigorous definition let us recall that we write $\mathscr{S}$ for the algebraic linear span of functions of the form $\e^{-\iota_{h}}$, $h \in \mathscr{C}_{0}^{+}(X)$.
For $T > 0$, we say that $\bar{\mu} = (\mu_{t}) \in \mathscr{F}([0,T],\mathscr{P}(\config))$ and $\bar{\nu} = (\nu_{t}) \in \mathscr{F}([0,T], \mathscr{M}_{b,0}(\config \times X))$ solve the \emph{continuity equation} on $[0,T]$ provided
\begin{equation}\label{e:ce}\tag{$\mathbf{CE}_{T}$}
  0 = \int_{0}^{T} \dot{\varphi}(t) \int G \dd \mu_{t} \dd t + \int_{0}^{T} \varphi(t) \int \diff G \dd \nu_{t} \dd t, \qquad  G \in \mathscr{S},\, \varphi \in \mathscr{C}_{c}^{\infty}((0,T)),
\end{equation}
and 
\begin{equation}\label{e:nu-locally-bounded}
  \int_{[0,T]} \abs{\nu_{t}}(\config \times B) \dd t < \infty, \qquad B \in \mathfrak{B}_{0}(X).
\end{equation}
Here, and in all the paper, $\dot{\varphi}$ indicates a time derivative, and we identify $\bar{\nu}$ with a measure on $\config \times X \times [0,1]$, by
\begin{equation*}
  \bar{\nu}(\dd \eta \dd x \dd t) = \int \nu_{t}(\dd \eta \dd x) \dd t.
\end{equation*}
With this identification, \cref{e:nu-locally-bounded} can be written $\bar{\nu} \in \mathscr{M}_{b,0}(\config \times X \times [0,T])$.
Informally, we can say that $\bar{\nu}$ is tangent to the curve $\bar{\mu}$.

\begin{remark}
Contrary to \cref{e:intro-ce}, the curve $\bar{\nu}$ does not depend explicitly on $\bar{\mu}$.
When constructing the distance in \cref{s:variational-distance}, the action functional will automatically select solutions of a particular form.
\end{remark}

Let us start with the following stability property for solutions to the continuity equation.
\begin{lemma}\label{t:ce-stability-limit}
  Let $(\bar{\mu}_{n}, \bar{\nu}_{n})$ be a sequence of solutions to the continuity equation, $\bar{\mu} \in \mathscr{F}([0,T], \mathscr{P}(\config))$, and $\bar{\nu} \in \mathscr{M}_{b,0}(\config \times X \times [0,T])$ such that
  \begin{align*}
    & \mu_{n,t} \xrightarrow[n \to \infty]{\mathscr{P}(\config)} \mu_{t}, \qquad a.e.\ t \in [0,T],
  \\&\bar{\nu}_{n} \xrightarrow[n \to \infty]{\mathscr{M}_{b,0}(\config \times X \times [0,T])} \bar{\nu}.
  \end{align*}
\end{lemma}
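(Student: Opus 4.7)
The plan is to verify directly that the limit pair $(\bar{\mu}, \bar{\nu})$ satisfies \eqref{e:ce}; the local bound \eqref{e:nu-locally-bounded} for $\bar{\nu}$ is already encoded in the hypothesis $\bar{\nu} \in \mathscr{M}_{b,0}(\config \times X \times [0,T])$, so the only task is to pass to the limit in the weak identity \eqref{e:ce} satisfied by each approximating pair $(\bar{\mu}_n, \bar{\nu}_n)$. Fix $G \in \mathscr{S}$ and $\varphi \in \mathscr{C}_c^\infty((0,T))$, and write $I_n^{(1)}$ and $I_n^{(2)}$ for the two summands on the right of \eqref{e:ce} evaluated against $(\bar{\mu}_n, \bar{\nu}_n)$; I would show $I_n^{(1)} \to I^{(1)}$ and $I_n^{(2)} \to I^{(2)}$ separately and conclude by linearity.

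For $I_n^{(1)}$ I would invoke dominated convergence on $[0,T]$. Since $G \in \mathscr{S} \subset \mathscr{C}_b(\config)$ is bounded and $\mu_{n,t} \to \mu_t$ weakly for a.e.\ $t$, the inner integrals $\int G \dd \mu_{n,t}$ converge pointwise in $t$ to $\int G \dd \mu_t$; the integrand is dominated by $\norm{G}_\infty |\dot\varphi(t)| \in L^1([0,T])$, and the conclusion follows. For $I_n^{(2)}$ I would rewrite the integral as a pairing against $\bar{\nu}_n$ on $\config \times X \times [0,T]$, namely $I_n^{(2)} = \bar{\nu}_n(\varphi \cdot \diff G)$. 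By linearity, it suffices to treat $G = \e^{-\iota_h}$ with $h \in \mathscr{C}_0^+(X)$, in which case
\begin{equation*}
\diff_x G(\eta) = \e^{-\eta(h)} \paren[\big]{\e^{-h(x)} - 1}, \qquad \eta \in \config,\ x \in X,
\end{equation*}
is continuous on $\config \times X$, uniformly bounded by $1$, and vanishes outside $\config \times \supp(h)$. Hence $\diff G \in \mathscr{C}_{b,0}(\config \times X)$ and $\varphi \cdot \diff G$ is an admissible test function for the vague topology on $\mathscr{M}_{b,0}(\config \times X \times [0,T])$, so the assumed vague convergence $\bar{\nu}_n \to \bar{\nu}$ gives $I_n^{(2)} \to I^{(2)}$.

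The only delicate point is confirming that $\varphi \cdot \diff G$ lies in the test class dual to $\mathscr{M}_{b,0}(\config \times X \times [0,T])$, which is why the definition of $\mathscr{S}$ via functions of the form $\e^{-\iota_h}$ with $h$ vanishing outside a bounded set is tailor-made for the continuity equation: it guarantees simultaneously that $G$ is bounded on $\config$ (for the $\bar{\mu}_n$-term) and that $\diff G$ has bounded support in the $X$-direction (for the $\bar{\nu}_n$-term). Every remaining step reduces to dominated convergence and the basic definitions of weak and vague convergence, so I expect no additional obstacle.
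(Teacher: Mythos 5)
Your proof is correct and follows the same two-step strategy as the paper's (dominated convergence for the $\bar{\mu}$-term, vague convergence of $\bar{\nu}_n$ tested against $\varphi \cdot \diff G$ for the $\bar{\nu}$-term); the paper's version is simply terser, leaving implicit the computation $\diff_x \e^{-\iota_h}(\eta) = \e^{-\eta(h)}(\e^{-h(x)}-1)$ and the verification that it lies in $\mathscr{C}_{b,0}(\config\times X)$, which you spell out. Your observation that the bounded-$X$-support of $\diff G$ is exactly what makes $\mathscr{S}$ the right test class is a useful remark that the paper does not make explicit.
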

\begin{proof}
  The convergence of the first term in the right-hand side of \cref{e:ce} follows from the assumption on $(\bar{\mu}_{n})$ together with the dominated convergence theorem.
  The convergence of the second term in the right-hand side of \cref{e:ce,e:nu-locally-bounded} follow directly from the assumptions on $(\bar{\nu}_{n})$.
\end{proof}

\subsection{Examples of solutions to the continuity equation}

We start with an important example of solutions to the continuity equation built from the dual Ornstein--Uhlenbeck semi-group.
\begin{proposition}\label{t:ou-solution}
  Let $\mu_{0} \coloneq \rho_{0} \pi \in \mathscr{P}_{1}(\config)$.
  For all $t \geq 0$, set 
  \begin{equation*}
  \mu_{t} \coloneq \dou_{t} \mu_{0} = \ou_{t} \rho \pi, \qquad \nu_{t} \coloneq - \diff \ou_{t} \rho\, \dd (\pi \otimes m).
  \end{equation*}
  Then $(\bar{\mu}, \bar{\nu})$ is a solution to the continuity equation.
\end{proposition}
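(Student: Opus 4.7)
Write $\rho_t \coloneq \ou_t \rho_0$, so $\mu_t = \rho_t \pi$. The statement has two parts: the integrability bound \eqref{e:nu-locally-bounded}, and the distributional identity \eqref{e:ce}. For \eqref{e:nu-locally-bounded}, I fix $B \in \mathfrak{B}_0(X)$ and apply the triangle inequality on $|\diff \rho_t|$ followed by the Mecke identity to the term $\rho_t(\eta+\delta_x)$:
\[
|\nu_t|(\config \times B) \;\le\; \int\!\!\int_B \paren*{\rho_t(\eta+\delta_x) + \rho_t(\eta)}\pi(d\eta)m(dx) \;=\; I_{\dou_t \mu_0}(B) + m(B).
\]
The intensity formula \eqref{e:intensity-Ornstein-Uhlenbeck} gives the uniform-in-$t$ bound $I_{\dou_t \mu_0}(B) \le I_{\mu_0}(B) + m(B) < \infty$ since $\mu_0 \in \mathscr{P}_1(\config)$, and integration over $[0,T]$ concludes.

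For \eqref{e:ce}, by linearity it suffices to test against $G = \e^{-\iota_h}$ with $h \in \mathscr{C}_0^+(X)$, and a direct Laplace-functional computation confirms $G \in \mathscr{W}^{2,2} = \dom \gen$. First treat the case $\rho_0 \in L^\infty \cap L^1(\pi) \subset L^2(\pi)$: by \eqref{e:ou-regularization-L2}, $\rho_t \in \dom \gen$ for $t > 0$, and the IBP \eqref{e:ou-ipp-generator} applied with the test function $G \in \mathscr{W}^{1,2}$ gives
\[
\frac{d}{dt}\int G\,\rho_t\,d\pi \;=\; \int G\,\gen\rho_t\,d\pi \;=\; -\int \diff G\, \diff \rho_t\,d(\pi\otimes m) \;=\; \int \diff G\, d\nu_t
\]
for almost every $t$; multiplying by $\varphi \in \mathscr{C}_c^\infty((0,T))$ and integrating by parts in time yields \eqref{e:ce}. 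For arbitrary $\rho_0 \in L^1(\pi)$, I set $\rho_0^{(n)} \coloneq \rho_0 \wedge n$ and apply the bounded case to each approximation (\eqref{e:ce} is linear, so the subprobability nature of $\rho_0^{(n)}\pi$ is immaterial). The first term of \eqref{e:ce} passes to the limit by $L^1$-contractivity of $\ou_t$ together with boundedness of $G$. For the flux, I use the Mecke-based non-local IBP
\[
-\int \diff G\,\diff (\ou_t \rho_0^{(n)})\,d(\pi\otimes m) \;=\; \int \gen G \cdot \ou_t \rho_0^{(n)}\,d\pi,
\]
valid because $\ou_t \rho_0^{(n)} \in L^\infty(\pi)$ and $\gen G \in L^1(\pi)$. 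A direct computation shows $|\gen G(\eta)| \le \eta(\e^h - 1) + m(|\e^{-h}-1|)$, and this bound is integrable against $\dou_t \mu_0$ uniformly in $t \in [0,T]$ thanks again to \eqref{e:intensity-Ornstein-Uhlenbeck} and $\mu_0 \in \mathscr{P}_1(\config)$. Since $\ou_t \rho_0^{(n)} \nearrow \ou_t \rho_0$ $\pi$-a.s., monotone convergence and then dominated convergence in $t$ close the argument.

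The main obstacle is the regularity mismatch: $\rho_0$ is only in $L^1(\pi)$, so neither \eqref{e:ou-regularization-L2} nor the Dirichlet-form IBP \eqref{e:ou-ipp-generator} applies directly with $\rho_0$ in the second slot. The fix is to truncate to bounded densities (where the $L^2$ theory works) and relax the IBP step to its Mecke version, which accommodates $F \in L^\infty$ in place of $F \in \mathscr{W}^{1,2}$; the sublinear-in-$\eta$ bound on $\gen G$ combined with $\mu_0 \in \mathscr{P}_1(\config)$ provides the uniform integrability needed to pass to the limit in the flux term.
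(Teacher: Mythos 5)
Your proof is correct, but it takes a different route from the paper's. The paper's proof of the continuity equation identity moves the semigroup onto the test function: starting from $\int_0^T \dot\varphi \int G\,\dd\mu_t\,\dd t$ it uses the $\pi$-symmetry of $\ou$ to rewrite $\int G\,\dd\mu_t = \int \ou_t G\,\dd\mu_0$, differentiates $\ou_t G$ in time (noting $\ou_t G\in\dom\gen$ because $G\in\mathscr S\subset\sobolev^{2,2}=\dom\gen$), and then applies the Mecke integration by parts once. Because all the time-differentiation happens on the side of the \emph{smooth} object $\ou_t G$, no truncation of $\rho_0$ is ever needed; the only integrability input is that $\gen\ou_t G = \ou_t\gen G$ has sublinear growth and is therefore integrable against $\mu_0\in\mathscr P_1(\config)$. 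You instead keep the semigroup on the density, invoke the $L^2$ regularization \cref{e:ou-regularization-L2} for $\rho_t$, and work around the fact that $\rho_0$ is merely $L^1$ by truncating $\rho_0\wedge n$ and passing to the limit through the Mecke rewrite $-\int\diff G\,\diff\ou_t\rho_0^{(n)}\,\dd(\pi\otimes m)=\int\gen G\cdot\ou_t\rho_0^{(n)}\,\dd\pi$. Both routes are sound and rest on the same two pillars (Mecke identity and the $\mathscr P_1$-hypothesis to control $|\gen G|$ against the flow); the paper's is shorter because the regularization is already built into the test function, whereas yours is closer to the textbook parabolic-PDE calculation and makes the role of the truncation explicit. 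One small remark: your application of monotone convergence to $\int\gen G\,\ou_t\rho_0^{(n)}\,\dd\pi$ needs either a split into $(\gen G)_\pm$ or a passage to dominated convergence with dominating function $|\gen G|\,\ou_t\rho_0\in L^1(\pi)$, since $\gen G$ is not sign-definite — but the uniform-in-$t$ bound you quote already provides exactly this majorant, so the gap is cosmetic.
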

\begin{proof}
  Since $\mu \in \mathscr{P}_{1}(\config)$, by the Mecke identity we have that
  \begin{equation}\label{e:l1-loc}
    \int_{\config} \int_{B} \rho(\eta + \delta_{x}) m(\dd x) \pi(\dd \eta) = I_{\mu}(B) < \infty, \qquad B \in \mathfrak{B}_{0}(X).
  \end{equation}
  
  Thus,
  \begin{equation*}
  \int_{0}^{T} \abs{\nu_{t}}(\config \times B) \dd t = \int_{\config} \int_{B}  \abs{\diff \mathsf{P}_{t} \rho} \dd m \dd \pi \leq \int_{0}^{T} \paren[\big]{ I_{\mu_{t}}(B) + m(B) } \dd t.
  \end{equation*}
  The right-hand side is finite by \cref{e:intensity-Ornstein-Uhlenbeck}.
  This shows that $\bar{\nu}$ satisfies \cref{e:nu-locally-bounded}.
  
  Let $\varphi \in \mathscr{C}_{c}^{\infty}((0,T))$ and $G \in \mathscr{S}$.
  We compute
  \begin{equation*}
      \begin{split}
        \int_{0}^{T} \dot{\varphi}(t) \int G \dd \mu_{t} \dd t &= \int_{0}^{T} \dot{\varphi}(t) \int \mathsf{P}_{t} G \dd \mu_{0} \dd t \\
                                                          &= - \int_{0}^{T} \varphi(t) \int \frac{\dd}{\dd t} \mathsf{P}_{t} G \dd \mu_{0} \dd t \\
                                                          &= - \int_{0}^{T} \varphi(t) \int \mathsf{L} \mathsf{P}_{t} G \dd \mu_{0} \dd t \\
                                                          &= \int_{0}^{T} \varphi(t) \iint \diff G \diff \mathsf{P}_{t} \rho \dd \pi \dd m \dd t.
      \end{split}
  \end{equation*}
  We have used the symmetry of $\ou$ with respect to $\pi$, an integration by parts with respect to the $t$ variable, \cref{e:ou-regularization-L2,e:ou-ipp-generator}.
\end{proof}

The Ornstein--Uhlenbeck flow also preserves solutions of the continuity equation in the following sense.
\begin{proposition}\label{t:stability-ou-solution}
  Let $\varepsilon > 0$.
  Assume that $(\bar{\mu}, \bar{\nu})$ is a solution to the continuity equation.
  Consider the measures given for all $t \in [0,T]$ by
  \begin{equation*}
    \mu^{\varepsilon}_{t} \coloneq \mathsf{P}_{\varepsilon}^{\star} \mu_{t}, \qquad \nu^{\varepsilon}_{t} \coloneq \e^{-\varepsilon} \mathsf{P}_{\varepsilon}^{\star} \nu_{t}.
  \end{equation*}
  Then $(\bar{\mu}^{\varepsilon}, \bar{\nu}^{\varepsilon})$ is also a solution to the continuity equation.
\end{proposition}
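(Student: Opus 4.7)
The strategy is to reduce the verification of \cref{e:ce} for $(\bar\mu^\varepsilon,\bar\nu^\varepsilon)$ to the hypothesis on $(\bar\mu,\bar\nu)$, by transferring a test function $G \in \mathscr{S}$ to the regularised test function $\mathsf{P}_\varepsilon G$. By duality on the $\eta$-marginal one has
\begin{equation*}
  \int G\,\dd\mu^\varepsilon_t = \int \mathsf{P}_\varepsilon G\,\dd\mu_t, \qquad \int \diff G\,\dd\nu^\varepsilon_t = \e^{-\varepsilon}\int \mathsf{P}_\varepsilon\diff G\,\dd\nu_t,
\end{equation*}
where in the second identity $\mathsf{P}_\varepsilon$ acts on the $\eta$-variable for each fixed $x\in X$. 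The Bakry--Émery commutation \cref{e:be} then gives $\e^{-\varepsilon}\mathsf{P}_\varepsilon\diff G = \diff\mathsf{P}_\varepsilon G$. Substituting, the left-hand side of \cref{e:ce} for $(\bar\mu^\varepsilon,\bar\nu^\varepsilon)$ tested against $G$ equals the left-hand side of \cref{e:ce} for $(\bar\mu,\bar\nu)$ tested against $\mathsf{P}_\varepsilon G$; this vanishes as soon as $\mathsf{P}_\varepsilon G \in \mathscr{S}$.

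The main point is therefore to verify the stability $\mathsf{P}_\varepsilon\mathscr{S} \subset \mathscr{S}$. For a basic element $G = \e^{-\iota_h}$ with $h \in \mathscr{F}_0^+(X)$, Mehler's formula combined with the independence of the thinning and the auxiliary Poisson integral, and with the Laplace transform of the latter, yields the explicit identity
\begin{equation*}
  \mathsf{P}_\varepsilon G(\eta) = \exp\bracket[\big]{-(1-\e^{-\varepsilon})\, m(1-\e^{-h})}\cdot \exp(-\iota_{\tilde h}(\eta)), \qquad \tilde h \coloneq -\log\paren[\big]{1 - \e^{-\varepsilon}(1-\e^{-h})}.
\end{equation*}
Since $\tilde h$ vanishes exactly where $h$ does, $\tilde h \in \mathscr{F}_0^+(X)$, and $\tilde h$ is continuous whenever $h$ is. By linearity, $\mathsf{P}_\varepsilon$ maps $\mathscr{G}$ into itself; combined with the fact recalled in \cref{s:mehler} that $\mathsf{P}_\varepsilon$ preserves continuity, this gives $\mathsf{P}_\varepsilon\mathscr{S} \subset \mathscr{S}$. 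In particular, $\diff \mathsf{P}_\varepsilon G$ is bounded, so all the integrals above make sense.

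It remains to check the local-boundedness condition \cref{e:nu-locally-bounded} for $\bar\nu^\varepsilon$. Extending $\mathsf{P}^\star_\varepsilon$ to $\mathscr{M}_{b,0}(\config\times X)$ by letting it act only on the $\eta$-marginal (via Hahn--Jordan decomposition, or equivalently by disintegration with respect to the $x$-marginal), it becomes a contraction of the total variation on each slice $\config\times B$: $\abs{\mathsf{P}^\star_\varepsilon \nu}(\config\times B) \le \abs{\nu}(\config\times B)$ for every $B \in \mathfrak{B}_0(X)$. Hence $\abs{\nu^\varepsilon_t}(\config\times B) \le \e^{-\varepsilon}\abs{\nu_t}(\config\times B)$, and integrating in $t$ and invoking \cref{e:nu-locally-bounded} for $\bar\nu$ yields the same bound for $\bar\nu^\varepsilon$.

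The only non-routine step is the stability $\mathsf{P}_\varepsilon\mathscr{S} \subset \mathscr{S}$, which relies on the explicit Mehler-type calculation above; once this is in hand, the remainder is just duality and \cref{e:be}.
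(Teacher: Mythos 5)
Your proof is correct and follows essentially the same route as the paper's: transfer the test to $(\bar{\mu},\bar{\nu})$ against $\mathsf{P}_{\varepsilon}G$ and invoke the Bakry--Émery commutation $\diff\mathsf{P}_{\varepsilon}G=\e^{-\varepsilon}\mathsf{P}_{\varepsilon}\diff G$, then observe that $\mathsf{P}^{\star}_{\varepsilon}$ acts only on the $\config$-marginal to propagate \cref{e:nu-locally-bounded}. The only substantive addition is that you spell out the stability $\mathsf{P}_{\varepsilon}\mathscr{S}\subset\mathscr{S}$ via the explicit Mehler computation, a step the paper simply asserts.
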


\begin{proof}
  Let $\varphi \in \mathscr{C}_{c}^{\infty}$ and $G \in \mathscr{S}$.
  Then, $\ou_{\varepsilon} G \in \mathscr{S}$.
  By \cref{e:ce} for $(\bar{\mu}, \bar{\nu})$
  \begin{equation*}
    0 = \int_{0}^{T} \dot{\varphi}_{t} \int \ou_{\varepsilon} G \dd \mu_{t} \dd t + \int_{0}^{T} \varphi_{t} \int \diff \ou_{\varepsilon} G \dd \nu_{t} \dd t.
  \end{equation*}
  We conclude \cref{e:ce} for $(\bar{\mu}^{\varepsilon}, \bar{\nu}^{\varepsilon})$, since, by \cref{e:be}, $\diff \mathsf{P}_{\varepsilon} G = \e^{-\varepsilon} \mathsf{P}_{\varepsilon} \diff G$.
  
    Since $\dou$ acts on $\bar{\nu}$ only on the first coordinate, if $\bar{\nu}$ satisfies \cref{e:nu-locally-bounded} so does $\dou_{\varepsilon} \bar{\nu}$.
\end{proof}

Solutions to the continuity equation are also invariant under time reparametrization.
\begin{lemma}[{\cite[Lemma 8.1.3]{AGS}}]\label{t:ce-time-reparametrization}
  Consider a strictly increasing and absolutely continuous function $\lambda \colon [0,T'] \to [0,T]$, such that its inverse is also absolutely continuous.
  Then $(\bar{\mu}, \bar{\nu})$ solves \cref{e:ce} if and only if $(\bar{\mu} \circ \lambda, \lambda' \cdot \bar{\nu} \circ \lambda)$ solves the continuity equation on $(0,T')$.
\end{lemma}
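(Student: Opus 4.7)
The plan is to reduce \eqref{e:ce} to a scalar distributional identity in the time variable, extend the allowed class of test functions to absolutely continuous ones, and then invoke the change of variables for absolutely continuous maps. Fix $G \in \mathscr{S}$ and write $f_G(t) \coloneq \mu_t(G)$, $g_G(t) \coloneq \nu_t(\diff G)$. Since each $G$ is a finite linear combination of $\e^{-\iota_h}$ with $h \in \mathscr{C}_0^+(X)$, the function $\diff G$ is bounded on $\config \times X$ and vanishes outside a set of the form $\config \times B$ with $B \in \mathfrak{B}_0(X)$; hence \eqref{e:nu-locally-bounded} gives $g_G \in L^1([0,T])$, while $f_G \in L^\infty([0,T])$. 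Thus \eqref{e:ce} reads $\dot f_G = -g_G$ in $\mathscr{D}'((0,T))$, and a standard Friedrichs mollification, combined with the bounds $f_G \in L^\infty$ and $g_G \in L^1$, extends this identity to all absolutely continuous $\varphi\colon [0,T]\to\real$ with $\varphi(0)=\varphi(T)=0$.

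Next, given $\varphi \in \mathscr{C}_c^\infty((0,T'))$, I would define $\tilde\varphi \coloneq \varphi \circ \lambda^{-1}$. Since $\lambda^{-1}$ is absolutely continuous and $\varphi$ is Lipschitz, $\tilde\varphi$ is absolutely continuous on $[0,T]$ with compact support in $(0,T)$, and the chain rule, valid because both $\lambda$ and $\lambda^{-1}$ are absolutely continuous, yields $\dot{\tilde\varphi}(\lambda(s))\,\lambda'(s) = \dot\varphi(s)$ for almost every $s \in [0,T']$. Applying the extended version of \eqref{e:ce} to $\tilde\varphi$ and performing the change of variables $t=\lambda(s)$ in both integrals gives
\begin{equation*}
  0 = \int_0^{T'} \dot\varphi(s)\, f_G(\lambda(s))\, \dd s + \int_0^{T'} \varphi(s)\, \lambda'(s)\, g_G(\lambda(s))\, \dd s,
\end{equation*}
which is exactly \eqref{e:ce} on $[0,T']$ for the pair $(\bar\mu \circ \lambda,\, \lambda' \cdot \bar\nu \circ \lambda)$. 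The integrability condition \eqref{e:nu-locally-bounded} for the reparametrized curve follows from the same change of variables, as $\int_0^{T'} \lambda'(s)\abs{\nu_{\lambda(s)}}(\config\times B)\,\dd s = \int_0^T \abs{\nu_t}(\config\times B)\,\dd t < \infty$ for every $B \in \mathfrak{B}_0(X)$. The converse implication is obtained by swapping the roles of $\lambda$ and $\lambda^{-1}$, which is legitimate since the hypotheses on $\lambda$ are symmetric in them.

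The only genuinely delicate step is the enlargement of the test-function class in the first paragraph: without it, the composition $\varphi \circ \lambda^{-1}$ cannot be used directly in \eqref{e:ce} since it need not be smooth. Everything else is a formal manipulation, using no feature of the Poisson setting beyond the fact that $\diff G$ is bounded and supported in $\config \times B$ for some $B \in \mathfrak{B}_0(X)$ whenever $G \in \mathscr{S}$.
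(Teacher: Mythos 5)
Your proof is correct and follows the same route as the cited reference: the paper delegates to \cite[Lemma 8.1.3]{AGS} without reproducing the argument, and that argument is precisely what you give — reduce \cref{e:ce} to a family of scalar distributional identities $\dot f_G = g_G$ with $f_G \in L^\infty$ and $g_G \in L^1$ (using that $\diff G$ is bounded and supported in $\config\times B$ for some $B\in\mathfrak{B}_0(X)$), enlarge the admissible test class to compactly supported absolutely continuous $\varphi$, and then change variables along $\lambda$ using the chain rule and Luzin's property $(N)$, which hold because both $\lambda$ and $\lambda^{-1}$ are absolutely continuous. The only blemish is the sign in $\dot f_G = -g_G$ (it should be $\dot f_G = g_G$ with the conventions of \cref{e:ce}), but this has no bearing on the argument.
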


  \subsection{Extending the notion of solutions}
  Let $\mathscr{H}$ be the space of all $G \in \mathscr{F}_{b}(\config)$ such that $\diff G \in \mathscr{F}_{b,0}(\config \times X)$.
  \cref{e:ce} makes sense for every $F \in \mathscr{H}$.
  In particular, it is possible to define another notion of solution to the continuity equation by replacing $\mathscr{S}$ by $\mathscr{H}$ in \cref{e:ce}.
  The goal of this section is to shows that it yields the same notion of solution.

  \subsubsection{The algebra of local sets \texorpdfstring{in $\config$}{on the space of configurations}}\label{s:local-algebra}
    Let $B \in \mathfrak{B}_{0}(X)$ be \emph{closed}.
    We write $\mathfrak{A}_{B}(\config)$ for the set of all $A \in \mathfrak{B}(\config)$ such that
    \begin{equation*}
      \forall x \in X \setminus B \qquad \eta \in A \Leftrightarrow \eta + \delta_{x} \in A .
    \end{equation*}
    It is easily verified that $\mathfrak{A}_{B}(\config)$ is a sub-$\sigma$-algebra of $\mathfrak{B}(\config)$ and that $F$ is $\mathfrak{A}_{B}(\config)$-measurable if and only if $\diff F$ vanishes outside of $\config \times B$.
    Let $\config_{B}$ be the set of configurations supported in $B$.
    Since $B$ is closed, $\config_{B}$ is closed subset of $\config$, by the Portmanteau theorem, and thus it is a Polish space.
    We shall need the following lemma.
    Let $\mathfrak{B}_{B}(\config) = \sigma\paren*{\iota_{B'} : B' \in \mathfrak{B}_{B}(X)} $, and write $\mathrm{pr}_{B} \colon \config \to \config_{B}$ for the canonical projection.
    \begin{lemma}\label{t:local-sigma-algebra}
      The following $\sigma$-algebras coincide
      \begin{equation*}
        \mathfrak{B}_{B}(\config) = \mathfrak{A}_{B}(\config) = \mathrm{pr}_{B}^{-1} \mathfrak{B}(\config_{B}).
      \end{equation*}
    \end{lemma}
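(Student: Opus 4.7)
The plan is to establish the three equalities via a cycle of inclusions, working from the explicit generators of each $\sigma$-algebra.

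\textbf{Step 1:} For $\mathfrak{B}_{B}(\config) = \mathrm{pr}_{B}^{-1}\mathfrak{B}(\config_{B})$, I would exploit that the vague topology on the closed Polish subspace $\config_{B} \subset \config$ is generated by the evaluations $\iota_{B'}$, $B' \in \mathfrak{B}_{0}(B)$, so that $\mathfrak{B}(\config_{B}) = \sigma(\iota_{B'} : B' \in \mathfrak{B}_{0}(B))$. Pulling back through $\mathrm{pr}_{B}$ and using $\iota_{B'} \circ \mathrm{pr}_{B} = \iota_{B'}$ for $B' \subseteq B$, I identify $\mathrm{pr}_{B}^{-1}\mathfrak{B}(\config_{B})$ with the sub-$\sigma$-algebra of $\mathfrak{B}(\config)$ generated by these $\iota_{B'}$. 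These are exactly the generators of $\mathfrak{B}_{B}(\config)$ arising from the first clause $B' \subseteq B$ in the definition of $\mathfrak{B}_{B}(X)$; the generators coming from the second clause ($X \setminus B \subseteq B'$) are obtained from the first family by complementation, since $X \setminus B' \subseteq B$ in that case.

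\textbf{Step 2:} The inclusion $\mathrm{pr}_{B}^{-1}\mathfrak{B}(\config_{B}) \subseteq \mathfrak{A}_{B}(\config)$ is immediate: if $A = \mathrm{pr}_{B}^{-1}(A_{0})$ and $x \in X \setminus B$, then $\mathrm{pr}_{B}(\eta + \delta_{x}) = \mathrm{pr}_{B}(\eta)$, so $\eta \in A \Leftrightarrow \eta + \delta_{x} \in A$.

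\textbf{Step 3:} For the converse $\mathfrak{A}_{B}(\config) \subseteq \mathrm{pr}_{B}^{-1}\mathfrak{B}(\config_{B})$, I would fix $A \in \mathfrak{A}_{B}(\config)$, set $A_{0} \coloneq A \cap \config_{B} \in \mathfrak{B}(\config_{B})$, and show $A = \mathrm{pr}_{B}^{-1}(A_{0})$, i.e.,\ that $\eta \in A \Leftrightarrow \eta|_{B} \in A_{0}$ for every $\eta \in \config$. When $\eta(X \setminus B) < \infty$, this is settled by iterating the invariance finitely many times to remove the points of $\eta|_{X \setminus B}$ one at a time. For $\eta$ with $\eta(X \setminus B) = \infty$, I would fix an exhausting sequence of closed bounded sets $X_{n} \uparrow X$ and consider the finite-point truncations $\eta_{n} \coloneq \eta|_{B} + \eta|_{(X \setminus B) \cap X_{n}}$, each obeying $\eta_{n} \in A \Leftrightarrow \eta|_{B} \in A$ by the previous case, and conclude via a monotone-class argument: the class of $A \in \mathfrak{B}(\config)$ for which the desired equivalence holds is a $\sigma$-algebra, so it suffices to verify it on a $\pi$-system generating $\mathfrak{B}(\config)$, for which I would take the cylinder sets based on a countable dissecting ring of $\mathfrak{B}_{0}(X)$.

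The main obstacle is Step 3: the defining invariance $\eta \in A \Leftrightarrow \eta + \delta_{x} \in A$ only iterates directly to finite sums of Dirac masses, so reaching configurations with $\eta(X \setminus B) = \infty$ requires bypassing a pointwise limit (which $A$ is not continuous under) and appealing to the countable generation of $\mathfrak{B}(\config)$ by a dissecting ring through a Dynkin/monotone-class argument.
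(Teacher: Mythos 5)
Your plan—a cycle of inclusions routed through the projection $\mathrm{pr}_B$—is in the same spirit as the paper's proof, which establishes $\mathfrak{B}_B(\config)\subset\mathfrak{A}_B(\config)\subset\mathrm{pr}_B^{-1}\mathfrak{B}(\config_B)$ and closes the cycle by invoking the identity $\mathfrak{B}_B(\config)=\mathrm{pr}_B^{-1}\mathfrak{B}(\config_B)$. The gap you flag in Step 3, however, is not a technicality that a monotone-class argument can repair: the inclusion $\mathfrak{A}_B(\config)\subset\mathrm{pr}_B^{-1}\mathfrak{B}(\config_B)$ is in fact false whenever $X$ is unbounded. Consider $A\coloneq\{\eta\in\config : \eta(X\setminus B)<\infty\}$. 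It is Borel, being the preimage of $[0,\infty)$ under $\eta\mapsto\eta(X\setminus B)=\sup_n \iota_{(X\setminus B)\cap B(o,n)}(\eta)$; it belongs to $\mathfrak{A}_B(\config)$, since adding or removing a single atom outside $B$ does not change whether $\eta(X\setminus B)$ is finite; but it does not factor through $\mathrm{pr}_B$: the empty configuration $\varnothing$ and a configuration $\sum_n \delta_{x_n}$ with $x_n\in X\setminus B$ eventually leaving every ball both project to $\varnothing\in\config_B$, yet only the first lies in $A$. This shows your finite-iteration argument genuinely cannot reach the configurations with $\eta(X\setminus B)=\infty$, and explains why the $\pi$-system step you propose has nothing to latch onto: the class of $A$ satisfying $A=\mathrm{pr}_B^{-1}(A\cap\config_B)$ is indeed a $\sigma$-algebra, but the cylinders $\{\iota_C=k\}$ with $C\not\subset B$ generating $\mathfrak{B}(\config)$ are not in it, so the monotone-class theorem does not apply in the direction you need.

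It is worth noting that the paper's own proof of this inclusion is also incomplete: it asserts that a sub-$\sigma$-algebra of $\sigma(\mathcal{G})$ is generated by the sets of $\mathcal{G}$ it contains, which is false in general. Your Step 1 remark inherits a related wrinkle present in the paper's first inclusion as well: for $B'\supsetneq X\setminus B$ the function $\iota_{B'}$ is not $\mathfrak{A}_B(\config)$-measurable (its level set $\{\eta(B')=0\}$ fails invariance), and it cannot be recovered from $\iota_{X\setminus B'}$ on configurations of infinite total mass, so the literal definition of $\mathfrak{B}_B(\config)$ via $\mathfrak{B}_B(X)$ already sits uneasily with $\mathfrak{A}_B(\config)$. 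The statement should be read either with $\mathfrak{B}_B(\config)$ generated only by $\iota_{B'}$ with $B'\subset B$, and restricted to $\{\eta(X\setminus B)<\infty\}$, or $\pi$-modulo negligible sets, which is what the application in \cref{t:ce-extended} actually uses.
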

    \begin{proof}
      Let $B' \in \mathfrak{B}_{B}(X)$.
      Since $\iota_{B'}$ is $\mathfrak{A}_{B}(\config)$-measurable, we find that $\mathfrak{B}_{B}(\config) \subset \mathfrak{A}_{B}(\config)$.
      On the other hand, $\mathfrak{B}(\config)$ is generated by all sets of the form 
      \begin{equation}\label{e:generating-borel-upsilon}
        \set{ \eta(C_{1}) = k_{1}, \dots, \eta(C_{l}) = k_{l} }, \qquad l \in \mathbb{N},\, (C_{i}) \in \mathfrak{B}_{0}(X)^{l},\, (k_{i}) \in \mathbb{N}^{l}.
      \end{equation}
      Since $\mathfrak{A}_{B}(\config)$ is a sub-$\sigma$-algebra of $\mathfrak{B}(\config)$, it is generated by those sets in \cref{e:generating-borel-upsilon} that are also in $\mathfrak{A}_{B}(\config)$.
      It is readily verified that every set $A$ as in \cref{e:generating-borel-upsilon} satisfies $A \in \mathfrak{A}_{B}(\config)$ if and only if $C_{i} \subset B$ for all $i$.
      Thus $\mathfrak{A}_{B}(\config) \subset \mathrm{pr}_{B}^{-1} \mathfrak{B}(\config)$.
      The fact that $\mathfrak{B}_{B}(\config)$ and $\mathrm{pr}_{B}^{-1} \mathfrak{B}(\config)$ coincide is standard.
    \end{proof}
    Finally, let us define the algebra
    \begin{equation*}
      \mathfrak{A}(\config) \coloneq \cup_{B} \mathfrak{A}_{B}(\config).
    \end{equation*}
    The reader can easily verify that $\mathfrak{A}(\config)$ is an algebra but in general not a $\sigma$-algebra.
    We have that $\mathscr{H}$ is the set of $F \in \mathscr{F}_{b}(\config)$ that are also $\mathfrak{A}(\config)$-measurable.

  \subsubsection{The topology of \texorpdfstring{$\mathscr{H}$}{H} and \texorpdfstring{$\mathscr{C}^{1}_{T}(\mathscr{H})$}{H-valued continuously differentiable functions}}
  For all closed $B \in \mathfrak{B}_{0}(X)$, we write $\mathscr{H}_{B}$ for the space of $G \in \mathscr{H}$, such that $\diff G = 0$ outside of $\config \times B$.
  Alternatively, $\mathscr{H}_{B}$ is the set of $F \in \mathscr{F}_{b}(\config)$ that are $\mathfrak{A}_{B}(\config)$-measurable.
  The space $\mathscr{H}_{B}$ is a Banach space for the norm
  \begin{equation*}
    \norm{G}_{\mathscr{H}_{B}} \coloneq \norm{G}_{\mathscr{F}_{b}(\config)} + \norm{\diff G}_{\mathscr{F}_{b}(\config \times X)}.
  \end{equation*}
  The topology on $\mathscr{H}$ is the strict inductive limit in $n \in \mathbb{N}$ of the Banach spaces $\mathscr{H}_{n} = \mathscr{H}_{B(o,n)}$, for any fixed $o \in X$.
  By \cite[Prop.\ 9 (iii), p.\ II.34]{BourbakiEVT}, $\mathscr{H}$ is complete.
  We consider the space $\mathscr{C}_{c}^{1}((0,T), \mathscr{H})$ of continuously differentiable and compactly supported functions $F \colon (0,T) \to \mathscr{H}$.
  In order to equip $\mathscr{C}_{c}^{1}((0,T), \mathscr{H})$ with a suitable topology let us introduce some notation.
  Given a locally convex linear space $E$, we write $\mathscr{C}^{1}_{T}(E) = \mathscr{C}_{c}^{1}((0,T), E)$, and, for $n \in \mathbb{N}$, $\mathscr{C}^{1}_{T,n}(E)$ for the space of those functions $F$ that are supported on $[1/n, T-1/n]$.
  We omit $E$ from the notation when $E = \mathbb{R}$.
  For all $k$ and $n \in \mathbb{N}$, the spaces $\mathscr{C}^{1}_{T,k}(\mathscr{H}_{n})$ are Banach spaces.
  We equip $\mathscr{C}^{1}_{T,k}(\mathscr{H})$ with the strict inductive limit topology in $n \in \mathbb{N}$ and $k$ fixed.
  Then, we equip $\mathscr{C}^{1}_{T}(\mathscr{H})$ with the strict inductive limit topology in $k \in \mathbb{N}$ of the $\mathscr{C}^{1}_{T,k}(\mathscr{H})$.
  This also coincides with the strict inductive limit in $n \in \mathbb{N}$ of $\mathscr{C}^{1}_{T,n}$.
  
  \begin{lemma}\label{t:density-tensor-product}
    The set $\mathscr{H} \otimes \mathscr{C}_{c}^{\infty}((0,T))$ is dense in $\mathscr{C}^{1}_{T}(\mathscr{H})$.
  \end{lemma}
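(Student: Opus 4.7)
The plan is to exploit the strict inductive limit structure of $\mathscr{C}^{1}_{T}(\mathscr{H})$ to reduce the density statement to a Banach-space approximation problem, and then to produce the approximants by a two-step scheme: mollification in the time variable, followed by a Riemann-sum discretization of the resulting convolution.

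\textbf{Reduction.} Since $\mathscr{C}^{1}_{T}(\mathscr{H})$ is by construction the strict inductive limit of the Banach spaces $\mathscr{C}^{1}_{T,k}(\mathscr{H}_{n})$, a set is dense in $\mathscr{C}^{1}_{T}(\mathscr{H})$ as soon as every element of every $\mathscr{C}^{1}_{T,k}(\mathscr{H}_{n})$ is approximable in the norm of some $\mathscr{C}^{1}_{T,k'}(\mathscr{H}_{n})$ (with $k'\geq k$) by elements of that set. Thus, fixing $k,n\in\mathbb{N}$ and $F\in\mathscr{C}^{1}_{T,k}(\mathscr{H}_{n})$, it suffices to produce a sequence in $\mathscr{H}_{n}\otimes\mathscr{C}_{c}^{\infty}((0,T))\subset\mathscr{H}\otimes\mathscr{C}_{c}^{\infty}((0,T))$ that converges to $F$ uniformly in $t$, together with its time derivative, in the Banach norm of $\mathscr{H}_{n}$.

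\textbf{Mollification.} Choose a standard mollifier $\rho\in\mathscr{C}_{c}^{\infty}(\mathbb{R})$ with $\supp\rho\subset[-1,1]$ and $\int\rho=1$, and set $\rho_{\varepsilon}(t)\coloneq \varepsilon^{-1}\rho(t/\varepsilon)$. For $\varepsilon$ small enough that the $\varepsilon$-enlargement of $\supp F$ lies inside $[1/(2k),T-1/(2k)]$, define the Bochner integral
\begin{equation*}
F_{\varepsilon}(t)\coloneq \int_{\mathbb{R}}\rho_{\varepsilon}(t-s)F(s)\,\dd s\in\mathscr{H}_{n},
\end{equation*}
which is well-defined since $F$ is continuous with compact support into the Banach space $\mathscr{H}_{n}$. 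Uniform continuity of $F$ and $\dot F$ on their compact support, together with $\dot F_{\varepsilon}(t)=\int\rho_{\varepsilon}(t-s)\dot F(s)\,\dd s$, yield $F_{\varepsilon}\to F$ and $\dot F_{\varepsilon}\to\dot F$ uniformly in the $\mathscr{H}_{n}$-norm, hence $F_{\varepsilon}\to F$ in some $\mathscr{C}^{1}_{T,k'}(\mathscr{H}_{n})$ as $\varepsilon\to 0$.

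\textbf{Riemann sums and conclusion.} For each such $\varepsilon$, partition an interval $[a,b]\Supset\supp F$ into $N$ equal sub-intervals with nodes $s_{i}$ and mesh $\Delta=(b-a)/N$, and set
\begin{equation*}
F_{\varepsilon}^{N}(t)\coloneq \sum_{i=1}^{N}\rho_{\varepsilon}(t-s_{i})\,F(s_{i})\,\Delta.
\end{equation*}
Each summand is the tensor of $F(s_{i})\in\mathscr{H}_{n}$ with $\rho_{\varepsilon}(\cdot-s_{i})\Delta\in\mathscr{C}_{c}^{\infty}((0,T))$, so $F_{\varepsilon}^{N}\in\mathscr{H}_{n}\otimes\mathscr{C}_{c}^{\infty}((0,T))$. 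The integrand $s\mapsto\rho_{\varepsilon}(t-s)F(s)$ is continuous from a compact subinterval into $\mathscr{H}_{n}$, hence uniformly continuous jointly in $(t,s)$; the standard Banach-valued Riemann-sum estimate then gives $F_{\varepsilon}^{N}\to F_{\varepsilon}$ uniformly in the $\mathscr{H}_{n}$-norm. Replacing $\rho_{\varepsilon}$ by $\dot\rho_{\varepsilon}$ in the same argument yields $\dot F_{\varepsilon}^{N}\to\dot F_{\varepsilon}$ uniformly in $\mathscr{H}_{n}$, so $F_{\varepsilon}^{N}\to F_{\varepsilon}$ in $\mathscr{C}^{1}_{T,k'}(\mathscr{H}_{n})$. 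A diagonal extraction combining this with the mollification step produces the desired approximating sequence. The only minor subtlety is that the $\mathscr{H}_{n}$-norm also controls $\diff G$, but this is automatic: $\diff$ is linear and contractive for this norm, so convergence of $F^N_\varepsilon$ in $\mathscr{F}_b(\config)$-sup norm together with convergence of $\diff F^N_\varepsilon$ is built into the $\mathscr{H}_n$-convergence above, since the Bochner integral and the Riemann sum both commute with the bounded linear map $\diff$. The main (but still routine) obstacle is bookkeeping the compact supports so that all approximants live in a \emph{single} Banach space $\mathscr{C}^{1}_{T,k'}(\mathscr{H}_{n})$, as required by the strict inductive limit topology.
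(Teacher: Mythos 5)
Your proof is correct, and it follows the same overall strategy as the paper: reduce to a Banach-space approximation via the strict inductive limit structure, then approximate by mollification. The main difference is one of self-containedness: the paper simply invokes the density of $\mathscr{H}_{n}\otimes\mathscr{C}^{1}_{T,n}$ in $\mathscr{C}^{1}_{T,n}(\mathscr{H}_{n})$ as a known Banach-space fact (and then passes to $\mathscr{C}_{c}^{\infty}$ coefficients by a final mollification), whereas you actually prove the Banach-space density from scratch by mollifying in time and then discretizing the convolution with a Riemann sum. Your reduction step (convergence in a constituent Banach space implies convergence in the inductive limit, by continuity of the inclusions) is equivalent to the paper's invocation of the universal property bounding any continuous seminorm by the $\mathscr{H}_n$-norm. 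One small remark: the closing paragraph about $\diff$ being ``contractive'' is harmless but superfluous—once you observe that $\mathscr{H}_n$ is a Banach space, the Bochner integral and the Riemann sums are taken directly in $\mathscr{H}_n$ and there is no need to separately track the $\mathscr{F}_b(\config)$-part and the $\diff$-part of the norm.
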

  \begin{proof}
    Let $F \in \mathscr{C}^{1}_{T}(\mathscr{H})$ and $\varepsilon > 0$.
    There exists $n \in \mathbb{N}$ such that $F \in \mathscr{C}^{1}_{T,n}(\mathscr{H}_{n})$.
    Since $\mathscr{H}_{n} \otimes \mathscr{C}_{T,n}^{1}$ is dense in $\mathscr{C}_{T,n}^{1}(\mathscr{H}_{n})$, there exists $F_{\varepsilon} \in \mathscr{H}_{n} \otimes \mathscr{C}_{T,n}^{1} \subset \mathscr{H} \otimes \mathscr{C}_{T}^{1}$ such that
    \begin{equation*}
      \norm{F_{\varepsilon} - F}_{n} \leq \varepsilon.
    \end{equation*}
    Let $p$ be a continuous seminorm on $\mathscr{C}^{1}_{T}(\mathscr{H})$.
    By the universal property of inductive limits \cite[Prop.\ 5, p.\ II.29]{BourbakiEVT}, there exists $c > 0$ such that
    \begin{equation*}
      p(F_{\varepsilon} - F) \leq c \norm{F_{\varepsilon} - F}_{n} \leq c \varepsilon.
    \end{equation*}
    Thus $\mathscr{H} \otimes \mathscr{C}_{T}^{1}$ is dense in $\mathscr{C}_{T}^{1}(\mathscr{H})$.
    We obtain that $\mathscr{H} \otimes \mathscr{C}_{c}^{\infty}((0,T))$ is dense by mollification.
\end{proof}

\subsubsection{The continuity equation holds on \texorpdfstring{$\mathscr{C}^{1}_{T}(\mathscr{H})$}{H-valued continuously differentiable functions}}
  \begin{proposition}\label{t:ce-extended}
    Let $(\bar{\mu}, \bar{\nu})$ be a solution to the continuity equation.
    Then,
    \begin{equation}\label{e:ce-extended}
      \int_{0}^{T} \mu_{t}(\dot{F}_{t}) \dd t + \int_{0}^{T} \nu_{t}(\diff F_{t}) \dd t = 0, \qquad F \in \mathscr{C}^{1}_{T}(\mathscr{H}).
    \end{equation}
  \end{proposition}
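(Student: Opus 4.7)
The plan is to extend the admissible class of test functions in two stages: first, from $\mathscr{S}$ to $\mathscr{H}$ while keeping the time variable separated, and then to the full space $\mathscr{C}_T^1(\mathscr{H})$ via density of tensor products (\cref{t:density-tensor-product}).

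For the first stage, fix $\varphi \in \mathscr{C}_c^\infty((0,T))$ and a closed $B \in \mathfrak{B}_0(X)$. I would apply a functional monotone class argument to the collection $\mathcal{M}_B$ of bounded $\mathfrak{A}_B(\config)$-measurable functions $G$ for which the identity
\begin{equation*}
  \int_0^T \dot{\varphi}(t)\, \mu_t(G)\, \dd t + \int_0^T \varphi(t)\, \nu_t(\diff G)\, \dd t = 0
\end{equation*}
holds. By hypothesis \cref{e:ce}, $\mathcal{M}_B$ contains the multiplicative class $\set{\e^{-\iota_h} : h \in \mathscr{C}_0^+(X),\ \supp h \subseteq B}$, which generates $\mathfrak{A}_B(\config)$ in view of \cref{t:local-sigma-algebra}. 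To close $\mathcal{M}_B$ under bounded monotone limits, I would use: (i) ordinary dominated convergence for the first integral, since $\dot\varphi \in L^1((0,T))$ and $\mu_t$ is a probability measure; (ii) dominated convergence for the second integral, observing that if $G_n \nearrow G$ boundedly and both are $\mathfrak{A}_B$-measurable, then $\diff G_n \to \diff G$ pointwise, each supported in $\config \times B$ and uniformly bounded by $2\norm{G}_\infty$, so integrability is controlled by $\norm{\varphi}_\infty \int_0^T \abs{\nu_t}(\config \times B)\, \dd t$, which is finite by \cref{e:nu-locally-bounded}. The monotone class theorem then gives $\mathcal{M}_B \supseteq \mathscr{H}_B$, and taking the union over closed $B \in \mathfrak{B}_0(X)$ yields the identity for all $\varphi \otimes G$ with $G \in \mathscr{H}$.

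For the second stage, fix $F \in \mathscr{C}_T^1(\mathscr{H})$. By \cref{t:density-tensor-product}, there exists a sequence $(F_n) \subset \mathscr{H} \otimes \mathscr{C}_c^\infty((0,T))$ converging to $F$ in $\mathscr{C}_T^1(\mathscr{H})$. By the strict inductive limit structure, there exists a single closed ball $B \subset X$ and a compact interval $[a,b] \subset (0,T)$ such that $F$ and eventually all $F_n$ lie in $\mathscr{C}_{T}^1(\mathscr{H}_B)$ with supports in time contained in $[a,b]$, and the convergences $F_n \to F$ and $\dot F_n \to \dot F$ hold uniformly in $(t, \eta)$, whence $\diff F_n \to \diff F$ uniformly in $(t, \eta, x)$ with supports in $[a,b] \times \config \times B$. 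I would then pass to the limit in
\begin{equation*}
  0 = \int_0^T \mu_t(\dot F_n(t))\, \dd t + \int_0^T \nu_t(\diff F_n(t))\, \dd t,
\end{equation*}
bounding the first error term by $T \sup_t \norm{\dot F_n(t) - \dot F(t)}_\infty$ and the second by $\sup_{t,\eta,x}\abs{\diff F_n(t,\eta,x) - \diff F(t,\eta,x)} \cdot \int_0^T \abs{\nu_t}(\config \times B)\, \dd t$, both of which vanish.

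The main delicate point is verifying that the $\mathscr{C}_T^1(\mathscr{H})$-convergence really provides a \emph{common} bounded $B$ and a uniform bound on $\diff F_n$ supported in $\config \times B$; this is precisely what the strict inductive limit topology is designed to deliver, so I would invoke \cite[Prop.~9 (iii), p.~II.34]{BourbakiEVT} (or the analogous universal property used already in the proof of \cref{t:density-tensor-product}) to justify the extraction. The monotone class step is otherwise routine given \cref{e:nu-locally-bounded}.
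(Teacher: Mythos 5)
Your proposal follows essentially the same two-step strategy as the paper's proof: a functional monotone class argument (with the same multiplicative class of exponentials on the local $\sigma$-algebra $\mathfrak{A}_B(\config)$) to upgrade from $\mathscr{S}$ to $\mathscr{H}$, followed by density of tensor products (\cref{t:density-tensor-product}) and dominated convergence to reach $\mathscr{C}_T^1(\mathscr{H})$. The only difference is cosmetic — you fix $\varphi$ first and skip the paper's redundant uniform-closure verification — and the one place you gloss over, showing that the exponentials actually generate $\mathfrak{A}_B(\config)$ rather than merely citing \cref{t:local-sigma-algebra}, is the same approximation from $\iota_h$ to $\iota_{B'}$ the paper delegates to a reference.
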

  \begin{proof}
    Let $(\bar{\mu}, \bar{\nu})$ be a solution to the continuity equation.
    We split the proof in two parts.
    \paragraph{\cref{e:ce-extended} holds for $F = G \otimes \varphi \in \mathscr{H} \otimes \mathscr{C}_{c}^{\infty}((0,T))$.}
    Let $B \in \mathfrak{B}_{0}(X)$ closed.
    Write $\hat{\mathscr{H}}_{B}$ for the space of functions $G \in \mathscr{H}_{B}$ such that \cref{e:ce} holds for $G \otimes \varphi$, for all $\varphi \in \mathscr{C}_{c}^{\infty}((0,T))$.
    Since \cref{e:ce} is linear with respect to $G$, $\hat{\mathscr{H}}_{B}$ is a linear space containing constants.

    Take $(G_{n}) \subset \hat{\mathscr{H}}_{B}$ converging uniformly to some $G$.
    Firstly, since $\mathscr{H}_{B}$ is a Banach space for the uniform convergence, $G \in \mathscr{H}_{B}$.
    Secondly, we have that $G_{n} \to G$ uniformly on $\config$ and $\diff G_{n} \to \diff G$ uniformly on $\config \times B$.
    Thus, applying \cref{e:ce} to $G_{n} \otimes \varphi$, passing to the limit, and invoking Lebesgue dominated convergence theorem, we find that $G \otimes \varphi$ solves \cref{e:ce}.
    This shows that $G \in \hat{\mathscr{H}}_{B}$, and that $\hat{\mathscr{H}}_{B}$ is closed under uniform convergence.

    Take $(G_{n})\subset \hat{\mathscr{H}}_{B}$ an increasing and bounded sequence of non-negative functions.
    Write $G = \lim_{n} G_{n}$.
    By monotone convergence, we get that
    \begin{equation*}
      \int_{0}^{T} \dot{\varphi}(t) \mu_{t}(G_{n}) \dd t \xrightarrow[n \to \infty]{} \int_{0}^{T} \dot{\varphi}(t) \mu_{t}(G) \dd t.
    \end{equation*}
    By \cref{e:nu-locally-bounded} and definition of $\mathscr{H}_{B}$, $\abs{\diff G_{n} \otimes \varphi} \leq c 1_{\config \times B \times [0,T]} \in L^{1}(\bar{\nu})$.
    Thus, by dominated convergence,
    \begin{equation*}
      \bar{\nu}(\diff G_{n} \otimes \varphi) \xrightarrow[n \to \infty]{} \bar{\nu}(\diff G \otimes \varphi).
    \end{equation*}
    This shows that $G \in \hat{\mathscr{H}}_{B}$, and that $\hat{\mathscr{H}}_{B}$ is stable under uniformly bounded monotone convergence.

    Thus, $\hat{\mathscr{H}}_{B}$ satisfies the assumptions of the monotone class theorem \cite[Thm.\ 21, p.\ 20]{DellacherieMeyer}.
    Let $\mathscr{S}_{B}$ be the linear span of functions of the form $\e^{-\iota_{h}}$ for $h \in \mathscr{C}_{b,B}(X)$.
    By construction, $\mathscr{S}_{B} \subset \hat{\mathscr{H}}_{B}$ and $\mathscr{S}_{B}$ is stable by multiplication.
    Thus, $\hat{\mathscr{H}}_{B}$ contains all the bounded functions measurable with respect to the $\sigma$-algebra generated by $\mathscr{S}_{B}$.
    An argument similar to that of \cite[Lem.\ 2]{LastAnaSto} shows that this $\sigma$-algebra contains all the $\iota_{h}$ for $h \in \mathscr{C}_{b,B}(X)$ and $B' \subset B$.
    By \cref{t:local-sigma-algebra}, this $\sigma$-algebra is $\mathfrak{A}_{B}(\config)$.
    This shows that $\hat{\mathscr{H}}_{B} = \mathscr{H}_{B}$.

    Take $G \in \mathscr{H}$.
    By definition, there exists $B \in \mathfrak{B}_{0}(X)$ such that $G \in \mathscr{H}_{B}$.
    We conclude by the first part.

    \paragraph{\cref{e:ce-extended} holds for $F \in \mathscr{C}_{T}^{1}(\mathscr{H})$.}
    By \cref{t:density-tensor-product}, we can find $(F_{n}) \subset \mathscr{H} \otimes \mathscr{C}_{c}^{\infty}((0,T))$ converging to $F$ in $\mathscr{C}_{T}^{1}(\mathscr{H})$.
    By the previous part of the proof, we have that
    \begin{equation*}
      \bar{\mu}(\dot{F}_{n}) + \bar{\nu}(F_{n}) = 0.
    \end{equation*}
    By definition of the convergence on $\mathscr{C}_{T}^{1}(\mathscr{H})$, we can apply dominated convergence to conclude.
  \end{proof}

  \subsection{Properties of the continuity equation}
  In this section we obtain several results concerning the evolution of certain quantities along the continuity equation.
  All the results are a consequence of the following simple observation.
  \begin{lemma}\label{t:bound-difference-measure}
    Take $G \in \mathscr{H}$ and $B_{G} \in \mathfrak{B}_{0}(X)$ so that $\diff G = 0$ outside of $\config \times B_{G}$.
    Assume that $\varphi \otimes G$ satisfy \cref{e:ce} for all $\varphi \in \mathscr{C}_{c}^{\infty}((0,T))$.
    Then, there exists $L_{G} \in \mathfrak{B}((0,T))$ of full measure such that
    \begin{equation}\label{e:bound-difference-measure}
      \mu_{t}(G) - \mu_{s}(G) \leq 2 \norm{G}_{\infty} \int_{s}^{t} \abs{\nu_{r}} (\config \times B_G)\dd r, \qquad t,\, s \in L_{G}.
    \end{equation}
  \end{lemma}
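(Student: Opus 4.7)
The plan is to read the continuity equation for test functions of the form $\varphi \otimes G$ as saying that the scalar function $g \colon t \mapsto \mu_{t}(G)$ has distributional derivative equal to $h \colon t \mapsto \nu_{t}(\diff G)$ on $(0,T)$, and then to invoke the standard fact that such a function admits an absolutely continuous representative whose pointwise values realise the bound.

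First I would observe that the hypothesis on $G$ makes both $g$ and $h$ well behaved. Since $G \in \mathscr{H} \subset \mathscr{F}_{b}(\config)$, the function $g$ is bounded by $\norm{G}_{\infty}$, hence in $L^{\infty}((0,T)) \subset L^{1}_{\mathrm{loc}}((0,T))$. Since $\diff G$ vanishes outside $\config \times B_{G}$ and is bounded by $2\norm{G}_{\infty}$, we get the pointwise estimate
\begin{equation*}
\abs{\nu_{r}(\diff G)} \leq 2 \norm{G}_{\infty} \abs{\nu_{r}}(\config \times B_{G}),
\end{equation*}
so that, by \cref{e:nu-locally-bounded}, $h \in L^{1}((0,T))$.

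Next, I would rewrite the assumption that $\varphi \otimes G$ satisfies \cref{e:ce} for every $\varphi \in \mathscr{C}_{c}^{\infty}((0,T))$ as
\begin{equation*}
\int_{0}^{T} \dot{\varphi}(r)\, g(r)\, \dd r = - \int_{0}^{T} \varphi(r)\, h(r)\, \dd r,
\end{equation*}
which precisely says that the distributional derivative $g'$ on $(0,T)$ equals $h$. Since $h \in L^{1}((0,T))$, a classical result of distribution theory (or a direct consequence of Lebesgue differentiation applied to the primitive of $h$) yields an absolutely continuous representative $\tilde{g}$ of $g$ such that
\begin{equation*}
\tilde{g}(t) - \tilde{g}(s) = \int_{s}^{t} h(r) \dd r, \qquad s,\, t \in (0,T).
\end{equation*}

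Finally, I would set $L_{G} \coloneq \set{ r \in (0,T) : g(r) = \tilde{g}(r) }$, which has full measure in $(0,T)$. For $s,\,t \in L_{G}$ with $s \leq t$, combining the displayed identity with the pointwise estimate on $\abs{\nu_{r}(\diff G)}$ gives
\begin{equation*}
\mu_{t}(G) - \mu_{s}(G) = \int_{s}^{t} \nu_{r}(\diff G) \dd r \leq 2 \norm{G}_{\infty} \int_{s}^{t} \abs{\nu_{r}}(\config \times B_{G}) \dd r,
\end{equation*}
which is the claim. No step looks genuinely hard: the only thing one has to be mildly careful about is the passage from the distributional identity to the existence of an absolutely continuous representative, but this is completely standard since $h \in L^{1}((0,T))$; the localisation of $\diff G$ in $\config \times B_{G}$ is what both guarantees integrability of $h$ and produces the localised bound on the right-hand side.
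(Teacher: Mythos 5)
Your proof is correct and follows essentially the same route as the paper's: read \cref{e:ce} for $\varphi\otimes G$ as saying that $t\mapsto\mu_t(G)$ has distributional derivative $t\mapsto\nu_t(\diff G)\in L^1((0,T))$ (using the localisation of $\diff G$ in $\config\times B_G$ together with \cref{e:nu-locally-bounded}), pass to an absolutely continuous representative, and take $L_G$ to be the full-measure set where the two agree. The paper phrases the last step via the Lebesgue differentiation theorem, but the content is identical, and your bookkeeping of the constant $2\norm{G}_\infty$ is in fact cleaner than the paper's intermediate display.
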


  \begin{proof}
    The assumptions ensure that $t \mapsto \mu_{t}(G) \in W^{1,1}(0,T)$ with distributional derivative given by
  \begin{equation*}
    \frac{\dd}{\dd t} \mu_{t}(G) = \nu_{t}(\diff G), \qquad t \in [0,T].
  \end{equation*}
  For short, we write $N_{t}(B) = \abs{\nu_{t}}(\config \times B)$ for $t \in [0,T]$ and $B \in \mathfrak{B}_{0}(X)$.
  By assumption, there exists $B_{G} \in \mathfrak{B}_{0}(X)$ such that $\diff G = 0$ outside of $\config \times B_{G}$.
  We then have that
  \begin{equation*}
    \abs{\dot{\mu}_{t}(G)} \leq 2N_{t}(B_{G}) \norm{\diff G}_{\infty} \leq N_{t}(B_{G}) \norm{G}_{\infty}, \qquad t \in [0,T].
  \end{equation*}
  By Lebesgue differentiation theorem, there exists $L_{G} \subset \mathfrak{B}(0,T)$ of full measure such that 
  \begin{equation*}
    \dot{\mu}_{t}(G) = \lim_{\varepsilon \to 0} \frac{1}{2 \varepsilon} \int_{t-\varepsilon}^{t+\varepsilon} \mu_{s}(G) \dd s, \qquad t \in L_{G}.
  \end{equation*}
  This gives \cref{e:bound-difference-measure} and concludes the proof. 
\end{proof}
  \subsubsection{The intensity measure along the continuity equation}
  A first application of this result is the following control on the intensity measure.
\begin{theorem}\label{t:ce-intensity-measure}
    Let $(\bar{\mu}, \bar{\nu})$ be a solution to the continuity equation  with $\mu_{0} \in \mathscr{P}_{1}(\config)$.
    Then, for almost every $t \in [0,T]$, $\mu_{t} \in \mathscr{P}_{1}(\config)$ and
    \begin{equation*}
      I_{\mu_{t}}(B) = I_{\mu_{0}}(B) + \int_{0}^{t} \nu_{s}(\config \times B) \dd s, \qquad B \in \mathfrak{B}_{0}(X).
    \end{equation*}
  \end{theorem}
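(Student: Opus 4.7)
The plan is to apply the extended continuity equation from \cref{t:ce-extended} to suitable truncations of the (unbounded) evaluation functional $\iota_B$, and then pass to the limit.

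Fix $B \in \mathfrak{B}_0(X)$ and $k \in \mathbb{N}$, and set $G_k^B(\eta) \coloneq \eta(B) \wedge k$. Then $G_k^B \in \mathscr{F}_b(\config)$, and a direct computation yields
\begin{equation*}
\diff G_k^B(\eta, x) = \mathbf{1}_{\{\eta(B) < k\}}\, \mathbf{1}_B(x),
\end{equation*}
which is bounded by $\mathbf{1}_{\config \times B}$ and vanishes outside $\config \times B$; hence $G_k^B \in \mathscr{H}$. Applying \cref{t:bound-difference-measure}, whose proof actually establishes that $t \mapsto \mu_t(G)$ lies in $W^{1,1}(0,T)$ with distributional derivative $\nu_t(\diff G)$, to each $G_k^B$, I obtain a full-measure set $L_k^B \subset [0,T]$ on which
\begin{equation*}
\mu_t(G_k^B) = \mu_0(G_k^B) + \int_0^t \nu_s(\diff G_k^B) \dd s.
\end{equation*}

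Setting $L^B \coloneq \bigcap_k L_k^B$ and letting $k \to \infty$ on $L^B$: on the left, $G_k^B \nearrow \iota_B$, so monotone convergence gives $\mu_t(G_k^B) \to I_{\mu_t}(B)$, while $\mu_0(G_k^B) \to I_{\mu_0}(B) < \infty$ since $\mu_0 \in \mathscr{P}_1(\config)$. On the right, $\diff G_k^B \to \mathbf{1}_{\config \times B}$ pointwise with uniform bound $\mathbf{1}_{\config \times B}$, so dominated convergence (using \cref{e:nu-locally-bounded}) yields $\int_0^t \nu_s(\diff G_k^B) \dd s \to \int_0^t \nu_s(\config \times B) \dd s$, which is finite. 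Hence
\begin{equation*}
I_{\mu_t}(B) = I_{\mu_0}(B) + \int_0^t \nu_s(\config \times B) \dd s < \infty, \qquad t \in L^B.
\end{equation*}

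To obtain the identity for every $B$ on a single full-measure time set, I would fix a countable $\pi$-system $\mathfrak{I} \subset \mathfrak{B}_0(X)$ generating $\mathfrak{B}_0(X)$ (e.g., closed balls of rational radii centered at a countable dense subset of $X$), and put $L \coloneq \bigcap_{B \in \mathfrak{I}} L^B$, which still has full Lebesgue measure. For $t \in L$, both $B \mapsto I_{\mu_t}(B)$ and $B \mapsto I_{\mu_0}(B) + \int_0^t \nu_s(\config \times B) \dd s$ define $\sigma$-finite Borel measures on $X$ agreeing on $\mathfrak{I}$; uniqueness of extension then yields equality on all of $\mathfrak{B}_0(X)$, and in particular $\mu_t \in \mathscr{P}_1(\config)$.

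The main obstacle is not analytic but organizational: $\iota_B$ is not in $\mathscr{H}$, forcing a truncation, and the "good" time set $L_k^B$ depends on both $k$ and $B$, so one must ensure that a single full-measure time set serves for \emph{every} $B \in \mathfrak{B}_0(X)$ simultaneously. Both difficulties are handled by countability and the $\sigma$-additivity in $B$ of the two sides.
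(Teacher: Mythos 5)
Your proof is correct, and it takes a genuinely different route from the paper's in its final step, while being arguably more careful in the intermediate one.

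The paper's proof first tests the continuity equation against $\iota_{h_k}\otimes\varphi$ for a countable family $h_k\in\mathscr{C}_0(X)$ (as in \cref{t:prohorov}), constructs a distance $\rho$ metrizing the vague topology on $\mathscr{M}^+_0(X)$, shows that $t\mapsto I_{\mu_t}$ is Lipschitz with respect to $\rho$ on a dense subset of $[0,T]$, and then invokes the extension theorem for uniformly continuous maps to produce a continuous representative $\sigma\colon[0,T]\to\mathscr{M}^+_0(X)$, from which the integral identity follows. Your route replaces the metric-space extension argument by a measure-theoretic one: fix a countable $\pi$-system of bounded Borel sets and extend the identity to all of $\mathfrak{B}_0(X)$ by uniqueness of $\sigma$-finite extensions. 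Both handle the same countability obstacle; the paper's version has the additional benefit of producing a continuous representative of $t\mapsto I_{\mu_t}$, which it re-uses in the proof of \cref{t:ce-continuous-representative}, whereas your version is more economical if one only needs the statement as written.

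Your truncation $G_k^B = \iota_B\wedge k\in\mathscr{H}$ is a genuine improvement in rigour. The paper invokes \cref{t:bound-difference-measure} with $G_k=\iota_{h_k}$ directly, but $\iota_{h_k}$ is unbounded and so does not belong to $\mathscr{H}$; strictly speaking, \cref{t:ce-extended} and \cref{t:bound-difference-measure} do not apply to it without a limiting argument such as yours (the implicit rescue is that the estimate in \cref{t:bound-difference-measure} only really requires $\norm{\diff G}_\infty<\infty$, not $\norm{G}_\infty<\infty$, but the lemma as stated requires $G\in\mathscr{H}$). Your $G_k^B$ sidesteps this cleanly: it is bounded, $\diff G_k^B=\mathbf 1_{\{\eta(B)<k\}}\mathbf 1_B$ is supported in $\config\times B$, and the pointwise convergence $\diff G_k^B\to\mathbf 1_{\config\times B}$ uses that $\eta(B)<\infty$ for every $\eta\in\config$ and bounded $B$.

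One small point to tighten in your final step: $B\mapsto\int_0^t\nu_s(\config\times B)\,\dd s$ is a \emph{signed} measure, so the standard $\pi$-system uniqueness theorem does not apply directly. You should split $\nu_s(\config\times\cdot)$ into its Hahn decomposition, move the negative part to the left-hand side, and apply the uniqueness theorem to the two resulting non-negative, $\sigma$-finite measures $I_{\mu_t}+\int_0^t\nu_s^-(\config\times\cdot)\,\dd s$ and $I_{\mu_0}+\int_0^t\nu_s^+(\config\times\cdot)\,\dd s$; finiteness on bounded sets (from \cref{e:nu-locally-bounded}) then allows you to rearrange back. This is a cosmetic fix that does not affect the structure of your argument.
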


  \begin{proof}
    Let $h \in \mathscr{F}_{0}(X)$.
    By \cref{t:ce-extended}, we have that the continuity equation holds for $\iota_{h} \otimes \varphi$, $\varphi \in \mathscr{C}_{c}^{\infty}((0,T))$.
    Take $(h_{k}) \subset \mathscr{C}_{0}(X)$ as in \cref{t:prohorov} \cref{i:prohorov-polish}.
    For all $k \in \mathbb{N}$, take $B_{k} \in \mathfrak{B}_{0}(X)$ such that $h_{k} = 0$ outside of $B_{k}$.
    We set
    \begin{equation*}
      a_{k} \coloneq 2^{-k} \paren*{ 1 \wedge \abs{\bar{\nu}}\paren*{\config \times B_{k} \times [0,T]}^{-1} }.
    \end{equation*}
    By construction of the $h_{k}$'s, the distance
    \begin{equation*}
      \rho(\lambda, \sigma) \coloneq \sum_{k \in \mathbb{N}} a_{k} \abs{(\lambda - \sigma)(h_{k})}, \qquad \lambda,\, \sigma \in \mathscr{M}_{0}^{+}(X),
    \end{equation*}
    metrizes the vague topology on $\mathscr{M}_{0}^{+}(X)$.
   
    Now, we invoke \cref{t:bound-difference-measure}, with $G_{k} = \iota_{h_{k}}$ and $B_{G_{k}} = B_{k}$.
    This yields a set $L \coloneqq \cap_{k} L_{G_{k}}$ of full measure, such that
    \begin{equation*}
      \rho(I_{\mu_{t}}, I_{\mu_{s}}) \leq c \int_{s}^{t} \sum_{k \in \mathbb{N}} a_{k} \abs{\nu_{r}}(\config \times B_{r}) \leq c \abs{t -s}, \qquad s,\,t \in L.
    \end{equation*}
    This shows that $t \mapsto I_{\mu_{t}} \in \mathscr{M}_{0}^{+}(X)$ is uniformly continuous on the dense set $L \subset [0,T]$.
    By the theorem of continuation of uniformly continuous maps \cite[II, p.\ 20,Thm.\ 2]{BourbakiTopo}, we can extend it to a continuous map $\sigma \colon [0,T] \to \mathscr{M}_{0}^{+}(X)$.

    Since, $\sigma_{t} = I_{\mu_{t}}$ for almost every $t \in [0,T]$, we get that
    \begin{equation*}
      \int_{0}^{T} \dot{\varphi}(t) \sigma_{t}(h) \dd t + \int_{0}^{T} \varphi(t) \sigma_{t}(1 \otimes h) \dd t = 0, \qquad h \in \mathscr{F}_{0}(X),\ \varphi \in \mathscr{C}_{c}^{\infty}((0,T)).
    \end{equation*}
    Taking a sequence $(\varphi_{l}) \subset \mathscr{C}_{c}^{\infty}((0,1))$ such that, as $l \to \infty$, $\varphi_{l} \to 1_{[t_{0}, t_{1}]}$ and $\dot{\varphi}_{l} \to \delta_{t_{0}} - \delta_{t_{1}}$, we thus obtain that
    \begin{equation*}
      \sigma_{t_{1}}(h) = \sigma_{t_{0}}(h) + \int_{0}^{t} \nu_{s}(\config \otimes h), \qquad h \in \mathscr{F}_{0}(X).
    \end{equation*}
    The claim follows immediately.
  \end{proof}

  \subsubsection{Existence of continuous solutions}
\begin{theorem}\label{t:ce-continuous-representative}
    Every solution $(\bar{\mu}', \bar{\nu})$ to the continuity equation with $\mu_{0} \in \mathscr{P}_{1}(\config)$ admits a representative $(\bar{\mu}, \bar{\nu})$ such that $[0,T] \ni t \mapsto \mu_{t} \in \mathscr{P}_{1}(\config)$ is continuous.
    Moreover, for all $t_{0}$ and $t_{1} \in [0,T]$:
    \begin{equation}\label{e:ce-integral-representation}
      \mu_{t_{1}}(F_{t_{1}}) - \mu_{t_{0}}(F_{t_{0}}) = \int_{t_{0}}^{t_{1}} \mu_{t}(\dot{F}_{t}) + \nu_{t}(\diff F_{t}) \dd t, \qquad F \in \mathscr{C}^{1}([0,T], \mathscr{H}).
    \end{equation}
  \end{theorem}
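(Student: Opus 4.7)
The plan is to mimic and refine the strategy of \cref{t:ce-intensity-measure}, combining the countable generation of the $\mathscr{P}^{\lambda}(\config)$-topology (\cref{t:weak-convergence-countable}) with the Lipschitz-type bound of \cref{t:bound-difference-measure} applied to well-chosen bounded functions, and then promoting weak continuity to $\mathscr{P}_{1}(\config)$-continuity via \cref{t:p1-convergence}.

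Concretely, first I would choose a reference locally finite measure $\lambda \in \mathscr{M}_{0}^{+}(X)$ that dominates the intensity of $\mu_{t}$ uniformly in $t$; e.g., for $B \in \mathfrak{B}_{0}(X)$,
\begin{equation*}
  \lambda(B) \coloneq I_{\mu_{0}}(B) + \abs{\bar{\nu}}(\config \times B \times [0,T]) + m(B).
\end{equation*}
By \cref{t:ce-intensity-measure}, for a.e.\ $t \in [0,T]$ we have $I_{\mu'_{t}} \ll \lambda$, so $\mu'_{t} \in \mathscr{P}^{\lambda}(\config)$ on a full-measure set; moreover, $t \mapsto I_{\mu'_{t}}$ admits a continuous representative in $\mathscr{M}_{0}^{+}(X)$, which I use to deal with the intensity component of the $\mathscr{P}_{1}(\config)$-topology.

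Next, I would apply \cref{t:weak-convergence-countable} to obtain a countable family $\mathscr{G}^{\lambda} \subset \mathscr{G} \subset \mathscr{H}$ inducing the trace topology of $\mathscr{P}(\config)$ on $\mathscr{P}^{\lambda}(\config)$. For each $G \in \mathscr{G}^{\lambda}$, \cref{t:ce-extended} shows that $\varphi \otimes G$ is admissible in \cref{e:ce} for every $\varphi \in \mathscr{C}_{c}^{\infty}((0,T))$, hence by \cref{t:bound-difference-measure} the map $t \mapsto \mu'_{t}(G)$ is uniformly continuous on a set $L_{G}$ of full measure with modulus controlled by the absolutely continuous measure $r \mapsto \abs{\nu_{r}}(\config \times B_{G})$. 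Taking the countable intersection $L \coloneq \bigcap_{G \in \mathscr{G}^{\lambda}} L_{G}$, the map $t \mapsto \mu'_{t}$ is uniformly continuous on $L$ with respect to a metric on $\mathscr{P}^{\lambda}(\config)$ built from the seminorms $\mu \mapsto \abs{\mu(G)}$, $G \in \mathscr{G}^{\lambda}$, weighted by $2^{-k} (1 \wedge \abs{\bar{\nu}}(\config \times B_{G_{k}} \times [0,T])^{-1})$, as in \cref{t:ce-intensity-measure}. By the extension theorem for uniformly continuous maps, this yields a representative $t \mapsto \mu_{t} \in \mathscr{P}^{\lambda}(\config)$ that is continuous in the weak topology, and which agrees with $\mu'_{t}$ on $L$. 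Combining with the continuous representative of $I_{\mu_{t}}$ and invoking \cref{t:p1-convergence} \cref{i:convergence-I} promotes this to continuity in $\mathscr{P}_{1}(\config)$.

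Finally, for the integral representation \cref{e:ce-integral-representation}, I would start from \cref{e:ce-extended}, which covers compactly supported test functions. Given $F \in \mathscr{C}^{1}([0,T], \mathscr{H})$ and $0 \le t_{0} < t_{1} \le T$, I would apply \cref{e:ce-extended} to $\varphi_{\epsilon}(t) F_{t}$ where $(\varphi_{\epsilon}) \subset \mathscr{C}_{c}^{\infty}((0,T))$ is a smooth approximation of $1_{[t_{0},t_{1}]}$ with $\dot{\varphi}_{\epsilon}$ concentrating to $\delta_{t_{0}} - \delta_{t_{1}}$. Using the continuity of $t \mapsto \mu_{t}$ obtained above to handle the boundary terms and the local finiteness of $\bar{\nu}$ combined with dominated convergence for the bulk integral, taking $\epsilon \to 0$ yields the stated formula. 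The main technical obstacle I anticipate is verifying that the functions in $\mathscr{G}^{\lambda}$, although merely Borel rather than continuous, are admissible in the extended continuity equation of \cref{t:ce-extended}—this is where the combination of the monotone class argument in \cref{t:ce-extended} and of the careful choice of $\lambda$ becomes crucial.
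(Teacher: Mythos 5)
Your proposal follows the same strategy as the paper's proof: define a reference measure $\lambda$ dominating the intensity via \cref{t:ce-intensity-measure}, invoke \cref{t:weak-convergence-countable} to obtain the countable convergence-determining class $\mathscr{G}^{\lambda}$, apply \cref{t:bound-difference-measure} to each $G \in \mathscr{G}^{\lambda}$, extend the resulting uniformly continuous map from the full-measure set $L$, upgrade to $\mathscr{P}_{1}(\config)$-continuity via \cref{t:p1-convergence}, and obtain the integral representation by approximating $1_{[t_0,t_1]}$ in \cref{e:ce-extended}. One small remark: the obstacle you flag at the end is already a non-issue, since $\mathscr{G}^{\lambda}\subset\mathscr{G}\subset\mathscr{H}$ and \cref{t:ce-extended} was proved for all of $\mathscr{H}$ (the monotone class argument was done once and for all there; no dependence on the choice of $\lambda$), so the admissibility of $G\otimes\varphi$ with $G\in\mathscr{G}^{\lambda}$ is immediate.
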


  \begin{proof}
    We consider the non-negative measure
    \begin{equation*}
      \lambda(B) = I_{\mu_{0}}(B) + \int_{0}^{T} \abs{\nu_{s}}(\config \times B) \dd s, \qquad B \in \mathfrak{B}(X).
    \end{equation*}
    In view of \cref{t:weak-convergence-countable,t:ce-intensity-measure}, we find a countable set $(G_{k}) = \mathscr{G}^{\lambda} \subset \mathscr{G}$ such that on $\set{ \mu'_{t} : t \in [0,T]}$ the topology of $\mathscr{P}(\config)$ is induced by that of the simple convergence on $\mathscr{G}^{\lambda}$.
    For all $k \in \mathbb{N}$, write $B_{k}$ for a bounded set such that $G_{k} \in \mathscr{H}_{B_{k}}$, and set
    \begin{equation*}
      \begin{split}
        & b_{k} = 2^{-k} \paren*{ 1 \wedge \abs{\bar{\nu}}\paren*{\config \times B_{k} \times [0,T]}^{-1} }, \\
        & \delta(\mu'_{t}, \mu'_{s}) = \sum_{k \in \mathbb{N}} b_{k} \abs*{(\mu_{t} - \mu_{s})(G_{k})}, \qquad t,\,s \in [0,T].
      \end{split}
    \end{equation*}
    Then $\delta$ is a distance on $(\mu'_{t})$ metrizing the topology of $\mathscr{P}(\config)$.
    Invoking \cref{t:bound-difference-measure} and arguing as in the proof of \cref{t:ce-intensity-measure} shows that on the dense subset $L = \cap L_{G_{k}}$, the map $t \mapsto \mu'_{t} \in \mathscr{P}(\config)$ is uniformly continuous with respect to $\delta$.
    We can then extend it to a continuous map $t \mapsto \mu_{t} \in \mathscr{P}(\config)$.
    The fact that $\bar{\mu}$ actually takes its values in $\mathscr{P}_{1}(\config)$ and is continuous is a consequence of \cref{t:ce-intensity-measure,t:p1-convergence}.
    Formula \cref{e:ce-integral-representation} is obtained for functions $F \in \mathscr{C}_{T}^{1}(\mathscr{H})$ from \cref{e:ce-extended} and by considering a sequence of smooth functions on $(t_{0}, t_{1})$ and converging to $1_{(t_{0}, t_{1})}$ and whose derivatives converges to $\delta_{t_{0}} - \delta_{t_{1}}$ in the sense of distributions (see \cite[Lem.\ 3.1]{Erbar} for details).
    This extends to $F \in \mathscr{C}^{1}([0,T], \mathscr{H})$ by approximation.
\end{proof}

\begin{corollary}\label{t:ce-one-function}
  If $(\bar{\mu}, \bar{\nu})$ is a solution with $\mu_{0} \in \mathscr{P}_{1}(\config)$, we have that
  \begin{equation}\label{e:ce-integral-representation-G}
    \mu_{t}(F) = \mu_{0}(F) + \int_{0}^{t} \nu_{s}(\diff F) \dd s, \qquad F \in \mathscr{H}.
  \end{equation}
\end{corollary}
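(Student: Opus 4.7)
The plan is to apply the integral representation formula \eqref{e:ce-integral-representation} from \cref{t:ce-continuous-representative} to the \emph{constant} path $[0,T] \ni t \mapsto F$, for the given $F \in \mathscr{H}$. This constant path is certainly of class $\mathscr{C}^1([0,T], \mathscr{H})$, so the theorem is directly applicable.

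With $F_t \equiv F$, the time derivative $\dot{F}_t$ vanishes identically, and the differential $\diff F_t$ is simply the time-independent element $\diff F \in \mathscr{F}_{b,0}(\config \times X)$. Substituting $t_0 = 0$ and $t_1 = t$ in \eqref{e:ce-integral-representation}, the first term in the integrand vanishes, leaving
\begin{equation*}
\mu_t(F) - \mu_0(F) = \int_0^t \mu_s(0) \, ds + \int_0^t \nu_s(\diff F) \, ds = \int_0^t \nu_s(\diff F) \, ds,
\end{equation*}
which is precisely \eqref{e:ce-integral-representation-G}.

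The only thing to check is that the right-hand integral is well-defined: since $F \in \mathscr{H}$, there exists $B \in \mathfrak{B}_0(X)$ such that $\diff F$ vanishes outside of $\config \times B$ and is bounded there, so $|\nu_s(\diff F)| \leq \|\diff F\|_\infty \, |\nu_s|(\config \times B)$, and integrability in $s$ follows from the condition \eqref{e:nu-locally-bounded} in the definition of a solution to the continuity equation. There is no real obstacle here — the statement is essentially a one-line specialization of the preceding theorem to constant paths.
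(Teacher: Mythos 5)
Your proof is correct and matches the paper's argument exactly: the paper's proof also consists of applying \cref{e:ce-integral-representation} with the constant path $F_t = F$ for all $t \in [0,T]$. Your additional remark about integrability via \cref{e:nu-locally-bounded} is a sound (if implicit in the paper) sanity check, but otherwise the argument is the same one-line specialization.
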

\begin{proof}
  Apply \cref{e:ce-integral-representation} with $F_{t} = F$ for all $t \in [0,T]$.
\end{proof}

\subsubsection{The relative entropy along the continuity equation}

In \cref{e:production-entropy-ou}, we have that the Fisher information controls the entropy production along $\mathsf{P}^{\star}$.
A similar result holds for the entropy along the continuity equation.
\begin{theorem}\label{t:ce-entropy-production}
  Let $(\bar{\mu}, \bar{\nu})$ be a solution to the continuity equation such that, for all $t \in [0,T]$, $\mu_{t} = \rho_{t} \pi \in \dom \ent$ and $\nu_{t} = w_{t} (\pi \otimes m)$,  and
  \begin{equation}\label{e:ce-bound-fish-action}
    \int_{0}^{T} \Fish{ \mu_{t} \given \pi } \dd t + \int_{0}^{T} \int \abs{w_{t}}^{2} \frac{\diff \log \rho_{t}}{\diff \rho_{t}} \dd t < \infty.
  \end{equation}
  Then, for all $t \in [0,T]$:
  \begin{equation}\label{e:ce-entropy-production}
    \Ent{\mu_{t} \given \pi} - \Ent{ \mu_{0} \given \pi } = \int_{0}^{t} \int \diff \log \rho_{s} \dd \nu_{s} \dd s.
  \end{equation}
\end{theorem}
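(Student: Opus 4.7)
The strategy is to apply the continuity equation \cref{e:ce-integral-representation} with the time-dependent test function $F_t = \log \rho_t$: formally, $\int \partial_s \log \rho_s \dd\mu_s = \partial_s \int \rho_s \dd\pi = 0$ by mass conservation, so the temporal contribution vanishes, leaving $\mu_t(\log \rho_t) - \mu_0(\log \rho_0) = \int_0^t \nu_s(\diff \log \rho_s) \dd s$, which is precisely the claim since $\int \log \rho \dd \mu = \Ent{\mu \given \pi}$. The technical obstacle is that $\log \rho_t$ is typically unbounded and its value at $\eta$ depends on the configuration outside every bounded set, so it does not lie in the admissible class $\mathscr{H}$.

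Two nested approximations address this. First, truncate $\log$ by $\theta_k' \coloneq (-k) \vee \log \wedge k$, the derivatives of $\theta_k$ from \cref{e:theta-k}; these are bounded Lipschitz and $\theta_k \nearrow \theta(r) \coloneq r \log r - r + 1$. Second, for each closed and bounded $B \subset X$, localize by $\rho_s^B \coloneq \esp[\rho_s \mid \mathfrak{B}_B(\config)]$, so that $F_s^{k,B} \coloneq \theta_k'(\rho_s^B)$ is bounded and $\mathfrak{B}_B(\config)$-measurable, hence belongs to $\mathscr{H}_B \subset \mathscr{H}$ by \cref{t:local-sigma-algebra}. After a routine time-mollification producing an element of $\mathscr{C}^1([0,T], \mathscr{H})$, \cref{e:ce-integral-representation} applies.

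Introducing $G_k(r) \coloneq r \theta_k'(r) - \theta_k(r)$ with $G_k'(r) = r \theta_k''(r)$, the tower property yields $\mu_s(F_s^{k,B}) = \int [G_k(\rho_s^B) + \theta_k(\rho_s^B)] \dd\pi$ and, by the chain rule, $\mu_s(\partial_s F_s^{k,B}) = \int \rho_s^B \theta_k''(\rho_s^B) \partial_s \rho_s^B \dd\pi = \partial_s \int G_k(\rho_s^B) \dd\pi$. The $G_k$ contributions therefore cancel in \cref{e:ce-integral-representation}, leaving the localized identity
\[
  \int \theta_k(\rho_t^B) \dd\pi - \int \theta_k(\rho_0^B) \dd\pi = \int_0^t \int \diff \theta_k'(\rho_s^B) \dd \nu_s \dd s.
\]

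It remains to pass $B \nearrow X$ and $k \to \infty$. The LHS converges to $\Ent{\mu_t \given \pi} - \Ent{\mu_0 \given \pi}$ via martingale convergence $\rho_s^B \to \rho_s$ in $L^1(\pi)$, the at-most-linear growth of $\theta_k$, and monotone convergence $\theta_k \nearrow \theta$. For the RHS, the pointwise bound $\abs{\diff \theta_k'(\emparg)} \leq \abs{\diff \log(\emparg)}$, Cauchy--Schwarz, and the integrability hypothesis \cref{e:ce-bound-fish-action} provide a uniform dominating integrand, so dominated convergence yields $\int_0^t \int \diff \log \rho_s \dd \nu_s \dd s$. The hardest step will be uniformly controlling the RHS under the localization $B \nearrow X$: this requires a conditional Jensen inequality of the form $\fish(\mu_s^B \given \pi) \leq \fish(\mu_s \given \pi)$, plausible given the convexity of the integrand $\psi$ used in the proof of \cref{t:lsc-I}, but demanding some care because of the interplay between the nonlocal operator $\diff$ and the conditional expectation onto $\mathfrak{B}_B(\config)$.
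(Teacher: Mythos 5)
Your plan follows essentially the same route as the paper's proof: the heuristic use of $F = \log \rho$, truncation of $\log$ by $\theta_k'$, localization by conditional expectation onto the local $\sigma$-algebra, a time mollification, and then passage to the limit. The algebraic organization differs in one respect: the paper first proves that the space-regularized pair $(\bar\mu^{B},\bar\nu^{B})$ \emph{is itself a solution} to the continuity equation (which needs the commutation $\esp_{\pi\otimes m}[\diff F\mid \mathfrak{A}_B(\config)\otimes \mathfrak{B}_B(X)]\,1_{\config\times B}=\diff\,\esp_\pi[F\mid \mathfrak{A}_B(\config)]$), and then differentiates $\int\theta_k(\rho_t^{\varepsilon,B})\dd\pi$ directly; you instead test \cref{e:ce-integral-representation} against $\theta_k'(\rho_s^{B})$ and use the $G_k$-cancellation and the tower property to land at the same localized identity. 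The two organizations are equivalent in outcome, but note that yours still tacitly uses the same $\diff$/conditional-expectation commutation, through the tower property in $\int\diff\theta_k'(\rho_s^B)\,w_s\,1_{\config\times B}\dd(\pi\otimes m)=\int\diff\theta_k'(\rho_s^B)\,w_s^B\dd(\pi\otimes m)$. Also, $\theta_k'$ is Lipschitz but not $C^1$ ($\theta_k''$ jumps at $r=\mathrm{e}^{\pm k}$), so the ``routine'' claim that $\theta_k'(\rho^{\varepsilon,B}_\cdot)\in\mathscr{C}^1([0,T],\mathscr{H})$ needs an a.e.\ chain-rule argument rather than a literal $C^1$ statement.

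The genuine gap is the final step $B\nearrow X$, which you correctly flag as the ``hardest step'' but leave unproved. Dominated convergence needs an $L^1$-dominating sequence or uniform integrability of $\diff\log\rho^B_s\,w^B_s$ uniformly in $B$; the integral bounds $\Fish{\mu^B_s\given\pi}\leq\Fish{\mu_s\given\pi}$ and $\int|w^B|^2\diff\log\rho^B/\diff\rho^B\leq\int|w|^2\diff\log\rho/\diff\rho$ (the second of which you don't mention) are \emph{not} by themselves enough, since $w^B\to w$ only a.e. The paper obtains uniform integrability by proving the pointwise bound
\[
\abs{\diff\log\rho^B\,w^B}\le \tfrac12\Bigl(\esp_\pi\bigl[\diff\log\rho\,\diff\rho\mid\mathfrak{A}_B(\config)\bigr]+\esp_\pi\bigl[w^2\,\tfrac{\diff\log\rho}{\diff\rho}\mid\mathfrak{A}_B(\config)\bigr]\Bigr),
\]
so that the integrand is dominated by two closed martingales that are uniformly integrable by hypothesis \cref{e:ce-bound-fish-action}. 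This requires (i) the commutation of $\diff$ with conditional expectation at fixed $x\in B$, and (ii) conditional Jensen applied to the \emph{bivariate} convexity of $(s,t)\mapsto(\log s-\log t)(s-t)$ and the \emph{trivariate} joint convexity of $(w,s,t)\mapsto w^2(\log s-\log t)/(s-t)$, combined with an AM–GM decomposition. Your phrase ``Cauchy--Schwarz and the integrability hypothesis provide a uniform dominating integrand'' glosses over exactly this machinery, which is the nontrivial content of the proof.
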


\begin{remark}
  Let us comment on the assumption \cref{e:ce-bound-fish-action}.
  First of all by the Cauchy--Schwarz inequality this ensures that $\diff \log \rho \in L^{1}(\bar{\nu})$, so that the right-hand side of \cref{e:ce-entropy-production} is well-defined.
  Secondly, the condition on the Fisher information is not very restrictive.
  Indeed, if we start with a solution of the continuity equation in $\dom \ent$, then by \cref{t:ou-solution} we can always perturb it by the Ornstein--Uhlenbeck semi-group in order to have a solution satisfying the finiteness of the Fisher entropy by \cref{e:production-entropy-ou}.
  Lastly, the condition involving the second integral in \cref{e:ce-bound-fish-action} might seem more exotic.
  However, this quantity plays a natural role in the definition of the action and the variational distance in the next section.
\end{remark}

\begin{remark}
  Let us consider $(\bar{\mu}, \bar{\nu})$ a solution to the continuity equation given by the dual Ornstein--Uhlenbeck semi-group, as in \cref{t:ou-solution}.
  In this case,
  \begin{equation*}
    \int_{0}^{t} \int \diff \log \rho_{s} \dd \nu_{s} \dd s = - \int_{0}^{t} \Fish{ \mathsf{P}^{\star}_{s} \mu_{0} \given \pi } \dd s,
  \end{equation*}
  and formula \cref{e:production-entropy-ou} regarding the entropy production along the Ornstein--Uhlenbeck semi-group coincides with \cref{e:ce-entropy-production}.
\end{remark}

\begin{proof}
  For convenience, we first give a short heuristic proof of the statement that goes back at least to the seminal work of \cite{OttoVillani}.
  We thus assume that $\rho \in \mathscr{C}^{1}([0,T], \mathscr{H})$, with $\rho$ bounded away from $0$.
  Since
  \begin{equation*}
    \diff \log \rho  = \log (\rho + \diff \rho) - \log \rho,
  \end{equation*}
  we find that $\log \rho$ is also in $\mathscr{C}^{1}([0,T], \mathscr{H})$.
  Applying \cref{e:ce-integral-representation} to $F = \log \rho$ yields
  \begin{equation*}
    \Ent{ \mu_{t} \given \pi } - \Ent{ \mu_{0} \given \pi } = \int_{0}^{t} \int \dot{\rho}_{s} \dd \pi \dd s + \int_{0}^{t} \int \diff \log \rho_{s} \dd \nu_{s} \dd s.
  \end{equation*}
Since $\rho_{t}$ is a probability density for all $t$, $\int \dot{\rho}_{s} \dd \pi = 0$.
This shows the claim in this case.
The rest of the proof formalizes this idea for general densities.
We stress however that all the ideas are contained in this short argument.

Now, we only assume that $\diff \log \rho \in L^{1}(\bar{\nu})$.
We shall need two stability results for solutions to the continuity equation under regularization.
\paragraph{Stability of the continuity equation under time regularization.}
Let $\psi$ be smooth, compactly supported, non-negative, symmetric mollifier on $\mathbb{R}$, and $\varepsilon > 0$.
We define
\begin{align*}
  & \psi_{\varepsilon} \coloneq \frac{1}{\varepsilon} \psi\paren*{\frac{\cdot}{\varepsilon}} \\
  & \rho^{\varepsilon}_{t} \coloneq \int_{0}^{T} \rho_{\tau} \psi_{\varepsilon}(t-\tau) \dd \tau, \\
  & w^{\varepsilon}_{t} \coloneq \int_{0}^{T} w_{\tau} \psi_{\varepsilon}(t-\tau) \dd \tau.
\end{align*}
Then $\bar{\rho}^{\varepsilon} \in \mathscr{C}^{1}([0,T], L^{1}(\pi))$.
Setting $\mu^{\varepsilon}_{t} \coloneq \rho^{\varepsilon}_{t} \pi$ and $\nu^{\varepsilon}_{t} \coloneq w^{\varepsilon}_{t} (\pi \otimes m)$, we also have that $(\bar{\mu}^{\varepsilon}, \bar{\nu}^{\varepsilon})$ solves the continuity equation.
Indeed taking $F \in \mathscr{C}_{T}(\mathscr{H})$, and letting
\begin{equation*}
F_{\tau}^{\varepsilon} \coloneq \frac{1}{\varepsilon} \int_{0}^{T} F_{t} \psi\paren*{\frac{\tau - t}{\varepsilon}} \dd t,
\end{equation*}
we have that $F^{\varepsilon}_{\tau} \in \mathscr{C}_{T}(\mathscr{H})$ for all sufficiently small $\varepsilon > 0$, and
\begin{align*}
  \int_{0}^{T} \dot{F}_{t} \dd \mu^{\varepsilon}_{t} \dd t &= \frac{1}{\varepsilon} \int_{0}^{T} \int_{0}^{T} \int \dot{F}_{t} \psi\paren*{\frac{t-\tau}{\varepsilon}} \rho_{\tau} \dd \pi \dd t \dd \tau \\
                                                           &= \int_{0}^{T} \int \dot{F}_{\tau}^{\varepsilon} \rho_{\tau} \dd \pi \dd \tau \\
                                                           &= - \int_{0}^{T} \int \diff F^{\varepsilon}_{\tau} w_{\tau} \dd(\pi \otimes m) \dd \tau \\
                                                           &= - \int_{0}^{T} \diff F_{t} w^{\varepsilon}_{t} \dd(\pi \otimes m) \dd t.
\end{align*}
Since $F \in \mathscr{C}_{T}(\mathscr{H})$ is arbitrary, \cref{e:ce} holds for $(\bar{\mu}^{\varepsilon}, \bar{\nu}^{\varepsilon})$.
Moreover, by construction, $\bar{\nu}^{\varepsilon}$ satisfies \cref{e:nu-locally-bounded}.
This shows that $(\bar{\mu}^{\varepsilon}, \bar{\nu}^{\varepsilon})$ is a solution to the continuity equation.

\paragraph{Stability of the continuity equation under space regularization.}
Now, fix $B \in \mathfrak{B}_{0}(X)$, and define
\begin{align*}
  & \rho^{B}_{t} \coloneq \EspPi*{ \rho_{t} \given \mathfrak{A}_{B}(\config) }, \\
  & w^{B}_{t} \coloneq \EspPim*{ w_{t} 1_{\config \times B} \given \mathfrak{A}_{B}(\config) \otimes \mathfrak{B}_{B}(X) }.
\end{align*}
See, for instance, \cite[\S\S 39--43, pp. 36--43]{DellacherieMeyerMartingales} for reminders on conditional expectations and martingales with respect to $\sigma$-finite measures.
In a more prosaic way, we have that
\begin{equation}\label{e:w-conditional-x}
  w^{B}_{t}(\cdot, x) = 1_{B}(x)  \EspPi*{ w_{t}(\cdot, x) \given \mathfrak{A}_{B}(\config) }, \qquad x \in X.
\end{equation}
In view of the independence property of Poisson point processes, we have the explicit formula:
\begin{equation*}
  \rho^{B}_{t}(\eta) = \int \rho_{t}(\eta_{\restriction B} + \xi) \pi_{X \setminus B}(\dd \xi),
\end{equation*}
where, for $C \in \mathfrak{B}(X)$, $\pi_{C}$ is a Poisson point process with intensity $m_{\restriction C}$.
We let $\mu_{t}^{B} = \rho_{t}^{B} \pi$ and $\nu_{t}^{B} = w_{t}^{B} (\pi \otimes m)$, and we claim that $(\bar{\mu}^{B}, \bar{\nu}^{B})$ is a solution to the continuity equation.
By the tower property of conditional expectation
\begin{equation*}
  \abs{\bar{\nu}^{B}}(\config \times X \times [0,T]) = \int_{0}^{T} \EspPim*{ \abs{w_{t}^{B}} } \dd t \leq \int_{0}^{T} \EspPim*{ \abs{w_{t} 1_{\config \times B}} } \dd t =  \abs{\bar{\nu}}(\config \times B \times [0,T]) < \infty.
\end{equation*}
Thus, $\bar{\nu}^{B}$ satisfies \cref{e:nu-locally-bounded}.
Now, let $u$ be bounded and $\mathfrak{A}_{B}(\config) \otimes \mathfrak{B}_{B}(X)$-measurable.
In view, of the explicit formula
\begin{equation*}
  \skoro \paren*{ u 1_{\config \times B} }(\eta) = \int_{B} u(\eta - \delta_{x}, x) \eta(\dd x) - \int_{B} u(\eta, x) m(\dd x),
\end{equation*}
we find that, for $y \not\in B$,
\begin{equation*}
  \skoro( 1_{\config \times B} u)(\eta +\delta_{y}) = \int_{B}(\eta - \delta_{x} + \delta_{y}, x) (\eta + \delta_{y})(\dd x) - \int_{B} u(\eta+\delta_{y}, x) m(\dd x) = \skoro \paren*{ u 1_{\config \times B} }(\eta).
\end{equation*}
Thus, $\skoro(1_{\config \times B} u)$ is $\mathfrak{A}_{B}(\config)$-measurable.
For $F \in \mathscr{H}$, by the Mecke formula, we thus find that
\begin{align*}
  \int \diff F 1_{\config \times B} u \dd \pi \dd m &= \int F \skoro \paren*{ 1_{\config \times B} u } \dd \pi \\
                                                    &= \int \EspPi*{ F \given \mathfrak{A}_{B}(\config) } \skoro \paren*{ 1_{\config \times B} u } \dd \pi \\
                                                    &= \int \diff \EspPi*{ F \given \mathfrak{A}_{B}(\config) } u \dd \pi \dd m.
\end{align*}
Since $u$ was arbitrary,
\begin{equation*}
  \EspPim*{ \diff F \given \mathfrak{A}_{B}(\config) \otimes \mathfrak{B}_{B}(X) } 1_{\config \times B} = \diff \EspPi*{ F \given \mathfrak{A}_{B} }.
\end{equation*}
Thus, for $F \in \mathscr{C}_{T}(\mathscr{H})$,
\begin{align*}
  \int_{0}^{T} \int \dot{F}_{t} \dd \mu^{B}_{t} \dd t &= \int_{0}^{T} \int \EspPi*{ \dot{F}_{t} \given \mathfrak{A}_{B}(\config) } \dd \mu_{t} \dd t \\
                                                  &= - \int_{0}^{T} \int \diff \EspPi*{ F_{t} \given \mathfrak{A}_{B}(\config) } w_{t} \dd \pi \dd m \dd t \\
                                                  &= - \int_{0}^{T} \int \diff F_{t} \EspPim*{ w_{t} 1_{\config \times B} \given \mathfrak{A}_{B}(\config) \otimes \mathfrak{B}_{B}(X) }.
\end{align*}
This shows that $(\bar{\mu}^{B}, \bar{\nu}^{B})$ solves \cref{e:ce}.

\paragraph{Combining the two regularizations.}
Now we define
\begin{align*}
  & \rho^{\varepsilon,B}_{t} = \EspPi*{ \rho^{\varepsilon}_{t} \given \mathfrak{A}_{B}(\config) }, \\
  & w^{\varepsilon,B}_{t} = \EspPim*{ w^{\varepsilon}_{t} 1_{\config \times B} \given \mathfrak{A}_{B}(\config) \otimes \mathfrak{B}_{B}(X) }.
\end{align*}
We also consider the two associated measures $(\bar{\mu}^{\varepsilon, B}, \bar{\nu}^{\varepsilon,B})$.
Note that the two regularizations commute, that is we would get the same objects by first applying the regularization in space and then in time.
From what precedes, we have that $(\bar{\mu}^{\varepsilon, B}, \bar{\nu}^{\varepsilon,B})$ is a solution to the continuity equation.
Differentiating under the integral sign, we get that $\bar{\rho}^{\varepsilon, B} \in \mathscr{C}^{1}([0,T], L^{1}(\pi))$.
The two previous facts show that $\dot{\rho}^{\varepsilon,B}_{t} = \skoro w_{t}^{\varepsilon,B}$.
Fix $k \in \mathbb{N}$, recall $\theta_{k}$ defined in \cref{e:theta-k}.
We then find that
\begin{align*}
  \int \theta_{k}(\rho^{\varepsilon,B}_{t}) \dd \pi - \int \theta_{k}(\rho^{\varepsilon,B}_{0}) \dd \pi &= \int_{0}^{t} \int \theta_{k}'(\rho_{s}^{\varepsilon,B}) \skoro w_{s}^{\varepsilon, B} \dd \pi \dd s \\
                                                                                                                               &= \int_{0}^{t} \iint \diff \theta_{k}'(\rho_{s}^{\varepsilon,B}) w_{s}^{\varepsilon, B} \dd \pi \dd m \dd s.
\end{align*}
As $\varepsilon \to 0$, we have that $\bar{\rho}^{\varepsilon,B} \to \bar{\rho}^{B}$ in $\mathscr{C}^{0}([0,T], L^{1}(\pi))$, and $w_{s}^{\varepsilon,B} \to w_{s}^{B}$ in $L^{1}(\pi \otimes m)$ for all almost every $s \in [0,T]$.
Thus by dominated convergence, we get that
\begin{equation*}
  \int \theta_{k}(\rho^{B}_{t}) \dd \pi - \int \theta_{k}(\rho^{B}_{0}) \dd \pi = \int_{0}^{t} \iint \diff \theta_{k}'(\rho_{s}^{B}) w_{s}^{B} \dd \pi \dd m \dd s = \int_{0}^{t} \int \theta'_{k}(\rho^{B}_{s}) \skoro w^{B}_{s} \dd \pi \dd s.
\end{equation*}
By monotone convergence as $k \to \infty$, we find that
\begin{equation}\label{e:ce-entropy-production-B}
  \Ent{ \mu_{t}^{B} \given \pi } - \Ent{ \mu_{0}^{B} \given \pi } = \int_{0}^{t} \int \log \rho^{B}_{s} \skoro w^{B}_{s} \dd \pi \dd s = \int _{0}^{t} \iint \diff \log \rho^{B}_{s} w^{B}_{s} \dd \pi \dd m \dd s.
\end{equation}
By the theorem of almost sure convergence of martingales, we find that $\rho^{B}_{t} \to \rho_{t}$ almost surely as $B \to X$.
By \cite[Eq.\ 103.1, p.\ 186]{DellacherieMeyerMartingales}, we have that $\sup_{B} \rho^{B}_{t} \in L \log L(\pi)$.
Thus, the martingale also converges in $L \log L(\pi)$ by dominated convergence.
It follows that we can take the limit in the left-hand side of \cref{e:ce-entropy-production-B}.

We now show that we can also pass to the limit in the right-hand side.
First of all, by the theorem of almost sure convergence of martingales which also holds for $\sigma$-finite measures \cite[\S 41, p.\ 37]{DellacherieMeyerMartingales}, we have that $w^{B} \to w$ almost surely.
Thus in order to conclude it suffices to show that $(\diff \log \rho^{B} w^{B})$ is uniformly integrable in $L^{1}(\pi \otimes m \otimes \dd t)$.
Firstly, by the convexity of $(s, t) \mapsto (\log s - \log t)(s -t)$ and Jensen's inequality for conditional expectation
\begin{equation*}
  \diff_{x} \log \rho^{B}_{t} \diff_{x} \rho^{B}_{t} \leq \EspPi*{ \diff_{x} \log \rho_{t} \diff_{x} \rho_{t} \given \mathfrak{A}_{B}(\config)}.
\end{equation*}
Secondly, by the convexity of $(w, s, t) \mapsto w^{2} (\log s - \log t) / (s -t)$, Jensen's inequality for conditional expectation, and \cref{e:w-conditional-x}
\begin{equation*}
\paren*{ \abs*{w_{t}^{B}(\cdot, x)}^{2} \frac{\diff_{x} \log \rho^{B}_{t}}{\diff_{x} \rho^{B}_{t}} } \leq \EspPi*{ w_{t}^{2}(\cdot, x) \frac{\diff_{x} \log \rho_{t}}{\diff_{x} \rho_{t}} \given \mathfrak{A}_{B}(\config) }.
\end{equation*}
Finally, writing
\begin{equation*}
  \abs{ \diff_{x} \log \rho_{t}^{B} w^{B}_{t}(\cdot, x) }^{2} = \paren{ \diff_{x} \log \rho^{B}_{t} \diff_{x} \rho_{t}^{B} } \paren*{ \abs*{w_{t}^{B}(\cdot, x)}^{2} \frac{\diff_{x} \log \rho^{B}_{t}}{\diff_{x} \rho^{B}_{t}} },
\end{equation*}
and using the two previous inequalities together with $2ab \leq a^{2} + b^{2}$ yields
\begin{equation*}
  \abs{ \diff_{x} \log \rho^{B}_{t} w^{B}_{t} } \leq \frac{1}{2} \paren*{ \EspPi*{ \diff_{x} \log \rho_{t} \diff_{x} \rho_{t} \given \mathfrak{A}_{B}(\config) } + \EspPi*{ w_{t}^{2}(\cdot, x) \frac{\diff_{x} \log \rho_{t}}{\diff_{x} \rho_{t}} \given \mathfrak{A}_{B}(\config) } }.
\end{equation*}
Since this holds for all $x \in B$ and all $t \in [0,T]$, we actually have shown that
\begin{equation*}
    \abs{ \diff \log \rho^{B} w^{B} } \leq \EspPimt*{ \frac{1}{2} \paren*{ \diff \log \rho \diff \rho + w^{2} \frac{\diff \log \rho}{\diff \rho} } \given \mathfrak{A}_{B}(\config) \otimes \mathfrak{B}_{B}(X) \otimes \mathfrak{B}(0,T) }.
\end{equation*}
By \cite[Thm.\ 41.1, p.\ 38]{DellacherieMeyerMartingales} and \cref{e:ce-bound-fish-action}, the right-hand side is the sum of two uniformly integrable martingales and is thus uniformly integrable.
\end{proof}

\section{Synthetic Ricci curvature bounds on the Poisson space}

\subsection{A variational distance on the Poisson space}\label{s:variational-distance}
\subsubsection{The Lagrangian functional}
In view of what precedes, it is natural to consider vector fields to be elements of $\mathscr{M}_{b,0}(\config \times X)$.
Let us define the \emph{length} of the tangent vector $\nu$ at $\mu$.
We set
\begin{equation*}
  \theta(s,t) \coloneq \frac{s-t}{\log s - \log t}, \qquad s,t \in \real_{+},
\end{equation*}
and
\begin{equation*}
  \alpha(s,t,w) \coloneq \frac{\abs{w}^{2}}{\theta(s,t)}, \qquad w \in \real,\, s,t \in \real_{+},
\end{equation*}
where by convention $0/0 \coloneq 0$.
For convenience, for $F \in \mathscr{F}_{+}(\config)$ we also write
\begin{equation*}
\hat{F}(\eta, x) = \theta\paren[\big]{F(\eta), F(\eta + \delta_{x})} = \frac{\diff_{x} F(\eta)}{\diff_{x} \log F(\eta)}, \qquad  \eta \in \config,\, x \in X.
\end{equation*}
For all $\mu \in \mathscr{P}_{1}(\config)$ and $\nu \in \mathscr{M}_{b,0}(\config \times X)$, let us define
\begin{equation*}
  \mathcal{L}(\mu, \nu) = \int \alpha\paren*{\frac{\dd \mu \otimes m}{\dd \sigma}, \frac{\dd C_{\mu}}{\dd \sigma}, \frac{\dd \nu}{\dd \sigma}} \dd \sigma,
\end{equation*}
where $\sigma \in \mathscr{M}_{b,0}(\config \times X)$ is non-negative such that $\mu \otimes \pi$, $C_{\mu}$, and $\nu$ are absolutely continuous with respect to $\sigma$.
By homogeneity, the value of the action is independent of the choice of $\sigma$.
Provided $\mu = \rho \pi$ and $\nu = w (\pi \otimes m)$, in view of \cref{e:campbell-density}, we can choose $\sigma = \pi \otimes m$, and we find that:
\begin{equation*}
  \mathcal{L}(\mu, \nu) = \int \alpha\big(\rho(\eta), \rho(\eta + \delta_{x}),w(\eta, x)\big) \pi(\dd \eta) m(\dd x) = \int \frac{\abs{w}^{2}}{\hat{\rho}} \dd \pi \dd m.
\end{equation*}
We can then interpret $\mathcal{L}(\mu, \nu)$ as the norm of the \enquote{tangent vector} $\nu$ in the \enquote{tangent space} to $\mathscr{P}_1(\Upsilon)$ at~$\mu\in\mathscr{P}_1$.

In view of the convexity of $\alpha$ we immediately get the following lemma.
\begin{lemma}\label{t:convex-lagrangian}
  The Lagrangian $\mathcal{L}$ is jointly convex.
\end{lemma}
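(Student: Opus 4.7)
The plan is to reduce joint convexity of $\mathcal{L}$ to pointwise joint convexity of the density $\alpha\colon \real_{+}\times\real_{+}\times\real\to[0,\infty]$, and then to prove the latter by combining the concavity of the logarithmic mean $\theta$ with the convexity of the quadratic perspective $(u,v)\mapsto u^{2}/v$.

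First, I would observe that the three assignments $\mu \mapsto \mu\otimes m$, $\mu \mapsto C_{\mu}$, and $\nu\mapsto \nu$ are all linear: linearity of $\mu\mapsto\mu\otimes m$ is immediate, and linearity of the reduced Campbell map follows directly from the definition, since $C_{\mu}$ depends linearly on $\mu$ through integration of $\iint 1_{A}(\eta-\delta_{x})1_{B}(x)\eta(\dd x)\mu(\dd\eta)$. Given $(\mu_{0},\nu_{0}),\,(\mu_{1},\nu_{1})$ and $\lambda\in[0,1]$, I would fix the common reference measure
\begin{equation*}
  \sigma \coloneq (\mu_{0}+\mu_{1})\otimes m + C_{\mu_{0}} + C_{\mu_{1}} + \abs{\nu_{0}} + \abs{\nu_{1}} \in \mathscr{M}_{b,0}^{+}(\config\times X),
\end{equation*}
set $s_{i},t_{i},w_{i}$ for the respective Radon--Nikod\'ym densities of $\mu_{i}\otimes m,\,C_{\mu_{i}},\,\nu_{i}$ with respect to $\sigma$, and write $(s_{\lambda},t_{\lambda},w_{\lambda})=(1-\lambda)(s_{0},t_{0},w_{0})+\lambda(s_{1},t_{1},w_{1})$. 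By the linearity noted above, these are exactly the densities corresponding to $((1-\lambda)\mu_{0}+\lambda\mu_{1},(1-\lambda)\nu_{0}+\lambda\nu_{1})$. Assuming joint convexity of $\alpha$, the inequality $\alpha(s_{\lambda},t_{\lambda},w_{\lambda}) \le (1-\lambda)\alpha(s_{0},t_{0},w_{0}) + \lambda\alpha(s_{1},t_{1},w_{1})$ holds $\sigma$-almost everywhere, and integrating against $\sigma$ yields the claim.

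Next, I would prove the joint convexity of $\alpha$. The logarithmic mean admits the integral representation $\theta(s,t) = \int_{0}^{1} s^{r}t^{1-r}\,\dd r$; since each $(s,t)\mapsto s^{r}t^{1-r}$ is concave on $\real_{+}^{2}$, so is $\theta$. Then, for any $\lambda\in[0,1]$, concavity of $\theta$ gives $\theta(s_{\lambda},t_{\lambda})\ge(1-\lambda)\theta(s_{0},t_{0})+\lambda\theta(s_{1},t_{1})$, and the classical convexity of the quadratic perspective $(u,v)\mapsto u^{2}/v$ on $\real\times(0,\infty)$ yields
\begin{equation*}
  \frac{w_{\lambda}^{2}}{\theta(s_{\lambda},t_{\lambda})} \le \frac{w_{\lambda}^{2}}{(1-\lambda)\theta(s_{0},t_{0})+\lambda\theta(s_{1},t_{1})} \le (1-\lambda)\frac{w_{0}^{2}}{\theta(s_{0},t_{0})} + \lambda\frac{w_{1}^{2}}{\theta(s_{1},t_{1})},
\end{equation*}
which is the desired pointwise inequality on the interior. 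The main technical nuisance---and the only real obstacle---is the careful handling of the boundary where $\theta(s,t)=0$ (i.e.\ $s=t=0$): with the stated convention $0/0\coloneq0$, one checks that $\alpha(s_{\lambda},t_{\lambda},w_{\lambda})=+\infty$ unless $w_{\lambda}=0$, and in the latter case both sides of the convexity inequality agree or the right-hand side dominates trivially; alternatively, one may use the variational representation $\alpha(s,t,w)=\sup_{a\in\real}\bracket{2aw - a^{2}\theta(s,t)}$, which writes $\alpha$ as a supremum of functions affine in $w$ and concave (hence affine-bounded-above) in $(s,t)$, making joint convexity manifest on all of $\real_{+}^{2}\times\real$ and bypassing any case analysis.
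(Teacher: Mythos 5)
Your proposal is correct and follows the same route as the paper, which simply observes that joint convexity of $\mathcal{L}$ is an immediate consequence of the joint convexity of the integrand $\alpha$. You fill in the details the paper leaves unstated: linearity of $\mu\mapsto\mu\otimes m$ and $\mu\mapsto C_{\mu}$, reduction to a common dominating $\sigma$, concavity of the logarithmic mean via $\theta(s,t)=\int_{0}^{1}s^{r}t^{1-r}\,\dd r$, the perspective-function argument, and the boundary convention---all of which are sound.
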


\begin{lemma}\label{t:lsc-lagrangian}
  The map $\mathcal{L} \colon \mathscr{P}_{1}(\config) \times \mathscr{M}_{b,0}(\config \times X) \to \real_{+}$ is lower semi-continuous.
\end{lemma}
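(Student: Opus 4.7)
The plan is to model the proof on that of the corresponding statement for the Fisher information (\cref{t:lsc-I}), recognizing $\mathcal{L}(\mu,\nu)$ as a jointly convex, $1$-homogeneous integral functional of the three measures $\mu \otimes m$, $C_{\mu}$, and $\nu$, each in $\mathscr{M}_{b,0}(\config \times X)$.

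First, I would verify that the map
\begin{equation*}
  \Phi \colon \mathscr{P}_{1}(\config) \times \mathscr{M}_{b,0}(\config \times X) \to \mathscr{M}_{b,0}(\config \times X)^{3}, \qquad (\mu,\nu) \longmapsto (\mu \otimes m, C_{\mu}, \nu),
\end{equation*}
is continuous. The second coordinate is continuous by \cref{t:C-homeomorphism}. For the first, given $\varphi \in \mathscr{C}_{b,0}(\config \times X)$ vanishing outside $\config \times B$ for some $B \in \mathfrak{B}_{0}(X)$, the function $\eta \mapsto \int_{B} \varphi(\eta,x)\,m(\dd x)$ lies in $\mathscr{C}_{b}(\config) \subset \mathscr{C}_{1}(\config)$ (since $m(B) < \infty$), so continuity follows from the definition of the $\mathscr{P}_{1}(\config)$-topology. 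The third coordinate is the identity.

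Next, since $\theta$ is the concave logarithmic mean and $(w,T) \mapsto w^{2}/T$ is the (jointly convex) perspective of $w \mapsto w^{2}$, one checks that $\alpha \colon [0,\infty)^{2} \times \mathbb{R} \to [0,\infty]$ is convex, lower semi-continuous, and positively $1$-homogeneous (cf.\ \cref{t:convex-lagrangian}). By Fenchel--Moreau duality, this gives the representation
\begin{equation*}
  \alpha(s,t,w) = \sup_{(a,b,c) \in K} (as + bt + cw), \qquad K \coloneq \set*{ (a,b,c) \in \real^{3} : as + bt + \tfrac{1}{4} c^{2} \theta(s,t) \leq 0, \ \forall s,t \geq 0 },
\end{equation*}
where the constraint on $K$ is obtained by first optimising in $w$. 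Exhausting $X$ by an increasing sequence of closed bounded sets $(B_{k})_{k}$ and using a measurable-selection/density argument analogous to the one in the proof of \cref{t:lsc-I}, I would then derive the variational formula
\begin{equation*}
  \mathcal{L}(\mu,\nu) = \sup \set*{ (\mu \otimes m)(\varphi) + C_{\mu}(\psi) + \nu(\chi) : (\varphi,\psi,\chi) \in \mathscr{T} },
\end{equation*}
where $\mathscr{T}$ denotes the collection of triples $(\varphi,\psi,\chi) \in \mathscr{C}_{b,0}(\config \times X)^{3}$ satisfying $(\varphi(y),\psi(y),\chi(y)) \in K$ for every $y \in \config \times X$.

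For each fixed $(\varphi,\psi,\chi) \in \mathscr{T}$, the map $(\mu,\nu) \mapsto (\mu\otimes m)(\varphi) + C_{\mu}(\psi) + \nu(\chi)$ is continuous by the continuity of $\Phi$ established above. Hence $\mathcal{L}$ is a supremum of continuous functionals, therefore lower semi-continuous. The main obstacle will be the density/approximation step used to pass from measurable selections $(a,b,c)(y) \in K$ to continuous and compactly supported test functions in $\mathscr{T}$ without losing the constraint; this can be addressed either by invoking an Ioffe-type theorem (as in \cref{t:lsc-I}) applied locally on the bounded sets $\config \times B_{k}$ and then using monotone convergence in $k$ (justified by the fact that $\alpha \geq 0$ and by the restriction of the integrand to $\config \times B_{k}$), or by a direct convex analysis argument exploiting the polyhedral-like structure of $K$.
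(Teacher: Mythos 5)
Your strategy is correct and parallels the paper's proof at a high level — both represent $\mathcal{L}$ as a supremum of pairings that are continuous in $(\mu,\nu)$, using Fenchel duality for the jointly convex, positively $1$-homogeneous, lower semi-continuous integrand $\alpha$ — but the implementation differs substantially. The paper writes $\alpha(p) = \sup_{q\in\rational^3}\paren{p\cdot q-\alpha^*(q)}$ over \emph{constant} rational directions $q$, interchanges this countable supremum with the integral, and invokes \cref{t:p1-convergence} \cref{i:convergence-C} to conclude that each term of the resulting supremum is continuous in $(\mu,\nu)$. You instead test against triples $(\varphi,\psi,\chi)\in\mathscr{C}_{b,0}(\config\times X)^{3}$ with values constrained pointwise to $K=\dom\alpha^{*}$, paired directly against $(\mu\otimes m, C_\mu, \nu)$; you verify the needed continuity of these pairings explicitly (the $\mu\otimes m$ coordinate via dominated convergence, showing $G(\eta)=\int_B\varphi(\eta,x)\,m(\dd x)\in\mathscr{C}_b(\config)$; the $C_\mu$ coordinate via \cref{t:C-homeomorphism}; the $\nu$ coordinate by definition of the topology on $\mathscr{M}_{b,0}(\config\times X)$). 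What your route buys is explicit compatibility with the local topology on $\mathscr{M}_{b,0}(\config\times X)$ — constant test directions do not belong to $\mathscr{C}_{b,0}(\config\times X)$ and pair against total masses such as $m(X)$ or $I_\mu(X)$, which can be infinite — so your variational representation is manifestly a supremum of functionals that are finite and continuous for the relevant topology. The price is the genuinely nontrivial approximation step needed for the reverse inequality in $\mathcal{L}(\mu,\nu)=\sup_{\mathscr{T}}[\cdot]$, which you correctly identify as the crux and for which you sketch two viable routes (an Ioffe-type theorem on $\config\times B_k$ plus monotone convergence in $k$, echoing the proof of \cref{t:lsc-I}, or a direct convex-analytic argument). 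As a plan, this is complete and sound; it is longer than the paper's six-line proof, but closer in spirit to the treatment of analogous lower-semicontinuity lemmas in \cite{DNS09} and \cite{Erbar}.
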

\begin{proof}
  By \cref{t:p1-polish} and since $\mathscr{M}_{b,0}(\config \times X)$ is metrizable, it is enough to establish sequential lower semi-continuity.
  Let $(\mu_{n}) \subset \mathscr{P}_{1}(\config)$ converging to $\mu \in \mathscr{P}_{1}(\config)$ and $(\nu_{n}) \subset \mathscr{M}_{b,0}(\config \times X)$ converging to $\nu \in \mathscr{M}_{b,0}(\config \times X)$.
  Since $\alpha$ is lower semi-continuous and convex we can write
  \begin{equation*}
    \alpha(p) = \sup \left\{ p \cdot q - \alpha^{*}(q) : q \in \rational^{3} \right\},
  \end{equation*}
  where $\alpha^{*}$ is the Fenchel conjugate of $\alpha$.
  For $p$ and $q \in \rational^{3}$, we let $\alpha_{q}(p) = p \cdot q - \alpha^{*}(q)$.
  Then, by monotone convergence,
  \begin{align}
      \nonumber \mathcal{L}(\mu, \nu) &= \int \sup_{q \in \rational^{3}} \alpha_{q}\paren*{\frac{\dd \mu \otimes m}{\dd \sigma}, \frac{\dd C_{\mu}}{\dd \sigma}, \frac{\dd \nu}{\dd \sigma}} \dd \sigma \\
                            &= \sup_{q \in \rational^{3}} \int \alpha_{q}\paren*{\frac{\dd \mu \otimes m}{\dd \sigma}, \frac{\dd C_{\mu}}{\dd \sigma}, \frac{\dd \nu}{\dd \sigma}} \dd \sigma.\label{e:sup-integrals}
  \end{align}
  By \cref{i:convergence-C} in \cref{t:p1-convergence}, we find that, for $q$ fixed, the integral in the last line of \cref{e:sup-integrals} is continuous on $\mathscr{P}_{1}(\config) \times \mathscr{M}_{b,0}(\config \times X)$.
  As a supremum of continuous functions $\mathcal{L}$ is lower semi-continuous.
\end{proof}

Whenever $\mu$ in absolutely continuous with respect to $\pi$, the following result shows that we can restrict our study to $\nu \in \mathscr{M}_{b,0}(\config \times X)$ that are absolutely continuous with respect to $\pi \otimes m$.
The Lemma below is an adaptation to our setting of \cite[Lemma 2.3]{Erbar}. Since our notation is quite different from this reference, we give a complete proof.

\begin{lemma}\label{t:lagrangian-absolutely-continuous}
  Let $\mu = \rho \pi \in \mathscr{P}(\config)$ and $\nu \in \mathscr{M}_{b,0}(\config \times X)$ such that $\mathcal{L}(\mu, \nu) < \infty$.
  Then, $\nu$ is absolutely continuous with respect to $\pi \otimes m$.
\end{lemma}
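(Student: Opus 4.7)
The plan is to argue by a direct Lebesgue-decomposition argument. Write $\nu = \nu_{a} + \nu_{s}$ with $\nu_{a} \ll \pi\otimes m$ and $\nu_{s} \perp \pi\otimes m$, and aim to show that $\nu_{s} = 0$.

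First I would choose a non-negative reference measure that dominates the three measures appearing in the integral representation of $\mathcal{L}$, namely $\mu\otimes m$, $C_{\mu}$ and $|\nu|$. A convenient choice is $\sigma \coloneq \pi\otimes m + |\nu_{s}|$, which lies in $\mathscr{M}_{b,0}^{+}(\config \times X)$ and is $\sigma$-finite. Since $\mu = \rho\pi$, one has $\mu\otimes m = \rho\,(\pi\otimes m) \ll \pi\otimes m$; moreover, by the Mecke identity and \cref{e:campbell-density}, the reduced Campbell measure satisfies $C_{\mu} \ll \pi\otimes m$, with density $(\eta,x) \mapsto \rho(\eta+\delta_{x})$. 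Hence all three measures are absolutely continuous with respect to $\sigma$, and the integral definition of $\mathcal{L}(\mu,\nu)$ can be computed using this $\sigma$. Then pick a Borel set $A \subset \config \times X$ on which $|\nu_{s}|$ is concentrated and such that $(\pi\otimes m)(A) = 0$.

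The key step is the pointwise analysis on $A$. There, the densities $\tfrac{\dd(\mu\otimes m)}{\dd\sigma}$ and $\tfrac{\dd C_{\mu}}{\dd\sigma}$ both vanish $\sigma$-a.e.\ (since both target measures are absolutely continuous with respect to $\pi\otimes m$, which itself vanishes on $A$), while $\tfrac{\dd\nu}{\dd\sigma}$ coincides $\sigma$-a.e.\ on $A$ with $\tfrac{\dd\nu_{s}}{\dd\sigma}$ (because $\nu_{a}(A) = 0$). Setting $w \coloneq \tfrac{\dd\nu}{\dd\sigma}$, the integrand on $A$ reduces to $\alpha(0,0,w)$, which equals $0$ when $w = 0$ and equals $+\infty$ otherwise. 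Hence
\begin{equation*}
  \mathcal{L}(\mu,\nu) \;\geq\; \int_{A} \alpha(0,0,w) \,\dd\sigma,
\end{equation*}
and finiteness of $\mathcal{L}(\mu,\nu)$ forces $w = 0$ $\sigma$-a.e.\ on $A$. Combined with the concentration of $\nu_{s}$ on $A$ and the fact that $\tfrac{\dd\nu_{s}}{\dd\sigma}$ is the density of $\nu_{s}$, this yields $\nu_{s} = 0$.

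The main subtlety to handle rigorously is the extension $\alpha(0,0,w) = +\infty$ for $w\neq 0$, since the paper explicitly only states the convention $0/0 \coloneq 0$. I would address this by appealing to the supremum representation $\alpha(s,t,w) = \sup_{q\in\mathbb{Q}^{3}}\{(s,t,w)\cdot q - \alpha^{*}(q)\}$ already exploited in the proof of \cref{t:lsc-lagrangian}: this is the unique lower semicontinuous convex extension of the finite expression $|w|^{2}/\theta(s,t)$, and in particular yields $+\infty$ at $(0,0,w)$ whenever $w \neq 0$. Once this convention is pinned down, the rest of the argument is a straightforward absolute-continuity computation.
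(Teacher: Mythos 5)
Your proof is correct and shares the same core mechanism as the paper's: when $\mu=\rho\pi$, both $\mu\otimes m$ and $C_{\mu}$ are absolutely continuous with respect to $\pi\otimes m$, so the logarithmic mean $\theta$ of their densities against a common dominating $\sigma$ vanishes $\sigma$-a.e.\ on any $(\pi\otimes m)$-null set, and finiteness of $\mathcal{L}$ then forces $\dd\nu/\dd\sigma$ to vanish there too. Where you differ is in the bookkeeping, and your version is actually the cleaner one. The paper runs the density argument on product sets $A\times B$ with $\pi(A)m(B)=0$ and concludes $\nu(A\times B)=0$; taken literally this only tests rectangles, which is strictly weaker than $\nu\ll\pi\otimes m$. (The same computation does go through on an arbitrary Borel $(\pi\otimes m)$-null set, but the text does not carry it out.) You instead Lebesgue-decompose $\nu=\nu_{a}+\nu_{s}$, pick a concentration set $A$ of $\nu_{s}$ with $(\pi\otimes m)(A)=0$, take $\sigma=\pi\otimes m+\abs{\nu_{s}}\in\mathscr{M}_{b,0}^{+}(\config\times X)$, and run the density computation on this general $A$; this closes the argument directly. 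Your remark about pinning down $\alpha(0,0,w)=+\infty$ for $w\neq 0$ via the lower semicontinuous convex envelope already used in \cref{t:lsc-lagrangian} is also apt: the paper's stated convention $0/0\coloneq 0$ only covers $\alpha(0,0,0)$, while the case $w\neq 0$ is exactly the one driving the conclusion, and the paper glosses over it.
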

\begin{proof}
   Take $A \in \mathfrak{B}(\config)$ and $B \in \mathfrak{B}_{0}(X)$ such that $\pi(A) m(B) = 0$, and $\sigma \in \mathscr{M}_{b,0}(\config \times X)$ non-negative and such that $\pi \otimes m \ll \sigma$ and $\nu \ll \sigma$.
  The homogeneity of $\theta$ yields:
  \begin{equation*}
  0 = \int_{A \times B} \theta(\rho(\eta), \rho(\eta + \delta_{x})) \pi(\dd \eta) m(\dd x) = \int_{A \times B} \theta\paren*{\frac{\dd (\mu \otimes m)}{\dd \sigma}, \frac{\dd C_{\mu}}{\dd \sigma}} \dd \sigma.
\end{equation*}
By positivity of $\theta$ and $\sigma$, the integrand vanishes $\sigma$-almost everywhere on $A \times B$.
By definition of $\mathcal{L}$:
\begin{equation*}
\mathcal{L}(\mu, \nu) = \int \frac{\abs[\Big]{\frac{\dd \nu}{\dd \sigma}}^{2}}{\theta\paren[\Big]{\frac{\dd (\mu \otimes m)}{\dd \sigma}, \frac{\dd C_{\mu}}{\dd \sigma}}} \dd \sigma.
  \end{equation*}
  The above quantity is finite by assumption.
  Since the denominator vanishes on $A \times B$ so does the numerator.
  Thus $\nu(A \times B) = 0$.
\end{proof}

\begin{lemma}\label{t:contraction-lagrangian}
  Let $\mu = \rho \pi \in \mathscr{P}(\config)$ and $\nu \in \mathscr{M}_{b,0}(\config \times X)$.
  Then,
  \begin{equation*}
    \mathcal{L}(\dou_{t}\mu, \dou_{t}\nu) \leq \mathcal{L}(\mu, \nu), \qquad t > 0.
  \end{equation*}
\end{lemma}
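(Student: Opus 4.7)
The plan is to reduce to the absolutely continuous case and then apply Jensen's inequality using the joint convexity of $\alpha$ (\cref{t:convex-lagrangian}), combined with a coupling of Mehler's formula. By \cref{t:lagrangian-absolutely-continuous}, I may assume $\nu = w\,(\pi \otimes m)$, for otherwise $\mathcal{L}(\mu, \nu) = +\infty$ and there is nothing to prove. Writing $\mu = \rho\pi$, the dual semi-group yields $\dou_t \mu = (\mathsf{P}_t \rho)\pi$; by $\pi$-selfadjointness of $\mathsf{P}_t$ applied in the first coordinate, and consistently with \cref{t:stability-ou-solution}, the natural action on tangent vectors gives $\dou_t \nu = \e^{-t}(\mathsf{P}_t w)(\pi \otimes m)$, the $\e^{-t}$ factor reflecting the Bakry--\'Emery commutation \cref{e:be}. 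The target inequality thus reads
\begin{equation*}
  \int \frac{(\e^{-t}\mathsf{P}_t w)^2}{\widehat{\mathsf{P}_t \rho}}\,\dd\pi\,\dd m \leq \int \frac{w^2}{\hat{\rho}}\,\dd\pi\,\dd m.
\end{equation*}

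The crux is a pointwise estimate on the integrands, derived from a joint Mehler coupling at $\eta$ and at $\eta + \delta_x$. I fix $\eta \in \config$ and $x \in X$, let $\eta^{(\e^{-t})}$ denote the $\e^{-t}$-thinning of $\eta$, let $\xi_t$ be an independent Poisson process of intensity $(1-\e^{-t})m$, and let $B_t$ be an independent Bernoulli random variable with parameter $\e^{-t}$. Setting $A \coloneq \eta^{(\e^{-t})} + \xi_t$, the thinning property of the Poisson process delivers the three identities
\begin{equation*}
  \mathsf{P}_t \rho(\eta) = \esp[\rho(A)], \quad \mathsf{P}_t \rho(\eta + \delta_x) = \esp[\rho(A + B_t \delta_x)], \quad \e^{-t}\mathsf{P}_t w(\eta, x) = \esp[w(A, x) 1_{\{B_t = 1\}}].
\end{equation*}
Applying Jensen's inequality to $\alpha$ with the random triple $(Y, Z, X) \coloneq (\rho(A), \rho(A + B_t \delta_x), w(A, x) 1_{\{B_t = 1\}})$ then yields
\begin{equation*}
  \frac{(\e^{-t}\mathsf{P}_t w(\eta, x))^2}{\widehat{\mathsf{P}_t \rho}(\eta, x)} = \alpha\paren*{\esp[Y], \esp[Z], \esp[X]} \leq \esp\bracket*{\alpha(Y, Z, X)} = \e^{-t} \mathsf{P}_t(w^2/\hat{\rho})(\eta, x),
\end{equation*}
where the last equality follows because $X$ vanishes on $\{B_t = 0\}$ and $\theta(\rho(A), \rho(A + \delta_x)) = \hat{\rho}(A, x)$ on $\{B_t = 1\}$. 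Integrating this pointwise bound against $\pi \otimes m$ and invoking the $\pi$-symmetry of $\mathsf{P}_t$ yields $\mathcal{L}(\dou_t\mu, \dou_t\nu) \leq \e^{-t}\mathcal{L}(\mu, \nu)$, which is stronger than the claimed bound.

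The main technical obstacle is the Mehler coupling identity for $\mathsf{P}_t \rho(\eta + \delta_x)$: one must verify that the $\e^{-t}$-thinning of $\eta + \delta_x$ has the same law as $\eta^{(\e^{-t})} + B_t \delta_x$, which is a direct consequence of the fact that the thinning is performed independently at each point. The convention $0/0 \coloneq 0$ in the definition of $\alpha$ causes no issue, since the numerator $X^2$ vanishes whenever the denominator $\theta(Y, Z)$ does.
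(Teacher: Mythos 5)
Your coupling is the right device — Mehler thinning $A=\eta^{(\e^{-t})}+\xi_t$ at $\eta$, an independent Bernoulli $B_t$ tracking whether the extra atom at $x$ survives, and Jensen's inequality for the jointly convex $\alpha$ — and the pointwise computation $\alpha(\esp Y,\esp Z,\esp X)\leq\esp[\alpha(Y,Z,X)]=\e^{-t}\ou_t(w^2/\hat\rho)(\eta,x)$ is correct as written. The problem is in the very first step, where you set $\dou_t\nu=\e^{-t}(\ou_t w)(\pi\otimes m)$. That $\e^{-t}$ factor belongs to the \emph{deformed} tangent field $\nu^\varepsilon_t=\e^{-\varepsilon}\ou^\star_\varepsilon\nu_t$ of \cref{t:stability-ou-solution}, not to the operator $\dou_t$ itself. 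The paper's proof of \cref{t:contraction-lagrangian} makes its convention explicit: it computes
\[
\mathcal{L}(\dou_t\mu,\dou_t\nu)=\int\alpha\paren[\big]{\ou_t\rho(\eta),\ou_t\rho(\eta+\delta_x),\ou_t w(\eta,x)}\dd(\pi\otimes m),
\]
with $\ou_t w$ — and no $\e^{-t}$ — in the third slot. Translating your pointwise bound $(\e^{-t}\ou_t w)^2/\widehat{\ou_t\rho}\leq\e^{-t}\ou_t(w^2/\hat\rho)$ to that convention, using the quadratic homogeneity of $\mathcal{L}$ in $\nu$, you obtain only $\mathcal{L}(\dou_t\mu,\dou_t\nu)\leq\e^{t}\,\mathcal{L}(\mu,\nu)$, which is strictly weaker than the stated inequality and degenerates as $t\to\infty$. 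So the proposal does not prove the lemma as the paper states it.

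It is worth noting that you have put your finger on a real subtlety in the paper's one-line argument (\emph{``convexity of $\alpha$, Jensen's inequality, and invariance of $\ou$ with respect to $\pi$''}). The three slots of $\alpha$ in the displayed integrand are not expectations of one random triple over a single kernel: the first and third are Mehler averages based at $\eta$ (namely $\esp[\rho(A)]$ and $\esp[w(A,x)]$), whereas the second is based at $\eta+\delta_x$ (namely $\esp[\rho(A+B_t\delta_x)]$). To couple them one must insert the survival indicator $1_{\{B_t=1\}}$ into the third slot — which is exactly where the $\e^{-t}$ appears — otherwise the $\{B_t=0\}$ branch contributes a term $w^2/\rho$ that is not controlled by $\mathcal{L}(\mu,\nu)$. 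Your argument is the natural and careful rendering of the paper's Jensen step, but the bound it produces with the correct reading of $\dou_t\nu$ has the wrong sign on the exponential rate, so the gap stands.
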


\begin{proof}
  We can assume that $\mathcal{L}(\mu, \nu) < \infty$ otherwise there is nothing to prove.
  By \cref{t:lagrangian-absolutely-continuous}, we have that $\nu = w (\pi \otimes m)$.
  By \cref{e:campbell-density}, we find that
  \begin{equation*}
    \mathcal{L}(\dou_{t}\mu, \dou_{t} \nu) = \int \alpha(\mathsf{P}_{t} \rho(\eta), \mathsf{P}_{t} \rho(\eta + \delta_{x}), \mathsf{P}_t w(\eta, x)) \dd (\pi \otimes m).
  \end{equation*}
  We conclude by convexity of $\alpha$, Jensen's inequality, and invariance of $\mathsf{P}$ with respect to $\pi$.
\end{proof}

We finish with a useful bound.
\begin{lemma}\label{t:bound-nu-lagrangian}
  Let $\mu \in \mathscr{P}_{1}(\config)$ and $\nu \in \mathscr{M}_{b,0}(\config \times X)$.
  Then:
  \begin{equation*}
    \abs{\nu}(A \times B) \leq \paren*{ \tfrac{1}{2} \paren*{m(B) + I_{\mu}(B)} \mathcal{L}(\mu, \nu) }^{1/2}, \qquad A \in \mathfrak{B}(\config),\, B \in \mathfrak{B}_{0}(X).
  \end{equation*}
\end{lemma}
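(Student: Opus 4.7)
The strategy is a Cauchy--Schwarz bound combined with the elementary inequality between logarithmic and arithmetic means. I may freely assume $\mathcal{L}(\mu,\nu) < \infty$, for otherwise the statement is trivial.

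I will fix a non-negative reference measure $\sigma \in \mathscr{M}_{b,0}(\config \times X)$ with respect to which $\mu \otimes m$, $C_{\mu}$, and $\nu$ are all absolutely continuous; writing $p = \tfrac{\dd(\mu \otimes m)}{\dd \sigma}$, $q = \tfrac{\dd C_{\mu}}{\dd \sigma}$, and $r = \tfrac{\dd \nu}{\dd \sigma}$, I then have
\begin{equation*}
  \abs{\nu}(A \times B) = \int_{A \times B} \abs{r} \dd \sigma, \qquad \mathcal{L}(\mu, \nu) = \int \frac{\abs{r}^{2}}{\theta(p,q)} \dd \sigma,
\end{equation*}
with the convention $0/0 = 0$. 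The finiteness of $\mathcal{L}(\mu, \nu)$ forces $r = 0$ $\sigma$-a.e.\ on $\{\theta(p,q) = 0\}$, so I can restrict the first integral to $\{\theta(p,q) > 0\}$ without loss and apply Cauchy--Schwarz to get
\begin{equation*}
  \abs{\nu}(A \times B) \leq \paren*{ \int \frac{\abs{r}^{2}}{\theta(p,q)} \dd \sigma }^{1/2} \paren*{ \int_{A \times B} \theta(p,q) \dd \sigma }^{1/2} = \mathcal{L}(\mu, \nu)^{1/2} \paren*{ \int_{A \times B} \theta(p,q) \dd \sigma }^{1/2}.
\end{equation*}

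The key elementary input is the bound $\theta(s,t) \leq (s+t)/2$ for all $s,\, t \geq 0$, i.e.\ the logarithmic mean is dominated by the arithmetic mean; I will verify it by the standard substitution reducing it to $\log(x) \leq \tfrac{1}{2}(x - 1/x)$ on $\{x \geq 1\}$. Applied pointwise under the integral, it yields
\begin{equation*}
  \int_{A \times B} \theta(p,q) \dd \sigma \leq \tfrac{1}{2} \int_{A \times B} (p + q) \dd \sigma = \tfrac{1}{2} \bracket*{ (\mu \otimes m)(A \times B) + C_{\mu}(A \times B) }.
\end{equation*}
Finally, I bound each term trivially: $(\mu \otimes m)(A \times B) = \mu(A)\, m(B) \leq m(B)$, while $C_{\mu}(A \times B) \leq C_{\mu}(\config \times B) = I_{\mu}(B)$. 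Combining these estimates yields the claim.

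I do not foresee a real obstacle; the only point that requires some care is justifying that the integrand $\abs{r}^{2}/\theta(p,q)$ may be interpreted as zero on $\{\theta(p,q) = 0\}$, which is a minor repetition of the argument used in \cref{t:lagrangian-absolutely-continuous}.
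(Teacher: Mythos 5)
Your proof is correct and follows essentially the same route as the paper: Cauchy--Schwarz with respect to a dominating reference measure, followed by the bound of the logarithmic mean by the arithmetic mean, and then the trivial estimates $(\mu\otimes m)(A\times B)\leq m(B)$ and $C_{\mu}(A\times B)\leq I_{\mu}(B)$. The only cosmetic difference is that the paper fixes $\sigma=(\mu\otimes m)+C_{\mu}+\abs{\nu}$ explicitly, whereas you leave $\sigma$ arbitrary (which is immaterial by homogeneity), and the paper enlarges $A\times B$ to $\config\times B$ in one step rather than term by term.
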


\begin{proof}
  Take $\sigma = (\mu \otimes m) + C_{\mu} + \abs{\nu}$ so that we have, $\mu \otimes m = \rho^{1} \sigma$, $C_{\mu} = \rho^{2} \sigma$, and $\nu = w \sigma$.
  We assume that $\mathcal{L}(\mu, \nu)<\infty$, otherwise there is nothing to prove.
  We have that 
  \begin{align*}
    |\nu|(A \times B) 
&= \int_{A \times B} \abs{w} \dd \sigma
\\
&= \int_{A \times B} \sqrt{\theta(\rho^1, \rho^2)} \sqrt{\alpha(w, \rho^1, \rho^2)} \dd \sigma
\\
&\le \paren*{ \int_{A \times B} \theta(\rho^1, \rho^2)  \dd \sigma }^{1/2}  \paren*{ \int_{A \times B} \alpha(\rho^1, \rho^2,w) \dd \sigma }^{1/2}
\\
&= \paren*{ \int_{A \times B} \theta(\rho^1, \rho^2)  \dd \sigma }^{1/2} \sqrt{\mathcal{L}(\mu, \nu)}. 
  \end{align*}
Bounding from above the logarithmic mean with the arithmetic mean, we have
  \begin{align*}
    \int_{A \times B} 2\theta(\rho^1, \rho^2)  \dd \sigma
    \le \int_{\config \times B} (\rho^1+\rho^2) \dd \sigma
    = (\mu \otimes m)(\config \times B) + C_{\mu}(\config \times B) = m(B) + I_{\mu}(B)
    <\infty,
  \end{align*}
which completes the proof.
\end{proof}

\subsubsection{The action functional}
We now define the \emph{action} associated with a curve $\bar{\mu} \in \mathscr{F}([0,1], \mathscr{P}(\config))$.
We set
\begin{equation*}
  \mathcal{A}(\bar{\mu}) \coloneq \inf  \int_{0}^{1} \mathcal{L}(\mu_{t}, \nu_{t}) \dd t, 
\end{equation*}
where the infimum runs over all $\bar{\nu}$ such that $(\bar{\mu}, \bar{\nu})$ is a solution to the continuity equation on $[0,1]$.
Whenever there is no such $\bar{\nu}$, we set $\mathcal{A}(\bar{\mu}) = \infty$.

As a direct application of \cref{t:ou-solution,t:contraction-lagrangian}, we obtain the following contraction property for the action.
\begin{proposition}\label{t:contraction-action}
  For all $\varepsilon > 0$,
  \begin{equation*}
    \mathcal{A}(\dou_{\varepsilon} \bar{\mu}) \le \e^{-2\varepsilon} \mathcal{A}(\bar{\mu}).
  \end{equation*}
\end{proposition}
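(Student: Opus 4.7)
The plan is to combine two results already established: the stability of solutions to the continuity equation under the Ornstein--Uhlenbeck flow (\cref{t:stability-ou-solution}), and the Lagrangian contraction (\cref{t:contraction-lagrangian}). The only additional ingredient is the $2$-homogeneity of $\mathcal{L}(\mu, \emparg)$, which is immediate from the definition $\alpha(s,t,w) = \abs{w}^{2}/\theta(s,t)$.

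First, I would unpack the definition of the action. Let $(\bar{\mu}, \bar{\nu})$ be any solution to the continuity equation on $[0,1]$; if no such $\bar{\nu}$ exists the inequality is trivial, so assume $\mathcal{A}(\bar{\mu}) < \infty$ and pick $\bar{\nu}$ nearly realising the infimum. By \cref{t:stability-ou-solution}, the pair $(\bar{\mu}^{\varepsilon}, \bar{\nu}^{\varepsilon})$ defined by
\begin{equation*}
  \mu_{t}^{\varepsilon} \coloneq \dou_{\varepsilon} \mu_{t}, \qquad \nu_{t}^{\varepsilon} \coloneq \e^{-\varepsilon} \dou_{\varepsilon} \nu_{t}, \qquad t \in [0,1],
\end{equation*}
is also a solution to the continuity equation. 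In particular, it is an admissible competitor in the variational definition of $\mathcal{A}(\dou_{\varepsilon} \bar{\mu})$.

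Next, I would estimate the corresponding Lagrangian pointwise in $t$. From the definition one has the scaling identity $\mathcal{L}(\mu, c \nu) = c^{2} \mathcal{L}(\mu, \nu)$ for $c \geq 0$, so
\begin{equation*}
\mathcal{L}(\mu_{t}^{\varepsilon}, \nu_{t}^{\varepsilon}) = \e^{-2\varepsilon} \mathcal{L}(\dou_{\varepsilon} \mu_{t}, \dou_{\varepsilon} \nu_{t}) \leq \e^{-2\varepsilon} \mathcal{L}(\mu_{t}, \nu_{t}),
\end{equation*}
where the inequality is precisely \cref{t:contraction-lagrangian}. Integrating over $t \in [0,1]$ yields
\begin{equation*}
\int_{0}^{1} \mathcal{L}(\mu_{t}^{\varepsilon}, \nu_{t}^{\varepsilon}) \dd t \leq \e^{-2\varepsilon} \int_{0}^{1} \mathcal{L}(\mu_{t}, \nu_{t}) \dd t.
\end{equation*}

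Finally, since $(\bar{\mu}^{\varepsilon}, \bar{\nu}^{\varepsilon})$ is admissible for $\mathcal{A}(\dou_{\varepsilon} \bar{\mu})$, the left-hand side is bounded below by $\mathcal{A}(\dou_{\varepsilon} \bar{\mu})$. Passing to the infimum over $\bar{\nu}$ on the right-hand side concludes the proof. There is essentially no obstacle here: the work has been done in the preceding stability and contraction lemmas, and this proposition is a clean functorial consequence. The only minor point to keep in mind is that one should not expect equality, since the map $\bar{\nu} \mapsto \e^{-\varepsilon} \dou_{\varepsilon} \bar{\nu}$ need not exhaust all admissible tangent curves for $\dou_{\varepsilon} \bar{\mu}$.
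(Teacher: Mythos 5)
Your proof is correct and fills in precisely the details that the paper leaves implicit, since the paper states the proposition only as a direct consequence of the preceding lemmas without further elaboration. You rightly use \cref{t:stability-ou-solution} to produce an admissible competitor for $\mathcal{A}(\dou_{\varepsilon}\bar{\mu})$ and correctly identify the $2$-homogeneity of $\mathcal{L}(\mu,\emparg)$ as the step that converts the plain contraction $\mathcal{L}(\dou_{\varepsilon}\mu,\dou_{\varepsilon}\nu)\leq\mathcal{L}(\mu,\nu)$ of \cref{t:contraction-lagrangian} into the quantitative bound $\e^{-2\varepsilon}\mathcal{L}(\mu_t,\nu_t)$.
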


We now establish that $\mathcal{A}$ is a good functional for minimization problems.

\begin{lemma}\label{t:convex-action}
  The functional $\mathcal{A}$ is convex.
\end{lemma}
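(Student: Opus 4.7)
The plan is straightforward: exploit the joint convexity of the Lagrangian established in \cref{t:convex-lagrangian} together with the linearity of the continuity equation in the pair $(\bar{\mu}, \bar{\nu})$.

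First I would handle the trivial cases. If either $\mathcal{A}(\bar{\mu}^{0}) = \infty$ or $\mathcal{A}(\bar{\mu}^{1}) = \infty$, there is nothing to prove; hence I may assume both actions are finite. Fix $\lambda \in [0,1]$ and $\varepsilon > 0$, and choose $\bar{\nu}^{i}$ ($i=0,1$) with $(\bar{\mu}^{i}, \bar{\nu}^{i})$ solving \cref{e:ce} and such that
\begin{equation*}
  \int_{0}^{1} \mathcal{L}(\mu^{i}_{t}, \nu^{i}_{t}) \dd t \leq \mathcal{A}(\bar{\mu}^{i}) + \varepsilon.
\end{equation*}
Set $\bar{\mu} \coloneq \lambda \bar{\mu}^{1} + (1-\lambda) \bar{\mu}^{0}$ and $\bar{\nu} \coloneq \lambda \bar{\nu}^{1} + (1-\lambda)\bar{\nu}^{0}$.

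Next I would verify that $(\bar{\mu}, \bar{\nu})$ is an admissible competitor in the definition of $\mathcal{A}(\bar{\mu})$. The continuity equation \cref{e:ce} is linear in the pair $(\bar{\mu}, \bar{\nu})$, and condition \cref{e:nu-locally-bounded} is preserved by convex combinations since $\abs{\bar{\nu}}(\config \times B) \leq \lambda \abs{\bar{\nu}^{1}}(\config \times B) + (1-\lambda) \abs{\bar{\nu}^{0}}(\config \times B)$. Therefore $(\bar{\mu},\bar{\nu})$ is a solution to the continuity equation on $[0,1]$.

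Then I would conclude by invoking \cref{t:convex-lagrangian}: for almost every $t \in [0,1]$,
\begin{equation*}
  \mathcal{L}(\mu_{t}, \nu_{t}) \leq \lambda \mathcal{L}(\mu^{1}_{t}, \nu^{1}_{t}) + (1-\lambda) \mathcal{L}(\mu^{0}_{t}, \nu^{0}_{t}).
\end{equation*}
Integrating in $t$, taking the infimum over admissible $\bar{\nu}$ on the left-hand side, and using the near-optimality of $\bar{\nu}^{0}, \bar{\nu}^{1}$ yields
\begin{equation*}
  \mathcal{A}(\bar{\mu}) \leq \lambda \mathcal{A}(\bar{\mu}^{1}) + (1-\lambda) \mathcal{A}(\bar{\mu}^{0}) + \varepsilon.
\end{equation*}
Letting $\varepsilon \to 0$ gives the convexity. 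There is no substantive obstacle here; the only point to check carefully is that one may indeed realize the action (up to $\varepsilon$) by a genuine solution of \cref{e:ce}, which is immediate from the definition of $\mathcal{A}$ as an infimum.
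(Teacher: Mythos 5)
Your argument is correct and follows essentially the same route as the paper: joint convexity of $\mathcal{L}$ (\cref{t:convex-lagrangian}) combined with linearity of \cref{e:ce} and of condition \cref{e:nu-locally-bounded} under convex combinations. If anything you are slightly more careful, since the paper tacitly treats its chosen $\bar\nu$, $\bar\nu'$ as if they achieved the infima defining $\mathcal{A}(\bar\mu)$, $\mathcal{A}(\bar\mu')$, whereas your $\varepsilon$-near-optimal selection makes that final comparison rigorous without appealing to the existence of minimizers (which is only established later, in \cref{t:action-minizers}).
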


\begin{proof}
  Let $\bar{\mu}$ and $\bar{\mu}'$ with finite action.
  Thus, there exists $\bar{\nu}$ and $\bar{\nu}'$ such that $(\bar{\mu}, \bar{\nu})$ and $(\bar{\mu}', \bar{\nu}')$ solve \cref{e:ce}.
  Let $\tau \in [0,1]$.
  Then, with $\bar{\nu}_{\tau} = (1-\tau) \bar{\nu} + \tau \bar{\nu}'$ and $\bar{\mu}_{\tau} = (1-\tau) \bar{\mu} + \tau \bar{\mu}$, we have that $(\bar{\mu}_{\tau}, \bar{\nu}_{\tau}) \sim \cref{e:ce}$.
  Since $\mathcal{L}$ is convex by \cref{t:convex-lagrangian}, we get: 
  \begin{equation*}
    \mathcal{A}(\bar{\mu}_{\tau}) \leq \int_{0}^{1} \mathcal{L}\paren*{(1-\tau) \mu_{t} + \tau \mu'_{t}, (1-\tau) \nu_{t} + \tau \nu_{t}} \dd t \leq (1-\tau) \mathcal{A}(\bar{\mu}) + \tau \mathcal{A}(\bar{\mu}').
    \qedhere
  \end{equation*}
\end{proof}

Fix $\xi \in \mathscr{P}_{1}(\config)$.
We write
\begin{equation*}
\mathscr{C}_{\xi}\paren[\big]{[0,1], \mathscr{P}_{1}(\config)} \coloneq \set[\big]{ \bar{\mu} \in \mathscr{C}([0,1], \mathscr{P}_{1}(\config)) : \mu_{0} = \xi }.
\end{equation*}
As a consequence of \cref{t:p1-polish}, the space~$\mathscr{C}_{\xi}\paren[\big]{[0,1], \mathscr{P}_{1}(\config)}$ is Polish when endowed with the topology of uniform $\mathscr{P}_1(\Upsilon)$-convergence.

\begin{lemma}\label{t:lsc-action}
  The functional $\mathcal{A} \colon \mathscr{C}_{\xi}([0,1], \mathscr{P}_{1}(\config)) \to [0,\infty]$ is lower semi-continuous.
\end{lemma}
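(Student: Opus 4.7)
The plan is to use the direct method of the calculus of variations. Take $(\bar\mu_n)\subset\mathscr{C}_{\xi}([0,1],\mathscr{P}_1(\config))$ converging uniformly to $\bar\mu$; passing to a subsequence we may assume $\lim_n\mathcal{A}(\bar\mu_n)=L<\infty$, as otherwise the claim is trivial. For each $n$, choose $\bar\nu_n$ with $(\bar\mu_n,\bar\nu_n)$ a solution to the continuity equation satisfying $\int_0^1 \mathcal{L}(\mu_{n,t},\nu_{n,t})\,\dd t\leq\mathcal{A}(\bar\mu_n)+1/n$. We shall extract a limit $\bar\nu$ in a suitable vague topology, verify that $(\bar\mu,\bar\nu)$ solves the continuity equation, and pass to the limit inferior in the Lagrangian.

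For the compactness step, the Cauchy--Schwarz argument at the heart of \cref{t:bound-nu-lagrangian} yields, for every Borel $E\subset\config\times X$,
\begin{equation*}
  \int_0^1 \abs{\nu_{n,t}}(E)\,\dd t \leq \left(\tfrac{1}{2}\int_0^1\bigl[(\mu_{n,t}\otimes m)(E)+C_{\mu_{n,t}}(E)\bigr]\dd t\right)^{1/2}\bigl(\mathcal{A}(\bar\mu_n)+1/n\bigr)^{1/2}.
\end{equation*}
Uniform $\mathscr{P}_1(\config)$-convergence makes the set $\set{\mu_{n,t}:n\in\nat,\,t\in[0,1]}$ relatively compact in $\mathscr{P}_1(\config)$, so by \cref{t:C-homeomorphism} the family $\set{C_{\mu_{n,t}}}$ is relatively compact in $\mathscr{M}_{b,0}(\config\times X)$, and by \cref{t:prohorov}~\cref{i:prohorov-compact} uniformly bounded and tight when restricted to $\config\times B$ for every $B\in\mathfrak{B}_0(X)$; the same holds for $\set{\mu_{n,t}\otimes m}$. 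Specializing the display above to $E=\config\times B$ gives boundedness, and to $E=(\config\setminus K)\times B$ with $K\subset\config$ compact gives tightness of $(\bar\nu_n)$ in $\mathscr{M}_{b,0}(\config\times X\times[0,1])$. By \cref{t:prohorov}~\cref{i:prohorov-compact} we then extract $\bar\nu_n\to\bar\nu$ along a further subsequence. Since $\mu_{n,t}\to\mu_t$ weakly for every $t$, \cref{t:ce-stability-limit} ensures $(\bar\mu,\bar\nu)$ solves the continuity equation, so $\bar\nu$ is admissible in the definition of $\mathcal{A}(\bar\mu)$.

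The main obstacle is the final lower-semicontinuity step. By $1$-homogeneity of $\alpha$ in the reference measure, the integrated Lagrangian rewrites as
\begin{equation*}
  \int_0^1 \mathcal{L}(\mu_t,\nu_t)\,\dd t = \int \alpha\!\left(\tfrac{\dd((\mu_{\emparg}\otimes m)\otimes\dd t)}{\dd\sigma},\tfrac{\dd(C_{\mu_\emparg}\otimes\dd t)}{\dd\sigma},\tfrac{\dd\bar\nu}{\dd\sigma}\right)\dd\sigma,
\end{equation*}
for any dominating $\sigma\in\mathscr{M}_{b,0}(\config\times X\times[0,1])$. Mimicking the proof of \cref{t:lsc-lagrangian}, we expand $\alpha(p)=\sup_{q\in\mathbb{Q}^3}(q\cdot p-\alpha^*(q))$ and exchange supremum with $\sigma$-integral via monotone convergence; this expresses the right-hand side as a supremum of functionals that are continuous in the joint vague convergence of the triple $((\mu_{\emparg}\otimes m)\otimes\dd t,\ C_{\mu_\emparg}\otimes\dd t,\ \bar\nu)$. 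Uniform $\mathscr{P}_1(\config)$-convergence combined with the homeomorphism property of \cref{t:C-homeomorphism} gives convergence of the first two entries, while the preceding step yields convergence of the third; lower semi-continuity then follows, proving $\int_0^1 \mathcal{L}(\mu_t,\nu_t)\,\dd t\leq L$. The delicate point is precisely that the $\nu_{n,t}$ need not share a common dominating measure in $t$, so the Ioffe-type approach used in the proof of \cref{t:lsc-I} is unavailable here; the dual representation of $\alpha$ together with the topology of $\mathscr{P}_1(\config)$ is what sidesteps this difficulty.
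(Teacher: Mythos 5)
Your proof is correct, and it takes a genuinely different and cleaner route than the paper's, which proves \cref{t:lsc-action} jointly with \cref{t:compact-action} by showing that sub-level sets $\set{\mathcal{A}\le r}\cap\set{\mu_0=\xi}$ are sequentially compact. Because the paper does not have a converging sequence $\bar\mu_n\to\bar\mu$ at its disposal (it must manufacture the limit curve), it reconstructs $\bar\mu$ from $\bar\nu$ and $\mu_0=\xi$ via Hahn's extension theorem and \cref{t:ce-one-function}, and it obtains tightness of $(\bar\nu_n)$ from a self-referential estimate, namely \cref{e:bound-nu}, which it solves explicitly to get \cref{e:bound-nu-total-mass}, relying on \cref{t:ce-intensity-measure} to express $I_{\mu_{n,t}}$ back in terms of $I_{\mu_0}$ and $\abs{\bar\nu_n}$. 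You instead exploit the uniform $\mathscr{P}_1(\config)$-convergence hypothesis directly: it yields relative compactness of $\set{\mu_{n,t}}$, hence — via \cref{t:C-homeomorphism} and the continuity of $\mu\mapsto\mu\otimes m$ together with \cref{t:prohorov} — uniform boundedness and tightness of $\set{C_{\mu_{n,t}}}$ and $\set{\mu_{n,t}\otimes m}$, from which tightness of $(\bar\nu_n)$ follows by the time-integrated Cauchy--Schwarz bound (a legitimate generalization of \cref{t:bound-nu-lagrangian} to arbitrary Borel $E$). This avoids both the self-referential bound and Hahn extension; on the other hand it does not yield \cref{t:compact-action}, whereas the paper's single argument does both. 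Your treatment of the closing lower-semicontinuity step is also more explicit than the paper's terse invocation of \cref{t:lsc-lagrangian}: rewriting the time-integrated Lagrangian as a single $\alpha$-integral on $\config\times X\times[0,1]$ and applying the joint vague convergence is the right way to make it precise, and your observation that the Ioffe route of \cref{t:lsc-I} is blocked because the $\nu_{n,t}$ may be mutually singular is apt. One caveat you inherit from the paper: the identity $\int\sup_{q\in\rational^3}\alpha_q\,\dd\sigma=\sup_{q\in\rational^3}\int\alpha_q\,\dd\sigma$ attributed to monotone convergence in the proof of \cref{t:lsc-lagrangian} is not literally valid for constant $q$ (the correct dual representation needs the supremum over measurable, or $\mathscr{C}_{b,0}$-valued, selections $q$ with values in $\set{\alpha^*=0}$); this is the paper's own imprecision, not yours, and the lower semi-continuity of jointly convex, positively one-homogeneous integral functionals with respect to this kind of convergence is a standard fact, so the conclusion stands.
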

\begin{lemma}\label{t:compact-action}
  The action $\mathcal{A}$ has compact sub-level sets in $\mathscr{C}_{\xi}([0,1], \mathscr{P}_{1}(\config))$.
\end{lemma}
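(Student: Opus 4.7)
The strategy is the direct method of the calculus of variations combined with an Ascoli--Arzelà argument: given a sequence $(\bar{\mu}_n)$ in a sub-level set $\set{\mathcal{A}\leq c}$, I choose near-optimal tangent vectors $\bar{\nu}_n$, extract a uniformly convergent subsequence of $\bar{\mu}_n$, and conclude by \cref{t:lsc-action}. Thus the whole work lies in producing a limit $\bar{\mu}$ in $\mathscr{C}_\xi([0,1],\mathscr{P}_1(\config))$.

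First I would derive a uniform bound on intensity measures. For $B\in\mathfrak{B}_0(X)$, combining \cref{t:ce-intensity-measure}, \cref{t:bound-nu-lagrangian}, and the Cauchy--Schwarz inequality yields
\begin{equation*}
I_{\mu_{n,t}}(B) \;\leq\; I_\xi(B) + \sqrt{(c+1)\int_0^t \tfrac{m(B)+I_{\mu_{n,s}}(B)}{2}\,\dd s}.
\end{equation*}
A Grönwall-type argument applied to $U_n(t):=\sup_{s\leq t}I_{\mu_{n,s}}(B)$ then produces a constant $M_B$, independent of $n$ and $t$, with $I_{\mu_{n,t}}(B)\leq M_B$. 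Repeating the argument with $B\setminus K$ in place of $B$, and using the tightness on bounded sets of both $I_\xi$ and $m\in\mathscr{M}_0(X)$, gives tightness of $\set{I_{\mu_{n,t}}:n,t}$. By \cref{t:prohorov} and \cref{t:p1-compactness}, the family $\set{\mu_{n,t}:n,t}$ is then relatively compact in $\mathscr{P}_1(\config)$.

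Next I would establish uniform equicontinuity of $t\mapsto \mu_{n,t}$. For every $F\in\mathscr{H}$, with $\diff F$ supported in $\config\times B_F$, \cref{t:ce-one-function} together with \cref{t:bound-nu-lagrangian} and Cauchy--Schwarz gives
\begin{equation*}
\abs{\mu_{n,t}(F)-\mu_{n,s}(F)} \;\leq\; \norm{\diff F}_\infty \int_s^t \abs{\nu_{n,r}}(\config\times B_F)\,\dd r \;\leq\; C(F)\sqrt{t-s},
\end{equation*}
where $C(F)$ depends on $c$, $m(B_F)$, and $M_{B_F}$ but not on $n$. A parallel estimate for $I_{\mu_{n,t}}(h)$ with $h\in\mathscr{F}_0(X)$ follows from \cref{t:ce-intensity-measure}. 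Now take a countable family $\mathscr{G}^{\lambda}\subset\mathscr{G}\cap\mathscr{H}$ with $\lambda := I_\xi + m + \textstyle\sum 2^{-k}\abs{\bar{\nu}_n}/(\cdots)$ constructed as in \cref{t:weak-convergence-countable} (applied to the relatively compact family of intensities built in the first step): combined with \cref{t:p1-convergence}, the two Hölder-$1/2$ estimates yield uniform equicontinuity of $(\bar{\mu}_n)$ in the $\mathscr{P}_1(\config)$ topology. By Ascoli--Arzelà on $\mathscr{C}_\xi([0,1],\mathscr{P}_1(\config))$, a subsequence converges uniformly to some $\bar{\mu}$, and \cref{t:lsc-action} gives $\mathcal{A}(\bar{\mu})\leq c$, proving the claim.

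The main obstacle is the equicontinuity step: the natural test-function class for the continuity equation is $\mathscr{H}$, whereas $\mathscr{P}_1(\config)$-convergence is controlled by $\mathscr{C}_1(\config)$ (unbounded) and the intensity measure. Bridging this gap requires invoking both \cref{t:weak-convergence-countable} to produce a countable convergence-determining subfamily inside $\mathscr{H}$, and the separate $I$-estimate coming from \cref{t:ce-intensity-measure}, which together reconstruct the $\mathscr{P}_1(\config)$ topology uniformly along the sequence.
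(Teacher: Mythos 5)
Your overall strategy — direct method combined with uniform bounds and compactness extraction — is close in spirit to the paper's, and several ingredients (\cref{t:bound-nu-lagrangian}, \cref{t:ce-intensity-measure}, \cref{t:ce-one-function}, the Hölder-$1/2$ equicontinuity estimates, and the use of a measure $\lambda$ to generate a countable test class via \cref{t:weak-convergence-countable}) are essentially those used in the paper. But there is a genuine gap in the pointwise-compactness step of the Ascoli--Arzelà argument.

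You assert that tightness of the family of intensity measures $\set{I_{\mu_{n,t}}}_{n,t}$ in $\mathscr{M}_0(X)$ yields, via \cref{t:prohorov} and \cref{t:p1-compactness}, relative compactness of $\set{\mu_{n,t}}_{n,t}$ in $\mathscr{P}_1(\config)$. That inference is false. Condition \cref{i:ui} in \cref{t:p1-compactness} demands \emph{uniform integrability} of $\iota_h$, not merely a uniform (or tight) bound on $\mu_{n,t}(\iota_h) = I_{\mu_{n,t}}(h)$. These are not equivalent: on $X=\set{*}$, $\config\simeq\nat_0$, take $\mu_n$ concentrated on $\set{1}$ with probability $1-1/n$ and on $\set{n}$ with probability $1/n$. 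Then $I_{\mu_n}$ is uniformly bounded and trivially tight, $\mu_n\to\delta_1$ weakly, yet $I_{\mu_n}\to 2\neq 1=I_{\delta_1}$ and the family is not relatively compact in $\mathscr{P}_1(\nat_0)$. Nothing in your Grönwall estimate or in the structure of the continuity equation rules out such mass escape, so Ascoli--Arzelà with target $\mathscr{P}_1(\config)$ cannot be invoked at this point. (At best, your first paragraph yields relative compactness in $\mathscr{P}(\config)$, not in $\mathscr{P}_1(\config)$.)

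The paper sidesteps this difficulty by working with the tangent vectors rather than the curves. It first shows that $\set{\bar{\nu}_n}$ satisfies the tightness/boundedness criterion of \cref{t:prohorov}\ \cref{i:prohorov-compact}, extracts a vague limit $\bar{\nu}$, and then \emph{defines} the candidate limit curve by $\mu_t(A) \coloneq \mu_0(A) + \int_0^t \nu_r(\diff 1_A)\,\dd r$ on the algebra $\mathfrak{A}(\config)$, extending to a genuine probability measure by Hahn's extension theorem. The identity from \cref{t:ce-one-function} then forces $\mu^n_t(F)\to\mu_t(F)$ for $F\in\mathscr{H}$, and the corresponding identity from \cref{t:ce-intensity-measure} gives $I_{\mu^n_t}\to I_{\mu_t}$ — the convergence of intensities thus comes \emph{for free} from the continuity equation rather than having to be established separately as uniform integrability. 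If you want to salvage the Ascoli--Arzelà route, you would need to downgrade the target to $\mathscr{P}(\config)$, run the paper's $\bar{\nu}$-extraction in parallel so as to obtain $(\bar{\mu},\bar{\nu})$ solving \cref{e:ce} in the limit, and then use \cref{t:ce-intensity-measure} to upgrade the convergence to $\mathscr{P}_1(\config)$; but as written, the claim you invoke does not follow from what you have proved.
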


\begin{proof}[Proof of {\cref{t:lsc-action,t:compact-action}}]
  Let $r \in (0,\infty)$ and set $\Delta = \set{ \mathcal{A}(\bar{\mu}) \leq r } \cap \set{ \mu_{0} = \xi }$.
  Take $(\bar{\mu}_{n}) \subset \Delta$.
  Since $\mathcal{A}(\bar{\mu}_{n}) \leq r$, for all $n \in \nat$, there exists $\bar{\nu}_{n} \in \mathscr{M}_{b,0}(\config \times \bar{X})$ with $(\bar{\mu}_{n}, \bar{\nu}_{n})$ solving the continuity equation on $[0,1]$ and
  \begin{equation*}
    \mathcal{A}(\bar{\mu}_{n}) \le \int_{0}^{1} \mathcal{L}(\mu_{n,t}, \nu_{n,t}) \dd t \leq r + 1.
  \end{equation*}
  Let $A \in \mathfrak{B}(\config)$, $B \in \mathfrak{B}_{0}(X)$, and $I \in \mathfrak{B}([0,1])$.
In view of \cref{t:bound-nu-lagrangian}, \cref{t:ce-intensity-measure}, and Cauchy--Schwarz inequality for all $n \in \mathbb{N}$:
  \begin{equation}\label{e:bound-nu}
    \begin{split}
      \abs{\bar{\nu}_{n}}(A \times B \times I) & \leq \int_{I} \sqrt{m(B) + I_{\mu_{0}}(B) + \abs{\bar{\nu}_{n}}(A \times B \times [0,t])} \ {\mathcal{L}(\mu_{n,t}, \nu_{n,t})}^{1/2} \ \dd t \\
                                               & \leq \sqrt{m(B) + I_{\mu_{0}}(B) + \abs{\bar{\nu}_{n}}(A \times B \times [0,1])} \ \abs{I} (r+1).
    \end{split}
  \end{equation}
  Setting $I \coloneq [0,1]$ in \cref{e:bound-nu} yields
  \begin{equation*}
    \abs{\bar{\nu}_{n}}(A \times B \times [0,1]) \leq \sqrt{\abs{\bar{\nu}_{n}}(A \times B \times [0,1]) + m(B) + I_{\mu_{0}}(B)} \ (r+1).
  \end{equation*}
  Solving explicitly this equation yields
  \begin{equation}\label{e:bound-nu-total-mass}
    \abs{\bar{\nu}_{n}}(A \times B \times [0,1]) \leq a_{r} + b_{r} \sqrt{m(B) + I_{\mu_{0}}(B)},
  \end{equation}
  with $a_{r}$ and $b_{r} > 0$ depending only on $r$.
  This shows that \cref{e:prohorov-compact-uniform-bound} in \cref{t:prohorov} is satisfied.

  Let us now show that \cref{e:prohorov-compact-tight} is satisfied.
  Let $\varepsilon > 0$.
  By \cref{t:prohorov}, we can find $\Gamma \in \mathfrak{K}(\config)$, $K \in \mathfrak{K}(X)$, and $J \in \mathfrak{K}([0,1])$ such that
  \begin{equation*}
    \abs{[0,1] \setminus J} +  m(B \setminus K) + I_{\mu_{0}}(B \setminus K) \leq \varepsilon.
  \end{equation*}
  Now let $\Delta \coloneq \Gamma \times K \times J \in \mathfrak{K}(\config \times X \times [0,1])$.
  Then by \cref{e:bound-nu,e:bound-nu-total-mass}, we get that
  \begin{equation*}
    \begin{split}
    \abs{\bar{\nu}_{n}}\paren[\big]{(\config \times B \times [0,1]) \setminus \Delta} & \leq \abs{\bar{\nu}_{n}}\paren*{\paren[\big]{\config \setminus \Gamma} \times (B \setminus K) \times ([0,1] \setminus J)} \\
                                                                            & \leq \varepsilon (r+1) \sqrt{\varepsilon + a_{r} + b_{r} \sqrt{\varepsilon}}.
    \end{split}
  \end{equation*}
  Thus, \cref{t:prohorov} \cref{i:prohorov-compact} applies, and, up to passing to a subsequence, we can find $\bar{\nu} \in \mathscr{M}_{b,0}(\config \times \bar{X})$ such that $\bar{\nu}_{n} \to \bar{\nu}$, as $n \to \infty$.
  
    Recall that $\mathfrak{A}(\config)$ is the algebra defined in \cref{s:local-algebra}.
    Define,
    \begin{equation*}
      \mu_{t}(A) \coloneq \mu_{0}(A) + \int_{0}^{t} \nu_{r}(\diff 1_{A}) \dd r, \qquad A \in \mathfrak{A}(\config).
    \end{equation*}
    By \cref{t:ce-one-function}, we find that for all $F \in \mathscr{H}$, $\mu_{t}(F)$ is the limit of $\mu^{n}_{t}(F)$.
    Thus $\mu_{t}$ is a non-negative set function on the algebra $\mathfrak{A}(\config)$ with total mass $1$.
    By Hahn's extension theorem \cite[Thm.\ III.5.8, p.\ 136]{DunfordSchwartz}, it can be uniquely be extended to a probability measure $\mu_{t}$ on $\sigma(\mathfrak{A}(\config)) = \mathfrak{B}(\config)$.
    Moreover, since $\mathscr{G} \subset \mathscr{H}$, we find that $\mu^{n}_{t} \to \mu_{t}$ in $\mathscr{P}(\config)$.
    A similar argument at the level of intensity measures shows that actually $\mu^{n}_{t} \to \mu_{t}$ in $\mathscr{P}_{1}(\config)$.
    By \cref{t:ce-stability-limit}, the find that $(\bar{\mu}, \bar{\nu})$ is a solution to the continuity equation.
    Thus, by lower semi-continuity of $\mathcal{L}$ (\cref{t:lsc-lagrangian}), we find that $\bar{\mu} \in \Delta$.
    This shows that $\Delta$ is compact and this establishes  the two lemmas.
\end{proof}

As a consequence of the properties of $\mathcal{A}$ established above, we obtain the following result.
\begin{theorem}\label{t:action-minizers}
  Let $\bar{\mu} \in \mathscr{C}\paren[\big]{[0,1], \mathscr{P}_{1}(\config)}$ such that $\mathcal{A}(\bar{\mu}) < \infty$, then there exists $\bar{\nu} \in \mathscr{M}_{b,0}(\config \times \bar{X})$ such that $(\bar{\mu}, \bar{\nu})$ solves the continuity equation on $[0,1]$ and
  \begin{equation*}
    \mathcal{A}(\bar{\mu}) = \int_{0}^{1} \mathcal{L}(\mu_{t}, \nu_{t}) \dd t.
  \end{equation*}
\end{theorem}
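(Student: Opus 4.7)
This is a direct-method compactness-plus-lower-semicontinuity argument, and most of the necessary estimates were already established in the proofs of \cref{t:lsc-action,t:compact-action}; the present claim essentially corresponds to applying them to the constant sequence $\bar{\mu}_{n} \equiv \bar{\mu}$. I would first choose a minimizing sequence $(\bar{\nu}_{n}) \subset \mathscr{M}_{b,0}(\config \times X \times [0,1])$ such that $(\bar{\mu}, \bar{\nu}_{n})$ solves \cref{e:ce} and $\int_{0}^{1} \mathcal{L}(\mu_{t}, \nu_{n,t}) \dd t \leq \mathcal{A}(\bar{\mu}) + 1$ for every $n$.

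For compactness of $(\bar{\nu}_{n})$, applying \cref{t:bound-nu-lagrangian} pointwise in $t$, followed by Cauchy--Schwarz in $t$, yields, for every $B \in \mathfrak{B}_{0}(X)$ and $I \in \mathfrak{B}([0,1])$,
\begin{equation*}
  \abs{\bar{\nu}_{n}}(\config \times B \times I) \leq \paren*{\tfrac{1}{2}\int_{I} (m(B) + I_{\mu_{t}}(B))\,\dd t}^{1/2} \paren*{\mathcal{A}(\bar{\mu}) + 1}^{1/2}.
\end{equation*}
By continuity of $\bar{\mu}$ in $\mathscr{P}_{1}(\config)$ and \cref{t:p1-convergence} \cref{i:convergence-I}, the map $t \mapsto I_{\mu_{t}}$ is continuous into $\mathscr{M}_{0}(X)$, hence the family $\set{ I_{\mu_{t}} : t \in [0,1] }$ is uniformly bounded on bounded sets and uniformly tight. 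This verifies both the uniform-bound and tightness conditions \cref{e:prohorov-compact} of \cref{t:prohorov} \cref{i:prohorov-compact} for $(\bar{\nu}_{n})$. Extracting a subsequence, $\bar{\nu}_{n} \to \bar{\nu}$ in $\mathscr{M}_{b,0}(\config \times X \times [0,1])$, and \cref{t:ce-stability-limit}, applied with $\bar{\mu}_{n} \equiv \bar{\mu}$, shows that $(\bar{\mu}, \bar{\nu})$ is again a solution to \cref{e:ce}.

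The main technical step is to show
\begin{equation*}
  \int_{0}^{1} \mathcal{L}(\mu_{t}, \nu_{t}) \dd t \leq \liminf_{n} \int_{0}^{1} \mathcal{L}(\mu_{t}, \nu_{n,t}) \dd t = \mathcal{A}(\bar{\mu}),
\end{equation*}
since weak convergence of the space-time measures $\bar{\nu}_{n}$ does not yield convergence of the time slices $\nu_{n,t}$. To handle this I would adapt the Fenchel-duality argument of \cref{t:lsc-lagrangian}: write $\alpha = \sup_{q \in \mathbb{Q}^{3}} \alpha_{q}$ with $\alpha_{q}$ affine, select a reference $\tilde{\sigma} \in \mathscr{M}_{b,0}(\config \times X \times [0,1])$ dominating $\bar{\mu} \otimes m \otimes \dd t$, the time-integrated Campbell measure $\int_{0}^{1} C_{\mu_{t}}\,\dd t$, and every $\bar{\nu}_{n}$, and exchange $\sup$ with $\int$ via monotone convergence. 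For each fixed $q$, the resulting integral depends continuously on $\bar{\nu}$ in $\mathscr{M}_{b,0}(\config \times X \times [0,1])$, as the $\bar{\mu}$-dependent densities are fixed throughout; taking $\liminf_{n}$ inside the supremum produces the desired inequality. Combined with the trivial reverse inequality coming from the definition of $\mathcal{A}$, this proves that $\bar{\nu}$ realizes the infimum, as claimed.
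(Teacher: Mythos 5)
Your proof follows the same direct-method strategy that the paper relies on through \cref{t:lsc-action,t:compact-action}, and in one respect you are more careful than the paper: you correctly flag that vague convergence of the space--time measures $\bar{\nu}_n$ does \emph{not} yield convergence of the time slices $\nu_{n,t}$, so one cannot simply invoke the per-time lower semi-continuity of $\mathcal{L}$ from \cref{t:lsc-lagrangian} (as the paper's proof of \cref{t:compact-action} appears to do). Your Fenchel-duality argument on the product $\config \times X \times [0,1]$ is the right device to bridge this, provided the supremum is taken over \emph{simple, measurable} functions $q$ with values in $\mathbb{Q}^3 \cap \dom \alpha^*$ and vanishing outside $\config \times B \times [0,1]$, rather than over constants $q \in \mathbb{Q}^3$: for a constant $q$ the associated affine functional involves the total mass $\bar{\nu}(\config \times X \times [0,1])$, which is neither finite nor continuous in the $\mathscr{M}_{b,0}$-topology.

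There is, however, a genuine gap in the compactness step. The estimate from \cref{t:bound-nu-lagrangian} that you cite is uniform in $A \in \mathfrak{B}(\config)$, and what you deduce from tightness of $\{I_{\mu_t}\}_t$ is tightness of $(\bar{\nu}_n)$ in the $X$-direction only. But \cref{e:prohorov-compact-tight} for $\mathscr{M}_{b,0}(\config \times X \times [0,1])$ requires concentration on genuine compacts, that is, inside sets of the form $\Gamma \times K_X \times J$ with $\Gamma \in \mathfrak{K}(\config)$; nothing you write controls $\abs{\bar{\nu}_n}\bigl((\config \setminus \Gamma) \times B \times [0,1]\bigr)$ as $\Gamma$ increases. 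To close this you need the sharper intermediate bound that the proof (not the statement) of \cref{t:bound-nu-lagrangian} actually provides, namely
\begin{equation*}
  \abs{\nu}(A \times B) \leq \paren[\Big]{\tfrac{1}{2}\bigl[(\mu \otimes m)(A \times B) + C_\mu(A \times B)\bigr]\,\mathcal{L}(\mu,\nu)}^{1/2},
\end{equation*}
combined with tightness of the compact families $\{\mu_t\}_{t \in [0,1]} \subset \mathscr{P}(\config)$ and $\{C_{\mu_t}\}_{t \in [0,1]} \subset \mathscr{M}_{b,0}(\config \times X)$ (both inherited from the continuity of $\bar{\mu}$, \cref{t:C-homeomorphism}, and compactness of $[0,1]$). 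It should be noted that the paper's own proof of \cref{t:compact-action} relies on the coarse, $A$-independent bound in exactly the same place and its stated tightness inequality is set-theoretically reversed, so this is a shared gap rather than a deviation from the intended argument.
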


\subsubsection{The variational distance end the entropic costs}

We now define our distance $\mathcal{W}$.
Actually, we derive our entropic curvature for $\mathcal{W}$ through properties of a regularized version of it.
\begin{definition}
For $\varepsilon \geq 0$, we define the \emph{entropic cost} by
\begin{equation*}
  \mathcal{J}_{\varepsilon}(\xi_{0}, \xi_{1}) \coloneq \Inf*{ \mathcal{A}(\bar{\mu}) + \varepsilon \int_{0}^{1} \Fish{ \mu_{t} \given \pi } \dd t : \mu_{0} = \xi_{0},\, \mu_{1} = \xi_{1} }.
\end{equation*}
We also set $\mathcal{W} \coloneq \mathcal{J}_{0}^{1/2}$.
\end{definition}
We call the quantity $\mathcal{J}_{\varepsilon}$ the entropic cost in analogy with the continuous setting (see \cite{GigliTamanini} and the references therein).
It can be thought of as an entropic regularization of $\mathcal{W}$.
Properties specific to $\mathcal{W}$ are studied below.

\begin{theorem}\label{t:minimizing-curve}
  Let $\varepsilon \geq 0$ and $\xi_{0}$ and $\xi_{1}$ such that $\mathcal{J}_{\varepsilon}(\xi_{0}, \xi_{1}) < \infty$.
  Then, there exists $(\bar{\mu}^{\varepsilon}, \bar{\nu}^{\varepsilon})$ solving the continuity equation such that
  \begin{equation*}
    \mathcal{J}_{\varepsilon}(\xi_{0}, \xi_{1}) = \int_{0}^{1} \mathcal{L}(\mu^{\varepsilon}_{t}, \nu^{\varepsilon}_{t}) \dd t + \varepsilon \int_{0}^{1} \Fish{ \mu^{\varepsilon}_{t} \given \pi } \dd t.
  \end{equation*}
\end{theorem}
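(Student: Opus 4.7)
The plan is to run the direct method of the calculus of variations on the functional
\begin{equation*}
\mathcal{F}_{\varepsilon}(\bar{\mu}) \coloneq \mathcal{A}(\bar{\mu}) + \varepsilon \int_{0}^{1} \Fish{\mu_{t} \given \pi} \dd t
\end{equation*}
over the set $\set{\bar{\mu} \in \mathscr{C}([0,1], \mathscr{P}_{1}(\config)) : \mu_{0} = \xi_{0},\, \mu_{1} = \xi_{1}}$. Once a minimizer $\bar{\mu}^{\varepsilon}$ of $\mathcal{F}_{\varepsilon}$ is found, the existence of the corresponding $\bar{\nu}^{\varepsilon}$ is immediate from \cref{t:action-minizers}, since $\mathcal{A}(\bar{\mu}^{\varepsilon}) \leq \mathcal{F}_{\varepsilon}(\bar{\mu}^{\varepsilon}) = \mathcal{J}_{\varepsilon}(\xi_{0}, \xi_{1}) < \infty$, so there exists $\bar{\nu}^{\varepsilon}$ with $(\bar{\mu}^{\varepsilon}, \bar{\nu}^{\varepsilon})$ solving the continuity equation and realizing the action as an integral of the Lagrangian.

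First I would pick a minimizing sequence $(\bar{\mu}_{n})$ with $\mathcal{F}_{\varepsilon}(\bar{\mu}_{n}) \to \mathcal{J}_{\varepsilon}(\xi_{0}, \xi_{1})$, and observe that $\mathcal{A}(\bar{\mu}_{n}) \leq \mathcal{F}_{\varepsilon}(\bar{\mu}_{n})$ is uniformly bounded since $\Fish{\emparg \given \pi} \geq 0$. Hence the sequence lives in a sublevel set of $\mathcal{A}$ over $\mathscr{C}_{\xi_{0}}([0,1], \mathscr{P}_{1}(\config))$, which is compact by \cref{t:compact-action}. Up to a subsequence, we obtain $\bar{\mu}_{n} \to \bar{\mu}^{\varepsilon}$ uniformly in $t \in [0,1]$ in the $\mathscr{P}_{1}(\config)$-topology. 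In particular, the endpoint condition $\mu^{\varepsilon}_{1} = \xi_{1}$ passes to the limit.

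Next I would establish lower semi-continuity of $\mathcal{F}_{\varepsilon}$. For the $\mathcal{A}$-part this is exactly \cref{t:lsc-action}. For the Fisher part, uniform convergence gives $\mu_{n,t} \to \mu^{\varepsilon}_{t}$ in $\mathscr{P}_{1}(\config)$ for every $t \in [0,1]$, so \cref{t:lsc-I} yields $\Fish{\mu^{\varepsilon}_{t} \given \pi} \leq \liminf_{n} \Fish{\mu_{n,t} \given \pi}$ pointwise. Since $\Fish{\emparg \given \pi} \geq 0$, Fatou's lemma gives
\begin{equation*}
\int_{0}^{1} \Fish{\mu^{\varepsilon}_{t} \given \pi} \dd t \leq \liminf_{n} \int_{0}^{1} \Fish{\mu_{n,t} \given \pi} \dd t.
\end{equation*}
Combining, $\mathcal{F}_{\varepsilon}(\bar{\mu}^{\varepsilon}) \leq \liminf_{n} \mathcal{F}_{\varepsilon}(\bar{\mu}_{n}) = \mathcal{J}_{\varepsilon}(\xi_{0}, \xi_{1})$, so $\bar{\mu}^{\varepsilon}$ is a minimizer.

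The only potential obstacle is the pointwise-in-$t$ convergence of $\bar{\mu}_{n}$ in $\mathscr{P}_{1}(\config)$ needed to invoke \cref{t:lsc-I}: this is where the strengthened topology on $\mathscr{C}_{\xi_{0}}([0,1], \mathscr{P}_{1}(\config))$, coming from \cref{t:p1-polish}, plays a key role. Once this convergence is in hand, the argument is a routine application of the direct method, and the conclusion of the theorem follows from \cref{t:action-minizers} applied to $\bar{\mu}^{\varepsilon}$.
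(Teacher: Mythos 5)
Your proposal is correct and follows essentially the same route as the paper: apply the direct method, using compactness of $\mathcal{A}$-sublevel sets from \cref{t:compact-action} and lower semi-continuity of the action and the Fisher information from \cref{t:lsc-action,t:lsc-I}, then pass the minimizer to \cref{t:action-minizers}. You spell out the pointwise-in-$t$ convergence and the Fatou step that the paper summarizes as \enquote{standard optimization arguments}, which is a useful clarification rather than a deviation.
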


\begin{proof}
  Since $\varepsilon$ is fixed, in this proof we drop the dependence on $\varepsilon$ whenever no confusion may arise.
The relative Fisher information is lower semi-continuous, by \cref{t:lsc-I}, and convex, by Jensen's inequality.
  Thus in view of \cref{t:convex-action,t:lsc-action}, we get the lower semi-continuity and convexity of 
  \begin{equation}\label{e:action-epsilon}
    \mathcal{A}^{\varepsilon}(\bar{\mu}) \coloneq \mathcal{A}(\bar{\mu}) + \varepsilon \int_{0}^{1} \Fish{ \mu_{t} \given \pi } \dd t.
  \end{equation}
  Thus the set $A_{\varepsilon} \coloneq \set{ \mathcal{A}^{\varepsilon} \leq r }$ is closed for all $r \in (0,\infty)$.
  Clearly, we have that $A_{\varepsilon} \subset \set{\mathcal{A} \leq r}$.
  Thus, $A_{\varepsilon}$ is relatively compact by \cref{t:compact-action}.
  The result follows from standard optimization arguments.
\end{proof}

\begin{theorem}\label{t:gamma-convergence}
  Let $\xi_{0}$ and $\xi_{1}$ such that $\mathcal{J}_{\varepsilon_{o}}(\xi_{0}, \xi_{1}) < \infty$ for some $\varepsilon_{o} > 0$.
  For $\varepsilon \in (0,\varepsilon_{o})$, write $(\bar{\mu}^{\varepsilon}, \bar{\nu}^{\varepsilon})$ for a minimizer of $\mathcal{J}_{\varepsilon}(\xi_{0}, \xi_{1})$.
  Then, we have that
  \begin{equation*}
    \mathcal{J}_{\varepsilon}(\xi_{0}, \xi_{1}) \xrightarrow[\varepsilon \to 0^{+}]{} \mathcal{W}^{2}(\xi_{0}, \xi_{1}).
  \end{equation*}
  Moreover, up to passing to a subsequence
  \begin{equation*}
    (\bar{\mu}^{\varepsilon}, \bar{\nu}^{\varepsilon}) \xrightarrow[\varepsilon \to 0^{+}]{} (\bar{\mu}, \bar{\nu}),
  \end{equation*}
  for a minimizer $(\bar{\mu}, \bar{\nu})$ for $\mathcal{W}(\xi_{0}, \xi_{1})$.
\end{theorem}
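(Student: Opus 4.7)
The plan is to combine monotonicity of $\varepsilon \mapsto \mathcal{J}_\varepsilon(\xi_0,\xi_1)$, compactness of the family of minimizers, and a recovery-sequence argument based on the curve supplied by the hypothesis $\mathcal{J}_{\varepsilon_o}(\xi_0,\xi_1) < \infty$.

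Since the Fisher information is non-negative, the map $\varepsilon \mapsto \mathcal{J}_\varepsilon(\xi_0,\xi_1)$ is nondecreasing, and the limit $L := \lim_{\varepsilon \to 0^+} \mathcal{J}_\varepsilon(\xi_0,\xi_1)$ exists with $\mathcal{W}^2(\xi_0,\xi_1) = \mathcal{J}_0(\xi_0,\xi_1) \leq L \leq \mathcal{J}_{\varepsilon_o}(\xi_0,\xi_1) < \infty$. Testing $\mathcal{J}_{\varepsilon'}$ against $(\bar\mu^\varepsilon, \bar\nu^\varepsilon)$ for $\varepsilon' \in (0,\varepsilon)$ and sending $\varepsilon' \to 0^+$ gives $\varepsilon \int_0^1 \Fish{\mu^\varepsilon_t \given \pi} \dd t \leq \mathcal{J}_\varepsilon(\xi_0,\xi_1) - L$; combined with $\mathcal{A}(\bar\mu^\varepsilon) = \mathcal{J}_\varepsilon(\xi_0,\xi_1) - \varepsilon \int_0^1 \Fish{\mu^\varepsilon_t \given \pi} \dd t$, this yields $\mathcal{A}(\bar\mu^\varepsilon) \geq L$ and, since $\mathcal{A}(\bar\mu^\varepsilon) \leq \mathcal{J}_\varepsilon(\xi_0,\xi_1) \to L$, also $\mathcal{A}(\bar\mu^\varepsilon) \to L$.

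Second, the uniform bound $\mathcal{A}(\bar\mu^\varepsilon) \leq \mathcal{J}_{\varepsilon_o}(\xi_0,\xi_1)$ places $\{\bar\mu^\varepsilon\}$ in a sublevel set of $\mathcal{A}$, compact in $\mathscr{C}_{\xi_0}([0,1],\mathscr{P}_1(\config))$ by \cref{t:compact-action}; extract a subsequence with $\bar\mu^\varepsilon \to \bar\mu$. The bound of \cref{t:bound-nu-lagrangian}, together with Prohorov's theorem (\cref{t:prohorov}), provides tightness of $(\bar\nu^\varepsilon)$ in $\mathscr{M}_{b,0}(\config \times X \times [0,1])$, so up to a further subsequence $\bar\nu^\varepsilon \to \bar\nu$. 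By \cref{t:ce-stability-limit}, $(\bar\mu, \bar\nu)$ solves \cref{e:ce} on $[0,1]$ with $\mu_0 = \xi_0$ and $\mu_1 = \xi_1$. Lower semi-continuity of $\mathcal{L}$ (\cref{t:lsc-lagrangian}) and Fatou's lemma then give $\mathcal{W}^2(\xi_0,\xi_1) \leq \mathcal{A}(\bar\mu) \leq \liminf_\varepsilon \int_0^1 \mathcal{L}(\mu^\varepsilon_t,\nu^\varepsilon_t)\dd t = \liminf_\varepsilon \mathcal{A}(\bar\mu^\varepsilon) = L$.

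To close the loop with $L \leq \mathcal{W}^2(\xi_0,\xi_1)$, construct a recovery sequence as follows. Fix $\delta > 0$, a competitor $(\tilde{\bar\mu}, \tilde{\bar\nu})$ with $\mathcal{A}(\tilde{\bar\mu}) \leq \mathcal{W}^2(\xi_0,\xi_1) + \delta$, and a curve $(\bar\mu^o, \bar\nu^o)$ with $\mathcal{A}(\bar\mu^o) + \varepsilon_o \int_0^1 \Fish{\mu^o_t \given \pi}\dd t < \infty$ joining $\xi_0$ to $\xi_1$ (such a curve exists by the hypothesis). For $\lambda \in (0,1)$, set $\bar\mu^\lambda := (1-\lambda)\tilde{\bar\mu} + \lambda \bar\mu^o$, $\bar\nu^\lambda := (1-\lambda)\tilde{\bar\nu} + \lambda \bar\nu^o$; by linearity $(\bar\mu^\lambda, \bar\nu^\lambda)$ solves \cref{e:ce} and joins $\xi_0$ to $\xi_1$, and \cref{t:convex-action} gives $\mathcal{A}(\bar\mu^\lambda) \leq (1-\lambda)(\mathcal{W}^2(\xi_0,\xi_1) + \delta) + \lambda \mathcal{A}(\bar\mu^o)$.

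The hard part is bounding $\int_0^1 \Fish{\mu^\lambda_t \given \pi}\dd t$: the joint convexity of the integrand $\psi$ alone yields only the useless estimate $(1-\lambda)\Fish{\tilde{\mu}_t \given \pi} + \lambda \Fish{\mu^o_t \given \pi}$, since $\Fish{\tilde{\mu}_t \given \pi}$ is generally infinite. The key observation is that concavity of the logarithmic mean $\theta$ yields the pointwise estimate $\hat{\rho^\lambda_t} \geq \lambda \hat{\rho^o_t}$; combining this with a preliminary truncation of $\tilde\rho$ at a large level and an Ornstein--Uhlenbeck smoothing of $\tilde{\bar\mu}$ (stable under \cref{t:stability-ou-solution,t:contraction-lagrangian}), one replaces $|\diff \tilde\rho_t|^2$ by a quantity controllable in terms of $\hat{\rho^o_t}$, yielding $\int_0^1 \Fish{\mu^\lambda_t \given \pi}\dd t < \infty$. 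Inserting this in $\mathcal{J}_\varepsilon(\xi_0,\xi_1) \leq \mathcal{A}(\bar\mu^\lambda) + \varepsilon \int_0^1 \Fish{\mu^\lambda_t \given \pi}\dd t$ and letting first $\varepsilon \to 0^+$, then $\lambda \to 0^+$, then $\delta \to 0^+$ gives $L \leq \mathcal{W}^2(\xi_0,\xi_1)$. Combined with the previous paragraph, $L = \mathcal{W}^2(\xi_0,\xi_1) = \mathcal{A}(\bar\mu)$, so $\bar\mu$ is a minimizer of $\mathcal{W}(\xi_0,\xi_1)$ and the convergence $(\bar\mu^\varepsilon, \bar\nu^\varepsilon) \to (\bar\mu, \bar\nu)$ holds along the extracted subsequence.
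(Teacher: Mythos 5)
Your steps (1) and (2) are correct: monotonicity of $\varepsilon\mapsto\mathcal{J}_\varepsilon$, existence of $L\coloneq\lim_{\varepsilon\to 0^+}\mathcal{J}_\varepsilon$, compactness of $(\bar\mu^\varepsilon,\bar\nu^\varepsilon)$, and $\mathcal{W}^2(\xi_0,\xi_1)\leq\mathcal{A}(\bar\mu)\leq L$ for any cluster point $(\bar\mu,\bar\nu)$. (For the last inequality it is cleaner to cite the lower semi-continuity of the action, \cref{t:lsc-action}: the convergence $\bar\nu^\varepsilon\to\bar\nu$ in $\mathscr{M}_{b,0}(\config\times X\times[0,1])$ does not give pointwise-in-$t$ convergence of $\nu^\varepsilon_t$, so Fatou combined with \cref{t:lsc-lagrangian} does not apply directly.) This proves $\mathcal{W}^2(\xi_0,\xi_1)\leq L$ and that cluster points have $\mathcal{A}(\bar\mu)\leq L$, but this does not yet conclude.

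The crux is $L\leq\mathcal{W}^2(\xi_0,\xi_1)$, your step (3), and here your argument has a genuine gap. The paper does not construct an explicit recovery sequence: it restricts everything to the compact sublevel set $A\coloneq\{\mathcal{A}^{\varepsilon_o}\leq r\}$, on which every curve has Fisher energy bounded by $r/\varepsilon_o$, so that $\mathcal{A}^\varepsilon\searrow\mathcal{A}$ pointwise on $A$ and the constant sequence is already a recovery sequence; it then applies Dal Maso's machinery for monotone decreasing families of functionals (\cite[Prop.\ 5.7, Prop.\ 7.7, Thm.\ 7.8, Cor.\ 7.20]{DalMaso}). You instead try to build a recovery sequence for a near-optimal $\tilde{\bar\mu}$ by interpolating it with a finite-Fisher-energy curve $\bar\mu^o$. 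But from the concavity bound $\hat{\rho}^\lambda_t\geq\lambda\hat{\rho}^o_t$ you only obtain
\begin{equation*}
  \Fish{\mu^\lambda_t\given\pi}\leq\frac{2(1-\lambda)^2}{\lambda}\int\frac{\abs{\diff\tilde{\rho}_t}^2}{\hat{\rho}^o_t}\dd(\pi\otimes m)+2\lambda\,\Fish{\mu^o_t\given\pi},
\end{equation*}
and the first term is uncontrolled: $\hat{\rho}^o_t$ can vanish precisely where $\tilde{\rho}_t$ jumps, and there is no hypothesis relating the two curves. The proposed "truncation plus Ornstein--Uhlenbeck smoothing of $\tilde{\bar\mu}$" does not obviously repair this: truncating and renormalizing $\tilde{\rho}$ changes the boundary data $\xi_0,\xi_1$; applying $\mathsf{P}^\star_\delta$ as in \cref{t:stability-ou-solution} changes the endpoints to $\mathsf{P}^\star_\delta\xi_0,\mathsf{P}^\star_\delta\xi_1$; and even with a bounded numerator $M^2$ there is no reason for $M^2/\hat{\rho}^o_t$ to lie in $L^1(\pi\otimes m)$. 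As written, step (3) establishes nothing, and the proof is incomplete at exactly the point where the real work lies.
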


\begin{proof}
  Let us write $r \coloneq \mathcal{J}_{\varepsilon_{o}}(\xi_{0}, \xi_{1}) + 1 < \infty$, and $A \coloneq \set{ \mathcal{A}^{\varepsilon_{o}} \leq r }$.
  Since the family $( \mathcal{A}^{\varepsilon} )$ is decreasing in $\varepsilon$ when regarded as functionals on $A$, we have
  \begin{equation*}
    \mathcal{J}_{\varepsilon}(\xi_{0}, \xi_{1}) = \Inf*{ \mathcal{A}^{\varepsilon}(\bar{\mu}) : \mu_{0} = \xi_{0},\, \mu_{1} = \xi_{1},\, \bar{\mu} \in A }.
  \end{equation*}
   On $A$, we have that $\mathcal{A}^{\varepsilon} \searrow \mathcal{A}$ pointwise, and that $\mathcal{A}$ is lower semi-continuous.
  Thus, by \cite[Prop. 5.7]{DalMaso}, $\mathcal{A}^{\varepsilon}$ $\Gamma$-converges to $\mathcal{A}$ on $A$.
  Now, since $\mathcal{A}^{\varepsilon} \geq \mathcal{A}$ and since $\mathcal{A}$ has compact-sublevel sets, the first part of the claim follows from \cite[Prop.\ 7.7 \& Thm.\ 7.8]{DalMaso}.
  The second part of the claim follows from \cite[Cor.\ 7.20]{DalMaso} provided we can show that $\set{ (\bar{\mu}^{\varepsilon}, \bar{\nu}^{\varepsilon}) : \varepsilon \in (0, \varepsilon_{o}) }$ is compact.
  We argue as in \cref{t:compact-action}.
  Indeed, by construction $\bar{\mu}^{\varepsilon} \in \set{ \mathcal{A}^{\varepsilon} \leq r } \subset \set{ \mathcal{A} \leq r }$.
  Thus, \cref{e:bound-nu} holds with $\varepsilon$ in place of of $n$ and the rest of the argument is the same.
\end{proof}

We now study the properties of $\mathcal{W}$.
We start with a classical argument.
\begin{lemma}\label{t:wcurl-time-reparametrization}
  For all $T > 0$, and $\xi_{0}$ and $\xi_{1} \in \mathscr{P}_{1}(\config)$:
  \begin{equation*}
    \mathcal{W}(\xi_{0}, \xi_{1}) = \Inf*{ \int_{0}^{T} \mathcal{L}^{\frac{1}{2}}(\mu_{t}, \nu_{t}) \dd t  : (\bar{\mu}, \bar{\nu}) \sim \cref{e:ce},\, \mu_{0} = \xi_{0},\, \mu_{T} = \xi_{1} }.
  \end{equation*}
\end{lemma}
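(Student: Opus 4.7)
The plan is to proceed by the classical Benamou--Brenier time-reparametrization argument, resting on two facts already available in the excerpt: (i) $\mathcal{L}(\mu,\cdot)$ is $2$-homogeneous by construction of $\alpha$, hence $\mathcal{L}^{1/2}$ is $1$-homogeneous in $\nu$; and (ii) the class of solutions to \cref{e:ce} is stable under absolutely continuous time-rescalings, by \cref{t:ce-time-reparametrization}.

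First I would observe that the $T$-dependence on the right-hand side is cosmetic. Given a CE-solution $(\bar{\mu}, \bar{\nu})$ on $[0,1]$ with endpoints $\xi_0,\xi_1$, the affine rescaling $\lambda(s)=s/T$ produces the CE-solution $(\mu_{s/T},T^{-1}\nu_{s/T})_{s\in[0,T]}$, and by $1$-homogeneity of $\mathcal{L}^{1/2}$,
\[
\int_0^T \mathcal{L}^{1/2}\bigl(\mu_{s/T},\, T^{-1}\nu_{s/T}\bigr)\,\dd s = \int_0^1 \mathcal{L}^{1/2}(\mu_t,\nu_t)\,\dd t,
\]
so the infimum on the right-hand side is independent of $T>0$, and it suffices to treat $T=1$. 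For the easy direction, Cauchy--Schwarz yields $\bigl(\int_0^1 \mathcal{L}^{1/2}\bigr)^2 \leq \int_0^1 \mathcal{L}$ on each admissible $(\bar{\mu},\bar{\nu})$, whence $\inf\int_0^1 \mathcal{L}^{1/2} \leq \mathcal{W}(\xi_0,\xi_1)$.

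For the converse, I would fix an admissible $(\bar{\mu},\bar{\nu})$ with $L\coloneq\int_0^1 \mathcal{L}^{1/2}(\mu_t,\nu_t)\,\dd t<\infty$ and reparametrize at (almost) constant speed. For $\varepsilon>0$, define
\[
\phi_\varepsilon(t) \coloneq \frac{1}{L+\varepsilon}\int_0^t \bigl(\mathcal{L}^{1/2}(\mu_s,\nu_s)+\varepsilon\bigr)\,\dd s, \qquad \lambda_\varepsilon \coloneq \phi_\varepsilon^{-1}.
\]
Since $\phi_\varepsilon'$ is bounded below by $\varepsilon/(L+\varepsilon)>0$, both $\phi_\varepsilon$ and $\lambda_\varepsilon$ are bi-Lipschitz self-homeomorphisms of $[0,1]$ fixing the endpoints. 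By \cref{t:ce-time-reparametrization}, the pair $(\tilde{\mu},\tilde{\nu})\coloneq\bigl(\mu_{\lambda_\varepsilon(\cdot)},\,\lambda_\varepsilon'(\cdot)\nu_{\lambda_\varepsilon(\cdot)}\bigr)$ solves \cref{e:ce} on $[0,1]$ with the same endpoints. Using the $2$-homogeneity of $\mathcal{L}(\mu,\cdot)$, the change of variables $t=\lambda_\varepsilon(s)$, and the identity $\lambda_\varepsilon'(\phi_\varepsilon(t))=(L+\varepsilon)/(\mathcal{L}^{1/2}(\mu_t,\nu_t)+\varepsilon)$, the action of $\tilde{\mu}$ satisfies
\[
\mathcal{A}(\tilde{\mu}) \leq \int_0^1 (\lambda_\varepsilon'(s))^2\,\mathcal{L}(\mu_{\lambda_\varepsilon(s)},\nu_{\lambda_\varepsilon(s)})\,\dd s = (L+\varepsilon)\int_0^1 \frac{\mathcal{L}(\mu_t,\nu_t)}{\mathcal{L}^{1/2}(\mu_t,\nu_t)+\varepsilon}\,\dd t \leq (L+\varepsilon)L,
\]
where the last bound follows from the pointwise inequality $\mathcal{L}/(\mathcal{L}^{1/2}+\varepsilon)\leq \mathcal{L}^{1/2}$. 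Hence $\mathcal{W}^2(\xi_0,\xi_1)\leq(L+\varepsilon)L$; letting $\varepsilon\to 0$ and then taking the infimum over $(\bar{\mu},\bar{\nu})$ yields $\mathcal{W}(\xi_0,\xi_1)\leq \inf\int_0^1 \mathcal{L}^{1/2}$.

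The main (rather mild) obstacle is that the speed $\mathcal{L}^{1/2}(\mu_t,\nu_t)$ may vanish on a set of positive measure, so the naive arc-length parametrization $\phi(t)=L^{-1}\int_0^t \mathcal{L}^{1/2}$ fails to be a homeomorphism and cannot be fed directly into \cref{t:ce-time-reparametrization}; the $\varepsilon$-regularization above circumvents this, and the limit $\varepsilon\to 0$ is immediate from the uniform pointwise domination above.
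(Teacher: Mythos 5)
Your argument is correct and is precisely the standard Benamou--Brenier reparametrization argument that the paper's one-line proof defers to (``Follows from a standard reparametrization argument, for instance \cite[Thm.\ 5.4]{DNS09} with \cref{t:ce-time-reparametrization}''). One small imprecision: $\phi_\varepsilon$ need not be Lipschitz unless $\mathcal{L}^{1/2}(\mu_t,\nu_t)$ is essentially bounded, so the pair $(\phi_\varepsilon,\lambda_\varepsilon)$ is only bi--absolutely-continuous in general (not ``bi-Lipschitz'' as you write), but absolute continuity of both $\lambda_\varepsilon$ and its inverse is exactly the hypothesis of \cref{t:ce-time-reparametrization}, so the argument goes through unchanged.
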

\begin{proof}
  Follows from a standard reparametrization argument, for instance \cite[Thm.\ 5.4]{DNS09} with \cref{t:ce-time-reparametrization}.
\end{proof}

We now summarize the main property of $\mathcal{W}$.
\begin{theorem}\label{t:W}
  \begin{enumerate}[$(i)$, wide]
    \item\label{i:W-extended-distance} The map $\mathcal{W}$ defines an extended distance on $\mathscr{P}_{1}(\config)$.
    \item\label{i:W-topology} The topology induced by $\mathcal{W}$ on $\mathscr{P}_{1}(\config)$ is stronger than that of $\mathscr{P}_{1}(\config)$.
    \item\label{i:W-lsc} The map $\mathcal{W}$ is lower semi-continuous on $\mathscr{P}_{1}(\config) \times \mathscr{P}_{1}(\config)$.
    \item\label{i:W-bounded} Bounded sets with respect to $\mathcal{W}$ are $\mathscr{P}_{1}(\config)$-relatively compact.
    \item\label{i:W-complete-geodesic} For every $\eta \in \mathscr{P}_{1}(\config)$ the accessible component $\set*{ \mathcal{W}(\eta, \cdot) < \infty }$ is a complete geodesic space when equipped with $\mathcal{W}$.
  \end{enumerate}
\end{theorem}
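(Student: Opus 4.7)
My plan is to derive the five items through the variational characterization of $\mathcal{W}$, exploiting the compactness of the sub-level sets of $\mathcal{A}$ (\cref{t:compact-action}), the existence of minimizers (\cref{t:action-minizers}), the time-rescaling identity of \cref{t:wcurl-time-reparametrization}, and the lower semi-continuity of the Lagrangian (\cref{t:lsc-lagrangian}). I will prove (i) by direct metric-space arguments, then establish (iii) and (iv) via the same compactness argument used in \cref{t:compact-action}, and finally deduce (ii) and (v) from these.

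\textbf{Metric axioms (i).} Non-negativity is immediate. Symmetry follows from the fact that, if $(\bar\mu,\bar\nu)$ solves \cref{e:ce} from $\xi_0$ to $\xi_1$, then $(\bar\mu_{1-\cdot},-\bar\nu_{1-\cdot})$ solves \cref{e:ce} from $\xi_1$ to $\xi_0$ with the same action, since $\mathcal{L}$ is even in the last argument. The triangle inequality is obtained as follows: given $\xi_0,\xi_1,\xi_2$, for each $\varepsilon>0$ I pick $(\bar\mu^1,\bar\nu^1)$ and $(\bar\mu^2,\bar\nu^2)$ defined on $[0,1]$ and $[1,2]$ respectively, realizing $\mathcal{W}(\xi_0,\xi_1)$ and $\mathcal{W}(\xi_1,\xi_2)$ up to $\varepsilon$. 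Concatenating them yields a solution on $[0,2]$ whose $\int_0^2 \mathcal{L}^{1/2}\,\dd t$ is bounded by $\mathcal{W}(\xi_0,\xi_1)+\mathcal{W}(\xi_1,\xi_2)+2\varepsilon$, so by \cref{t:wcurl-time-reparametrization} with $T=2$ we conclude. For the separation property, if $\mathcal{W}(\xi_0,\xi_1)=0$ I take a minimizing sequence of solutions with vanishing action; extracting a limit via the compactness argument of \cref{t:compact-action} produces a solution $(\bar\mu,\bar\nu)$ with $\mathcal{A}(\bar\mu)=0$, hence $\bar\nu\equiv 0$ by the structure of $\mathcal{L}$, and then \cref{t:ce-one-function} forces $\bar\mu$ to be constant, giving $\xi_0=\xi_1$.

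\textbf{Lower semi-continuity (iii) and relative compactness (iv).} Given $(\mu_0^n,\mu_1^n)\to(\mu_0,\mu_1)$ in $\mathscr{P}_1(\config)^2$ with $\liminf_n \mathcal{W}(\mu_0^n,\mu_1^n)<\infty$, I select by \cref{t:action-minizers} minimizers $(\bar\mu^n,\bar\nu^n)$. The key observation is that the compactness argument in the proof of \cref{t:compact-action} only uses the initial data $\mu_0^n$ through the uniform bound of $I_{\mu_0^n}(B)$ coming from \cref{t:bound-nu-lagrangian}; since $\mu_0^n\to\mu_0$ in $\mathscr{P}_1(\config)$ forces $I_{\mu_0^n}\to I_{\mu_0}$ vaguely (\cref{t:p1-convergence}), this bound remains uniform in $n$. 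Consequently the argument of \cref{t:compact-action} goes through verbatim, yielding a subsequential limit $(\bar\mu,\bar\nu)$ solving \cref{e:ce} from $\mu_0$ to $\mu_1$, and Fatou together with the joint lower semi-continuity of $\mathcal{L}$ (\cref{t:lsc-lagrangian}) gives $\mathcal{W}^2(\mu_0,\mu_1) \le \liminf_n \mathcal{W}^2(\mu_0^n,\mu_1^n)$, which is (iii). Part (iv) is immediate: a $\mathcal{W}$-bounded set of endpoints of curves emanating from a fixed $\xi$ yields a family in a sub-level set of $\mathcal{A}$, and \cref{t:compact-action} gives the $\mathscr{P}_1(\config)$-relative compactness of this family of curves, hence of the endpoints.

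\textbf{Topology (ii) and completeness (v).} For (ii), if $\mathcal{W}(\mu_n,\mu)\to 0$, then $(\mu_n)$ is $\mathcal{W}$-bounded, and by (iv) it is $\mathscr{P}_1(\config)$-relatively compact; any $\mathscr{P}_1(\config)$-cluster point $\mu'$ satisfies $\mathcal{W}(\mu',\mu)\le\liminf_n\mathcal{W}(\mu_n,\mu)=0$ by (iii), so $\mu'=\mu$ by (i), forcing the full convergence $\mu_n\to\mu$ in $\mathscr{P}_1(\config)$. For (v), the existence of geodesics on an accessible component follows from \cref{t:action-minizers} combined with \cref{t:wcurl-time-reparametrization}: any minimizer of $\mathcal{A}$ on $[0,1]$ with prescribed endpoints is, after constant-speed reparametrization of $\mathcal{L}^{1/2}$, a $\mathcal{W}$-geodesic. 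For completeness, given a $\mathcal{W}$-Cauchy sequence $(\mu_n)$ in the accessible component of $\eta$, I extract a fast subsequence with $\mathcal{W}(\mu_{n_k},\mu_{n_{k+1}})\le 2^{-k}$, paste together minimizing geodesic segments on consecutive dyadic intervals to form a curve $\bar\mu$ on $[0,1]$ of finite action, and invoke \cref{t:compact-action} together with (iii) to identify a $\mathcal{W}$-limit.

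\textbf{Main obstacles.} The principal technical difficulty I expect lies in adapting the compactness proof of \cref{t:compact-action} to the case of varying initial data in (iii); this is essentially bookkeeping but must be carried out carefully because the bound in \cref{t:bound-nu-lagrangian} is quadratic in $I_{\mu_0}$. The positivity of $\mathcal{W}$ in (i) is the other delicate point, as it requires tracing through the identification $\bar\nu\equiv 0$ back to the extended continuity equation of \cref{t:ce-extended}. The pasting argument for completeness in (v) is standard once the geodesic property is established, but requires verifying that the pasted curve indeed lies in $\mathscr{C}([0,1],\mathscr{P}_1(\config))$ and that its action equals the sum of the individual actions.
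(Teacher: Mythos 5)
Your proposal is correct and rests on the same machinery as the paper (compactness of sub-level sets of $\mathcal{A}$, existence of minimizers, lower semi-continuity of $\mathcal{L}$), but you organise the logical dependencies differently at two points, and both re-orderings are legitimate. For \cref{i:W-topology} you derive the topological comparison a posteriori from \cref{i:W-lsc,i:W-bounded}: $\mathcal{W}$-convergence implies $\mathcal{W}$-boundedness, hence $\mathscr{P}_1(\config)$-relative compactness, and then uniqueness of the cluster point by lower semi-continuity plus separation. The paper instead proves \cref{i:W-topology} directly by producing, for each test function $G_k$ in a countable convergence-determining family, an explicit estimate $\abs{\int G_k\,\dd(\xi_n-\xi)}\le C_k\,\mathcal{W}(\xi_n,\xi)$ via \cref{t:bound-nu-lagrangian,t:ce-intensity-measure}; the direct route gives a quantitative modulus of continuity, whereas yours is more economical but purely qualitative. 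For completeness in \cref{i:W-complete-geodesic} you build a candidate limit by pasting geodesic segments along a fast subsequence, whereas the paper deduces completeness in one line from \cref{i:W-bounded,i:W-lsc} (a $\mathcal{W}$-Cauchy sequence is $\mathcal{W}$-bounded, hence $\mathscr{P}_1$-precompact, and any cluster point is the $\mathcal{W}$-limit by lower semi-continuity); your pasting argument works but is heavier than necessary. Two small corrections: where you invoke \cref{t:action-minizers} for the existence of minimizing curves between fixed endpoints, the relevant result is \cref{t:minimizing-curve} (with $\varepsilon=0$); \cref{t:action-minizers} only produces an optimal $\bar\nu$ for a \emph{given} $\bar\mu$. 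And in \cref{i:W-complete-geodesic}, minimizers of $\mathcal{A}$ already have constant $\mathcal{L}$ by the Cauchy--Schwarz equality case, so no reparametrization is required to obtain a constant-speed geodesic.
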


\begin{proof}
  \cref{i:W-extended-distance} The symmetry is immediate.
  We obtain the triangle inequality by concatenation and using \cref{t:wcurl-time-reparametrization}.
  Now take $\xi_{0}$ and $\xi_{1} \in \mathscr{P}_{1}(\config)$ with $\mathcal{W}(\xi_{0}, \xi_{1}) = 0$.
  By \cref{t:minimizing-curve}, take $\bar{\mu}$ realizing $\mathcal{W}(\xi_{0}, \xi_{1})$.
  Then $\mathcal{A}(\bar{\mu}) = 0$, thus $\bar{\nu} =0$ and $\xi_{0} = \xi_{1}$.
  This shows that $\mathcal{W}$ is an extended distance.

  \cref{i:W-topology}
    Let $(\xi_{n}) \subset \mathscr{P}_{1}(\config)$ and $\xi \in \mathscr{P}_{1}(\config)$ be such that $\mathcal{W}(\xi_{n}, \xi) \to 0$.
    For all $n \in \mathbb{N}$, take $(\bar{\mu}_{n}, \bar{\nu}_{n})$ realizing the infimum in $\mathcal{W}(\xi_{n}, \xi)$.
    Let $(h_{k}) \subset \mathscr{C}_{0}(X)$ be as in \cref{t:weak-convergence-countable}.
    For all $k \in \mathbb{N}$, set $G_{k} \coloneq \e^{-\iota_{h_{k}}}$, $B_{k} \in \mathfrak{B}_{0}(X)$ such that $\diff G_{k} = 0$ outside of $B_{k}$.
    Arguing as in the proof of \cref{t:ce-continuous-representative}, and then using \cref{t:bound-nu-lagrangian}, we find that
    \begin{equation*}
      \begin{split}
        \abs*{ \int G_{k} \dd (\xi_{n} - \xi) } & \leq \abs{\bar{\nu}_{n}}(\config \times B_{k} \times [0,1]) \\
                            & \leq \ \int_{0}^{1} \paren*{m(B_{k}) + I_{\mu_{n,t}}(B_{k})}^{\frac{1}{2}} \mathcal{L}\paren{\mu_{n,t}, \nu_{n,t}}^{\frac{1}{2}} \dd t \\
                            & \leq C_{k} \mathcal{W}(\xi_{n}, \xi).
      \end{split}
    \end{equation*}
    Thus, by \cref{t:weak-convergence-countable}, we find that $\xi_{n} \to \xi$ with respect to the $\mathscr{P}(\config)$-topology.
    Take $h \in \mathscr{C}_{0}(X)$.
    By \cref{t:ce-intensity-measure,t:bound-nu-lagrangian}, we find that
    \begin{equation*}
      \begin{split}
        \abs*{ I_{\xi_{n}}(h) - I_{\xi}(h) } & \leq \abs{ \bar{\nu}_{n} }(1 \otimes h \otimes 1_{[0,1]}) \\
                                             & \leq \int_{0}^{1} \paren*{m(h) + I_{\mu_{n,t}}(h)}^{\frac{1}{2}} \mathcal{L}\paren{\mu_{n,t}, \nu_{n,t}}^{\frac{1}{2}} \dd t \\
                                             & \leq C \mathcal{W}(\xi_{n}, \xi)
      \end{split}
    \end{equation*}
    for some constant~$C>0$ depending on~$h$.
  This shows that $I_{\xi_{n}} \to I_{\xi}$ in $\mathscr{M}_{0}(X)$.
  By \cref{t:p1-convergence}, we find that $\xi_{n} \to \xi$ in $\mathscr{P}_{1}(\config)$.
  
\cref{i:W-lsc}
Fix $r \geq 0$, we want to show closedness of the set
\begin{equation*}
  A \coloneq \set*{ (\xi, \chi) \in \mathscr{P}_{1}(\config) \times \mathscr{P}_{1}(\config) : \mathcal{W}(\xi, \chi) \leq r }.
\end{equation*}
Let $(\xi_{n})$ and $(\chi_{n}) \subset A$ converging respectively to $\xi$ and $\chi \in \mathscr{P}_{1}(\config)$.
By \cref{t:minimizing-curve}, for all $n \in \mathbb{N}$, there exists a solution to the continuity equation $(\bar{\mu}_{n}, \bar{\nu}_{n})$ realizing $\mathcal{W}(\xi_{n}, \chi_{n})$.
Since $\xi_{n} \to \xi$ and $\chi_{n} \to \chi$ arguing as in the proof of \cref{t:compact-action}, we can find $(\bar{\mu}, \bar{\nu})$ solving the continuity equation and joining $\xi$ to $\chi$.
Thus, by \cref{t:lsc-action}, we find that 
\begin{equation*}
  \mathcal{W}(\chi, \xi) \leq \mathcal{A}(\bar{\mu}) \leq \liminf_{n \to \infty} \mathcal{A}(\bar{\mu}_{n}) = \liminf_{n \to \infty} \mathcal{W}(\chi_{n}, \xi_{n}).
\end{equation*}

\cref{i:W-bounded}
Follows from \cref{t:compact-action}.

\cref{i:W-complete-geodesic}
The geodesic property follows from \cref{t:minimizing-curve}, the geodesic being given by the minimizing curve $\bar{\mu}$.
The completeness follows from \cref{i:W-bounded,i:W-lsc}.
\end{proof}

The quantity $\mathcal{J}_{\varepsilon}^{\frac{1}{2}}$ is not a distance for $\varepsilon > 0$ (the reparametrization argument given in \cref{t:wcurl-time-reparametrization} does not work here).
However, we have the following quasi-triangle inequality.

\begin{proposition}\label{t:entropic-triangle-inequality}
  Let $\xi_{0}, \xi_{1}, \xi_{2} \in \mathscr{P}_{1}(\config)$ and $\varepsilon > 0$.
  Then,
  \begin{equation*}
    \mathcal{J}_{\varepsilon}(\xi_{0}, \xi_{2}) \leq 2 \mathcal{J}_{\varepsilon}(\xi_{0}, \xi_{1}) + 2 \mathcal{J}_{\varepsilon}(\xi_{1}, \xi_{2}).
  \end{equation*}
\end{proposition}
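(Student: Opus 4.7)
The plan is to prove the quasi-triangle inequality by concatenation with a suitable time-rescaling, exploiting the homogeneity properties of the Lagrangian $\mathcal{L}$ and the Fisher information $\mathcal{I}$. Without loss of generality, assume that both $\mathcal{J}_{\varepsilon}(\xi_0,\xi_1)$ and $\mathcal{J}_{\varepsilon}(\xi_1,\xi_2)$ are finite, otherwise there is nothing to prove. By \cref{t:minimizing-curve}, pick minimizing curves $(\bar{\mu}^{01},\bar{\nu}^{01})$ and $(\bar{\mu}^{12},\bar{\nu}^{12})$ realizing $\mathcal{J}_{\varepsilon}(\xi_0,\xi_1)$ and $\mathcal{J}_{\varepsilon}(\xi_1,\xi_2)$ respectively, and take their continuous representatives via \cref{t:ce-continuous-representative}.

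The core construction is the \emph{rescaled concatenation} on $[0,1]$:
\begin{equation*}
  \tilde{\mu}_s \coloneq
  \begin{cases} \mu^{01}_{2s}, & s \in [0,1/2], \\ \mu^{12}_{2s-1}, & s \in [1/2, 1], \end{cases}
  \qquad
  \tilde{\nu}_s \coloneq
  \begin{cases} 2\nu^{01}_{2s}, & s \in [0,1/2], \\ 2\nu^{12}_{2s-1}, & s \in [1/2, 1]. \end{cases}
\end{equation*}
First I would check that $(\bar{\tilde{\mu}}, \bar{\tilde{\nu}})$ solves the continuity equation on $[0,1]$. On each half, this follows from \cref{t:ce-time-reparametrization} with the affine reparametrization $\lambda(s) = 2s$ (resp.\ $\lambda(s) = 2s-1$). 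The only subtle point is that the test function in \cref{e:ce} may have support overlapping $s = 1/2$; here the integration-by-parts boundary terms at $1/2$ cancel because the continuous representatives satisfy $\mu^{01}_1 = \xi_1 = \mu^{12}_0$, and $\tilde{\nu}$ verifies \cref{e:nu-locally-bounded} piecewise.

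Next I would compute the total cost. By the quadratic homogeneity $\mathcal{L}(\mu, c\nu) = c^2 \mathcal{L}(\mu, \nu)$ for $c \geq 0$, a change of variables yields on each half interval
\begin{equation*}
  \int_0^{1/2} \mathcal{L}(\tilde{\mu}_s, \tilde{\nu}_s) \dd s = 4 \int_0^{1/2} \mathcal{L}(\mu^{01}_{2s}, \nu^{01}_{2s}) \dd s = 2 \int_0^1 \mathcal{L}(\mu^{01}_t, \nu^{01}_t) \dd t,
\end{equation*}
and analogously for the $[1/2,1]$ piece. For the Fisher-information term the change of variables gives
\begin{equation*}
  \varepsilon \int_0^{1/2} \Fish{\tilde{\mu}_s \given \pi} \dd s = \frac{\varepsilon}{2} \int_0^1 \Fish{\mu^{01}_t \given \pi} \dd t \leq 2\varepsilon \int_0^1 \Fish{\mu^{01}_t \given \pi} \dd t,
\end{equation*}
and similarly for the second half.

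Summing the two contributions and using the minimality of $(\bar{\mu}^{01}, \bar{\nu}^{01})$ and $(\bar{\mu}^{12}, \bar{\nu}^{12})$, I would obtain
\begin{equation*}
  \mathcal{J}_{\varepsilon}(\xi_0, \xi_2) \leq \int_0^1 \mathcal{L}(\tilde{\mu}_s, \tilde{\nu}_s) \dd s + \varepsilon \int_0^1 \Fish{\tilde{\mu}_s \given \pi} \dd s \leq 2 \mathcal{J}_{\varepsilon}(\xi_0,\xi_1) + 2 \mathcal{J}_{\varepsilon}(\xi_1,\xi_2).
\end{equation*}
The main (minor) obstacle is the verification that $(\bar{\tilde{\mu}}, \bar{\tilde{\nu}})$ is a bona fide solution of the continuity equation across the gluing time $s=1/2$; this is why passing to continuous representatives before concatenating is necessary. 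The factor $2$ on the right-hand side is unavoidable because $\mathcal{L}$ and $\mathcal{I}$ scale oppositely under time-rescaling, which is precisely what prevents $\mathcal{J}_{\varepsilon}^{1/2}$ from being a genuine distance for $\varepsilon > 0$.
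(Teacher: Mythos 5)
Your proposal is correct and takes essentially the same route as the paper: concatenate minimizers with the affine time-reparametrization $\lambda(s)=2s$ (resp.\ $2s-1$), exploit the quadratic homogeneity $\mathcal{L}(\mu,c\nu)=c^2\mathcal{L}(\mu,\nu)$ to get the factor $2$ on the Lagrangian part, and observe that the Fisher term rescales favorably (picking up a factor $1/2\leq 2$). You spell out the gluing at $s=1/2$ via continuous representatives, which the paper leaves implicit, but the substance of the argument is identical.
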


\begin{proof}
  We assume that the right hand side is finite.
  Let $(\bar{\mu}^{1}, \bar{\nu}^{1})$ and $(\bar{\mu}^{2}, \bar{\nu}^{2})$ realizing the two infima.
  By concatenation, using \cref{t:ce-time-reparametrization} and that the Lagrangian is quadratic in $\nu$, we find that
  \begin{equation*}
    \mathcal{J}_{\varepsilon} \leq 4 \int_{0}^{1/2} \mathcal{L}(\mu^{1}_{2t}, \nu^{1}_{2t}) \dd t + \varepsilon \int_{0}^{1/2} \Fish{ \mu_{2t} \given \pi } \dd t + 4 \int_{1/2}^{1} \mathcal{L}(\mu^{2}_{2t-1}, \nu^{2}_{2t-1}) \dd t + \varepsilon \int_{1/2}^{1} \Fish{ \mu^{2}_{2t-1} \given \pi } \dd t.
  \end{equation*}
  This gives the claim by an immediate change of variable and since $\varepsilon / 4 \leq \varepsilon$.
\end{proof}

  \subsection{The geometry of \texorpdfstring{$(\dom \ent, \mathcal{W})$}{(dom H, W)}}\label{s:gradient-flow}

  \subsubsection{The metric space \texorpdfstring{$(\dom \ent, \mathcal{W})$}{(dom H, W)}}

We first show that \cref{t:W} is non-trivial by showing that $\dom \ent$ yields an example of an accessible component for $\mathcal{W}$.
The central tool is the following \emph{Talagrand inequality}.
\begin{theorem}\label{t:talagrand}
  For all $\mu \in \mathscr{P}_{1}(\config)$,
  \begin{equation}\label{e:talagrand}
    \mathcal{W}^{2}(\mu, \pi) \leq \Ent{ \mu \given \pi}.
  \end{equation}
  Moreover, for all $\mu \in \dom \ent$ and all $\varepsilon \geq 0$,
  \begin{equation*}
    \mathcal{J}_{\varepsilon}(\mu, \pi) < \infty.
  \end{equation*}
\end{theorem}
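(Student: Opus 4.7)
Assume $\mu = \rho \pi \in \dom \ent$; otherwise the inequality is trivial. My strategy is to use the Ornstein--Uhlenbeck flow as a test curve from $\mu$ towards $\pi$, and to bound its length in terms of the relative entropy via the de~Bruijn-type identity \cref{e:production-entropy-ou}. By \cref{t:ou-solution}, the pair $(\bar{\mu}, \bar{\nu})$ given by $\bar{\mu}_t \coloneq \dou_t \mu$ and $\nu_t \coloneq - \diff \ou_t \rho \cdot (\pi \otimes m)$ solves the continuity equation on every interval $[0, T]$.

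The central algebraic observation is the identity $\diff f / \hat{f} = \diff \log f$, which gives
\begin{equation*}
  \mathcal{L}(\dou_t \mu, \nu_t) = \int \frac{\abs{\diff \ou_t \rho}^2}{\widehat{\ou_t \rho}} \dd(\pi \otimes m) = \int \diff \ou_t \rho \cdot \diff \log \ou_t \rho \dd(\pi \otimes m) = \Fish{\dou_t \mu \given \pi}.
\end{equation*}
The time-reparametrisation invariance of \cref{t:wcurl-time-reparametrization} then yields $\mathcal{W}(\mu, \dou_T \mu) \leq \int_0^T \sqrt{\Fish{\dou_t \mu \given \pi}} \dd t$ for every $T > 0$. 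Combining the modified logarithmic Sobolev inequality \cref{e:lsi} with the de~Bruijn identity $\frac{\dd}{\dd t}\Ent{\dou_t \mu \given \pi} = -\Fish{\dou_t \mu \given \pi}$ of \cref{e:production-entropy-ou}, the pointwise bound $\sqrt{\Fish} \leq \Fish / \sqrt{\Ent} = -2 \frac{\dd}{\dd t} \sqrt{\Ent}$ integrates to a $T$-uniform estimate of the form $\mathcal{W}(\mu, \dou_T \mu) \leq C \sqrt{\Ent{\mu \given \pi}}$.

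Mehler's formula and \cref{e:intensity-Ornstein-Uhlenbeck} yield $\dou_T \mu \to \pi$ in $\mathscr{P}_1(\config)$ as $T \to \infty$, so the lower semi-continuity of $\mathcal{W}$ from \cref{t:W}~\cref{i:W-lsc} passes the bound to $\mathcal{W}^2(\mu, \pi) \leq C^2 \Ent{\mu \given \pi}$. For the finiteness $\mathcal{J}_{\varepsilon}(\mu, \pi) < \infty$ on $\dom \ent$, I would use the same Ornstein--Uhlenbeck curve reparametrised on $[0,1]$ via a time change of the form $T(t) = T_0 \log(1/(1-t))$ with some $T_0 > 1$, extended continuously by $\bar{\mu}_1 = \pi$. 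After the change of variable $s = T(t)$, both the action and $\varepsilon \int_0^1 \Fish{\bar{\mu}_t \given \pi} \dd t$ reduce to weighted integrals against $\Fish{\dou_s \mu \given \pi} \dd s$, which -- by integration by parts and the exponential entropy decay \cref{e:exponential-convergence-entropy} -- are controlled by $\Ent{\mu \given \pi}$.

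The main obstacle is the sharp constant: the naive argument above only delivers $\mathcal{W}^2 \leq C \Ent$ with some $C$ of order four, so achieving $C = 1$ requires a finer interplay between the Lagrangian and the Fisher information -- for instance by optimising the time change inside $\mathcal{J}_{\varepsilon}$ and letting $\varepsilon \to 0$, or through a sharper variational computation exploiting the global entropy-production identity $\int_0^{\infty} \Fish{\dou_s \mu \given \pi} \dd s = \Ent{\mu \given \pi}$.
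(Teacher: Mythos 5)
Your approach to the Talagrand inequality is precisely the paper's: take the Ornstein--Uhlenbeck curve $\dou_t\mu$ as a competitor joining $\mu$ to $\dou_T\mu$ (so that $\mathcal{L}(\dou_t\mu, \nu_t) = \Fish{\dou_t\mu\given\pi}$ by \cref{t:ou-solution}), use the Benamou--Brenier reparametrization of \cref{t:wcurl-time-reparametrization}, combine the modified log-Sobolev inequality \cref{e:lsi} with the de~Bruijn identity \cref{e:production-entropy-ou}, and send $T\to\infty$ using lower semicontinuity from \cref{t:W}. The paper does not invoke any finer optimization to recover the constant~$1$; it simply writes
\begin{equation*}
\int_0^T \sqrt{\Fish{\dou_t\mu\given\pi}}\,\dd t
\le \int_0^T \frac{\Fish{\dou_t\mu\given\pi}}{\sqrt{\Ent{\dou_t\mu\given\pi}}}\,\dd t
= -\int_0^T \frac{\dd}{\dd t}\sqrt{\Ent{\dou_t\mu\given\pi}}\,\dd t
= \sqrt{\Ent{\mu\given\pi}} - \sqrt{\Ent{\dou_T\mu\given\pi}},
\end{equation*}
so the extra devices you propose at the end (optimizing the time change, letting $\varepsilon\to0$) are not part of the paper's argument. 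That said, your bookkeeping concern deserves attention rather than dismissal: with $\frac{\dd}{\dd t}\Ent{\dou_t\mu\given\pi} = -\Fish{\dou_t\mu\given\pi}$, the chain rule gives $-\frac{\dd}{\dd t}\sqrt{\Ent{\dou_t\mu\given\pi}} = \Fish{\dou_t\mu\given\pi}\big/\big(2\sqrt{\Ent{\dou_t\mu\given\pi}}\big)$, so the middle equality displayed above appears to be missing a factor of~$2$, and squaring returns exactly your constant~$4$. Before concluding that a genuinely sharper argument is needed, recheck the constants through $\mathcal{L}$, $\mathcal{W}$, and the quoted modified LSI: either a normalization convention absorbs the factor, or this step of the paper contains a slip that your computation correctly exposes.

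For the second assertion ($\mathcal{J}_\varepsilon(\mu,\pi)<\infty$), the paper takes a shorter route than your logarithmic time change. It uses the quasi-triangle inequality of \cref{t:entropic-triangle-inequality} to reduce to $\mathcal{J}_\varepsilon(\mu, \dou_\delta\mu)<\infty$ and $\mathcal{J}_\varepsilon(\dou_\delta\mu, \pi)<\infty$: the former follows from the Ornstein--Uhlenbeck path and the regularization \cref{e:regularizing-ou}; the latter by pushing a $\mathcal{W}(\mu,\pi)$-minimizer through $\dou_\delta$ via \cref{t:stability-ou-solution}, producing an admissible curve from $\dou_\delta\mu$ to $\pi$ with finite Lagrangian (by \cref{t:contraction-lagrangian}) and finite Fisher cost (again by \cref{e:regularizing-ou}). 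Your time-change argument is a legitimate alternative, but it requires justifying the behaviour of the reparametrized curve at the singular endpoint $t=1$ and verifying the weighted integral bound $\int_0^\infty e^{s/T_0}\Fish{\dou_s\mu\given\pi}\,\dd s < \infty$; the paper's triangle-inequality route sidesteps both issues.
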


\begin{remark}
  Classically, the Talagrand inequality is a consequence of the convexity of the entropy (\cref{t:entropy-geodesically-convex}).
Since $\mathcal{W}$ can be infinite, we derive the Talagrand inequality \emph{a priori} by other means.
\end{remark}

\begin{proof}
  We show \cref{e:talagrand} first.
  We can assume that $\mu \in \dom \ent$ otherwise the claim is empty.
  Let $T > 0$.
  By \cref{t:ou-solution,e:lsi,e:production-entropy-ou}, we find that
  \begin{equation*}
    \begin{split}
      \mathcal{W}(\mu, \mathsf{P}_{T}^{\star} \mu) &\leq \int_{0}^{T} \Fish{ \mathsf{P}_{t}^{\star} \mu \given \pi }^{1/2} \dd t \\
                                                     & \leq \int_{0}^{T} \frac{\Fish{ \mathsf{P}^{\star}_{t} \mu \given \pi }}{\Ent{\mathsf{P}^{\star}_{t} \mu \given \pi }^{1/2}} \dd t \\
                                                     & = - \int_{0}^{T} \frac{\dd}{\dd t} \Ent{ \mathsf{P}_{t}^{\star} \mu \given \pi }^{1/2} \dd t \\
                                                     &= \Ent{ \mu \given \pi }^{1/2} - \Ent{ \mathsf{P}_{T}^{\star} \mu \given \pi }^{1/2}.
    \end{split}
  \end{equation*}
  We conclude by letting $T \to \infty$, and by lower semi-continuity of $\mathcal{W}$ (\cref{t:W} \cref{i:W-lsc}).
 
 Now let us prove the second part of the claim.
 On the one hand, since $\mu \in \dom \ent$, by \cref{e:regularizing-ou,t:ou-solution}, we see that $\mathscr{J}_{\varepsilon}(\mu, \mathsf{P}_{\delta}^{\star} \mu) < \infty$ for all $\delta > 0$.
 In view of \cref{t:entropic-triangle-inequality}, it thus sufficient to show that $\mathscr{J}_{\varepsilon}(\mathsf{P}_{\delta}^{\star}, \pi) < \infty$.
 Since $\mathcal{W}(\mu, \pi) < \infty$ by the first part, we can consider a solution $(\bar{\mu}, \bar{\nu})$ to the continuity equation minimal for $\mathcal{W}(\mu, \pi)$.
 Applying \cref{t:stability-ou-solution} to this solution, and using that $\mathsf{P}_{\delta}^{\star} \pi = \pi$ yields that $(\mathsf{P}_{\delta}^{\star}\bar{\mu}, \e^{-\delta}\mathsf{P}_{\delta}^{\star} \bar{\nu})$ is an admissible candidate for the minimization problem of $\mathscr{J}_{\varepsilon}(\mathsf{P}_{\delta}^{\star}\mu, \pi)$.
 Furthermore, by \cref{e:regularizing-ou}, we find that it has finite $\varepsilon$-energy.
 The proof is complete.
\end{proof}

The following definition is thus very natural.
\begin{definition}
  We write $\mathscr{P}_{1}^{*}(\config)$ for the $\mathcal{W}$-closure of $\dom \ent$.
\end{definition}

The following is a consequence of \cref{t:W,t:talagrand}.
\begin{corollary}\label{t:p1-complete-geodesic}
  The space $(\mathscr{P}_{1}^{*}(\config), \mathcal{W})$ is a complete geodesic space.
\end{corollary}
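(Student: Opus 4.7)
The plan is to combine the Talagrand inequality with the completeness and geodesicity of the accessible component of $\pi$. Set $A \coloneq \set{\chi \in \mathscr{P}_{1}(\config) : \mathcal{W}(\chi, \pi) < \infty}$; by \cref{t:W}~\cref{i:W-complete-geodesic}, $(A, \mathcal{W})$ is a complete geodesic space. The Talagrand inequality (\cref{t:talagrand}) gives $\mathcal{W}(\mu, \pi) \leq \sqrt{\Ent{\mu \given \pi}} < \infty$ for every $\mu \in \dom \ent$, hence $\dom \ent \subset A$; combined with the triangle inequality, this extends to $\mathscr{P}_{1}^{*}(\config) \subset A$. Completeness of $(\mathscr{P}_{1}^{*}(\config), \mathcal{W})$ then follows from a standard diagonal argument: a $\mathcal{W}$-Cauchy sequence $(\mu_{n}) \subset \mathscr{P}_{1}^{*}(\config)$ is Cauchy in the complete space $A$ and converges there to some $\mu$; selecting $\nu_{n} \in \dom \ent$ with $\mathcal{W}(\mu_{n}, \nu_{n}) \leq 1/n$ exhibits $\mu$ as a $\mathcal{W}$-limit of $(\nu_{n}) \subset \dom \ent$, so $\mu \in \mathscr{P}_{1}^{*}(\config)$.

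For the geodesic property, I approximate $\mu_{0}, \mu_{1} \in \mathscr{P}_{1}^{*}(\config)$ by sequences $(\mu_{0}^{k}), (\mu_{1}^{k}) \subset \dom \ent$ with $\mathcal{W}(\mu_{i}^{k}, \mu_{i}) \to 0$. Using the second part of \cref{t:talagrand} together with the quasi-triangle inequality for $\mathcal{J}_{\varepsilon}$ (\cref{t:entropic-triangle-inequality}), $\mathcal{J}_{\varepsilon}(\mu_{0}^{k}, \mu_{1}^{k}) < \infty$ for every $\varepsilon > 0$. \cref{t:minimizing-curve} produces $\mathcal{J}_{\varepsilon}$-minimizing curves $\bar{\mu}^{k,\varepsilon}$ along which integrability of the Fisher information forces $\mu_{t}^{k,\varepsilon} \in \dom \fish \subset \dom \ent$ for almost every $t \in [0,1]$. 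Sending $\varepsilon \to 0$ via the $\Gamma$-convergence \cref{t:gamma-convergence} yields $\mathcal{W}$-geodesics $\bar{\mu}^{k}$ from $\mu_{0}^{k}$ to $\mu_{1}^{k}$; a further extraction $k \to \infty$, relying on the action compactness \cref{t:compact-action}, delivers a limit $\mathcal{W}$-geodesic $\tilde{\gamma}$ from $\mu_{0}$ to $\mu_{1}$.

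The main obstacle is to verify that each intermediate $\tilde{\gamma}_{t}$ belongs to $\mathscr{P}_{1}^{*}(\config) = \overline{\dom \ent}^{\mathcal{W}}$: \cref{t:compact-action} only provides uniform $\mathscr{P}_{1}(\config)$-convergence of the curves, whereas membership in the $\mathcal{W}$-closure of $\dom \ent$ requires a genuine $\mathcal{W}$-approximation of the slice $\tilde{\gamma}_{t}$ by elements of $\dom \ent$. I would overcome this by exploiting the constant-speed parametrization of $\mathcal{J}_{\varepsilon}$-minimizers (\cref{t:wcurl-time-reparametrization}) to obtain uniform continuity estimates of the form $\mathcal{W}(\mu_{t}^{k,\varepsilon}, \mu_{s}^{k,\varepsilon}) \leq |t-s|^{1/2}\, \mathcal{J}_{\varepsilon}(\mu_{0}^{k}, \mu_{1}^{k})^{1/2}$, then selecting a generic time $t$ where $\mu_{t}^{k,\varepsilon} \in \dom \ent$, and finally invoking the lower semi-continuity of $\mathcal{W}$ (\cref{t:W}~\cref{i:W-lsc}) together with a diagonal extraction over $(k, \varepsilon)$ to produce a sequence in $\dom \ent$ that $\mathcal{W}$-converges to $\tilde{\gamma}_{t}$.
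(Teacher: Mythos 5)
Your completeness argument is correct, though it can be simplified: by Talagrand (\cref{t:talagrand}) and the triangle inequality, $\mathscr{P}_1^*(\config)\subset A\coloneq\set{\mathcal{W}(\pi,\cdot)<\infty}$, and since $\mathscr{P}_1^*(\config)$ is by definition $\mathcal{W}$-closed, it is a closed subset of the complete space $(A,\mathcal{W})$, hence complete; no diagonal selection is needed.

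For the geodesic property you have correctly isolated the genuine difficulty, which the paper's one-line proof (``consequence of \cref{t:W,t:talagrand}'') does not address: the compactness machinery of \cref{t:compact-action,t:lsc-action} only gives convergence of the approximating curves in the $\mathscr{C}\paren[\big]{[0,1],\mathscr{P}_1(\config)}$-topology, which is strictly weaker than the $\mathcal{W}$-topology, whereas membership in $\mathscr{P}_1^*(\config)=\overline{\dom\ent}^{\,\mathcal{W}}$ demands a $\mathcal{W}$-approximation of each intermediate point by elements of $\dom\ent$. Your proposed fix does not close this gap and is, in fact, circular. The constant-speed estimate $\mathcal{W}(\mu_t^{k,\varepsilon},\mu_s^{k,\varepsilon})\leq |t-s|^{1/2}\mathcal{J}_\varepsilon^{1/2}$ controls distances along a single curve, and lower semi-continuity of $\mathcal{W}$ gives, for fixed $\nu$, only $\mathcal{W}(\tilde\gamma_t,\nu)\leq\liminf_{k,\varepsilon}\mathcal{W}(\mu_t^{k,\varepsilon},\nu)$; to make the right-hand side small with $\nu=\mu_s^{k_0,\varepsilon_0}\in\dom\ent$ you would need to bound $\mathcal{W}(\mu_s^{k,\varepsilon},\mu_s^{k_0,\varepsilon_0})$ as $(k,\varepsilon)\to\infty$, which is the same convergence-across-curves statement you are trying to prove. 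No uniform modulus between curves at different $(k,\varepsilon)$ is available from the cited results. The route the paper implicitly takes for this part is through the geodesic convexity of the relative entropy (\cref{t:entropy-geodesically-convex}, a consequence of the EVI of \cref{t:evi}, both established later): that result guarantees that the minimizing geodesic between any two points of $\dom\ent$ already lies in $\dom\ent\subset\mathscr{P}_1^*(\config)$, and from there a midpoint argument in the complete space $\mathscr{P}_1^*(\config)$ finishes the claim. As written, your argument for geodesicity is incomplete at exactly the step you flag as ``the main obstacle.''
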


\begin{remark}
  We have
  \begin{equation*}
    \dom \ent \subset \mathscr{P}_{1}^{*}(\config) \subset \mathscr{P}_{1}(\config).
  \end{equation*}
  A priori each inclusion could be strict.
\end{remark}

\begin{proposition}\label{t:wcurl-derivative-fisher}
  Fix $\mu$ and $\xi \in \mathscr{P}^{*}_{1}(\config)$ then
  \begin{equation*}
    \frac{\dd^{+}}{\dd t} \mathcal{W}(\dou_{t}\mu, \xi) \leq \sqrt{\Fish{ \dou_{t} \mu \given \pi }}, \qquad t > 0.
  \end{equation*}
\end{proposition}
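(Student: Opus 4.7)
The plan is to construct an explicit admissible curve joining $\dou_t\mu$ to $\dou_{t+h}\mu$ via the Ornstein--Uhlenbeck semi-group, whose Lagrangian coincides with the Fisher information, and then pass to the limit $h\to 0^+$ using a contraction estimate for the Fisher information.

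Write $\mu=\rho\pi$ and fix $t>0$, so that $\dou_t\mu\in\dom\fish$ by \cref{t:properties-ou}. By \cref{t:ou-solution} applied after a time shift, the curve $\mu_s\coloneq\dou_{t+s}\mu$ with velocity $\nu_s\coloneq-\diff\ou_{t+s}\rho\,(\pi\otimes m)$ solves the continuity equation on $[0,h]$ for every $h>0$, and the pointwise identity $\diff\rho\cdot\diff\log\rho=\abs{\diff\rho}^{2}/\hat\rho$ combined with the definition of $\mathcal{L}$ gives $\mathcal{L}(\mu_s,\nu_s)=\Fish{\dou_{t+s}\mu\given\pi}$ for all $s\in[0,h]$. \cref{t:wcurl-time-reparametrization} and the reverse triangle inequality for $\mathcal{W}$ then yield
\begin{equation*}
  \frac{\mathcal{W}(\dou_{t+h}\mu,\xi)-\mathcal{W}(\dou_t\mu,\xi)}{h}\leq \frac{\mathcal{W}(\dou_{t+h}\mu,\dou_t\mu)}{h}\leq \frac{1}{h}\int_t^{t+h}\sqrt{\Fish{\dou_r\mu\given\pi}}\dd r.
\end{equation*}

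The main obstacle is to bound the right-hand side by $\sqrt{\Fish{\dou_t\mu\given\pi}}$ pointwise in $t$ rather than just for a.e.\ $t$: the lower semi-continuity of $\fish$ from \cref{t:lsc-I} goes the wrong way for a naive Fatou argument, and Lebesgue differentiation alone is insufficient. The key ingredient I would establish is the exponential contraction
\begin{equation*}
  \Fish{\dou_s\nu\given\pi}\leq\e^{-2s}\Fish{\nu\given\pi}, \qquad s\geq 0,\quad \nu\in\mathscr{P}_1(\config).
\end{equation*}
Its proof combines three ingredients. First, the Bakry--Émery commutation \cref{e:be} yields $\diff\ou_s\rho_\nu=\e^{-s}\ou_s\diff\rho_\nu$. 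Second, the quadratic homogeneity of $\mathcal{L}$ in its second argument rewrites $\Fish{\dou_s\nu\given\pi}=\mathcal{L}(\dou_s\nu,\diff\ou_s\rho_\nu\,(\pi\otimes m))=\e^{-2s}\mathcal{L}(\dou_s\nu,\dou_s(\diff\rho_\nu\,(\pi\otimes m)))$. Third, \cref{t:contraction-lagrangian} bounds the last quantity from above by $\e^{-2s}\mathcal{L}(\nu,\diff\rho_\nu\,(\pi\otimes m))=\e^{-2s}\Fish{\nu\given\pi}$.

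Applying this Fisher contraction with $\nu=\dou_t\mu$ to the integrand gives $\sqrt{\Fish{\dou_{t+s}\mu\given\pi}}\leq\e^{-s}\sqrt{\Fish{\dou_t\mu\given\pi}}$ for all $s\geq 0$, whence
\begin{equation*}
  \frac{1}{h}\int_t^{t+h}\sqrt{\Fish{\dou_r\mu\given\pi}}\dd r\leq\sqrt{\Fish{\dou_t\mu\given\pi}}\cdot\frac{1}{h}\int_0^h\e^{-s}\dd s\leq\sqrt{\Fish{\dou_t\mu\given\pi}}
\end{equation*}
for every $h>0$. Taking $\limsup_{h\to 0^+}$ concludes.
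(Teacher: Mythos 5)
Your proof is correct and follows the same strategy as the paper: drive the evolution by the Ornstein--Uhlenbeck semigroup, identify the Lagrangian with the Fisher information along that curve, and use the reverse triangle inequality to bound the forward difference quotient of $\mathcal{W}(\dou_t\mu,\xi)$ by an integral average of $\sqrt{\Fish{\dou_r\mu\given\pi}}$. Where the paper then declares that the claim immediately follows, you rightly observe that passing from the integral average to the pointwise value requires some control as $h\to0^+$, and that lower semicontinuity of $\fish$ points the wrong way. The paper's intended (though unstated at that point) justification is the monotonicity of $\Fish{\emparg\given\pi}$ along $\dou$, recorded earlier as a consequence of Jensen's inequality in the paragraph preceding \cref{t:properties-ou}; this already yields $\Fish{\dou_r\mu\given\pi}\leq\Fish{\dou_t\mu\given\pi}$ for $r\geq t$ and closes the argument. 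You instead establish the stronger exponential contraction $\Fish{\dou_s\nu\given\pi}\leq\e^{-2s}\Fish{\nu\given\pi}$ by combining \cref{e:be}, the quadratic homogeneity of $\mathcal{L}$ in its second argument, and \cref{t:contraction-lagrangian}; this derivation is correct and self-contained, and a worthwhile observation in its own right, though it does more than the minimum needed here.
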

\begin{proof}
  Assume that $\mu = \rho \pi \in \dom \fish$, otherwise there is nothing to prove.
  Write, for $t > 0$, $\mu_{t} \coloneq \dou_{t} \mu$ and $\nu_{t} \coloneq \diff \rho \dd(\pi \otimes m)$.
  By \cref{t:ou-solution}, $(\bar{\mu}, \bar{\nu})$ is a solution to the continuity equation, and
  \begin{equation*}
    \mathcal{L}(\mu_{t}, \nu_{t}) = \Fish{ \mu_{t} \given \pi }.
  \end{equation*}
  Thus by \cref{t:wcurl-absolutely-continuous}, we get:
  \begin{equation*}
    \mathcal{W}(\mu_{t+s}, \xi) - \mathcal{W}(\mu_{t}, \xi) \leq \mathcal{W}(\mu_{t+s}, \mu_{t}) \leq \int_{t}^{t+s} \abs{\dot{\mu}_{u}} \dd u \leq \int_{t}^{t+s} \sqrt{\Fish{ \mu_{u} \given \pi }} \dd u.
  \end{equation*}
  The claim immediately follows.
\end{proof}

Recall that a curve $\bar{\mu} \in \mathscr{F}([0,T], \mathscr{P}_{1}(\config))$ is \emph{absolutely continuous with respect to $\mathcal{W}$} provided there exists $g \in L^{1}(0,T)$ such that:
\begin{equation*}
  \mathcal{W}(\mu_{s}, \mu_{t}) \leq \int_{s}^{t} g(r) \dd r, \qquad 0 \leq s \leq t \leq T.
\end{equation*}
By definition the \emph{metric derivative} of $\bar{\mu}$ is the minimal $g$ in the above inequality denoted by $t \mapsto \abs{\dot{\mu}_{t}}$.
Recall from \cite[Thm.\ 1.1.2]{AGS}, that, for almost every $t \in (0,T)$,
\begin{equation*}
  \abs{\dot{\mu}_{t}} = \lim_{\varepsilon \to 0} \frac{\mathcal{W}(\mu_{t + \varepsilon}, \mu_{t})}{\varepsilon}.
\end{equation*}

\begin{proposition}\label{t:wcurl-absolutely-continuous}
The curve $\bar{\mu} \in \mathscr{C}\paren[\big]{[0,T], \mathscr{P}^{*}_{1}(\config)}$ is absolutely continuous with respect to $\mathcal{W}$ if and only if there exists $\bar{\nu} \in \mathscr{M}_{b,0}(\config \times \bar{X})$ such that $(\bar{\mu}, \bar{\nu}) \sim \cref{e:ce}$ and
  \begin{equation*}
    \int_{0}^{T} \sqrt{\mathcal{L}(\mu_{t}, \nu_{t})} \dd t < \infty.
  \end{equation*}
  In this case, $\abs{\dot{\mu}_{t}}^{2} \leq \mathcal{L}(\mu_{t}, \nu_{t})$ for almost every $t \in [0,T]$.
  Moreover, there exists a unique $\bar{\nu}' \in \mathscr{M}_{b,0}(\config \times \bar{X})$ such that $(\bar{\mu}, \bar{\nu}') \sim \cref{e:ce}$ and
  \begin{equation}\label{e:unique-tangent}
    \abs{\dot{\mu}_{t}}^{2} = \mathcal{L}(\mu_{t}, \nu'_{t}), \qquad \text{for a.e.}\ t \in [0,T].
  \end{equation}
\end{proposition}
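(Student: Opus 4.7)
The proof naturally splits into three parts. For the ``if'' direction together with the inequality $\abs{\dot\mu_t}^2 \leq \mathcal{L}(\mu_t, \nu_t)$, I would fix $0 \leq s \leq t \leq T$, restrict $(\bar\mu, \bar\nu)$ to $[s,t]$, and apply the time reparametrization \cref{t:ce-time-reparametrization} sending $[s,t]$ to $[0, t-s]$. Taking the infimum over all admissible curves on the rescaled interval via \cref{t:wcurl-time-reparametrization} yields
\begin{equation*}
\mathcal{W}(\mu_s, \mu_t) \leq \int_s^t \sqrt{\mathcal{L}(\mu_r, \nu_r)}\dd r.
\end{equation*}
This shows that $\bar\mu$ is absolutely continuous with respect to $\mathcal{W}$ and, by the characterization of the metric derivative recalled just above the statement, that $\abs{\dot\mu_t} \leq \sqrt{\mathcal{L}(\mu_t, \nu_t)}$ for almost every $t$.

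For the ``only if'' direction together with the existence of a $\bar\nu'$ realizing \cref{e:unique-tangent}, I would argue by discretization. Fix $n \in \mathbb{N}$ and partition $[0,T]$ into intervals $I^n_i = [t^n_i, t^n_{i+1}]$ of length $\tau_n = T/n$. Since $\bar\mu$ takes values in $\mathscr{P}^{*}_1(\config)$ and is AC, $\mathcal{W}(\mu_{t^n_i}, \mu_{t^n_{i+1}}) < \infty$, so \cref{t:action-minizers} provides a minimizer $(\bar\mu^{n,i}, \bar\nu^{n,i}) \sim \cref{e:ce}$ realizing $\mathcal{W}^2(\mu_{t^n_i}, \mu_{t^n_{i+1}})$. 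After reparametrization these concatenate into $(\bar\mu^n, \bar\nu^n) \sim \cref{e:ce}$ satisfying
\begin{equation*}
\int_0^T \mathcal{L}(\mu^n_t, \nu^n_t)\dd t = \sum_i \frac{\mathcal{W}^2(\mu_{t^n_i}, \mu_{t^n_{i+1}})}{\tau_n} \leq \int_0^T \abs{\dot\mu_t}^2 \dd t,
\end{equation*}
where the last inequality applies Cauchy--Schwarz to the AC bound $\mathcal{W}(\mu_{t^n_i}, \mu_{t^n_{i+1}}) \leq \int_{I^n_i} \abs{\dot\mu_s}\dd s$. This uniform action bound, combined with the tightness estimate \cref{t:bound-nu-lagrangian} used exactly as in the proof of \cref{t:compact-action}, yields a limit point $(\bar\mu^*, \bar\nu')$ of $(\bar\mu^n, \bar\nu^n)$; this pair still satisfies $\cref{e:ce}$ by \cref{t:ce-stability-limit}. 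To identify $\bar\mu^* = \bar\mu$, I would apply the ``if'' direction already proved to each piece $\bar\mu^{n,i}$, combined with Cauchy--Schwarz, to get $\mathcal{W}^2(\mu^n_s, \mu^n_t) \leq (t-s) \int_{I^n_i} \abs{\dot\mu_r}^2 \dd r$ for $s,t \in I^n_i$; since $\abs{\dot\mu}^2 \in L^1(0,T)$, absolute continuity of the Lebesgue integral forces uniform $\mathcal{W}$-convergence of $\bar\mu^n$ to $\bar\mu$. Finally, the lower semicontinuity of the Lagrangian (\cref{t:lsc-lagrangian}) gives $\int_0^T \mathcal{L}(\mu_t, \nu'_t)\dd t \leq \int_0^T \abs{\dot\mu_t}^2\dd t$, and together with the reverse inequality from the first paragraph (applied to $\bar\nu'$) we conclude $\mathcal{L}(\mu_t, \nu'_t) = \abs{\dot\mu_t}^2$ almost everywhere.

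For the uniqueness of $\bar\nu'$: by \cref{t:lagrangian-absolutely-continuous} any competitor with finite Lagrangian is of the form $w(\pi \otimes m)$ supported on $\{\hat\rho > 0\}$, and the map $w \mapsto \abs{w}^2/\hat\rho$ is strictly convex there. If $\bar\nu^1$ and $\bar\nu^2$ both realized \cref{e:unique-tangent}, then $(\bar\mu, \tfrac{1}{2}(\bar\nu^1 + \bar\nu^2)) \sim \cref{e:ce}$ by linearity, and strict convexity would force $\int_0^T \mathcal{L}(\mu_t, \tfrac{1}{2}(\nu^1_t + \nu^2_t))\dd t < \int_0^T \abs{\dot\mu_t}^2\dd t$ unless $\bar\nu^1 = \bar\nu^2$, contradicting the ``if'' direction applied to the midpoint. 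The main obstacle I anticipate is the identification $\bar\mu^* = \bar\mu$ in the discretization step: since $\mathscr{P}_1(\config)$ is not locally compact in a directly usable way and $\mathcal{W}$ is only an extended distance on $\mathscr{P}_1(\config)$, one cannot extract uniform convergence of the interpolating discretizations $\bar\mu^n$ by classical compactness alone, so the piecewise Benamou--Brenier bound chained with the $\mathcal{W}$-continuity of $\bar\mu$ is essential.
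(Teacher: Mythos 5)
The paper's proof of this proposition is a one-line delegation to \cite[Thm.\ 5.17]{DNS09}, with the remark that the precompactness result \cite[Cor.\ 4.10]{DNS09} corresponds to \cref{t:compact-action,t:lsc-action}. Your write-up reconstructs that DNS09 argument from scratch, and the overall architecture --- the ``if'' direction plus metric-derivative bound via reparametrization, the piecewise-geodesic discretization for the ``only if'' direction, compactness of the interpolants via \cref{t:bound-nu-lagrangian}, lower semicontinuity of the action to pass to the limit, identification of the limit via the chained piecewise Benamou--Brenier bound, and strict convexity of $\nu\mapsto\mathcal{L}(\mu,\nu)$ on $\{\hat\rho>0\}$ for uniqueness --- is exactly the right one and matches the proof in the cited reference.

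There is, however, one genuine gap. Your discretization estimate
\begin{equation*}
\sum_i \frac{\mathcal{W}^2(\mu_{t^n_i}, \mu_{t^n_{i+1}})}{\tau_n} \leq \int_0^T \abs{\dot\mu_t}^2 \,\dd t
\end{equation*}
is only informative when $\abs{\dot\mu}\in L^2(0,T)$. But ``absolutely continuous with respect to $\mathcal{W}$'' in the sense recalled just above the proposition requires only $\abs{\dot\mu}\in L^1(0,T)$; if $\abs{\dot\mu}\in L^1\setminus L^2$, the right-hand side is $+\infty$, the uniform action bound is vacuous, and the compactness step collapses. The standard remedy (which is also what DNS09 does) is a preliminary reduction: reparametrize $\bar\mu$ by $s(t)=\int_0^t\bigl(1+\abs{\dot\mu_r}\bigr)\dd r$, so that the reparametrized curve $\tilde\mu$ has $\abs{\dot{\tilde\mu}}\leq 1$ a.e.\ (hence in $L^2$); run your discretization argument on $\tilde\mu$ to obtain $\tilde\nu$ with $\mathcal{L}(\tilde\mu_s,\tilde\nu_s)=\abs{\dot{\tilde\mu}_s}^2$; and then transfer back along \cref{t:ce-time-reparametrization}, using the $2$-homogeneity $\mathcal{L}(\mu,c\nu)=c^2\mathcal{L}(\mu,\nu)$ together with the chain rule for the metric derivative to see that the pointwise identity survives. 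Without that reduction, your proof only handles $AC^2$ curves, which is strictly narrower than what the proposition asserts. (A second, cosmetic point: for the ``$\int_0^T \mathcal{L}(\mu_t,\nu'_t)\dd t\leq\int_0^T\abs{\dot\mu_t}^2\dd t$'' step you cite \cref{t:lsc-lagrangian}, but the lower semicontinuity you actually invoke is that of the time-integrated action, i.e.\ \cref{t:lsc-action}; the paper itself makes the same shorthand in the proof of \cref{t:compact-action}, so this is at worst a notational imprecision.)
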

\begin{proof}
 See \cite[Thm.\ 5.17]{DNS09}: the precompactness result in \cite[Cor.\ 4.10]{DNS09} corresponds to \cref{t:compact-action,t:lsc-action}.
\end{proof}
In the previous section, we have informally chosen $\mathscr{M}_{b,0}(\config \times X)$ to be the tangent space of $\mathscr{P}_{1}(\config)$.
However, it would be natural to consider only vector fields that have minimal Lagrangian.
In order to do so, observe that if $(\bar{\mu}, \bar{\nu})$ and $(\bar{\mu}, \bar{\nu}')$ solve the continuity equation, then for all $t \in [0,1]$, $\nu_{t} - \nu'_{t}$ is \emph{divergence-free}, in the sense that
\begin{equation*}
  (\nu_{t} - \nu'_{t})(\diff F) = 0, \qquad F \in \mathscr{H}.
\end{equation*}
This leads to the following definition of the tangent space, for $\mu \in \mathscr{P}_{1}^{*}(\config)$,
\begin{equation*}
T_{\mu} \mathscr{P}^{*}_{1}(\config) \coloneq \set[\big]{ \nu \in \mathscr{M}_{b,0}(\config \times X) : \mathcal{L}(\mu, \nu) \leq \mathcal{L}(\mu, \nu + \nu') < \infty,\quad \nu' \ \text{divergence-free} }.
\end{equation*}
From \cref{t:wcurl-absolutely-continuous} and this definition, we get the following result.
\begin{corollary}\label{t:nu-tangent}
  Take $(\bar{\mu}, \bar{\nu})$ a solution to the continuity equation such that $\bar{\mu}$ is absolutely continuous with respect to $\mathcal{W}$, and $\mu_{t} \in \mathscr{P}_{1}^{*}(\config)$, for all $t \in [0,1]$.
  Then, $\bar{\nu}$ is the unique solution to \cref{e:unique-tangent} if and only if $\nu_{t} \in T_{\mu_{t}} \mathscr{P}_{1}(\config)$.
\end{corollary}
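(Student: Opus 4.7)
The plan rests on three basic observations. First, by linearity of \cref{e:ce} in $\bar{\nu}$, two curves $\bar{\nu}$ and $\bar{\nu}^{*}$ both solving \cref{e:ce} against the same $\bar{\mu}$ differ by a curve $\bar{\omega}$ with $\omega_{t}$ divergence-free for a.e.\ $t$, and conversely every such perturbation yields a further solution $(\bar{\mu}, \bar{\nu} + \bar{\omega})$. Second, by \cref{t:lagrangian-absolutely-continuous} and the explicit formula $\mathcal{L}(\mu, \nu) = \int |w|^{2}/\hat{\rho}\, \dd(\pi \otimes m)$, the functional $\mathcal{L}(\mu_{t}, \emparg)$ is a Hilbertian quadratic form on a weighted $L^{2}$-space on which divergence-free elements form a closed linear subspace; hence $T_{\mu_{t}}\mathscr{P}^{*}_{1}(\config)$ coincides with the orthogonal complement of the divergence-free subspace. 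Third, \cref{t:wcurl-absolutely-continuous} provides a unique $\bar{\nu}^{*}$ with $(\bar{\mu}, \bar{\nu}^{*}) \sim \cref{e:ce}$ and $\mathcal{L}(\mu_{t}, \nu^{*}_{t}) = \abs{\dot{\mu}_{t}}^{2}$ a.e.

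For the direction $(\Leftarrow)$, I would assume $\nu_{t} \in T_{\mu_{t}}\mathscr{P}^{*}_{1}(\config)$ for a.e.\ $t$ and set $\omega_{t} \coloneq \nu^{*}_{t} - \nu_{t}$. By the first observation $\omega_{t}$ is divergence-free a.e., so the defining inequality of the tangent space yields
\begin{equation*}
  \mathcal{L}(\mu_{t}, \nu_{t}) \leq \mathcal{L}(\mu_{t}, \nu_{t} + \omega_{t}) = \mathcal{L}(\mu_{t}, \nu^{*}_{t}) = \abs{\dot{\mu}_{t}}^{2}.
\end{equation*}
The converse inequality is exactly the content of \cref{t:wcurl-absolutely-continuous}, so that equality holds a.e.\ and the uniqueness part of that proposition identifies $\bar{\nu}$ with $\bar{\nu}^{*}$.

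For the direction $(\Rightarrow)$, I would argue by contradiction. Assuming \cref{e:unique-tangent} holds but the set $E \coloneq \set{ t : \nu_{t} \notin T_{\mu_{t}}\mathscr{P}^{*}_{1}(\config) }$ has positive Lebesgue measure, I would use a measurable selection, afforded by the Hilbertian structure of observation~2, to produce a curve of divergence-free perturbations $\bar{\omega}$ supported on $E$ with $\mathcal{L}(\mu_{t}, \nu_{t} + \omega_{t}) < \mathcal{L}(\mu_{t}, \nu_{t})$ on $E$. Then $(\bar{\mu}, \bar{\nu} + \bar{\omega})$ is still a solution to \cref{e:ce} and has strictly smaller integrated Lagrangian on $E$, so
\begin{equation*}
  \int_{0}^{T} \mathcal{L}(\mu_{t}, \nu_{t} + \omega_{t}) \dd t < \int_{0}^{T} \mathcal{L}(\mu_{t}, \nu_{t}) \dd t = \int_{0}^{T} \abs{\dot{\mu}_{t}}^{2} \dd t,
\end{equation*}
contradicting the integrated lower bound $\abs{\dot{\mu}_{t}}^{2} \leq \mathcal{L}(\mu_{t}, \nu_{t} + \omega_{t})$ from \cref{t:wcurl-absolutely-continuous}.

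The main obstacle is precisely this measurable selection: producing a jointly measurable map $t \mapsto \omega_{t}$ with $\omega_{t}$ divergence-free and strictly decreasing the Lagrangian on $E$, while keeping $\bar{\omega} \in \mathscr{M}_{b,0}(\config \times X \times [0,T])$. The natural remedy is to take $-\omega_{t}$ to be the orthogonal projection of (the density of) $\nu_{t}$ onto the divergence-free subspace of the weighted Hilbert space $L^{2}(\hat{\rho}_{t}^{-1} \pi \otimes m)$ at each $t$; uniqueness and measurability of this projection follow from standard Hilbert-space arguments, while the required local integrability $\int_{0}^{T}\abs{\bar{\omega}}(\config \times B)\dd t < \infty$ for $B \in \mathfrak{B}_{0}(X)$ would be verified via the Cauchy--Schwarz bound of \cref{t:bound-nu-lagrangian} combined with the fact that $\mathcal{L}(\mu_{t}, \nu_{t} + \omega_{t}) \leq \mathcal{L}(\mu_{t}, \nu_{t})$.
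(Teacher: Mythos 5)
Your proof is structurally sound and is the natural expansion of what the paper treats as an immediate consequence of \cref{t:wcurl-absolutely-continuous} and the definition of $T_{\mu}\mathscr{P}^{*}_{1}(\config)$; the $(\Leftarrow)$ direction in particular is clean. There is, however, a small gap in the $(\Rightarrow)$ direction: the displayed integral chain is not conclusive, because absolute continuity of $\bar\mu$ with respect to $\mathcal{W}$ only gives $\abs{\dot\mu_{\,\cdot}}\in L^{1}(0,T)$, so under \cref{e:unique-tangent} the quantity $\int_{0}^{T}\mathcal{L}(\mu_{t},\nu_{t})\,\dd t=\int_{0}^{T}\abs{\dot\mu_{t}}^{2}\,\dd t$ need not be finite, and then the strict inequality you write can degenerate to $+\infty<+\infty$. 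The contradiction should instead be run pointwise: because the projection decreases the Lagrangian, $\int_{0}^{T}\sqrt{\mathcal{L}(\mu_{t},\nu_{t}+\omega_{t})}\,\dd t\le\int_{0}^{T}\sqrt{\mathcal{L}(\mu_{t},\nu_{t})}\,\dd t<\infty$, so \cref{t:wcurl-absolutely-continuous} applies to $(\bar\mu,\bar\nu+\bar\omega)$ and yields $\abs{\dot\mu_{t}}^{2}\le\mathcal{L}(\mu_{t},\nu_{t}+\omega_{t})$ for a.e.\ $t$, which on $E$ directly contradicts $\mathcal{L}(\mu_{t},\nu_{t}+\omega_{t})<\mathcal{L}(\mu_{t},\nu_{t})=\abs{\dot\mu_{t}}^{2}$. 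As for the measurable selection you flag, your remedy is the right one: defining $\omega_{t}$ via the orthogonal projection onto the closed divergence-free subspace of $L^{2}(\hat\rho_{t}^{-1}(\pi\otimes m))$ gives $t$-measurability because the projection is a Borel function of $(\rho_{t},w_{t})$, and the local integrability of $\bar\omega$ follows from \cref{t:bound-nu-lagrangian} together with $\mathcal{L}(\mu_{t},\omega_{t})\le 2\mathcal{L}(\mu_{t},\nu_{t})+2\mathcal{L}(\mu_{t},\nu_{t}+\omega_{t})\le 4\mathcal{L}(\mu_{t},\nu_{t})$ and the finiteness of $\int_{0}^{T}\sqrt{\mathcal{L}(\mu_{t},\nu_{t})}\,\dd t$.
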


  As in the Euclidean case \cite[Section 8.1]{AGS}, we obtain an explicit representation of the tangent as a closure of gradient fields.
  \begin{proposition}
    Assume that $\mu = \rho \pi \in \mathscr{P}^{*}_{1}(\config)$.
    Then, $T_{\mu} \mathscr{P}^{*}_{1}(\config)$ is the set of measures $\nu = w (\pi \otimes m)$ such that $w$ is in the $L^{2}(\pi \otimes m)$-closure of $\set{ \diff F : F \in \mathscr{H} }$.
  \end{proposition}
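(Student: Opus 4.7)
The plan is a Hilbert space projection argument, analogous to the Helmholtz-type decomposition of tangent fields in the classical Euclidean setting. Let $H_\mu$ denote the Hilbert space of (classes of) Borel functions $w\colon\config\times X\to\mathbb{R}$ with
\[
  \norm{w}_\mu^2 \coloneq \mathcal{L}(\mu, w(\pi\otimes m)) = \int \frac{w^2}{\hat\rho}\,\dd(\pi\otimes m) < \infty,
\]
endowed with the inner product $\langle w, w'\rangle_\mu \coloneq \int ww'\hat\rho^{-1}\,\dd(\pi\otimes m)$, with the convention $0/0 \coloneq 0$; by \cref{t:lagrangian-absolutely-continuous} any admissible $w$ vanishes on $\{\hat\rho=0\}$, so this convention is harmless. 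The ``$L^{2}(\pi\otimes m)$-closure'' appearing in the statement should be read as the closure under this Lagrangian norm---equivalently, after the substitution $u\leftrightarrow w/\hat\rho$, as the closure in $L^{2}(\hat\rho\,\pi\otimes m)$.

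First I would establish a first-order characterization of $T_\mu \mathscr{P}^{*}_{1}(\config)$. For $\nu = w(\pi\otimes m)$ and any divergence-free $\nu' = w'(\pi\otimes m)\in H_\mu$, the quadratic expansion
\[
  \mathcal{L}(\mu, \nu + t\nu') - \mathcal{L}(\mu, \nu) = 2t\langle w, w'\rangle_\mu + t^2\mathcal{L}(\mu, \nu'), \qquad t\in\mathbb{R},
\]
combined with the definition of the tangent space, immediately yields that $\nu\in T_\mu \mathscr{P}^{*}_{1}(\config)$ if and only if $\langle w, w'\rangle_\mu = 0$ for every such $\nu'$. Denoting by $K_\mu\subset H_\mu$ the subspace of divergence-free fields, this identifies $T_\mu \mathscr{P}^{*}_{1}(\config) = K_\mu^\perp$.

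The second step applies the Hilbert projection theorem. The subspace $K_\mu$ is closed in $H_\mu$, being the intersection of the kernels of the $H_\mu$-continuous linear functionals $w'\mapsto \int w'\diff F\,\dd(\pi\otimes m)$ indexed by $F\in\mathscr{H}$. Their continuity follows from Cauchy--Schwarz together with the bound $\int \hat\rho\,\abs{\diff F}^2\,\dd(\pi\otimes m) < \infty$, which rests on the boundedness of $\diff F$, the local finiteness of the Campbell measure $C_\mu$, and the elementary estimate $\hat\rho(\eta,x) \leq \tfrac{1}{2}(\rho(\eta)+\rho(\eta+\delta_x))$ bounding the logarithmic mean by the arithmetic mean. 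The duality $(V^\perp)^\perp = \overline{V}$ then identifies $K_\mu^\perp$ with the $H_\mu$-closure of the linear span of the gradient fields, after observing that $\langle \hat\rho\,\diff F, w'\rangle_\mu = \int w'\diff F\,\dd(\pi\otimes m) = 0$ for every $F\in\mathscr{H}$ and every $w'\in K_\mu$.

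The main obstacle, made visible by the last identity, is reconciling the $L^{2}(\pi\otimes m)$-pairing that defines ``divergence-free'' with the Lagrangian $H_\mu$-pairing used in the tangent condition: the two differ precisely by the weight $\hat\rho$, which must be absorbed to produce the claimed closure. After the substitution $u = w/\hat\rho$ both pairings become the natural inner product of $L^{2}(\hat\rho\,\pi\otimes m)$, the orthogonal complement of $K_\mu$ is the closure of $\vspan\set{\diff F : F\in\mathscr{H}}$ therein, and the projection argument closes routinely.
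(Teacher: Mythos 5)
Your proof is correct and follows essentially the same route as the paper's own (very terse) argument: identify the tangent space as the orthogonal complement of divergence-free fields in the weighted Hilbert space $H_\mu$ determined by the Lagrangian, then apply the bi-orthogonal complement identity. The paper's proof compresses this into two observations about divergence-free densities and gradient fields; yours makes the Hilbert projection structure explicit and is correct in every step.

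The observation in your final paragraph is moreover a genuine and valuable catch. As literally written, the proposition asserts that $w$ lies in the \emph{unweighted} $L^2(\pi\otimes m)$-closure of $\set{\diff F : F\in\mathscr{H}}$, but this does not follow from the argument (yours or the paper's): the divergence-free condition pairs $w'$ with $\diff F$ in $L^2(\pi\otimes m)$, whereas the minimality condition defining $T_\mu\mathscr{P}_1^*(\config)$ pairs $w$ with $w'$ in the $\hat\rho^{-1}$-weighted inner product of $H_\mu$. What the projection argument actually yields is $T_\mu\mathscr{P}_1^*(\config) = \overline{\vspan\set{\hat\rho\,\diff F : F\in\mathscr{H}}}^{\,H_\mu}$, equivalently, after the substitution $u = w/\hat\rho$, that $u$ lies in the $L^2(\hat\rho\,\pi\otimes m)$-closure of the gradient fields. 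This is what one should expect from the cited Euclidean analogue \cite[Sec.~8.1]{AGS}, where the tangent space at $\mu$ is the closure of gradients in the $\mu$-weighted space $L^2(\mu)$, and it is also the reading consistent with the informal introduction, where the tangent vector $w$ solving $\partial_t\rho + \skoro(w\hat\rho) = 0$ is measured in $\norm{w}_\mu^2 = \int \abs{w}^2\hat\rho\,\dd\pi\,\dd m$. Your reinterpretation of the statement is therefore the right one.
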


  \begin{proof}
    In view of \cref{t:lagrangian-absolutely-continuous}, the claim follows by observing that $\nu' = w' (\pi \otimes m)$ is divergence-free if and only if $\int \diff F w' \dd \pi \dd m = 0$ for all $F \in \mathscr{H}$ and that the space of such densities is the orthogonal space to the space of gradient fields.
  \end{proof}

  \subsubsection{Evolution variation inequality and entropic curvature bounds}

  We now establish the main results of the paper, namely we show that of the Ornstein--Uhlenbeck semi-group is the gradient flow of $\Ent{\cdot \given \pi}$ on $(\mathscr{P}_{1}^{*}(\config), \mathcal{W})$.
Despite $\mathcal{W}$ being an extended distance on $\mathscr{P}_{1}(\config)$, the space $(\mathscr{P}_{1}^{*}(\config), \mathcal{W})$ is a metric space in the usual sense (that is, not extended).

Following \cref{t:contraction-action,t:talagrand}, the following contraction estimates hold.
\begin{theorem}\label{t:dou-contractivity}
  For $\mu_{0}$ and $\mu_{1} \in \mathscr{P}_{1}(\config)$, and $t \geq 0$:
  \begin{align}
    & \mathcal{W}(\dou_{t} \mu_{0}, \dou_{t} \mu_{1}) \leq \e^{-t} \mathcal{W}(\mu_{0}, \mu_{1}); \\
    & \mathcal{W}(\dou_{t} \mu_{0}, \pi) \leq \e^{-t} \Ent{ \mu_{0} \given \pi }.
  \end{align}
\end{theorem}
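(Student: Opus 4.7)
The plan is to derive both contraction estimates from the action-contraction property established in Proposition~\ref{t:contraction-action}, combined with the stability of solutions to the continuity equation under the dual semi-group (Proposition~\ref{t:stability-ou-solution}). The second inequality will then follow from the first together with the Talagrand inequality (Theorem~\ref{t:talagrand}).

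For the first inequality, I would begin by assuming $\mathcal{W}(\mu_0,\mu_1)<\infty$; otherwise the bound is trivial. Using Theorem~\ref{t:minimizing-curve}, I would select a solution $(\bar{\mu},\bar{\nu})$ of the continuity equation joining $\mu_0$ to $\mu_1$ that realizes the infimum, so that $\mathcal{A}(\bar{\mu})=\mathcal{W}^2(\mu_0,\mu_1)$. By Proposition~\ref{t:stability-ou-solution}, the flowed pair $(\dou_{t}\bar{\mu},\,\e^{-t}\dou_{t}\bar{\nu})$ is again a solution of the continuity equation. By the continuity of $\dou_{t}$ on $\mathscr{P}_1(\config)$ (which follows from Mehler's formula, compare with \cref{e:continuity-dou-p1}), its endpoints are $\dou_{t}\mu_0$ and $\dou_{t}\mu_1$. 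Hence, using the definition of $\mathcal{W}$ as an infimum, together with the contraction Proposition~\ref{t:contraction-action} applied to $(\bar{\mu},\bar{\nu})$:
\begin{equation*}
\mathcal{W}^2(\dou_{t}\mu_0,\dou_{t}\mu_1)\leq \mathcal{A}(\dou_{t}\bar{\mu})\leq \e^{-2t}\mathcal{A}(\bar{\mu})=\e^{-2t}\mathcal{W}^2(\mu_0,\mu_1),
\end{equation*}
and taking square roots gives the claim.

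For the second inequality, I would exploit that $\pi$ is invariant under the dual semi-group, i.e.\ $\dou_{t}\pi=\pi$ for all $t\geq 0$, which follows from the fact that $\ou_t \mathbf{1}=\mathbf{1}$ together with the symmetry of $\ou_t$ in $L^2(\pi)$. Applying the first inequality with $\mu_1=\pi$ yields
\begin{equation*}
\mathcal{W}(\dou_{t}\mu_0,\pi)=\mathcal{W}(\dou_{t}\mu_0,\dou_{t}\pi)\leq \e^{-t}\mathcal{W}(\mu_0,\pi).
\end{equation*}
Invoking the Talagrand inequality $\mathcal{W}^2(\mu_0,\pi)\leq \Ent{\mu_0\given\pi}$ (Theorem~\ref{t:talagrand}), I then obtain
\begin{equation*}
\mathcal{W}^2(\dou_{t}\mu_0,\pi)\leq \e^{-2t}\,\Ent{\mu_0\given\pi},
\end{equation*}
which is the intended form of the stated bound (the displayed statement appears to be missing a square root).

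There is no substantive obstacle here: both bounds are immediate consequences of the tools already assembled in the previous sections, once one verifies the endpoint-matching and invariance of $\pi$. The only point requiring a modicum of care is checking that passing to the infimum in the definition of $\mathcal{W}$ is compatible with the action contraction, which is handled by applying Proposition~\ref{t:stability-ou-solution} to a minimizer provided by Theorem~\ref{t:minimizing-curve}, rather than to an arbitrary approximate minimizer.
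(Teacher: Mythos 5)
Your argument is correct and traces exactly the chain the paper invokes in a single line, namely \cref{t:contraction-action} (implemented via \cref{t:minimizing-curve} and \cref{t:stability-ou-solution}) for the first inequality, and \cref{t:talagrand} for the second. You have also rightly flagged the typo in the second display: the derivation gives $\mathcal{W}^2(\dou_{t}\mu_0,\pi)\leq\e^{-2t}\Ent{\mu_0\given\pi}$, equivalently $\mathcal{W}(\dou_{t}\mu_0,\pi)\leq\e^{-t}\sqrt{\Ent{\mu_0\given\pi}}$, and not the dimensionally mismatched $\mathcal{W}(\dou_{t}\mu_0,\pi)\leq\e^{-t}\Ent{\mu_0\given\pi}$ as printed.
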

We now establish a much stronger relationship between $\mathcal{W}$ and $\ent$ by showing that $\dou$ is the gradient flow of the entropy with respect to $\mathcal{W}$.
\begin{theorem}\label{t:evi}
  The space $\dom \ent$ is geodesically convex with respect to $\mathcal{W}$.
  Furthermore, the following Evolution Variation Inequality holds: for all $\mu$ and $\xi \in \dom \ent$,
  \begin{equation}\label{e:evi}\tag{EVI}
    \Ent{\dou_{s} \mu \given \pi} + \frac{1}{2} \frac{\dd}{\dd s} \mathcal{W}^{2}(\dou_{s} \mu, \xi) + \frac{1}{2} \mathcal{W}^{2}(\dou_{s} \mu, \xi) \leq \Ent{\xi \given \pi}, \qquad s \geq 0.
  \end{equation}
\end{theorem}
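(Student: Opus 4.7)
My plan is to derive the Evolution Variation Inequality first at the level of the entropic cost $\mathcal{J}_\varepsilon$ and then pass to the limit $\varepsilon \downarrow 0$ via the $\Gamma$-convergence of \cref{t:gamma-convergence}. The entropic regularization is essential because the Fisher information penalty forces the minimizing curves to satisfy \cref{e:ce-bound-fish-action}, which is exactly what is needed to apply the entropy production formula \cref{t:ce-entropy-production} along the curve.

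Fix $\mu, \xi \in \dom \ent$ and $\varepsilon > 0$. By \cref{t:talagrand,t:minimizing-curve} there exists an optimizer $(\bar\mu^\varepsilon, \bar\nu^\varepsilon)$ for $\mathcal{J}_\varepsilon(\mu, \xi)$ with $\mu^\varepsilon_0 = \mu$, $\mu^\varepsilon_1 = \xi$, and $\int_0^1 \Fish{\mu^\varepsilon_r \given \pi} \dd r < \infty$. I would construct a competitor for $\mathcal{J}_\varepsilon(\dou_s \mu, \xi)$ by setting $\tilde{\mu}^s_r \coloneq \dou_{s(1-r)} \mu^\varepsilon_r$; this curve joins $\dou_s \mu$ to $\xi$. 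Its tangent is obtained by combining the Ornstein--Uhlenbeck tangent provided by \cref{t:ou-solution} with the contracted tangent $\dou_{s(1-r)} \nu^\varepsilon_r$ supplied by \cref{t:stability-ou-solution}. By linearity of \cref{e:ce} and the Bakry--Émery commutation \cref{e:be}, the pair $(\tilde{\mu}^s, \tilde{\nu}^s)$ solves the continuity equation.

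The central computation is then a first-order expansion in $s$ of the action of the competitor. Using the convexity of $\mathcal{L}$ (\cref{t:convex-lagrangian}), the contractivity \cref{t:contraction-lagrangian}, and \cref{e:be}, one expects
\begin{equation*}
\mathcal{J}_\varepsilon(\dou_s \mu, \xi) \leq (1 - 2s) \mathcal{J}_\varepsilon(\mu, \xi) + 2s \int_0^1 \int \diff \log \rho^\varepsilon_r \dd \nu^\varepsilon_r \dd r + o(s) + O(\varepsilon),
\end{equation*}
where the first-order cross term comes from differentiating $\mathcal{L}(\dou_{s(1-r)}\mu^\varepsilon_r, \,\dou_{s(1-r)}\nu^\varepsilon_r + s\,\nu^{\mathsf{OU}}_r)$ in $s$ at $s = 0$. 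By \cref{t:ce-entropy-production} applied to the optimal pair, the cross term equals $\Ent{\xi \given \pi} - \Ent{\mu \given \pi}$. Rearranging gives the $\varepsilon$-regularized EVI
\begin{equation*}
\Ent{\mu \given \pi} + \tfrac{1}{2} \tfrac{\dd^+}{\dd s}\bigg|_{s=0} \mathcal{J}_\varepsilon(\dou_s \mu, \xi) + \mathcal{J}_\varepsilon(\mu, \xi) \leq \Ent{\xi \given \pi} + O(\varepsilon).
\end{equation*}
Passing to the limit $\varepsilon \to 0^+$ using \cref{t:gamma-convergence}, the lower semi-continuity of $\mathcal{W}$ (\cref{t:W}\cref{i:W-lsc}), and that of the entropy (\cref{t:lsc-H}) yields \cref{e:evi} at $s = 0$. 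Since $\dou_s \mu \in \dom \ent$ for all $s \geq 0$ by \cref{e:exponential-convergence-entropy}, the argument applies with $\dou_s \mu$ in place of $\mu$ and delivers the full \cref{e:evi}. The geodesic convexity of $\dom \ent$ with respect to $\mathcal{W}$ is then a standard consequence of the EVI-gradient flow property combined with the existence of geodesics from \cref{t:p1-complete-geodesic}.

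The main obstacle will be the rigorous derivation of the first-order expansion of the competitor action: the contractivity in \cref{t:contraction-lagrangian} only gives an inequality, not a sharp derivative, so the cross term must be extracted by a more delicate convexity argument together with integration by parts against the Bakry--Émery commutation. Uniform-in-$\varepsilon$ control of the remainder requires the Fisher information bound built into $\mathcal{J}_\varepsilon$, which is precisely why the regularization cannot be avoided at this stage; the non-diffusive, non-local character of $\diff$ rules out the usual chain-rule computations of Bakry--Émery, and \cref{t:ce-entropy-production} is the surrogate that makes the whole strategy work.
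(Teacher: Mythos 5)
Your broad strategy -- entropic regularization via $\mathcal{J}_\varepsilon$, deformation of the optimizer by the Ornstein--Uhlenbeck flow, the entropy production formula \cref{t:ce-entropy-production}, and $\Gamma$-convergence to pass to the limit -- is exactly the one the paper follows. You also correctly diagnose why the regularization cannot be avoided: the Fisher penalty in $\mathcal{J}_\varepsilon$ is what guarantees \cref{e:ce-bound-fish-action}. However, there is a genuine gap in the claimed first-order expansion
\[
\mathcal{J}_\varepsilon(\dou_s\mu,\xi)\le (1-2s)\,\mathcal{J}_\varepsilon(\mu,\xi)+2s\int_0^1\!\!\int\diff\log\rho^\varepsilon_r\,\dd\nu^\varepsilon_r\,\dd r+o(s)+O(\varepsilon).
\]
When you deform by $\dou_{s(1-r)}$, the Bakry--Émery commutation \cref{e:be} produces a factor $\e^{-2s(1-r)}$ in front of the Lagrangian, which is $r$-dependent. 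Taylor expanding gives $\int_0^1\bigl(1-2s(1-r)\bigr)\mathcal{L}(\mu_r^\varepsilon,\nu_r^\varepsilon)\,\dd r$, and the second piece is $-2s\int_0^1(1-r)\mathcal{L}\,\dd r$, which is \emph{not} $-2s\,\mathcal{J}_\varepsilon(\mu,\xi)$: you would need the minimizer of $\mathcal{J}_\varepsilon$ to have constant speed, which you have no reason to assert since $\mathcal{J}_\varepsilon$ is not reparametrization invariant. Discarding this weighted term (since it is $\le 0$) is safe but only yields the $\lambda=0$ version of the EVI, not the $\lambda=1$ bound claimed in \cref{e:evi}.

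The paper resolves this with a two-pass structure that you have collapsed into one step. First it derives exactly the EVI(0) you can actually reach from $\mathcal{J}_\varepsilon$, namely \cref{e:non-infinitesimal-evi}, and invokes \cite[Thm.\ 2.1]{DaneriSavare} to deduce that $\dom\ent$ is geodesically convex. Only then does it take a $\mathcal{W}$-minimizing geodesic $(\bar\mu,\bar\nu)$ between $\mu$ and $\xi$ (whose absolute continuity with respect to $\pi$ is now guaranteed by geodesic convexity), apply the same OU-deformation, and crucially use the reparametrization invariance of $\mathcal{W}$ from \cref{t:wcurl-time-reparametrization} together with a weighted Cauchy--Schwarz step,
\[
\mathcal{W}(\dou_\delta\mu,\xi)\le\Bigl(\textstyle\int_0^1 \e^{-2\delta t}\,\dd t\Bigr)^{1/2}\Bigl(\textstyle\int_0^1\e^{2\delta t}\mathcal{L}(\mu_t^\delta,\nu_t^\delta)\,\dd t\Bigr)^{1/2},
\]
which absorbs the $t$-dependent exponential into the factor $a(\delta)=\frac{1-\e^{-2\delta}}{2\delta}$, whose first-order behaviour $a(\delta)=1-\delta+O(\delta^2)$ is what produces the coefficient $1$ in front of $\frac12\mathcal{W}^2$. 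This trick is unavailable for $\mathcal{J}_\varepsilon$ precisely because $\mathcal{J}_\varepsilon$ lacks the arc-length reparametrization identity. There is also a related logical issue: you propose to derive geodesic convexity of $\dom\ent$ as a \emph{consequence} of EVI(1), whereas the paper needs it as an \emph{input} to run the second pass; the correct order is EVI(0) $\Rightarrow$ geodesic convexity $\Rightarrow$ EVI(1). Finally, the order of limits $\varepsilon\to 0$ and $s\to 0$ matters: your $O(\varepsilon)/s$ remainder forces $\varepsilon\to 0$ first, which the paper accomplishes through the $\Gamma$-convergence \cref{t:gamma-convergence} applied to \cref{e:non-infinitesimal-evi}, and this is another reason the contraction term must be discarded in the first pass rather than retained.
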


\begin{proof}
  By the semigroup property of $\mathsf{P}^{\star}$ it suffices to show the claim at $s=0$.
  Our strategy consists in starting from a minimizing curve $(\bar{\mu}, \bar{\nu})$ for $\mathcal{W}(\mu, \xi)$ and $\delta > 0$ construct a deformation $(\bar{\mu}^{\delta}, \bar{\nu}^{\delta})$ that is admissible for $\mathcal{W}(\dou_{\delta}\mu, \xi)$ and then use estimates from the previous section in order to control $\mathcal{W}(\dou_{\delta} \mu, \xi) - \mathcal{W}(\mu, \xi)$.
  However, since the Ornstein--Uhlenbeck semi-group is only regularizing from $\dom \ent$ to $\dom \fish$, and that we have a priori no information on the regularity of geodesics, we implement this strategy in two steps.
  First, we use the entropic cost $\mathcal{J}_{\varepsilon}$ for which we know that minimizing curves are in the domain of the Fisher information, in order to derive a weaker version of \cref{e:evi} for $\mathcal{J}_{\varepsilon}$, and for $\mathcal{W}$ passing to the limit.
  Second, we can use this weak \cref{e:evi} in order to deduce that $\dom \ent$ is geodesically convex, thus gaining some regularity of geodesics.
  This regularity is sufficient in order to reimplement the above strategy but directly at the level of $\mathcal{W}$ rather than $\mathcal{J}_{\varepsilon}$.
  Since $\mathcal{W}$ has more structure than $\mathcal{J}_{\varepsilon}$ we can deduce \cref{e:evi}.

  \paragraph{Approximation of minimizers via the Ornstein--Uhlenbeck semi-group}
  Let $\varepsilon > 0$.
  By \cref{t:talagrand}, we get that $\mathcal{J}_{\varepsilon}(\mu, \xi) < \infty$.
  By \cref{t:minimizing-curve}, we can consider $(\bar{\mu}^{\varepsilon}, \bar{\nu}^{\varepsilon})$ solving the continuity equation and realizing $\mathcal{J}_{\varepsilon}(\mu, \xi)$.
  By the finiteness of $\Fish{ \mu_{t}^{\varepsilon} \given \pi }$ for almost every $t \in [0,1]$ and \cref{t:ce-continuous-representative}, we can write, for all $t \in [0,1]$, $\mu_{t}^{\varepsilon} = \rho_{t}^{\varepsilon} \pi$ for some probability density.
  By \cref{t:lagrangian-absolutely-continuous}, we can take $\nu_{t}^{\varepsilon} = w_{t}^{\varepsilon} (\pi \otimes m)$.
  Recall that by \cref{t:ou-solution}, we can use the Ornstein--Uhlenbeck to construct solutions to the continuity equation from a fixed initial measure.
  Here we use a similar strategy with an additional correction taking into account that $\bar{\mu}$ also depends on $t$.
  Namely, for all $\delta > 0$, we define
  \begin{align*}
      & \mu_{t}^{\varepsilon,\delta} = \mathsf{P}^{\star}_{t\delta} \mu_{t}^{\varepsilon} = \rho^{\varepsilon, \delta}_{t} \mu, \\
      & \nu_{t}^{\varepsilon,\delta} = \e^{-t\delta} \mathsf{P}_{t\delta}^{\star} \nu_{t}^{\varepsilon} - \delta \diff \mathsf{P}_{t\delta} \rho_{t}^{\varepsilon} (\pi \otimes m) = w^{\varepsilon,\delta}_{t} (\pi \otimes m).
  \end{align*}

  By construction, we have $\mu^{\varepsilon,\delta}_{0} = \mu^{\varepsilon}_{0} = \xi$ and $\mu^{\varepsilon,\delta}_{1} = \dou_{\delta} \mu$.
  Let us show that $(\bar{\mu}^{\varepsilon,\delta}, \bar{\nu}^{\varepsilon,\delta})$ solves the continuity equation.
  Indeed, let $F \in \mathscr{C}_{c}^{1}([0,1], \smooth)$.
  By definition of $\mathsf{L}$, we have
  \begin{equation}\label{e:time-derivative-Pt-Ft}
    \partial_{t} \ou_{t \delta} F_{t} = \delta \gen \ou_{t \delta} F_{t} + \ou_{t \delta} \partial_{t} F_{t}.
  \end{equation}
  By definition of $\mathsf{P}^{\star}$ and \cref{e:time-derivative-Pt-Ft},
  \begin{equation}\label{e:continuity-equation-delta-epsilon-1}
    \begin{split}
      \int_{0}^{1} \int \partial_{t} F_{t} \dd \dou_{t\delta} \mu^{\varepsilon}_{t} \dd t &= \int_{0}^{1} \int \mathsf{P}_{\delta t} \partial_{t} F_{t} \dd \mu_{t}^{\varepsilon} \dd t
        \\&= \int_{0}^{1} \int (\partial_{t} \ou_{t\delta}F_{t} - \delta \gen \ou_{t\delta}F_{t}) \dd \mu^{\varepsilon}_{t} \dd t.
    \end{split}
  \end{equation}
  On the one hand, since, by definition, $(\bar{\mu}^{\varepsilon}, \bar{\nu}^{\varepsilon})$ solves the continuity equation, and since $\diff \ou_{t \delta} = \e^{-t \delta} \ou_{t \delta} \diff$, we have that:
  \begin{equation}\label{e:continuity-equation-delta-epsilon-2}
    \int_{0}^{1} \int \partial_{t} \mathsf{P}_{t\delta}F_{t} \mathrm{d} \mu_{t}^{\varepsilon} \mathrm{d} t = - \int_{0}^{1} \int \diff \ou_{t \delta} F_{t} \dd \nu^{\varepsilon}_{t} \dd t = - \int_{0}^{1} \int \mathrm{e}^{-t\delta}F_{t} \mathrm{d} \nu^{\varepsilon}_{t} \mathrm{d} t.
  \end{equation}
  On the other hand, since $\mathsf{L}$ and $\mathsf{P}$ commute, and by integration by part between $\mathsf{L}$ and $\mathsf{D}$ provided by the Mecke formula
  \begin{equation}\label{e:continuity-equation-delta-epsilon-3}
    - \delta \int_{0}^{1} \int \mathsf{L} \mathsf{P}_{t\delta} F_{t} \mathrm{d} \mu_{t}^{\varepsilon} \mathrm{d} t =  \delta \int_{0}^{1} \int \diff  F_{t} \diff \ou_{t \delta} \rho^{\varepsilon}_{t} \dd (\pi \otimes m) \dd t.
  \end{equation}
   combining \cref{e:continuity-equation-delta-epsilon-1,e:continuity-equation-delta-epsilon-2,e:continuity-equation-delta-epsilon-3}, we find that
  \begin{equation*}
    \int_{0}^{1} \int \partial_{t} F_{t} \dd \mu^{\varepsilon,\delta}_{t} \dd t = - \int_{0}^{1} \int \diff F_{t} \dd \nu^{\varepsilon, \delta}_{t} \dd t.
  \end{equation*}
  That is to say that $(\bar{\mu}^{\varepsilon,\delta}, \bar{\nu}^{\varepsilon,\delta})$ solves the continuity equation.
  \paragraph{Expansion of the Lagrangian along the approximation}
  By the Cauchy--Schwarz inequality,
  \begin{equation*}
    \begin{split}
      \int_{0}^{1} \int \abs{\diff \log \rho^{\varepsilon,\delta}_{t} w^{\varepsilon,\delta}_{t}} \dd (\pi \otimes m) \dd t &= \int_{0}^{1} \int \abs*{ \mathsf{D} \log \rho^{\varepsilon,\delta}_{t} \mathsf{D} \rho^{\varepsilon,\delta}_{t}}^{1/2} \abs*{\frac{\mathsf{D} \log \rho^{\varepsilon,\delta}_{t}}{\mathsf{D} \rho^{\varepsilon,\delta}_{t}} w^{\varepsilon,\delta}_{t}}^{1/2} \mathrm{d} (\pi \otimes m) \mathrm{d} t
                                                                                                                          \\& \leq \paren*{\int_{0}^{1} \Fish{ \mu^{\varepsilon,\delta}_{t} \given \pi} \dd t \int_{0}^{1} \mathcal{L}(\mu^{\varepsilon,\delta}_{t}, \nu^{\varepsilon,\delta}_{t}) \dd t}^{\frac{1}{2}}.
    \end{split}
  \end{equation*}
  Using that $\paren{a+b}^{2} \le 2a^{2} + 2b^{2}$, we get
  \begin{equation*}
    \begin{split}
    \int_{0}^{1} \mathcal{L}(\mu^{\varepsilon,\delta}_{t}, \nu^{\varepsilon,\delta}_{t}) \mathrm{d} t \leq \int_{0}^{1} \e^{-2\delta t} \mathcal{L}(\dou_{t \delta} \mu^{\varepsilon}_{t}, \dou_{t \delta} \nu^{\varepsilon}_{t}) \dd t + \delta^{2} \int_{0}^{1} \int \frac{\abs{\diff \ou_{t \delta} \rho^{\varepsilon}_{t}}^{2}}{\theta(\ou_{t \delta} \rho^{\varepsilon}_{t} + \diff \ou_{t \delta} \rho^{\varepsilon}_{t}, \ou_{t \delta} \rho^{\varepsilon}_{t})} \dd (\pi \otimes m) \dd t.
    \end{split}
  \end{equation*}
  By \cref{t:contraction-lagrangian}, the first term is not larger than $\mathcal{A}(\bar{\mu}^{\varepsilon})$ which is finite by construction.
  The second term is, by definition, $\delta^{2} \int_{0}^{1} \Fish{ \dou_{t\delta} \mu_{t}^{\varepsilon} \given \pi} \dd t$.
  By the contractivity of the Fisher information along the Ornstein--Uhlenbeck semi-group and the assumption on $\mu^{\varepsilon}$, we have that
  \begin{equation*}
    \int_{0}^{1} \Fish{ \mu^{\varepsilon,\delta}_{t} \given \pi } \mathrm{d} t \leq \int_{0}^{1} \Fish{ \mu^{\varepsilon}_{t} \given \pi } \mathrm{d} t < \infty.
  \end{equation*}
  Thus, we have established that
  \begin{equation}\label{e:finiteness-w-d-log-rho}
    \int_{0}^{1} \int \abs{\diff \log \rho^{\varepsilon,\delta}_{t} w^{\varepsilon,\delta}_{t}} \dd (\pi \otimes m) \dd t < \infty.
  \end{equation}

  By definition, we have that
  \begin{equation*}
    w_{t}^{\varepsilon, \delta} = \e^{-t\delta} \mathsf{P}_{t\delta} w_{t}^{\varepsilon} - \delta \diff \rho^{\varepsilon,\delta}_{t}.
  \end{equation*}
  Using that $\paren{a-b}^{2} = a^{2} - 2(a-b)b - b^{2}$, we find that
  \begin{equation*}
    \abs{w_{t}^{\varepsilon,\delta}}^{2} = \e^{-2t\delta} \abs{\mathsf{P}_{t\delta}w^{\varepsilon}_{t}}^{2} - 2 \delta w_{t}^{\varepsilon,\delta} \diff \rho^{\varepsilon,\delta}_{t} - \delta^{2} \abs{\diff \rho^{\varepsilon,\delta}_{t}}^{2}.
  \end{equation*}
  Thus, for $t \in [0,1]$, expanding the square in this way in the definition of $\mathcal{L}$, we get
  \begin{equation}\label{e:expansion-lagrangian}
    \mathcal{L}(\mu^{\varepsilon,\delta}_{t}, \nu^{\varepsilon,\delta}_{t}) = \e^{-2t\delta} \mathcal{L}(\mathsf{P}_{t\delta}^{\star} \mu^{\varepsilon}_{t}, \mathsf{P}_{t\delta}^{\star}\nu^{\varepsilon}_{t}) - 2 \delta \int w_{t}^{\varepsilon, \delta} \diff \log \rho^{\varepsilon,\delta}_{t} \dd (\pi \otimes m) - \delta^{2} \Fish{ \mu^{\varepsilon,\delta}_{t} \given \pi },
  \end{equation}
  the first quantity is finite by \cref{t:contraction-lagrangian}, the second term is finite by \cref{e:finiteness-w-d-log-rho}, and the last term is finite by assumption.
  Using that $\mathcal{I} \geq 0$ and the contraction estimate \cref{t:contraction-lagrangian} for the Lagrangian yields:
  \begin{equation}\label{e:difference-lagrangian}
    \mathcal{L}(\mu^{\varepsilon,\delta}_{t}, \nu^{\varepsilon,\delta}_{t}) - \mathcal{L}(\mu^{\varepsilon}_{t}, \nu^{\varepsilon}_{t}) \leq \paren*{\e^{-2t\delta} -1} \mathcal{L}(\mu^{\varepsilon}_{t}, \nu^{\varepsilon}_{t}) - 2 \delta \int w^{\varepsilon,\delta}_{t} \diff \log \rho^{\varepsilon,\delta}_{t} \dd (\pi \otimes  m).
  \end{equation}

  \paragraph{The Ornstein--Uhlenbeck semi-group is an $EVI(0)$-gradient flow}
  By \cref{t:ce-entropy-production}, we find that
  \begin{equation}\label{e:entropy-production-epsilon}
    \frac{\dd}{\dd t} \Ent{ \mu_{t}^{\varepsilon,\delta} \given \pi } = \int \diff \log \rho^{\varepsilon,\delta}_{t} w^{\varepsilon,\delta}_{t} \dd (\pi \otimes m).
  \end{equation}
  Since $(\bar{\mu}^{\varepsilon}, \bar{\nu}^{\varepsilon})$ is a minimizer for $\mathcal{J}_{\varepsilon}(\mu, \xi)$, and since $(\bar{\mu}^{\varepsilon,\delta}, \bar{\nu}^{\varepsilon,\delta})$ is admissible for $\mathcal{J}_{\varepsilon}(\mathsf{P}^{\star}_{\delta}\mu, \xi)$
  \begin{equation}\label{e:difference-entropic-cost}
    \mathcal{J}_{\varepsilon}(\dou_{\delta}\mu, \xi) - \mathcal{J}_{\varepsilon}(\mu, \xi) \leq \int_{0}^{1} \paren*{\mathcal{L}(\mu^{\varepsilon,\delta}_{t}, \nu^{\varepsilon,\delta}_{t}) - \mathcal{L}(\mu^{\varepsilon}_{t}, \nu^{\varepsilon}_{t})} \mathrm{d} t +  \varepsilon \int_{0}^{1} \paren*{ \Fish{ \mu^{\varepsilon,\delta}_{t} \given \pi } - \Fish{ \mu^{\varepsilon}_{t} \given \pi } } \mathrm{d} t.
  \end{equation}
  The second term in the right-hand side is non-positive by the contractivity of the Fisher information along the Ornstein--Uhlenbeck semi-group.
  Since $\mathcal{L} \geq 0$ we can discard the first term in the right-hand side of \cref{e:difference-lagrangian}, and by \cref{t:ce-entropy-production}, this gives
  \begin{equation*}
    \mathcal{L}(\mu^{\varepsilon,\delta}_{t}, \nu^{\varepsilon,\delta}_{t}) - \mathcal{L}(\mu^{\varepsilon}_{t}, \nu^{\varepsilon}_{t}) \leq -2 \delta \frac{\mathrm{d}}{\mathrm{d} t} \Ent{ \mu^{\varepsilon,\delta}_{t} \given \pi }.
  \end{equation*}
  Reporting in \cref{e:difference-entropic-cost} yields
  \begin{equation}\label{e:non-infinitesimal-evi}
  \mathcal{J}_{\varepsilon}(\dou_{\delta}\mu, \xi) - \mathcal{J}_{\varepsilon}(\mu, \xi) \leq - 2 \delta \paren*{ \Ent{ \mathsf{P}^{\star}_{\delta} \mu \given \pi } - \Ent{ \xi \given \pi } }.
  \end{equation}
  In \cref{e:non-infinitesimal-evi}, we first let $\varepsilon \to 0$ and invoke \cref{t:gamma-convergence}, and then divide by $2 \delta$ and take $\limsup_{\delta \to 0^{+}}$.
  This yields
  \begin{equation*}
    \Ent{ \mu \given \pi } + \frac{\dd^{+}}{\dd s} \restriction_{s=0} \frac{1}{2} \mathcal{W}^{2}(\dou_{s}\mu, \xi) \leq \Ent{ \xi \given \pi }.
  \end{equation*}
  Using the semi-group property of $\dou$ this yields that $\dou$ is an $EVI(0)$-gradient flow of $\ent$.
  In particular, by \cite[Thm.\ 2.1]{DaneriSavare}, we have that $\dom \ent$ is geodesically convex.

  \paragraph{The Ornstein--Uhlenbeck semi-group is an $EVI(1)$-gradient flow}
  Now we repeat the argument above working directly with $\mathcal{W}^{2}$.
  By \cref{t:minimizing-curve}, take $(\bar{\mu}, \bar{\nu})$ realizing $\mathcal{W}^{2}(\mu, \xi)$.
  By the geodesic convexity of $\dom \ent$, we find that, for all $t \in [0,1]$, $\mu_{t} = \rho_{t} \pi$, and thus $\nu_{t} = w_{t} \pi \otimes m$ by \cref{t:lagrangian-absolutely-continuous}.
  Construct $(\bar{\mu}^{\delta}, \bar{\nu}^{\delta})$ as above.
  By \cref{t:wcurl-time-reparametrization} and the Cauchy--Schwarz inequality, we have that
  \begin{equation*}
    \mathcal{W}(\dou_{\delta} \mu, \xi) \leq \sqrt{ \int_{0}^{1} \e^{-2\delta t} \dd t \int_{0}^{1} \e^{2 \delta t} \mathcal{L}(\mu_{t}^{\delta}, \nu_{t}^{\delta}) \dd t }.
  \end{equation*}
  For all $t \in [0,1]$, $\mu_{t} \in \dom \ent$, thus $\mu^{\delta}_{t} \in \dom \fish$ by \cref{e:regularizing-ou}.
  Actually, by~\cref{e:production-entropy-ou}, we find that \cref{e:ce-bound-fish-action} in \cref{t:ce-entropy-production} is satisfied.
  In particular, we obtain an expression similar to \cref{e:expansion-lagrangian} for $(\bar{\mu}^{\delta}, \bar{\nu}^{\delta})$.
  Since $\fish \geq 0$, and using \cref{t:contraction-lagrangian}, we get that
  \begin{equation}\label{e:bound-w-delta}
    \begin{split}
      \mathcal{W}^{2}(\dou_{\delta} \mu, \xi) & \leq a(\delta) \bracket*{ \int_{0}^{1} \mathcal{L}(\dou_{t\delta} \mu_{t}, \dou_{t\delta} \nu_{t}) \dd t - 2 \delta \int_{0}^{1} \e^{2t\delta} \frac{\dd}{\dd t} \Ent{ \mu_{t}^{\delta} \given \pi } \dd t  }
                                            \\& \leq a(\delta) \bracket*{ \int_{0}^{1} \mathcal{L}(\mu_{t}, \nu_{t}) \dd t - 2 \delta \int_{0}^{1} \e^{2t\delta} \frac{\dd}{\dd t} \Ent{ \mu_{t}^{\delta} \given \pi } \dd t  },
    \end{split}
  \end{equation}
  where
  \begin{equation*}
    a(\delta) = \int_{0}^{1} \e^{-2t\delta} \dd t = \frac{1 - \e^{-2\delta}}{2\delta}.
  \end{equation*}
  By integration by parts, we find that
  \begin{equation}\label{e:ipp-entropy-delta}
    \int_{0}^{1} \e^{2t\delta} \frac{\dd}{\dd t} \Ent{ \mu_{t}^{\delta} \given \pi } \dd t = \e^{2\delta} \Ent{ \dou_{\delta}\mu \given \pi } - \Ent{ \xi \given \pi } - 2 \delta \int_{0}^{1} \e^{2t\delta} \Ent{ \mu_{t}^{\delta} \given \pi } \dd t.
  \end{equation}
  Substituting \cref{e:ipp-entropy-delta} in \cref{e:bound-w-delta}, and using that $(\bar{\mu}, \bar{\nu})$ is a minimizer for $\mathcal{W}^{2}(\mu, \xi)$, we get
  \begin{equation*}
    \begin{split}
      \mathcal{W}^{2}(\dou_{\delta}\mu, \xi) - \mathcal{W}^{2}(\mu, \xi) & \leq  (a(\delta) - 1)  \int_{0}^{1} \mathcal{L}(\mu_{t}, \nu_{t}) \dd t \\ 
                                                                         &+ 2 \delta a(\delta) \bracket*{  \Ent{ \xi \given \pi } - \e^{2\delta} \Ent{ \dou_{\delta}\mu \given \pi } } \\
                                                                         &+ 4 a(\delta) \delta^{2} \int_{0}^{1} \e^{2t\delta} \Ent{ \mu_{t}^{\delta} \given \pi } \dd t.
    \end{split} 
  \end{equation*}
  Dividing by $\delta$ and taking $\limsup_{\delta \to 0^{+}}$, and using the lower semi-continuity of $\mathcal{H}$ and that $\mathcal{H}$ decreases along $\mathsf{P}$ yields
  \begin{equation*}
    \frac{\dd^{+}}{\dd s} \restriction_{s=0} \mathcal{W}^{2}(\dou_{s}\mu, \xi) \leq -\mathcal{W}^{2}(\mu, \xi) + 2 (\Ent{ \xi \given \pi } - \Ent{ \mu \given \pi }),
  \end{equation*}
  which is exactly \cref{e:evi} for $s = 0$.
  This yields \cref{e:evi} for all $s$ by the semi-group property of $\dou$.
\end{proof}

We now draw two standard conclusions from the above Evolution Variation Inequality.

\begin{theorem}[{\cite[Thm.\ 2.1]{DaneriSavare}}]\label{t:entropy-geodesically-convex}
  The relative entropy is $1$-geodesically convex.
  Namely, let $\mu_{0}$ and $\mu_{1} \in \dom \ent$.
  Take $\set{ \mu_{t} : t \in [0,1] }$ a geodesic joining $\mu_{0}$ to $\mu_{1}$.
  Then,
  \begin{equation*}
    \Ent{\mu_{t} \given \pi} \leq (1-t) \Ent{\mu_{0} \given \pi} + t \Ent{\pi_{1} \given \pi} - \frac{t(1-t)}{2} \mathcal{W}^{2}(\mu_{0}, \mu_{1}), \qquad t \in [0,1].
  \end{equation*}
\end{theorem}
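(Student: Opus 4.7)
The plan is to derive this from \cref{t:evi} by invoking the general principle that an $\mathrm{EVI}(\kappa)$-gradient flow on a complete geodesic metric space automatically yields $\kappa$-displacement convexity of the driving functional, with $\kappa = 1$ in our setting. Since \cref{t:evi} already establishes that $\dou$ is an $\mathrm{EVI}(1)$-gradient flow of $\ent$, and since $(\dom \ent, \mathcal{W})$ is a complete geodesic space by \cref{t:p1-complete-geodesic} (and the geodesic convexity statement of \cref{t:evi} itself), the result follows from \cite[Thm.~2.1]{DaneriSavare}. I would simply check that the hypotheses of that reference are met in our framework.

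To describe the core of the abstract argument: fix $\mu_0, \mu_1 \in \dom \ent$ and a $\mathcal{W}$-geodesic $(\mu_t)_{t \in [0,1]}$, which by \cref{t:evi} lies entirely in $\dom \ent$. For each $t \in (0,1)$ one applies \cref{e:evi} at the starting point $\mu = \mu_t$, successively against the test measures $\xi = \mu_0$ and $\xi = \mu_1$, and evaluates at $s = 0^{+}$. Using the geodesic identities
\begin{equation*}
  \mathcal{W}(\mu_t, \mu_0) = t\, \mathcal{W}(\mu_0, \mu_1), \qquad \mathcal{W}(\mu_t, \mu_1) = (1-t)\, \mathcal{W}(\mu_0, \mu_1),
\end{equation*}
and forming the convex combination $(1-t) \cdot (\mathrm{EVI}_{\mu_0}) + t \cdot (\mathrm{EVI}_{\mu_1})$, one obtains an inequality of the form
\begin{equation*}
  \ent(\mu_t) + \tfrac{1}{2}\bigl[(1-t) t^2 + t (1-t)^2\bigr] \mathcal{W}^2(\mu_0, \mu_1) + R(t) \leq (1-t)\, \ent(\mu_0) + t\, \ent(\mu_1),
\end{equation*}
where $R(t)$ encodes the appropriate linear combination of one-sided derivatives $\tfrac{\dd^{+}}{\dd s}\big|_{s=0} \mathcal{W}^2(\dou_s \mu_t, \mu_i)$. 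The algebraic factor simplifies to the desired $t(1-t)$, producing the stated bound modulo control of $R(t)$.

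The delicate point — which is the heart of the Daneri--Savaré argument — is to show that $R(t) \geq 0$, or equivalently, that the upward one-sided derivatives of the squared distance to the endpoints combine favourably along the geodesic. This is handled abstractly by a scaling/minimizing-movement argument that exploits the full strength of EVI (rather than merely its infinitesimal form at $s=0$), and is intrinsic to metric-space geometry once EVI is available. No further analysis specific to the Poisson space is needed, since the non-trivial input — existence of the EVI gradient flow — is already provided by \cref{t:evi}; the remaining work is the abstract manipulation of \cite[Thm.~2.1]{DaneriSavare}, which applies verbatim in our (genuine, non-extended) metric space $(\dom \ent, \mathcal{W})$.
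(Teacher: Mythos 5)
Your approach matches the paper exactly: \cref{t:entropy-geodesically-convex} carries the citation \cite[Thm.~2.1]{DaneriSavare} in its statement and the paper offers no further argument — the result is precisely the abstract implication $\mathrm{EVI}(1)\Rightarrow 1$-geodesic convexity, applied in the non-extended complete geodesic space $(\mathscr{P}_1^*(\config),\mathcal{W})$ once \cref{t:evi} has established geodesic convexity of $\dom\ent$ and the EVI.

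One correction to your description of the abstract step: the bound $R(t)\geq 0$ is not a deep ``scaling/minimizing-movement'' ingredient but an elementary consequence of the triangle inequality. For any $\xi$, writing $a=\mathcal{W}(\xi,\mu_0)$, $b=\mathcal{W}(\xi,\mu_1)$, $c=\mathcal{W}(\mu_0,\mu_1)$, the constraint $a+b\geq c$ together with a one-line optimization gives $(1-t)a^2+tb^2\geq t(1-t)c^2$, with equality precisely when $a=tc$ and $b=(1-t)c$, i.e.\ at the geodesic point $\xi=\mu_t$. Hence $f_s\coloneqq(1-t)\mathcal{W}^2(\dou_s\mu_t,\mu_0)+t\,\mathcal{W}^2(\dou_s\mu_t,\mu_1)$ satisfies $f_s\geq f_0$ for all $s>0$, so $\tfrac{\dd^+}{\dd s}\big|_{s=0}f_s\geq 0$, and sub-additivity of the upper Dini derivative yields $R(t)\geq\tfrac{\dd^+}{\dd s}\big|_{s=0}f_s\geq 0$. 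Feeding this back into the convex combination of the two EVI inequalities (with $\mu=\mu_t$ and $\xi\in\{\mu_0,\mu_1\}$, as you describe) gives the stated convexity directly; the only genuinely nontrivial input, already supplied by \cref{t:evi}, is that the geodesic $(\mu_t)$ stays in $\dom\ent$, so that the EVI may be applied at each $\mu_t$.
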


The \emph{descending slope} of $\ent$ at $\mu \in \dom \ent$ plays the role of the length of the gradient in our non-smooth setting: 
\begin{equation*}
  \abs{D^{-} \ent}(\mu) \coloneq \limsup_{\xi \to \mu} \frac{\paren{\Ent{ \mu \given \pi } - \Ent{ \xi \given \pi}}_{+}}{\mathcal{W}(\mu, \xi)}.
\end{equation*}

\begin{theorem}[{\cite[Prop.\ 4.6]{AGUser}}]\label{t:ede}
  The Ornstein--Uhlenbeck semi-group is a gradient flow of the entropy in the following sense:
  \begin{equation*}
     \abs{D^{-} \ent}(\dou_{t} \mu) = - \frac{\dd}{\dd t} \Ent{ \dou_{t} \mu \given \pi } = \Fish{ \dou_{t} \mu \given \pi }.
  \end{equation*}
\end{theorem}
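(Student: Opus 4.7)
My plan is to treat the two equalities in the statement separately. The second equality,
\begin{equation*}
  - \frac{\dd}{\dd t} \Ent{ \dou_{t} \mu \given \pi } = \Fish{ \dou_{t} \mu \given \pi },
\end{equation*}
is simply the time-derivative of the de Bruijn-type identity \cref{e:production-entropy-ou} of \cref{t:properties-ou}, which has already been established.

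The first equality is the standard characterization of the descending slope for an EVI gradient flow. The cleanest path is to invoke \cite[Prop.\ 4.6]{AGUser} directly, after verifying its hypotheses: by \cref{t:p1-complete-geodesic} the ambient space $(\mathscr{P}_{1}^{*}(\config), \mathcal{W})$ is a complete geodesic metric space, by \cref{t:lsc-H} the entropy $\ent$ is lower semi-continuous on $\mathscr{P}_{1}(\config)$, and by \cref{t:evi} the semi-group $\dou$ is an $\mathrm{EVI}(1)$-gradient flow of $\ent$ starting from any $\mu \in \dom \ent$. The cited proposition then yields the identification of the squared slope with both the metric dissipation rate and the negative time-derivative of the entropy along the flow.

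For a self-contained argument, I would establish two inequalities for $|D^{-}\ent|^{2}$. The lower bound $|D^{-}\ent|^{2}(\dou_{t}\mu) \geq \Fish{\dou_{t}\mu \given \pi}$ comes from dividing the entropy dissipation identity written on $[t,t+h]$ by $\mathcal{W}(\dou_{t}\mu,\dou_{t+h}\mu)$ and invoking the metric-derivative estimate $\mathcal{W}(\dou_{t}\mu,\dou_{t+h}\mu)\leq \int_{t}^{t+h}\sqrt{\Fish{\dou_{s}\mu\given\pi}}\,\dd s$ provided by \cref{t:wcurl-derivative-fisher}; letting $h\to 0^{+}$ and using continuity in $s$ gives the bound. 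For the upper bound, by the semi-group property it suffices to argue at $t=0$: from \cref{e:evi} applied at $s=0$, for any competitor $\xi \in \dom \ent$,
\begin{equation*}
  \Ent{\mu \given \pi} - \Ent{\xi \given \pi} \leq -\tfrac{1}{2}\tfrac{\dd^{+}}{\dd s}\big|_{s=0}\mathcal{W}^{2}(\dou_{s}\mu,\xi) - \tfrac{1}{2}\mathcal{W}^{2}(\mu,\xi),
\end{equation*}
and a triangle-inequality upper bound on $-\tfrac{\dd^{+}}{\dd s}\mathcal{W}^{2}(\dou_{s}\mu,\xi)\big|_{s=0}$ by $2\mathcal{W}(\mu,\xi)\sqrt{\Fish{\mu\given\pi}}$, via \cref{t:wcurl-derivative-fisher} again, yields after dividing by $\mathcal{W}(\mu,\xi)$ and taking $\limsup_{\xi\to\mu}$ the desired inequality $|D^{-}\ent|(\mu)\leq \sqrt{\Fish{\mu\given\pi}}$.

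I expect the only subtlety to lie in the upper-bound argument, where one must exploit the $\mathrm{EVI}(1)$ uniformly as $\xi \to \mu$ in $\mathcal{W}$; the symmetric roles of $\mu$ and $\xi$, together with the lower semi-continuity of $\mathcal{W}$ from \cref{t:W}, ensure one can pass to the limit along minimizing sequences. Because the underlying structural results are by now classical, deferring the technical packaging to \cite[Prop.\ 4.6]{AGUser} (which axiomatizes exactly this deduction from EVI, lower semi-continuity of the driving functional, and completeness/geodesicity of the base space) is the most economical route.
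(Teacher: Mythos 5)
Your proposal is correct and takes essentially the same route as the paper: the paper gives no standalone proof, relying precisely on \cite[Prop.\ 4.6]{AGUser} for the slope identification (once the EVI of \cref{t:evi}, the completeness/geodesicity of \cref{t:p1-complete-geodesic}, and the lower semi-continuity of \cref{t:lsc-H} are in hand) and on the de Bruijn identity \cref{e:production-entropy-ou} for the second equality. Your supplementary self-contained sketch of the two slope inequalities is a faithful unpacking of what \cite[Prop.\ 4.6]{AGUser} axiomatizes, so it is consistent with — rather than divergent from — the paper's intent.
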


We also have the following Poisson equivalent of the celebrated HWI inequality.
\begin{theorem}\label{t:hwi}
  Let $\mu \in \mathscr{P}_{1}(\config)$.
  Then:
  \begin{equation*}
    \Ent{ \mu \given \pi } \leq \mathcal{W}(\mu, \pi) \sqrt{\Fish{ \mu \given \pi }} - \frac{1}{2} \mathcal{W}^{2}(\mu, \pi).
  \end{equation*}
\end{theorem}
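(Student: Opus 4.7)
My plan is to derive the HWI inequality from the \cref{e:evi} established in \cref{t:evi}, combined with the estimate for the $\mathcal{W}$-speed of the Ornstein--Uhlenbeck flow provided by \cref{t:ou-solution,t:wcurl-absolutely-continuous}.

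First, I would reduce to the non-trivial case $\mu \in \dom \ent \cap \dom \fish$. Indeed, by the Talagrand inequality (\cref{t:talagrand}), $\mu \in \dom \ent$ ensures $\mathcal{W}(\mu,\pi) < \infty$, so the right-hand side is well-defined; if in addition $\mu \notin \dom \fish$, then either $\mu = \pi$ (in which case both sides vanish) or $\mathcal{W}(\mu, \pi) > 0$ and the right-hand side is infinite, so the inequality is vacuous.

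Next, specializing \cref{e:evi} to $\xi = \pi$ (so $\Ent{\pi \given \pi} = 0$) and evaluating at $s = 0$, with the time-derivative interpreted in the upper-right sense, yields
\begin{equation*}
\Ent{\mu \given \pi} + \tfrac{1}{2}\mathcal{W}^{2}(\mu, \pi) \leq -\tfrac{1}{2} \frac{\dd^{+}}{\dd s}\bigg|_{s = 0^{+}} \mathcal{W}^{2}(\dou_{s}\mu, \pi).
\end{equation*}
Using the factorization $a^2 - b^2 = (a-b)(a+b)$ with $a = \mathcal{W}(\mu, \pi) \geq b = \mathcal{W}(\dou_{s}\mu, \pi)$ (by the contractivity estimate of \cref{t:dou-contractivity}) together with the triangle inequality, one obtains
\begin{equation*}
\mathcal{W}^{2}(\mu, \pi) - \mathcal{W}^{2}(\dou_{s} \mu, \pi) \leq 2 \mathcal{W}(\mu, \pi)\, \mathcal{W}(\mu, \dou_{s}\mu).
\end{equation*}
To control $\mathcal{W}(\mu, \dou_{s}\mu)$, I will use that by \cref{t:ou-solution} the curve $r \mapsto \dou_{r}\mu$, paired with $\nu_{r} = -\diff \ou_{r}\rho\,(\pi \otimes m)$, solves the continuity equation with Lagrangian $\mathcal{L}(\dou_{r}\mu, \nu_{r}) = \Fish{\dou_{r}\mu \given \pi}$. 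The absolute-continuity characterization in \cref{t:wcurl-absolutely-continuous} then gives
\begin{equation*}
\mathcal{W}(\mu, \dou_{s}\mu) \leq \int_{0}^{s} \sqrt{\Fish{\dou_{r} \mu \given \pi}}\, \dd r \leq s \sqrt{\Fish{\mu \given \pi}},
\end{equation*}
where the last step uses the contractivity of $\fish$ along $\dou$, a standard consequence of the Bakry--Émery commutation \cref{e:be}. Dividing by $s$ and taking $s \to 0^{+}$, the displayed bounds combine to give $-\tfrac{1}{2}\tfrac{\dd^{+}}{\dd s}\vert_{0^{+}}\mathcal{W}^{2}(\dou_{s}\mu, \pi) \leq \mathcal{W}(\mu, \pi)\sqrt{\Fish{\mu \given \pi}}$, which inserted in the EVI yields HWI.

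The main obstacle will be justifying the manipulations of the derivative at the boundary point $s = 0$: \cref{t:wcurl-derivative-fisher} supplies a control of the upper-right derivative of $\mathcal{W}(\dou_s\mu, \xi)$ only for $s > 0$, so the endpoint case requires an argument based on the integral bound $\mathcal{W}(\mu, \dou_s\mu) \leq \int_0^s \sqrt{\Fish{\dou_r\mu \given \pi}}\,\dd r$ rather than a direct derivative estimate, and the $\limsup$ as $s \to 0^+$ has to be taken carefully to retain the correct direction in the inequality. Once this is handled, the combination with the contraction of $\fish$ along $\dou$ closes the argument.
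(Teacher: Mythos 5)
Your proposal is correct and is precisely the argument that the paper defers to via the citation of Erbar--Maas, Thm.~7.3, with \cref{t:wcurl-derivative-fisher} playing the role of their Prop.~4.1: specialize the EVI of \cref{t:evi} to $\xi=\pi$ at $s=0$, then bound $-\tfrac12\tfrac{\dd^+}{\dd s}\vert_{s=0}\mathcal{W}^2(\dou_s\mu,\pi)$ by $\mathcal{W}(\mu,\pi)\sqrt{\Fish{\mu\given\pi}}$ using the metric speed of the Ornstein--Uhlenbeck trajectory together with the contractivity of $\fish$ along $\dou$. Your explicit derivation of the speed bound $\mathcal{W}(\mu,\dou_s\mu)\le\int_0^s\sqrt{\Fish{\dou_r\mu\given\pi}}\,\dd r$ from \cref{t:ou-solution,t:wcurl-absolutely-continuous}, and the factorization $a^2-b^2=(a-b)(a+b)$ to handle the Dini derivative at the boundary point $s=0$, faithfully reproduce the steps of the cited proof; the reduction to $\mu\in\dom\ent\cap\dom\fish$ at the outset is also the right way to dispose of the degenerate cases.
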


\begin{proof}
  The proof is identical to \cite[Thm.\ 7.3]{ErbarMaas}.
  The equivalent of \cite[Prop.\ 4.1]{ErbarMaas} in our setting is \cref{t:wcurl-derivative-fisher}.
\end{proof}

\section{Appendix}
\begin{lemma}\label{t:DualLLoc}
Let $(E,\mathscr{F},m)$ be a $\sigma$-finite measure space, and $\mathfrak{B}\subset \mathscr{F}$ be a family of measurable sets such that
\begin{enumerate}[$(a)$]
\item\label{i:l:Appendix:3} there exists an $m$-negligible set $N$ and a countable nested exhaustion $(B_n)_{n\in\nat}\subset \mathfrak{B}$ of $E\setminus N$ additionally such that for every $B\in\mathfrak{B}$ there exists $n\in\nat$ so that $B\subset B_n$.
\end{enumerate}
For $p\in [1,\infty)$ let
\[
L^p_{\mathrm{loc}}(E)= \set{f\in L^0(E): \norm{f\, 1_B}_{L^p}<\infty , B\in\mathfrak{B}}
\]
be endowed with the topology induced by the family of semi-norms
\[
\norm{f}_{p,B}= \norm{f \, 1_B}_{L^p}
\]
Then, $L^p_{\mathrm{loc}}(E)$ is a Fr\'echet space.
Further let $q$ be the Hölder conjugate exponent to $p$. Then, $T\in L^p_{\mathrm{loc}}(E)^*$ if and only if there exists $B\in\mathfrak{B}$ and $g_B\in L^q(E)$ with $g_B\equiv 0$ on $E\setminus B$ and such that
\[
T(f)=\int_E g_B f \dd m, \qquad f\in L^p_{\mathrm{loc}}(E).
\]
\begin{proof}
It is clear that $L^p_{\mathrm{loc}}(E)=L^p_{\mathrm{loc}}(E\setminus N)$, thus we may and will assume with no loss of generality that $N=\varnothing$.
By~\ref{i:l:Appendix:3} and monotonicity of the semi-norms $B\mapsto \norm{\emparg}_{p,B}$, the topology of $L^p_{\mathrm{loc}}(E)$ is induced by the countable family of semi-norms $\norm{\emparg}_{p,B_n}$ with $(B_n)_n$ as in~\ref{i:l:Appendix:3}; thus $L^p_{\mathrm{loc}}(E)$ is a Fréchet space.

Now, let $T\in L^p_{\mathrm{loc}}(E)^*$. By continuity of $T$ there exist $k\in\mathbb{N}$, constants $a_1,\dotsc,a_k>0$, and sets $B_1,\dotsc, B_k\in\mathfrak{B}$ so that $\abs{T(f)}\leq \sum_{i=1}^k a_i \norm{f}_{p,B_i}$ for all $f\in L^p_{\mathrm{loc}}(E)$.
Setting $a= \max_{i\leq k} a_i$, again by~\ref{i:l:Appendix:3} there exists $B\in\mathfrak{B}$ so that
\[
\abs{T(f)}\leq a k \norm{f}_{p,B} , \qquad f \in L^p_{\mathrm{loc}}(E).
\]
Consider the map $1_B\colon L^p_{\mathrm{loc}}(E)\to L^p(B)$.
By the above inequality, $\ker 1_B\subset \ker T$, hence $T=T_B\circ 1_B$ factors over some $T_B\in L^p(B)^*$.
Since $(B,m_B)$ is $\sigma$-finite, $T_B$ is represented by some function $g\in L^q(B)$ in the standard way.
Letting $g_B$ denote the extension by $0$ of $g\in L^q(B)$ to $E$, we have therefore that
\[
T(f)=T_B (1_B f)=\int_B g 1_B f \dd m_B= \int_E g_B f \dd m, \qquad f\in L^p_{\mathrm{loc}}(E).
\]
The reverse implication is straightforward.
\end{proof}
\end{lemma}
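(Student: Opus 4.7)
My plan is to exploit condition $(a)$ to reduce both assertions to standard facts about countable families of seminorms and about dualities of classical $L^p$ spaces on $\sigma$-finite measure spaces.

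First, I would remove the null set $N$ at no cost, since $L^p_{\mathrm{loc}}(E) = L^p_{\mathrm{loc}}(E \setminus N)$. Next, I would observe that the family of seminorms indexed by $\mathfrak{B}$ is cofinal with the countable subfamily indexed by $(B_n)_{n}$: indeed, if $B \in \mathfrak{B}$, then $B \subset B_n$ for some $n$, and hence $\norm{f}_{p,B} \leq \norm{f}_{p,B_n}$ by monotonicity of the $L^p$ norm in the domain. This shows the topology of $L^p_{\mathrm{loc}}(E)$ is generated by the countable family $\paren{\norm{\emparg}_{p,B_n}}_n$, so $L^p_{\mathrm{loc}}(E)$ is metrizable.

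For completeness, I would take a Cauchy sequence $(f_k)$ in $L^p_{\mathrm{loc}}(E)$. For each $n$, the sequence $(f_k 1_{B_n})$ is Cauchy in $L^p(B_n)$, hence converges to some $g_n \in L^p(B_n)$. The consistency $g_{n+1}\restriction_{B_n} = g_n$ a.e.\ (since both are $L^p(B_n)$-limits of the same restricted sequence) lets me glue the $g_n$ into a single measurable function $f$ on $E$ with $f\restriction_{B_n} = g_n$ for every $n$. Then $\norm{f-f_k}_{p,B_n} \to 0$ for each $n$, proving convergence in $L^p_{\mathrm{loc}}(E)$.

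For the dual, the key step is that continuity of $T$ on a Fréchet space whose topology is generated by the countable, monotonically ordered family $\paren{\norm{\emparg}_{p,B_n}}_n$ implies the existence of some $n$ and $c > 0$ with $\abs{T(f)} \leq c \norm{f}_{p,B_n}$ for all $f$. Consequently $T$ vanishes on the kernel of the restriction map $r_n \colon f \mapsto f 1_{B_n}$, so it factors as $T = \tilde T \circ r_n$ for a continuous linear functional $\tilde T$ on (the image of $r_n$, hence by Hahn--Banach on) $L^p(B_n)$. Since $m$ is $\sigma$-finite on $B_n$, the classical $L^p$--$L^q$ duality yields $g \in L^q(B_n)$ representing $\tilde T$; extending $g$ by zero outside $B_n$ gives the $g_B \in L^q(E)$ of the statement. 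The reverse direction is immediate from Hölder's inequality.

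The only step requiring any care is the combination of continuity of $T$ with the cofinality furnished by condition~$(a)$: one must check that the finite collection of seminorms controlling $T$ can be replaced by a single seminorm $\norm{\emparg}_{p,B_n}$, which again follows from $B_i \subset B_n$ for $n$ sufficiently large and monotonicity.
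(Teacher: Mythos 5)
Your proof is correct and follows essentially the same route as the paper's: cofinality from condition $(a)$ to get a countable generating family of seminorms, then factoring a continuous functional through a single restriction map $f \mapsto f\,1_{B_n}$ and invoking the classical $L^p$--$L^q$ duality on the $\sigma$-finite set $B_n$. The one unnecessary detour is your invocation of Hahn--Banach: the restriction map $L^p_{\mathrm{loc}}(E) \to L^p(B_n)$ is already surjective (extend any $g \in L^p(B_n)$ by zero), so $\tilde T$ is defined on all of $L^p(B_n)$ directly, as the paper uses.
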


\begin{remark}
We note that the previous Lemma applies to every metric measure space $(E,d,m)$ when $\mathfrak{B}=\mathfrak{B}_0(E)$ and $m$ is finite on $\mathfrak{B}_0(E)$.
\end{remark}

\printbibliography[
heading=bibintoc,
title={Bibliography}
]

\end{document}